\DeclarePairedDelimiter\ceil{\lceil}{\rceil}
\title{On the dimension of $s$-Nikod\'ym sets}
\author{Damian D{\k a}browski}
\address{Department of Mathematics and Statistics\\ University of Jyv\"askyl\"a,
P.O. Box 35 (MaD)\\
FI-40014 University of Jyv\"askyl\"a\\
Finland} 
\email{\href{mailto:damian.m.dabrowski@jyu.fi}{damian.m.dabrowski@jyu.fi}}
\author{Max Goering}
\address{Department of Mathematics and Statistics\\ University of Jyv\"askyl\"a,
P.O. Box 35 (MaD)\\
FI-40014 University of Jyv\"askyl\"a\\
Finland} 
\email{\href{mailto:max.l.goering@jyu.fi}{max.l.goering@jyu.fi}}
\author{Tuomas Orponen}
\address{Department of Mathematics and Statistics\\ University of Jyv\"askyl\"a,
P.O. Box 35 (MaD)\\
FI-40014 University of Jyv\"askyl\"a\\
Finland} 
\email{\vspace{-1cm}\href{mailto:tuomas.t.orponen@jyu.fi}{tuomas.t.orponen@jyu.fi}}
\date{\today}
\subjclass[2010]{28A78 (primary) 28A80 (secondary)}
\keywords{Nikod\'ym sets, fractals, slicing, Hausdorff dimension}
\thanks{D.D is supported by the Research Council of Finland postdoctoral grant \emph{Quantitative rectifiability and harmonic measure beyond the Ahlfors-David-regular setting}, grant
		No. 347123. M.G. and T.O. are supported by the European Research Council (ERC) under the European Union’s Horizon Europe research and innovation programme (grant agreement No 101087499). T.O. is supported by the Research Council of Finland via the project \emph{Approximate incidence geometry}, grant no. 355453.}
\newcommand{\R}{\mathbb{R}}
\newcommand{\N}{\mathbb{N}}
\newcommand{\C}{\mathbb{C}}
\newcommand{\Z}{\mathbb{Z}}
\newcommand{\calT}{\mathcal{T}}
\newcommand{\calD}{\mathcal{D}}
\newcommand{\calF}{\mathcal{F}}
\newcommand{\calS}{\mathcal{S}}
\newcommand{\spt}{\operatorname{spt}}
\newcommand{\Hd}{\dim_{\mathrm{H}}}
\newcommand{\calP}{\mathcal{P}}
\newcommand{\cH}{\mathcal{H}}
\newcommand{\calA}{\mathcal{A}}
\newcommand{\1}{\mathbf{1}}
\newcommand{\diam}{\operatorname{diam}}
\newcommand{\dist}{\operatorname{dist}}
\newcommand{\bF}{\mathbf{F}}
\newcommand{\ve}{\varepsilon}
\def\Barint_#1{\mathchoice
          {\mathop{\vrule width 6pt height 3 pt depth -2.5pt
                  \kern -8pt \intop}\nolimits_{#1}}%
          {\mathop{\vrule width 5pt height 3 pt depth -2.6pt
                  \kern -6pt \intop}\nolimits_{#1}}%
          {\mathop{\vrule width 5pt height 3 pt depth -2.6pt
                  \kern -6pt \intop}\nolimits_{#1}}%
          {\mathop{\vrule width 5pt height 3 pt depth -2.6pt
                  \kern -6pt \intop}\nolimits_{#1}}}
\numberwithin{equation}{section}
\theoremstyle{plain}
\newtheorem{thm}[equation]{Theorem}
\newtheorem*{"thm"}{"Theorem"}
\newtheorem{lemma}[equation]{Lemma}
\newtheorem{proposition}[equation]{Proposition}
\newtheorem{question}{Question}
\theoremstyle{definition}
\newtheorem{definition}[equation]{Definition}
\newtheorem{notation}[equation]{Notation}
\theoremstyle{remark}
\newtheorem{remark}[equation]{Remark}
\newcommand{\nref}[1]{(\hyperref[#1]{#1})}
\DeclareMathSymbol{\intop}  {\mathop}{mathx}{"B3}
\begin{document}

\begin{abstract} Let $s \in [0,1]$. We show that a Borel set $N \subset \R^{2}$ whose every point is linearly accessible by an $s$-dimensional family of lines has Hausdorff dimension at most $2 - s$. \end{abstract}

\maketitle

\tableofcontents

\section{Introduction}

A set $K \subset \R^{2}$ is called \emph{linearly accessible} if for every $x \in K$ there exists a line $\ell_{x} \subset \R^{2}$ containing $x$, but no other points of $K$ -- in other words $K \cap \ell_{x} = \{x\}$. In 1927, answering a question of Banach \cite{ABSU24}, Nikod\'ym \cite{Ni} constructed a linearly accessible Borel set $N \subset [0,1]^{2}$ of Lebesgue measure unity. Nowadays linearly accessible sets of positive measure are often called \emph{Nikod\'ym sets}. A simpler construction was found by Davies \cite{MR45795} in the 50s, and a different projection theoretic viewpoint (along with generalisations to higher dimensions) was provided by Falconer in his famous "digital sundial paper" \cite{MR842156} in the 80s. A Baire category based construction is described in the paper \cite{MR3771142} by Chang, Cs\"ornyei, H\'era, and Keleti. See also the paper \cite{MR639466} of Casas and de Guzm\'{a}n, where it is shown that the accessibility lines $\ell_{x}$ cannot be chosen to depend on $x$ in a Lipschitz way (the result is also recorded in \cite[Theorem 9.6.4]{MR596037}).
 
What if we require every point of $N$ to be accessible by many lines? Davies \cite{MR45795} already in the 50s showed that there exist Borel sets $N \subset [0,1]^{2}$ of full measure such that every point $x \in N$ is accessible by uncountably many lines, and indeed uncountably many lines in every non-trivial arc of directions. Davies called such sets \emph{$c$-densely accessible}.

Can the set of accessibility lines have Hausdorff dimension $s \in (0,1]$? Our main result answers this negatively for Borel sets $N \subset \R^{2}$ of full dimension:

\begin{thm}\label{t:Nikodym} Let $s \in [0,1]$, and let $N \subset \R^{2}$ be an \emph{$s$-Nikod\'ym set}: a Borel set whose every point is accessible by an $s$-dimensional family of lines. Then $\Hd N \leq 2 - s$. \end{thm}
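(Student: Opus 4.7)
The approach is to convert the $s$-Nikod\'ym condition into a lower bound on the Hausdorff dimension of a natural subset of the affine Grassmannian $\mathcal{A}(2,1)$ of lines in $\R^{2}$ --- which is a $2$-dimensional manifold --- and to combine this with the ambient dimensional constraint. Let $\mathcal{E} \subset \mathcal{A}(2,1)$ denote the Borel set of ``Nikod\'ym lines'' for $N$: those $\ell$ for which $\ell \cap N$ consists of a single point. The natural map $\mathcal{E} \twoheadrightarrow N$, $\ell \mapsto \ell \cap N$, is a surjection whose fiber over $x \in N$ is in natural bijection with the $s$-dimensional set $L(x)$ of Nikod\'ym directions at $x$. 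The key structural feature is a ``single-crossing'' property: every $\ell \in \mathcal{E}$ is associated with a unique point $x = \ell \cap N$.

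The heart of the proof is to establish the Furstenberg-type estimate
\[
\Hd(\mathcal{E}) \geq \Hd(N) + s.
\]
I would construct a Frostman probability measure $\mu$ on $N$ of exponent $\Hd(N) - \varepsilon$, and, via a Borel measurable selection, Frostman probability measures $\nu_{x}$ on $L(x)$ of exponent $s - \varepsilon$ with uniform Frostman constants. Pushing forward the disintegrated measure $\int \nu_{x}\, d\mu(x)$ on $\{(x,\theta) : x \in N,\ \theta \in L(x)\}$ along $(x,\theta) \mapsto \ell_{x,\theta}$ gives a candidate measure $\eta$ on $\mathcal{E}$. A naive Fubini/tube calculation using only the Frostman tube bound $\mu(T_{r}(\ell)) \lesssim r^{\Hd(N) - 1}$ produces merely the weaker exponent $\Hd(N) + s - 1$, which would give the unsatisfactory conclusion $\Hd(N) \leq 3 - s$. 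The crucial step is a \emph{gain of one full dimension}: this is where the single-crossing structure must be used, since it rules out the concentration of $\mu$ in thin tubes aligned with Nikod\'ym lines that would otherwise defeat the sharp bound. I would appeal to recent sharp Furstenberg-set and radial-projection estimates in the plane, in the style of Orponen--Shmerkin and Ren--Wang, which are precisely designed to prevent this tube-concentration phenomenon.

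Once $\Hd(\mathcal{E}) \geq \Hd(N) + s$ is secured, the theorem follows immediately from the ambient bound $\Hd(\mathcal{E}) \leq \dim \mathcal{A}(2,1) = 2$: combining the two inequalities yields $\Hd(N) + s \leq 2$, that is, $\Hd(N) \leq 2 - s$. The main technical obstacle is therefore the sharp Furstenberg-type lower bound on $\Hd(\mathcal{E})$ --- specifically, the improvement from the naive exponent $\Hd(N) + s - 1$ to the sharp $\Hd(N) + s$. A secondary but necessary ingredient is the Borel-measurable selection of uniformly Frostman measures on the fibers $\{L(x)\}_{x \in N}$, which should follow from standard regularity arguments applied to the set-valued assignment $x \mapsto L(x)$.
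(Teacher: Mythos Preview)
Your proposal has a genuine gap at the step you yourself flag as ``the heart of the proof'': the inequality $\Hd(\mathcal{E}) \geq \Hd(N) + s$. The sharp Furstenberg and radial-projection estimates you invoke (Orponen--Shmerkin, Ren--Wang) do not use any single-crossing hypothesis, and without it the best possible lower bound on a line set containing an $s$-dimensional pencil through every point of a $t$-dimensional set is $\min\{s+t,\tfrac{3s+t}{2},s+1\}$. In the regime of interest here one has $t=\Hd N>1$, so this minimum is $s+1$, not $s+t$; and $s+1\le 2$ is vacuous. The aligned example---a fixed $s$-dimensional slope set $\Theta$ with $L(x)=\{\ell_{x,\theta}:\theta\in\Theta\}$ for every $x$---shows $s+1$ is sharp for the generic Furstenberg problem. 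So your appeal to those estimates cannot possibly deliver the exponent you need; the improvement from $s+1$ to $s+t$ would have to come entirely from the single-crossing structure, and you give no mechanism for extracting it.

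This is not a minor technicality: exploiting the single-crossing property is exactly the content of the paper, and it is done by a completely different route. The paper derives Theorem~\ref{t:Nikodym} from Theorem~\ref{t:main} (a pointwise bound $\Hd E_{x}(K)\leq 2-t$ for $\mathcal H^{t}$-a.e.\ $x$) via a short contradiction argument; the real work is Theorem~\ref{t:main}, proved through $X$-ray measures, a Sobolev estimate for $X[\mu,\{\sigma_{x}\}]$, a ``tightness'' reduction that forces the direction sets $E_{x}$ to be nearly constant at some scale, and then an incidence-counting argument. None of this is packaged in existing Furstenberg-type theorems. A secondary issue: your map $\pi:\mathcal{E}\to N$, $\ell\mapsto\ell\cap N$, is not obviously continuous (nearby Nikod\'ym lines can meet $N$ at distant points), so fiber-dimension heuristics do not apply directly either.
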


\begin{remark} There is work in progress by other authors on the question of the sharpness of Theorem \ref{t:Nikodym}. We can hopefully report on that in a later version of this paper. \end{remark}

We derive Theorem \ref{t:Nikodym} from the following, slightly stronger, statement:

\begin{thm}\label{t:main} Let $t \in [1,2]$, and let $K \subset \R^{2}$ be compact. For $x \in K$, let 
\begin{displaymath} E_{x}(K) := \{e \in S^{1} : K \cap \ell_{x,e} = \{x\}\} = \{e \in S^{1} : |K \cap \ell_{x,e}| = 1\}. \end{displaymath}
Then, $\Hd E_{x}(K) \leq 2 - t$ for $\mathcal{H}^{t}$ almost all $x \in K$.  \end{thm}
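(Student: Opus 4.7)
The plan is to argue by contradiction. Suppose the conclusion fails: there exist $s > 2-t$ and a Borel set $B \subset K$ with $\mathcal{H}^{t}(B) > 0$ such that $\Hd E_x(K) > s$ for every $x \in B$. In particular $s + t > 2$ and, since $E_x(K) \subset S^{1}$, $s \leq 1$.

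Using Frostman's lemma and a Borel selection argument, fix a probability measure $\nu$ on $B$ with $\nu(B(x, r)) \lesssim r^{t}$ and a Borel family $\{\mu_x\}_{x \in B}$ of probability measures on $E_x(K)$ with $\mu_x(B(e, r)) \lesssim r^{s}$. Consider the nonzero ``product'' measure $\lambda := \int \delta_x \otimes \mu_x \, d\nu(x)$ on the set $G := \{(x, e) : x \in B,\, e \in E_x(K)\}$, and parametrize the space of affine lines $\mathcal{A}(2, 1) \cong S^{1} \times \R$ by $(e, b) \leftrightarrow \ell_{e, b} := \{y \in \R^{2} : y \cdot e^{\perp} = b\}$. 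The Lipschitz line map $F(x, e) := (e, x \cdot e^{\perp})$ is \emph{injective} on $G$: if $F(x, e) = F(x', e')$, then $e = e'$ and both $x, x'$ lie on $\ell_{x, e}$, but $\ell_{x, e} \cap K = \{x\}$, forcing $x' = x$. Hence $\omega := F_{*} \lambda$ is a nonzero Borel measure on the 2-dimensional space $\mathcal{A}(2, 1)$.

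The strategy is to show that $\omega$ has Frostman exponent strictly greater than $2$, which, since $\mathcal{A}(2, 1)$ is 2-dimensional, forces $\omega \equiv 0$ and contradicts $\omega(\mathcal{A}(2,1)) = \nu(B) > 0$. A direct unwinding of the definitions gives
\[
	\omega\bigl(B((e_{0}, b_{0}), r)\bigr) \;\lesssim\; r^{s} \cdot \nu\bigl(T_{Cr}(\ell_{e_{0}, b_{0}}) \cap K\bigr),
\]
where $T_{\rho}(\ell)$ is the $\rho$-neighbourhood of the line $\ell$ and $C$ depends only on $\operatorname{diam}(K)$. A uniform slab estimate $\nu(T_{\rho}(\ell) \cap K) \lesssim \rho^{t}$ for $\ell$ in the support of $\omega$ would then yield the target Frostman exponent $s + t > 2$ and close the argument.

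The main obstacle is exactly this slab estimate. The naive Frostman bound only gives $\nu(T_{\rho}(\ell)) \lesssim \rho^{t-1}$, leading to exponent $s + t - 1$, which is one power short. The needed improvement must exploit the defining property of $G$: each line $\ell \in \operatorname{supp}(\omega)$ meets $K$ in a single point, so compactness of $K$ forces $K$ to be concentrated near the basepoint $x$ inside thin tubes around $\ell$. Turning this qualitative non-tangentiality into a uniform quantitative $\rho^{t}$-bound for $\omega$-typical lines is the heart of the matter, and is expected to rely on a Furstenberg-type or radial-projection-type incidence estimate in the plane, along the lines of recent work of Orponen--Shmerkin and Ren--Wang.
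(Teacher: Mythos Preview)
Your reduction is clean, and the injectivity of the line map $F$ on $G$ is a correct and appealing observation. However, injectivity of a Lipschitz map does not prevent a drop in Hausdorff dimension, so this by itself yields no contradiction; everything rests on the slab estimate $\nu(T_{\rho}(\ell)) \lesssim \rho^{t}$ (or even the weaker $\rho^{2-s+\epsilon}$) for $\omega$-typical accessibility lines $\ell$, which you correctly flag as ``the heart of the matter'' but do not prove. The accessibility condition $K\cap\ell=\{x\}$ is purely qualitative: it does not prevent $K$ from accumulating on $\ell$ tangentially, and for a $t$-Frostman measure the generic tube mass is only $\nu(T_{\rho}(\ell))\sim\rho$ (via Marstrand's projection theorem), so that $\rho^{-t}\nu(T_{\rho}(\ell))\to\infty$. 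Your appeal to Furstenberg-type or radial-projection estimates is misplaced: those are \emph{incidence upper bound} results, whereas the slab estimate you need---and the theorem itself---are \emph{incidence lower bound} statements. The paper makes exactly this distinction explicit in the introduction, noting that the literature on the latter is thin.

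It is also worth noting that the paper does analyse your measure $\omega$ (there denoted $\mu(\sigma_{x})$) and proves it has finite $(s+1-\epsilon)$-energy (Proposition~\ref{prop:energyest}); this is established \emph{without} using the accessibility hypothesis, and since $s+1\le 2$ it shows $\omega$ sits comfortably inside the $2$-dimensional line space. The paper therefore does not try to argue that $\omega$ is ``too big''. Instead it pulls back to $\R^{2}$ via the adjoint $X$-ray transform, uses the resulting Sobolev regularity $X^{\ast}\omega\in\dot H^{\sigma/2}$ together with a high--low argument (Proposition~\ref{prop3}), and builds a multiscale ``tightness'' machinery (Sections~\ref{s6}--\ref{s7}) to locate two well-separated pieces $G_{1},G_{2}\subset K$ such that the union of accessibility lines emanating from $G_{1}$ is forced to meet $G_{2}$. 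Your setup and the paper's share the initial passage to a configuration $(\nu,\{\mu_{x}\})$ via Frostman, but diverge completely after that.
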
 

\begin{proof}[Proof of Theorem \ref{t:Nikodym} assuming Theorem \ref{t:main}] Assume to the contrary that $\Hd N > 2 - s$, and start by finding a compact subset $K \subset N$ with $\Hd K > 2 - s$. Now $\Hd E_{x}(K) \geq \Hd E_{x}(N) \geq s$ for all $x \in K$. But according to Theorem \ref{t:main} applied with any $t \in (2 - s,\Hd K)$, we have $\Hd E_{x}(K) \leq 2 - t < s$ for $\mathcal{H}^{t}$ almost all $x \in K$. Since $\Hd K > t$, in particular $\Hd E_{x}(K) < s$ for some $x \in K$. This contradiction completes the proof of Theorem \ref{t:Nikodym}. \end{proof}

\subsection{Related work} Prior to this paper it was only known that if $N \subset \R^{2}$ is Borel, and the accessibility lines have positive linear measure for all $x \in N$, then $\Hd N \leq 1$. This is a consequence of the following radial slicing theorem of Marstrand \cite{Mar} from 1954:
\begin{thm}\label{t:Marstrand} Let $t \in (1,2]$, and let $B \subset \R^{2}$ be a Borel set with $0 < \mathcal{H}^{t}(B) < \infty$. Then, for $\mathcal{H}^{t}$ almost all $x \in B$, it holds 
\begin{displaymath} \Hd (B \cap \ell_{x,e}) \geq t - 1 \end{displaymath}
for $\mathcal{H}^{1}$ almost all $e \in S^{1}$. Here $\ell_{x,e} = x + \mathrm{span}(e)$. \end{thm}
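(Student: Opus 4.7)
The argument is by contradiction. The case $t=1$ is trivial since $\Hd E_x(K) \leq \Hd S^1 = 1 = 2-t$, so assume $t \in (1,2]$. Suppose toward contradiction that there exist a compact set $A \subset K$ with $\mathcal{H}^t(A) > 0$ and $\sigma \in (2-t, 1]$ such that $\Hd E_x(K) > \sigma$ for every $x \in A$. Frostman's lemma produces a Borel probability measure $\mu$ on $A$ with $\mu(B(x,r)) \lesssim r^t$, and a second Frostman step on $E_x(K) \subset S^1$ supplies, for each $x \in A$, a Borel probability measure $\nu_x$ with $\nu_x(B(e,r)) \lesssim r^\sigma$; a standard measurable selection lets one take $x \mapsto \nu_x$ Borel.

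The main object of study is the measure $\omega$ on the line space $\mathcal{L}$, defined as the pushforward of $d\mu(x)\,d\nu_x(e)$ under $(x,e)\mapsto \ell_{x,e}$. By the definition of $E_x(K)$, the support of $\omega$ is contained in $\Lambda_K := \{\ell \in \mathcal{L} : |K \cap \ell|=1\}$. Parameterising $\mathcal{L}$ by direction and perpendicular offset, a $\delta$-ball around $\ell_{e_0,p_0}$ corresponds to pairs $(x,e)$ with $e \in B(e_0,\delta)$ and $\dist(x, \ell_{e_0,p_0}) \lesssim \delta$. The Frostman bound on $\nu_x$ controls the first factor by $\delta^\sigma$. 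The second factor equals the $\mu$-mass of a $\delta$-tube around $\ell_{e_0,p_0}$; since $t > 1$, Marstrand's orthogonal projection theorem ensures this is $\lesssim \delta$ for $\mathcal{H}^1$-a.e.\ direction $e_0$, where the projected measure has an $L^2$ density. Combining, $\omega(B_\mathcal{L}(\ell_0,\delta)) \lesssim \delta^{1+\sigma}$ on a positive-measure set of lines, so $\Hd(\spt\omega) \geq 1+\sigma > 3-t$.

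To conclude I would establish the sharp bound $\Hd \Lambda_K \leq 3-t$: lines that meet $K$ in dimension strictly less than $t-1$ (in particular, in a single point) form a family of dimension at most $3-t$ in $\mathcal{L}$. Granted this bound, the inequalities
\[
1+\sigma \leq \Hd(\spt\omega) \leq \Hd\Lambda_K \leq 3-t
\]
contradict the assumption $\sigma > 2-t$.

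The main obstacle is precisely this sharp exceptional-set bound on $\Lambda_K$. Theorem \ref{t:Marstrand} itself only asserts that the exceptional family of lines has $\mathcal{H}^2$-measure zero in $\mathcal{L}$, and promoting this to a dimensional statement requires a quantitative Fourier or $L^2$-incidence input. I would approach it via $\delta$-discretisation, covering $\Lambda_K$ by $\delta$-tubes and bounding their number by combining the $L^2$-theory of orthogonal projections of Frostman $t$-measures (in the spirit of Kaufman and Peres--Schlag) with the recent sharp radial-projection and incidence machinery developed by Orponen, Shmerkin, Wang and their collaborators.
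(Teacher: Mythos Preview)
Your proposal does not address Theorem~\ref{t:Marstrand}. That theorem is Marstrand's 1954 radial slicing result, which the paper \emph{cites} as classical background and does not prove. Your counter-assumption, the sets $E_x(K)$, and the target inequality $\Hd E_x(K)\le 2-t$ all belong to Theorem~\ref{t:main}, the paper's main result. There is nothing to compare for Theorem~\ref{t:Marstrand} itself.

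Read instead as an attempt at Theorem~\ref{t:main}, the argument has a structural gap. Your Frostman estimate $\omega(B(\ell_0,\delta))\lesssim\delta^{1+\sigma}$ is essentially the energy bound the paper establishes as Proposition~\ref{prop:energyest}, so that step is sound in spirit. The problem is the claimed inequality $\Hd\Lambda_K\le 3-t$. You flag it as ``the main obstacle,'' but it is at least as hard as Theorem~\ref{t:main}: the projection and incidence machinery you invoke (Kaufman, Peres--Schlag, and the recent radial-projection work) controls exceptional \emph{directions} --- yielding statements like Theorem~\ref{sharpMarstrand} with a bound $2-t$ on a subset of $S^1$ --- not exceptional \emph{lines} in $\mathcal{A}(2,1)$ with the bound $3-t$. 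No such line-space estimate is available, and the paper does not prove one; its route to Theorem~\ref{t:main} is entirely different, combining Sobolev regularity of the $X$-ray measure with a renormalisation argument to force $\spt X[\mu|_{G_1},\{\sigma_x\}]\cap G_2\neq\emptyset$ for two well-separated pieces $G_1,G_2\subset K$. There is also a secondary issue you have not addressed: $E_x(K)$ is not closed, so $\spt\nu_x\not\subset E_x(K)$ in general and hence $\spt\omega\not\subset\Lambda_K$; the paper devotes Section~\ref{s3} to handling exactly this closure problem via the approximating sets $E_x^{r,\epsilon}$.
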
 

Why does Theorem \ref{t:Marstrand} imply the claim above it? If $\Hd N > 1$, choose $t > 1$ and a Borel subset $B \subset N$ with $0 < \mathcal{H}^{t}(B) < \infty$. Then $\Hd (N \cap \ell_{x,e}) \geq \Hd (B \cap \ell_{x,e}) > 0$ for $(\mathcal{H}^{t} \times \mathcal{H}^{1})$ almost all $(x,e) \in B \times S^{1}$. This contradicts the assumption that every $x \in N$ has $\mathcal{H}^{1}$ positively many accessibility lines.

Another relevant previous result is the following exceptional set estimate for the (linear) Marstrand slicing theorem obtained in \cite{MR3145914}:
\begin{thm}\label{sharpMarstrand} Let $t \in (1,2]$, and let $B \subset \R^{2}$ be a Borel set with $0 < \mathcal{H}^{t}(B) < \infty$. Then, the following holds for all $e \in S^{1} \, \setminus \, E$, where $\Hd E \leq 2 - t$. There are $\mathcal{H}^{1}$ positively many lines $\ell \subset \R^{2}$ parallel to $e$ with $\Hd (B \cap \ell) \geq t - 1$. \end{thm}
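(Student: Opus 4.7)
The plan is to bound $E$ by showing that every Radon probability measure $\nu$ on $S^{1}$ with $I_{\sigma}(\nu) < \infty$ for some $\sigma > 2 - t$ assigns zero mass to $E$; by Frostman's lemma this then forces $\Hd E \leq 2 - t$. Passing to a subset if necessary, I may assume that $\nu$ additionally satisfies the pointwise Frostman bound $\nu(B(e,r)) \leq C r^{\sigma}$. Choose a compactly supported Radon measure $\mu$ on $B$ with $I_{s}(\mu) < \infty$ for every $s < t$, available since $\mathcal{H}^{t}(B) > 0$. The Falconer exceptional-set theorem for projections guarantees that $\pi_{e\ast}\mu$ is absolutely continuous with respect to $\mathcal{L}^{1}$ outside an exceptional set of directions of Hausdorff dimension at most $2 - t$, hence for $\nu$-a.e.\ $e$; for such $e$ one has the Lebesgue disintegration $\mu = \int \tilde{\mu}_{e,x} \, dx$, with $\tilde{\mu}_{e,x}$ a Radon measure on $\ell_{e,x} = xe + \spa(e^{\perp})$ and the set $\{x : \tilde{\mu}_{e,x} \neq 0\}$ of positive Lebesgue measure.

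The main computation is the Fourier slicing identity
\[
\int_{\R} I_{s}(\tilde{\mu}_{e,x}) \, dx = c_{s} \int_{\R^{2}} |\hat{\mu}(\xi)|^{2} \, |\xi \cdot e^{\perp}|^{s-1} \, d\xi,
\]
which is obtained by expressing coordinates in the basis $\{e, e^{\perp}\}$ and applying Plancherel in the $e$-variable (with a standard mollification to handle singular $\mu$). Integrating against $d\nu(e)$ and switching the order of integration produces
\[
\int_{S^{1}} \int_{\R} I_{s}(\tilde{\mu}_{e,x}) \, dx \, d\nu(e) = c_{s} \int_{\R^{2}} |\hat{\mu}(\xi)|^{2} \, |\xi|^{s-1} \, K_{\nu}(\xi/|\xi|) \, d\xi,
\]
where $K_{\nu}(\omega) := \int_{S^{1}} |\omega \cdot e^{\perp}|^{s-1} \, d\nu(e)$. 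Since $|\omega \cdot e^{\perp}| \asymp \dist(e, \{\pm \omega\})$ on $S^{1}$, the Frostman condition on $\nu$ yields $\sup_{\omega \in S^{1}} K_{\nu}(\omega) < \infty$ whenever $s > 1 - \sigma$, by the routine dyadic estimate. Combined with the Fourier characterisation of the Riesz energy, $\int |\hat{\mu}(\xi)|^{2} |\xi|^{s-1} \, d\xi \simeq I_{s+1}(\mu) < \infty$ for $s < t - 1$, the entire double integral is therefore finite for any $s$ in the nonempty interval $(1 - \sigma, t - 1)$.

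To conclude, I would apply a double countable-intersection trick. Pick a rational sequence $s_{n} \uparrow t - 1$ with $s_{n} > 1 - \sigma$. Fubini in $e$ produces a $\nu$-null set outside which $\int I_{s_{n}}(\tilde{\mu}_{e,x}) \, dx < \infty$ for every $n$; a further application of Fubini in $x$ then yields a Lebesgue-null set outside which $I_{s_{n}}(\tilde{\mu}_{e,x}) < \infty$ holds simultaneously for every $n$. This forces $\Hd \, \spt(\tilde{\mu}_{e,x}) \geq \sup_{n} s_{n} = t - 1$. Since $\spt(\tilde{\mu}_{e,x}) \subset B \cap \ell_{e,x}$ and $\{x : \tilde{\mu}_{e,x} \neq 0\}$ has positive Lebesgue measure, this produces $\mathcal{H}^{1}$-positively many lines $\ell$ parallel to $e$ with $\Hd(B \cap \ell) \geq t - 1$, as required.

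I expect the principal technical difficulty to lie in the uniform angular bound on $K_{\nu}$: it is precisely the interplay between the Frostman exponent $\sigma$ of $\nu$ and the energy exponent $s+1$ of $\mu$ that pins the dimension of $E$ at exactly $2 - t$, via the requirement that $(1-\sigma, t-1)$ be nonempty. The countable-intersection device then neatly upgrades the output of the Fourier estimate -- which only controls $\Hd \geq s$ for each individual $s < t - 1$, with the constants degenerating as $s \uparrow t - 1$ -- to the sharp threshold $\Hd \geq t - 1$.
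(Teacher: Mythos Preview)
The paper does not contain a proof of Theorem \ref{sharpMarstrand}; it is quoted in the ``Related work'' subsection as a known result from \cite{MR3145914}, and the paper explicitly remarks that the technique of \cite{MR3145914} is not strong enough for the main Theorem \ref{t:main}. So there is no ``paper's own proof'' to compare against.

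That said, your argument is correct and is essentially the Fourier-analytic approach of \cite{MR3145914}: one tests the exceptional set against an arbitrary $\sigma$-Frostman measure $\nu$ on $S^{1}$ with $\sigma > 2-t$, combines the sliced-energy identity with the Riesz-energy Fourier representation of $\mu$, and observes that the angular kernel $K_{\nu}(\omega)=\int |\omega\cdot e^{\perp}|^{s-1}\,d\nu(e)$ is bounded precisely when $s>1-\sigma$. The window $(1-\sigma,t-1)$ being nonempty is exactly the condition $\sigma>2-t$, and the countable-intersection step to pass from ``$\geq s$ for each $s<t-1$'' to ``$\geq t-1$'' is standard.

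One cosmetic slip: your lines $\ell_{e,x}=xe+\spa(e^{\perp})$ are parallel to $e^{\perp}$, not to $e$, so in the last paragraph you are really proving the statement with $e$ replaced by $e^{\perp}$. This is harmless since $e\mapsto e^{\perp}$ is an isometry of $S^{1}$ and preserves Hausdorff dimension of the exceptional set, but it is worth aligning the notation. Also, the invocation of Falconer's exceptional-set theorem for the absolute continuity of $\pi_{e\ast}\mu$ is not really a separate ingredient: the finiteness of your double energy integral already forces $\pi_{e\ast}\mu\in L^{2}$ for $\nu$-a.e.\ $e$ (take $s\searrow 0$ formally, or just note that the slice measures in the identity are by construction the Lebesgue-disintegration slices), so you could streamline by absorbing that step into the main computation.
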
 

Theorem \ref{sharpMarstrand} contains the exponent "$2 - t$", so one might be led to think that it implies Theorem \ref{t:Nikodym}. In fact, it only implies the following special case, where all the accessibility lines have the same directions. Let $N \subset \R^{2}$ be Borel, and let $E \subset S^{1}$ with $\Hd E = s \in [0,1]$. If all the points in $N$ are accessible by a line parallel to each of the directions in $S$, then $\Hd N \leq 2 - s$. This statement can be derived from Theorem \ref{sharpMarstrand} exactly like Theorem \ref{t:Nikodym} is derived from Theorem \ref{t:main}. 

The technique in \cite{MR3145914} for proving Theorem \ref{sharpMarstrand} is not powerful enough to establish Theorem \ref{t:main}, where the key difficulty is that the direction sets $E_{x}(K)$ may vary as a function of $x$. On the other hand, in its restricted context, Theorem \ref{sharpMarstrand} gives a stronger conclusion: outside a small exceptional set of directions, it guarantees the existence of lines $\ell \subset \R^{2}$ with $\Hd (B \cap \ell) \geq t - 1 > 0$. A positive answer to the following question would supersede both Theorems \ref{t:main} and \ref{sharpMarstrand}:
\begin{question}\label{q1} Let $t \in (1,2]$, and let $K \subset \R^{2}$ be compact. For $x \in K$, let
\begin{displaymath} E_{x}^{t - 1}(K) := \{e \in S^{1} : \Hd (K \cap \ell_{x,e}) < t - 1\}. \end{displaymath}
Is it true that $\Hd E_{x}^{t - 1}(K) \leq 2 - t$ for $\mathcal{H}^{t}$ almost all $x \in K$? 
 \end{question}

\begin{remark}\label{rem2} The following result was claimed in \cite[Theorem 6.9]{MR3617376}: if $t \in (1,2]$, and $K \subset \R^{2}$ is compact with $0 < \mathcal{H}^{t}(K) < \infty$, then there exists a set $E \subset S^{1}$ such that $\Hd E \leq 2 - t$, and for $\mathcal{H}^{t}$ almost every $x \in K$ it holds
\begin{equation}\label{form70} \Hd (K \cap \ell_{x,e}) \geq t - 1, \qquad e \in S^{1} \, \setminus \, E. \end{equation}
If true, this result would even supersede Question \ref{q1}. 

However, \cite[Theorem 6.9]{MR3617376} is not correct. Counter examples are provided by "radial graphs", see Appendix \ref{appB}. In fact, even the weaker version of \cite[Theorem 6.9]{MR3617376} is false, where \eqref{form70} is relaxed to $|K \cap \ell_{x,e}| \geq 2$. We have confirmed this with the author of \cite{MR3617376}. 

The main problem in \cite[Theorem 6.9]{MR3617376} is the claim that the set $E \subset S^{1}$ can be chosen independently of the point $x$ (in some $\mathcal{H}^{t}$ full measure subset of $K$). While it seems possible that $\Hd E_{x}^{t - 1}(K) \leq 2 - t$ for $\mathcal{H}^{t}$ almost all $x \in K$, \cite[Theorem 6.9]{MR3617376} would imply that the \textbf{union} of the sets $E_{x}^{t - 1}(K)$ over a $\mathcal{H}^{t}$ full measure subset of points $x \in K$ has dimension $\leq 2 - t$. This is not true. Even the weaker version is false, where $E_{x}^{t - 1}(K)$ is replaced by $E_{x}(K)$. We show in Appendix \ref{appB} that there are no non-trivial estimates for the dimension of this union, at least for $t \in (1,2)$. \end{remark} 

\begin{remark} Theorems \ref{t:Marstrand} and \ref{t:main} are results on \emph{incidence lower bounds}: they state that a set $K \subset \R^{2}$ must have non-trivially large intersections with families of lines that \emph{a priori} only contain one point from $K$. The available literature on incidence lower bounds is quite thin compared to literature on \emph{incidence upper bounds}.  

A typical incidence upper bound problem asks to show that if $K \subset \R^{d}$ is a set with $\Hd K = t$, and $\mathcal{L}$ is a family of lines with $\Hd \mathcal{L} = s$ (and possibly other properties), then there exist lines $\ell \in \mathcal{L}$ with $\Hd (K \cap \ell) \leq \gamma(s,t)$ for a non-trivial exponent $\gamma(s,t) \in [0,1)$. The \emph{$(s,t)$-Furstenberg set} and \emph{Kakeya} problems can be viewed as special cases of this general incidence upper bound framework. These problems have recently witnessed great progress, see for example \cite{2023arXiv230511587F,fu2022incidence,2023arXiv230514544G,MR4521046,2023arXiv230110199O,2023arXiv230808819R,2022arXiv221009581W,2024arXiv240112337W,2023arXiv230705894Z,MR4550118}, and the proof of Theorem \ref{t:main} uses ideas and tools from \cite{fu2022incidence,2023arXiv230110199O}. For further discussion on outstanding incidence lower bound problems, see \cite{2023arXiv230518253C}, in particular \cite[Section 7]{2023arXiv230518253C}. 
\end{remark} 

\subsection{Proof outline} The first step in the proof of Theorem \ref{t:main} is to transform the "set theoretic" problem into a measure theoretic one. This is accomplished in Section \ref{s3}. The measure theoretic problem is stated in terms of \emph{configurations} and (associated) \emph{$X$-ray measures}. A \emph{configuration} is a pair $(\mu,\{\sigma_{x}\})$, where $\mu$ is a Radon measure on $[0,1]^{2}$, and each $\sigma_{x}$ is a measure supported on $S^{1}$ (or in fact $[0,1]$ for technical convenience). There is also a measurability hypothesis, see Definition \ref{def:configuration}.

The $X$-ray measure $\mathbf{X} = X[\mu,\{\sigma_{x}\}]$ associated to a configuration $(\mu,\{\sigma_{x}\})$ is defined by
\begin{displaymath} \int f \, d\mathbf{X} = \int \int Xf(\theta,\pi_{\theta}(x)) \, d\sigma_{x}(\theta) \, d\mu(x), \qquad f \in C_{c}(\R^{2}), \end{displaymath}
where $Xf \in C_{c}([0,1] \times \R)$ is the usual $X$-ray transform of $f$ (with the convention that lines in $\R^{2}$ are parametrised by $[0,1] \times \R$).

The idea, in the context of Theorem \ref{t:main}, is that $\mu$ is a $t$-Frostman measure on $K$ and each $\sigma_{x}$ is an $s$-Frostman measure on the set $E_{x}(K)$ with $s > 2 - t$. The measures $\sigma_{x}$ are provided by an initial counter assumption that $\Hd E_{x}(K) > 2 - t$ for most $x \in K$. Roughly speaking, the contradiction is eventually reached by exhibiting two sets $F,G \subset K$ with $\mathcal{H}_{\infty}^{t}(F) \sim 1 \sim \mathcal{H}_{\infty}^{t}(G)$ and $\dist(F,G) > 0$ such that
\begin{equation}\label{form72} \spt \mathbf{X}[\mu|_{F},\{\sigma_{x}\}] \cap G \neq \emptyset. \end{equation}
This is indeed a contradiction because 
\begin{displaymath} \Big( \bigcup_{x \in F} \{\ell_{x,\theta} : \theta \in E_{x}(K)\} \Big) \cap G = \emptyset \end{displaymath}
by definition of $E_{x}(K)$, and $\mathbf{X}[\mu|_{F},\{\sigma_{x}\}]$ is supported on $\bigcup_{x \in F} \{\ell_{x,\theta} : \theta \in E_{x}(K)\}$ (up to closures, a technicality we carefully track in the actual proof). The rigorous measure theoretic analogue of Theorem \ref{t:main} is stated in Theorem \ref{t:configurations}. 

How to prove \eqref{form72}? It turns out that if $\mu$ is $t$-Frostman and each $\sigma_{x}$ is (uniformly) $s$-Frostman with $s + t > 2$, then $\mathbf{X} \in \dot{H}^{\sigma/2}$ for all $\sigma < s$, see Proposition \ref{prop4}. This is used alongside Proposition \ref{prop3}, which contains the following general fact about Sobolev functions: If $h \in \dot{H}^{\sigma/2}$ is a non-negative function, and $\varphi_{\Delta}$ is an approximate identity at scale $\Delta$, then 
\begin{equation}\label{form73} \mathcal{H}^{t}_{\infty}(\{h \ast \varphi_{\Delta} \approx 1\} \, \setminus \, \spt h) = o_{\Delta \to 0}(1), \qquad \sigma + t > 2. \end{equation}
Informally, this says there are very few points which are $\Delta$-close to $\spt h$, yet not in $\spt h$. This principle is likely classical, and at least the $1^{st}$ and $3^{rd}$ authors have used it earlier to make progress on the \emph{visibility problem} \cite{2023arXiv230516026D,MR4592863}. 

In the context of proving \eqref{form72}, Proposition \ref{prop3} applied to $\mathbf{X} = \mathbf{X}[\mu|_{F},\{\sigma_{x}\}]$ enables the following argument: instead of proving \eqref{form72} directly, it suffices to demonstrate that
\begin{equation}\label{form74} G \subset \{\mathbf{X} \ast \varphi_{\Delta} \approx 1\} \end{equation}
for some small but fixed $\Delta > 0$. This proves \eqref{form72}: since $\mathcal{H}_{\infty}^{t}(G) \sim 1$, we may infer from \eqref{form73} that $G \not\subset \{\mathbf{X} \ast \varphi_{\Delta} \approx 1\} \, \setminus \, \spt \mathbf{X}$ (if $\Delta > 0$ is sufficiently small).

\begin{remark} For readers familiar with the arguments in \cite{2023arXiv230518253C}, the reduction from proving \eqref{form72} to proving \eqref{form74} can be compared (at least philosophically) with the steps of proving the "initial incidence lower bound at a large scale" and then deducing an incidence lower bound at a (given) small scale as a corollary, see \cite[Section 2]{2023arXiv230518253C} for details. We thank Alex Cohen for bringing this connection to our attention. The comparison of incidences at large and small scales in \cite{2023arXiv230518253C} is implemented by the \emph{high-low method}. Here, we instead rely on Proposition \ref{prop3}. However, the proof of Proposition \ref{prop3} bears resemblance to the proof of the key proposition in the high-low method, see \cite[Proposition 2.1]{GSW}.   \end{remark}

We have now reduced the proof of Theorem \ref{t:main} to the task of finding a small scale $\Delta > 0$, and two well-separated, relatively large, sets $F,G \subset K$ such that \eqref{form74} holds. This is the most technical part of the argument. Section \ref{s6} contains Lemma \ref{l:main}, where the sets $F,G$ are constructed under an additional hypothesis on the geometry of $K$ and the families $E_{x}(K)$ at scale $\Delta$. This hypothesis is called \emph{tightness} (of the configuration $(\mu,\{\sigma_{x}\})$ at scale $\Delta$), see Definition \ref{def:tightness}. Informally speaking, tightness means that all the sets $E_{x}(K)$ essentially coincide at scale $\Delta$. This would be too much to literally require, but for the discussion in this introduction it is instructive to interpret ``tightness''  as ``$E_{x}(K) \equiv E \subset S^{1}$ for all $x \in K$''.

Tightness is crucial for obtaining \eqref{form74}, because it can be used to show that "many tubes intersect the expected number of squares at scale $\Delta$". This is needed for obtaining lower bounds for $\mathbf{X} \ast \varphi_{\Delta}$. We give a few more details. Let $\mathcal{Q} := \mathcal{D}_{\Delta}(\spt \mu)$, and for each $Q \in \mathcal{Q}$, let $\mathbb{T}_{Q}$ be the family of $\Delta$-tubes intersecting $Q$ containing all the lines with directions in $E$. The \emph{slope set} $\sigma(\mathbb{T}_{Q}) \subset \Delta \cdot \Z$ is now independent of $Q \in \mathcal{Q}$, thanks to tightness. Let $M := |\sigma(\mathbb{T}_{Q})|$ (for any $Q \in \mathcal{Q}$). The "total" tube family $\mathbb{T} = \bigcup_{Q \in \mathcal{Q}} \mathbb{T}_{Q}$ satisfies $|\mathbb{T}| \lesssim M\Delta^{-1}$,
simply because there can be $\lesssim \Delta^{-1}$ tubes in a fixed direction. On the other hand, every square $Q \in \mathcal{Q}$ meets at least $M$ tubes from $\mathbb{T}$, namely those in $\mathbb{T}_{Q}$. Therefore,
\begin{displaymath} |\mathcal{I}(\mathcal{Q},\mathbb{T})| := |\{(Q,T) \in \mathcal{Q} \times \mathbb{T} : Q \cap T \neq \emptyset\}| \geq M|\mathcal{Q}|. \end{displaymath}
Putting these estimates together, we obtain the following lower bound for the average number of squares in $\mathcal{Q}$ meeting a tube in $\mathbb{T}$:
\begin{displaymath} \frac{|\mathcal{I}(\mathcal{Q},\mathbb{T})|}{|\mathbb{T}|} \gtrsim \frac{M|\mathcal{Q}|}{M\Delta^{-1}} = \Delta|\mathcal{Q}|. \end{displaymath} 
This number coincides with the "expected" value, for example in a situation where the squares in $\mathcal{Q}$ would be selected randomly from $\mathcal{D}_{\Delta}$. 

After some pigeonholing, every tube in $\mathbb{T}$ contains (at least) the "expected" number of squares from $\mathcal{Q}$. Using the definition of the $X$-ray measure $\mathbf{X}$, this leads to the conclusion that $\mathbf{X} \ast \varphi_{\Delta} \gtrapprox 1$ on most squares $Q \in \mathcal{Q}$. Finding the well-separated sets $F,G \subset K$ is relatively easy at this point; we employ a lemma of Erd\H{o}s \cite[Lemma 1]{MR190027} which guarantees the existence of large bi-partite sub-graphs inside arbitrary (undirected) graphs.

There is probably no reason why the tightness hypothesis would be satisfied by the "original" configuration $(\mu,\{\sigma_{x}\})$ at any scale $\Delta > 0$. However, in Section \ref{s7} we show that there exists a relatively large dyadic square $Q \subset [0,1)^{2}$ such that the \emph{renormalised configuration} $(\mu^{Q},\{\sigma^{Q}_{x}\})$ satisfies tightness at some small scale $\Delta > 0$. Here 
\begin{displaymath} \mu^{Q} = \mu(Q)^{-1}S_{Q}(\mu|_{Q}) \quad \text{and} \quad \sigma_{x}^{Q} = \sigma_{S_{Q}^{-1}(x)}, \end{displaymath}
where $S_{Q}$ is a homothety mapping $Q$ to $[0,1)^{2}$. A similar argument first appeared in the proof of \cite[Theorem 5.7]{2023arXiv230110199O}. It is a quantitative version of the following rough heuristic: the map $x \mapsto E_{x}(K)$ is Borel, hence approximately continuous. Thus, at a typical $x$, after sufficient "zooming in" the sets $E_{x}(K)$ do not vary too much. Making this idea precise requires a lot of pigeonholing, most of which is carried out in Proposition \ref{prop6}. 

Eventually, Lemma \ref{l:main} is applied to the tight configuration $(\mu^{Q},\{\sigma_{x}^{Q}\})$. This leads to an analogue of \eqref{form72} for $\mu^{Q}$ and $\sigma_{x}^{Q}$. Since we have an effective lower bound on the side-length of $Q$, this turns out to be good enough to conclude the proof of Theorem \ref{t:main}. 

We conclude the discussion by mentioning another major technicality: since the square $Q$ is selected by pigeonholing, the renormalisation $\mu^{Q}$ may fail to be $t$-Frostman (if $\mu(Q) \ll \ell(Q)^{t}$). This is problematic, since we need  the Sobolev regularity of $\mathbf{X}[\mu^{Q},\{\sigma_{x}^{Q}\}]$, and that was crucially based on the $t$-Frostman property of $\mu^{Q}$. 

We solve this issue by proving something like this: given $0 < \tau < t$ and a long enough scale sequence $\Delta_{n} < \Delta_{n - 1} < \ldots < \Delta_{0} = 1$, there exist $\gtrsim_{t,\tau} n$ scales $\Delta_{j}$ such that $\mu^{Q}$ is $\tau$-Frostman for some $Q \in \mathcal{D}_{\Delta_{j}}$. The precise formulation, Proposition \ref{lemma5}, is actually stated in terms of \emph{uniform $(\delta,t)$-sets} and involves no measures. In any case, this result allows us to restrict our search for "$Q$" to the "good scales" where we know that $\mu^{Q}$ still satisfies a $\tau$-dimensional Frostman condition for $\tau < t$ arbitrarily close to $t$.

\subsection{Acknowledgements} We are grateful to Pertti Mattila for bringing \cite[Theorem 6.9]{MR3617376} to our attention, and at the same time mentioning that the proof is incomplete.

\section{Notation and preliminaries}

We write $A \lesssim B$ if there exists a constant $C$ independent of scale so that $A \le C B$. Then $A \gtrsim B$ and $A \sim B$ are defined analogously. In contrast, given some scale $\delta$, unless otherwise stated, $A \lessapprox_{\delta} B$ means either that for any arbitrarily small $\epsilon$, and all sufficiently small $\delta$, $A \le \delta^{-\epsilon} B$ or that there exists an absolute constant $C$ so that for all sufficiently small $\delta$, $A \le \log(1/\delta)^{C} B$. The notation $\gtrapprox_{\delta}$ and $\approx_{\delta}$ are defined analogously.

\subsection{A metric on the space of lines}  The family of all lines in $\R^{2}$ is denoted $\mathcal{A}(2,1)$. The lines containing $0$ are denoted $\mathcal{G}(2,1)$. We define the following metric on $\mathcal{A}(2,1)$:
\begin{displaymath} d_{\mathcal{A}(2,1)}(\ell_{1},\ell_{2}) := \|\pi_{L_{1}} - \pi_{L_{2}}\| + |a_{1} - a_{2}|, \qquad \ell_{j} = a_{j} + L_{j}. \end{displaymath}
Here $L_{j} \in \mathcal{G}(2,1)$ is a $1$-dimensional subspace of $\R^{2}$, $\pi_{L_{j}}$ denotes the orthogonal projection onto $L_{j}$, $\| \cdot \|$ denotes the operator norm, and $a_{j} \in L_{j}^{\perp}$. For $\mathcal{L} \subset \mathcal{A}(2,1)$, the Hausdorff dimension $\Hd \mathcal{L}$ is defined relative to the metric $d_{\mathcal{A}(2,1)}$.

\subsection{Dyadic cubes, tubes, $(\delta,t)$-sets, and Frostman measures}

\begin{definition}[Dyadic cubes]\label{def:dyadicSquares} For $\delta \in 2^{-\N}$ and $A \subset \R^{d}$, the notation $\mathcal{D}_{\delta}(A)$ stands for the family of standard dyadic $\delta$-cubes intersecting $A$. In the important special case $A = [0,1)^{d}$, we abbreviate $\mathcal{D}_{\delta} := \mathcal{D}_{\delta}([0,1)^{d})$.  \end{definition}

\begin{definition}[$(\delta,t)$-sets]
	Let $(X,d)$ be a metric space, in practice $X = \R^{d}$ or $X = \mathcal{A}(2,1)$. For $C,\delta,t > 0$, we say that a set $P\subset (X,d)$ is a \emph{$(\delta,t,C)$-set} if
	\begin{equation*}
	|P\cap B(x,r)|_{\delta} \le Cr^t|P|_{\delta}, \qquad x\in P, \, \delta\le r\le 1. 
	\end{equation*}
	Here $|\cdot|_{\delta}$ stands for the $\delta$-covering number. A family $\mathcal{P} \subset \mathcal{D}_{\delta}$ is called a $(\delta,t,C)$-set if the union $P := \cup \mathcal{P}$ is a $(\delta,t,C)$-set.
\end{definition}
\begin{definition}[Katz-Tao $(\delta,t)$-sets]\label{def:KatzTao}
	Let $(X,d)$ be a metric space. For $C,\delta,t > 0$, we say that a set $P\subset (X,d)$ is a \emph{Katz-Tao $(\delta,t,C)$-set} if
	\begin{equation*}
	|P\cap B(x,r)|_{\delta} \le C\left(\frac{r}{\delta}\right)^t, \qquad x\in P, \, \delta\le r\le 1. 
	\end{equation*}
	A family $\mathcal{P} \subset \mathcal{D}_{\delta}$ is called a Katz-Tao $(\delta,t,C)$-set if the union $P := \cup \mathcal{P}$ is a Katz-Tao $(\delta,t,C)$-set.
\end{definition}

\begin{remark}\label{rem4} The $(\delta,t,C)$-set property is not inherited by subsets, except if the subset has nearly maximal $\delta$-covering number. Indeed, if $\mathcal{P} \subset \mathcal{D}_{\delta}$ is a $(\delta,t,C)$-set, and $\mathcal{P}' \subset \mathcal{P}$ is arbitrary, then $\mathcal{P'}$ is a $(\delta,t,C|\mathcal{P}|/|\mathcal{P}'|)$-set.

On the other hand, it is clear from the definition that the Katz-Tao $(\delta,t,C)$-set property is inherited by subsets, with the same constant $C$.  \end{remark}

\begin{definition}[Point-line duality map]\label{def:pointLineDuality} The following map $\mathbf{D} \colon \R^{2} \to \mathcal{A}(2,1)$ is called the \emph{point-line duality map}:
\begin{displaymath} \mathbf{D}(a,b) := \{(x,y) \in \R^{2} : y = ax + b\}, \qquad (a,b) \in \R^{2}. \end{displaymath}
\end{definition}

\begin{definition}[Dyadic tubes]\label{def:dyadicTubes} Let $\delta \in 2^{-\N}$. For any $p \in \mathcal{D}_{\delta}([-1,1) \times \R)$, the set $\mathbf{D}(p)$ is a collection of lines in $\mathcal{A}(2,1)$, and $T = \cup \mathbf{D}(p) \vcentcolon= \cup_{\ell \in \mathbf{D}(p)} \ell \subset \R^{2}$ is called a \emph{dyadic $\delta$-tube}. The family of all dyadic $\delta$-tubes is denoted $\mathcal{T}^{\delta}$. For $p = [a,a + \delta) \times [b,b + \delta) \in \mathcal{D}_{\delta}([-1,1) \times \R)$ with $a \in (\delta \cdot \Z) \cap [-1,1)$ and $b \in \delta \cdot \Z$, the \emph{slope} of the dyadic $\delta$-tube $T = \cup \mathbf{D}(p)$ is defined to be $\sigma(T) := a$. For a family $\mathcal{T} \subset \mathcal{T}^{\delta}$, we write
\begin{displaymath} \sigma(\mathcal{T}) := \{\sigma(T) : T \in \mathcal{T}\} \subset (\delta \cdot \Z) \cap [-1,1). \end{displaymath}
This is the \emph{slope set of $\mathcal{T}$}.  \end{definition}

\begin{remark} A dyadic $\delta$-tube $T = \cup \mathbf{D}(p)$ is a subset of $\R^{2}$. If there is no risk of confusion, we reserve the right to view $T$ as a subset of $\mathcal{A}(2,1)$, i.e., as $\mathbf{D}(p)$, when convenient. In particular, if $\mathcal{L} \subset \mathcal{A}(2,1)$ is a set of lines, and $T \in \mathcal{T}^{\delta}$, we will write 
\begin{displaymath} \mathcal{L} \cap T := \{\ell \in \mathcal{L} : \ell \in \mathbf{D}(p)\} = \{\ell \in \mathcal{L} : \ell \subset T\}. \end{displaymath} \end{remark}

\begin{definition}[$(\delta,s)$-sets of dyadic tubes]\label{def:deltaSTubes} For $C,s > 0$, a family $\mathcal{T} = \{\cup \mathbf{D}(p) : p \in \mathcal{P}\} \subset \mathcal{T}^{\delta}$ is called a $(\delta,s,C)$-set if the set $\mathcal{P} \subset \mathcal{D}_{\delta}([-1,1) \times \R)$ is a $(\delta,s,C)$-set. The Katz-Tao $(\delta,s,C)$-sets of tubes are defined analogously. \end{definition}

\begin{remark} Another, equivalent up to constants, definition would be to require that the line set $\cup \{\mathbf{D}(p) : p \in \mathcal{P}\} \subset \mathcal{A}(2,1)$ is a $(\delta,s,C)$-set in the metric $d_{\mathcal{A}(2,1)}$.  \end{remark}

The $(\delta,s)$-set properties of $\mathcal{T}$ and $\sigma(\mathcal{T})$ coincide whenever all the elements of $\mathcal{T}$ intersect a common $\delta$-square:

\begin{lemma}\label{lemma4} Let $C,s > 0$. Assume that $\mathcal{T}_{p} \subset \mathcal{T}^{\delta}$ is a family of dyadic $\delta$-tubes, each of which intersects a common square $p \in \mathcal{D}_{\delta}$. If $\mathcal{T}_{p}$ is a $(\delta,s,C)$-set, then $\sigma(\mathcal{T}_{p})$ is a $(\delta,s,C')$-set, where $C' \sim C$. Conversely, if $\sigma(\mathcal{T}_{p})$ is a $(\delta,s,C)$-set, then $\mathcal{T}_{p}$ is a $(\delta,s,C')$-set with $C' \sim C$. \end{lemma}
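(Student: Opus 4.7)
The plan rests on a single geometric observation in the point-line duality picture. Write $p = (x_{0}, y_{0}) + [0, \delta)^{2} \in \mathcal{D}_{\delta}$, and for each $T \in \mathcal{T}_{p}$ let $p_{T} = (\sigma(T), b_{T}) + [0, \delta)^{2}$ be the dyadic square in parameter space with $T = \cup \mathbf{D}(p_{T})$. A line $y = ax + b$ meets $p$ if and only if $b = y - ax$ for some $(x, y) \in p$; since $|a| \leq 1$, this forces $b$ into an interval of length $O(\delta)$ around $y_{0} - a x_{0}$. Consequently the collection $\mathcal{P} := \{p_{T} : T \in \mathcal{T}_{p}\}$ is contained in an $O(\delta)$-neighbourhood of the affine line $L_{p} = \{(a,b) : b = y_{0} - a x_{0}\}$, which I call the \emph{strip} $\Sigma_{p}$. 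In particular, for each fixed dyadic slope $a_{0}$ there are only $O(1)$ squares $p_{T} \in \mathcal{P}$ with $\sigma(T) = a_{0}$, so the slope map $T \mapsto \sigma(T)$ is $O(1)$-to-one on $\mathcal{T}_{p}$, and $|\mathcal{T}_{p}|_{\delta} \sim |\sigma(\mathcal{T}_{p})|_{\delta}$.

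Because $|x_{0}| \leq 1$, the restriction of the projection $(a, b) \mapsto a$ to $\Sigma_{p}$ is bi-Lipschitz with absolute constants. Combining this with the $\delta$-separation of $\sigma(\mathcal{T}_{p}) \subset \delta \cdot \Z$ and with the $O(1)$-to-one multiplicity above, one obtains an absolute constant $A \geq 1$ such that, for every $(a_{0}, b_{0}) \in \cup \mathcal{P}$ and every $r \geq \delta$,
\begin{displaymath}
|\mathcal{T}_{p} \cap B((a_{0}, b_{0}), r)|_{\delta} \lesssim |\sigma(\mathcal{T}_{p}) \cap [a_{0} - Ar, a_{0} + Ar]| \lesssim |\mathcal{T}_{p} \cap B((a_{0}, b_{0}), Ar)|_{\delta}.
\end{displaymath}

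Both implications of the lemma now reduce to direct substitution. Suppose $\mathcal{T}_{p}$ is a $(\delta, s, C)$-set; given $a_{0} \in \sigma(\mathcal{T}_{p})$, pick $(a_{0}, b_{0}) \in \cup \mathcal{P}$ and apply the $(\delta, s, C)$-inequality at $(a_{0}, b_{0})$ with radius $Ar$. Combined with the second $\lesssim$ above and with $|\mathcal{T}_{p}|_{\delta} \sim |\sigma(\mathcal{T}_{p})|$, this gives $|\sigma(\mathcal{T}_{p}) \cap [a_{0} - r, a_{0} + r]| \lesssim C A^{s} r^{s} |\sigma(\mathcal{T}_{p})| \lesssim C r^{s} |\sigma(\mathcal{T}_{p})|$, confirming $C' \sim C$. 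For the reverse direction, assume $\sigma(\mathcal{T}_{p})$ is a $(\delta, s, C)$-set; given $(a_{0}, b_{0}) \in \cup \mathcal{P}$, let $a_{T} = \sigma(T)$ be the slope of the square containing $(a_{0}, b_{0})$, so $|a_{0} - a_{T}| < \delta \leq r$. The first $\lesssim$ above yields $|\mathcal{T}_{p} \cap B((a_{0}, b_{0}), r)|_{\delta} \lesssim |\sigma(\mathcal{T}_{p}) \cap [a_{T} - (A + 1)r, a_{T} + (A + 1)r]|$, and the $(\delta, s, C)$-property of $\sigma(\mathcal{T}_{p})$ at $a_{T} \in \sigma(\mathcal{T}_{p})$ bounds this by $C(A+1)^{s} r^{s} |\sigma(\mathcal{T}_{p})| \sim C r^{s} |\mathcal{T}_{p}|_{\delta}$, as required. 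The main obstacle is just tracking absolute constants and matching base points to the definition of a $(\delta, s, C)$-set; the strip geometry itself is an immediate consequence of point-line duality and the constraint $|a| \leq 1$.
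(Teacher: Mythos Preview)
Your argument is correct. The paper itself does not give a proof but cites \cite[Corollary 2.12]{OS23}; your self-contained argument via the strip geometry in parameter space (the squares $p_{T}$ all lie within $O(\delta)$ of the line $\{b = y_{0} - a x_{0}\}$, on which the projection $(a,b) \mapsto a$ is bi-Lipschitz since $|x_{0}| \leq 1$) is exactly the mechanism underlying that reference. One cosmetic point: in your displayed chain the right-hand side should strictly be $|\mathcal{T}_{p} \cap B((a_{0},b_{0}),A'r)|_{\delta}$ for some possibly larger absolute $A'$ (applying the strip bi-Lipschitz estimate to an interval of radius $Ar$ rather than $r$), but this only affects the absolute constant and not the conclusion $C' \sim C$.
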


\begin{proof} See \cite[Corollary 2.12]{OS23}. \end{proof}

A Borel measure $\mu$ on a metric space $(X,d)$ is called an $(s,C)$-Frostman measure if $\mu(B(x,r)) \le C r^{s}$ for all $r > 0$ and $x \in X$. A Borel measure $\mu$ is called an $s$-Frostman measure if there exists some $C$ so that it is an $(s,C)$-Frostman measure.

\subsection{Configurations and the $X$-ray measure} We now introduce the main measure theoretic tool in the proof of Theorem \ref{t:main}.

\begin{definition}\label{def:configuration} A \emph{configuration} is a pair $(\mu,\{\sigma_{x}\})$, where $\mu$ is a Radon measure on $[0,1]^{2}$, and for each $x \in \R^{2}$, $\sigma_{x}$ is a Radon measure on $[0,1]$, and
\begin{displaymath} x \mapsto \int g(x,\theta) \, d\sigma_{x}(\theta) \end{displaymath}
is a Borel function for all $g \in C(\R^{2} \times [0,1])$.    \end{definition}

\begin{remark} The correct intuition is to think of $\sigma_{x}$ as a measure on $S^{1}$, but having the support contained in $[0,1]$ brings minor technical convenience. \end{remark} 

\begin{remark} The configurations we really encounter in this paper are of the following special form. For each dyadic square $p \in \mathcal{D}_{\delta}$ of side-length $\delta > 0$ (for $\delta \in 2^{-\N}$ arbitrarily small but fixed), the measures $\sigma_{x}$ coincide for all $x \in p$. The measurability hypothesis in Definition \ref{def:configuration} is clear for such configurations, in fact $x \mapsto \int g(x,\theta) \, d\sigma_{x}(\theta)$ is piecewise continuous for $g \in C(\R^{2} \times [0,1])$.  \end{remark} 

\begin{definition}[$\mu(\sigma_{x})$]\label{def:productMeasure} Given a configuration $(\mu,\{\sigma_{x}\})$, we define the Radon measure $\mu(\sigma_{x})$ on $[0,1] \times \R$ by 
\begin{equation}\label{form15} \int g \, d\mu(\sigma_{x}) := \int \int g(\theta,\pi_{\theta}(x)) \, d\sigma_{x}(\theta) \, d\mu(x), \qquad g \in C_{c}([0,1] \times \R). \end{equation}
 \end{definition}
 
 \begin{remark} The definition is well-posed, since the right hand side of \eqref{form15} defines a positive linear functional on $C_{c}([0,1] \times \R)$ by the hypotheses in Definition \ref{def:configuration} (applied to the continuous function $\tilde{g}(x,\theta) := g(\theta,\pi_{\theta}(x))$). 
 
One should view $\mu(\sigma_{x})$ as a measure on the space of lines $\mathcal{A}(2,1)$. This makes sense when one parametrises $\mathcal{A}(2,1)$ by $(\theta,r) \mapsto \pi_{\theta}^{-1}\{r\}$, where 
 \begin{displaymath} \pi_{\theta}(x) := x \cdot (\cos 2\pi \theta,\sin 2\pi \theta), \qquad \theta \in [0,1], \, x \in \R^{2}. \end{displaymath}
 Under this parametrisation, the point $(\theta,\pi_{\theta}(x))$ appearing in the argument of $g$ in \eqref{form15} corresponds to the line $\ell_{x,\theta} := \pi_{\theta}^{-1}\{\pi_{\theta}(x)\}$ containing $x$. 
 
 Arguably, a more natural way to define $\mu(\sigma_{x})$ would be to first transfer each $\sigma_{x}$ to a measure $\mathcal{L}_{x}$ supported on the lines passing through $x$, and then define $\mu(\sigma_{x}) = \mu(\mathcal{L}_{x})$ as a $\mu$-weighted average over the measures $\mathcal{L}_{x}$. With this method one would end up with measure on $\mathcal{A}(2,1)$. However, in the sequel it is technically simpler to deal with a measure supported on the parameter space $[0,1] \times \R$.  \end{remark}

\begin{definition}[$X$-ray measure]\label{def:XRayMeasure} Given a configuration $(\mu,\{\sigma_{x}\})$, we define the \emph{$X$-ray measure} $X[\mu,\{\sigma_{x}\}]$ as the Radon measure on $\R^2$ determined by the relation
\begin{equation}\label{form47} \int f \, dX[\mu,\{\sigma_{x}\}] := \int \int Xf(\theta,\pi_{\theta}(x)) \, d\sigma_{x}(\theta) \, d\mu(x), \qquad f \in C_{c}(\R^{2}), \end{equation}
where $Xf(\theta,r) := \int_{\pi_{\theta}^{-1}\{r\}} f \, d\mathcal{H}^{1}$ is the \emph{$X$-ray transform} of $f$ evaluated at $(\theta,r)$. \end{definition}

\begin{remark} The definition is well-posed, since $Xf \in C_{c}([0,1] \times \R)$ for $f \in C_{c}(\R^{2})$. Therefore $x \mapsto \int Xf(\theta,\pi_{\theta}(x)) \, d\sigma_{x}(\theta)$ defines a bounded Borel function by the hypothesis in Definition \ref{def:configuration}, applied to the continuous function $g(x,\theta) := Xf(\theta,\pi_{\theta}(x))$. So, the RHS of \eqref{form47} defines a positive linear functional on $C_{c}(\R^{2})$.  \end{remark}

\begin{remark}\label{rem5} We record here that if $(\mu,\{\sigma_{x}\})$ is a configuration with $K := \spt \mu$, then the associated $X$-ray measure is supported on the set
\begin{displaymath} \mathbf{X}(\mu,\{\sigma_{x}\}) := \overline{\bigcup_{x \in K} L_{x}}, \end{displaymath}
where $L_{x} := \cup \{\ell_{x,\theta} : \theta \in \spt \sigma_{x}\}$. This follows readily from \eqref{form47}, but let us spell out the details. Let $f \in C_{c}(\R^{2})$ be a continuous function with support disjoint from $\mathbf{X}(\mu,\{\sigma_{x}\})$. Then, $f \equiv 0$ on each line $\ell_{x,\theta}$ with $x \in K$ and $\theta \in \spt \sigma_{x}$. Therefore, 
\begin{displaymath} Xf(\theta,\pi_{\theta}(x)) = \int_{\pi_{\theta}^{-1}\{\pi_{\theta}(x)\}} f \, d\mathcal{H}^{1} = \int_{\ell_{x,\theta}} f \, d\mathcal{H}^{1} = 0, \qquad x \in K, \, \theta \in \spt \sigma_{x}, \end{displaymath}
and so $\int f \, dX[\mu,\{\sigma_{x}\}] = 0$.  \end{remark}

There is a simple relationship between the measures in Definitions \ref{def:productMeasure} and \ref{def:XRayMeasure}. To describe it, we recall the definition of the adjoint $X$-ray transform:
\begin{definition}\label{def:AdjointXRayTransform} Let $\nu$ be a Radon measure on $[0,1] \times \R$. The \emph{adjoint $X$-ray transform of $\nu$}, denoted $X^{\ast}\nu$, is the Radon measure on $\R^{2}$ determined by the relation
\begin{displaymath} \int f \, d(X^{\ast}\nu) := \int Xf \, d\nu = \int \int_{\pi_{\theta}^{-1}\{r\}} f \, d\mathcal{H}^{1} \, d\nu(\theta,r), \qquad f \in C_{c}(\R^{2}). \end{displaymath}
\end{definition}

We can now clarify the connection between the measures in Definitions \ref{def:productMeasure} and \ref{def:XRayMeasure}:
\begin{proposition}\label{prop:adjRad} Let $(\mu,\{\sigma_{x}\})$ be a configuration. Then,
\begin{displaymath} X[\mu,\{\sigma_{x}\}] = X^{\ast}\mu(\sigma_{x}). \end{displaymath}
 \end{proposition}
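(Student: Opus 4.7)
The plan is to prove this by a direct unwinding of definitions, verifying that the two Radon measures agree when tested against an arbitrary $f \in C_c(\R^2)$, and then invoking Riesz representation.

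First I would fix $f \in C_c(\R^2)$ and recall that $Xf \in C_c([0,1] \times \R)$, so that $Xf$ is a legitimate test function for the measure $\mu(\sigma_x)$ from Definition \ref{def:productMeasure}. Then I would apply Definition \ref{def:AdjointXRayTransform} to the measure $\nu := \mu(\sigma_x)$ to obtain
\begin{displaymath}
\int f \, d(X^\ast \mu(\sigma_x)) = \int Xf \, d\mu(\sigma_x).
\end{displaymath}
Next, I would substitute $g := Xf$ into the defining formula \eqref{form15} for $\mu(\sigma_x)$, which yields
\begin{displaymath}
\int Xf \, d\mu(\sigma_x) = \int \int Xf(\theta, \pi_\theta(x)) \, d\sigma_x(\theta) \, d\mu(x).
\end{displaymath}
Comparing the right-hand side with \eqref{form47}, this is exactly $\int f \, dX[\mu,\{\sigma_x\}]$.

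Since the identity $\int f \, d(X^\ast \mu(\sigma_x)) = \int f \, dX[\mu,\{\sigma_x\}]$ holds for every $f \in C_c(\R^2)$, the Riesz representation theorem forces $X^\ast \mu(\sigma_x) = X[\mu,\{\sigma_x\}]$ as Radon measures on $\R^2$. I expect no real obstacle: the only subtle points are (i) checking that $Xf$ has compact support in $[0,1] \times \R$ (which holds because the support of $f$ is a compact subset of $\R^2$, so any line meeting $\spt f$ is parametrised by a bounded range of $(\theta,r)$), and (ii) ensuring the measurability hypothesis from Definition \ref{def:configuration} is invoked via the continuous function $\tilde g(x,\theta) := Xf(\theta, \pi_\theta(x))$, so that the iterated integral makes sense — but these are exactly the well-posedness remarks already recorded after Definitions \ref{def:productMeasure} and \ref{def:XRayMeasure}.
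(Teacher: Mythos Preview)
Your proof is correct and follows essentially the same approach as the paper: both fix $f \in C_c(\R^2)$ and chain together Definitions \ref{def:XRayMeasure}, \ref{def:productMeasure}, and \ref{def:AdjointXRayTransform} to obtain the same string of equalities (you simply traverse the chain in the opposite order). The extra well-posedness remarks you include are exactly those already recorded after the definitions, so nothing is missing.
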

 
 \begin{proof} Fix $f \in C_{c}(\R^{2})$. Consecutively applying Definitions \ref{def:XRayMeasure}, \ref{def:productMeasure}, and \ref{def:AdjointXRayTransform} confirms
 \begin{displaymath} \int f \, d X[\mu,\{\sigma_{x}\}] =\int \int Xf (\theta,\pi_{\theta}(x)) \, d\sigma_{x}(\theta) \, d\mu(x) = \int Xf \, d\mu(\sigma_{x}) = \int f \, dX^{\ast}\mu(\sigma_{x}). \end{displaymath}
 \end{proof} 


\section{From sets to configurations}\label{s3}

In this section we state a measure theoretic variant of Theorem \ref{t:main} formulated in terms of configurations, see Theorem \ref{t:configurations}, and demonstrate how Theorem \ref{t:main} can be deduced from Theorem \ref{t:configurations}. Here is Theorem \ref{t:main} once more (with the small modification that now $\ell_{x,\theta} := \pi_{\theta}^{-1}\{\pi_{\theta}(x)\}$ for $\theta \in [0,1]$):

\begin{thm}\label{t:mainRestated} Let $t \in [1,2]$, and let $K \subset \R^{2}$ be compact. For $x \in K$, let 
\begin{displaymath} E_{x}(K) := \{\theta \in [0,1] : K \cap \ell_{x,\theta} = \{x\}\} = \{\theta \in [0,1] : |K \cap \ell_{x,\theta}| = 1\}. \end{displaymath}
Then, $\Hd E_{x}(K) \leq 2 - t$ for $\mathcal{H}^{t}$ almost all $x \in K$. \end{thm}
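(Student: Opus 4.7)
The plan is to argue by contradiction following the measure-theoretic reduction sketched in the introduction. Suppose some compact $K_0 \subset K$ has $\mathcal{H}^t(K_0) > 0$ with $\Hd E_x(K) > 2-t$ for every $x \in K_0$. By Frostman's lemma choose a $(t,C)$-Frostman Radon measure $\mu$ supported on a compact subset of $K_0$, and by measurable selection plus a pigeonhole on parameters pick, for $\mu$-a.e.\ $x$, an $(s,C)$-Frostman measure $\sigma_x$ on $E_x(K)$ with a fixed $s$ satisfying $s+t>2$ and with $x\mapsto\sigma_x$ measurable in the sense of Definition \ref{def:configuration}. By Remark \ref{rem5} the $X$-ray measure $X[\mu|_F,\{\sigma_x\}]$ is supported on $\overline{\bigcup_{x\in F} L_x}$; since each line in $L_x$ meets $K$ only at $x$, if $F,G\subset\spt\mu$ are well separated then $\spt X[\mu|_F,\{\sigma_x\}]\cap G=\emptyset$. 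So the target contradiction is the existence of well-separated $F,G\subset\spt\mu$ with $\mathcal{H}^t_\infty(F),\mathcal{H}^t_\infty(G)\gtrsim 1$ such that $\spt X[\mu|_F,\{\sigma_x\}]\cap G\neq\emptyset$.

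The analytic engine will be Proposition \ref{prop4}, giving $X[\mu,\{\sigma_x\}]\in\dot{H}^{\sigma/2}$ for all $\sigma<s$ under the Frostman assumptions with $s+t>2$, together with Proposition \ref{prop3}, which says that for nonnegative $h\in\dot{H}^{\sigma/2}$ the set $\{h*\varphi_\Delta\approx 1\}\setminus\spt h$ has vanishing $\mathcal{H}^t_\infty$-measure as $\Delta\to 0$ whenever $\sigma+t>2$. Combining the two, it suffices to produce $F,G$ as above and a small but fixed scale $\Delta$ with $G\subset\{X[\mu|_F,\{\sigma_x\}]*\varphi_\Delta\gtrsim 1\}$; Proposition \ref{prop3} then forces $G$ to meet $\spt X[\mu|_F,\{\sigma_x\}]$, contradicting the previous paragraph.

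The construction of $F,G$ proceeds through a tightness reduction. A multi-scale pigeonholing on the Borel map $x\mapsto E_x(K)$, which is Proposition \ref{prop6}, locates a dyadic square $Q$ and a scale $\Delta\ll\ell(Q)$ such that the renormalised configuration $(\mu^Q,\{\sigma_x^Q\})$ is tight at scale $\Delta$, meaning the direction supports $\spt\sigma_x^Q$ essentially coincide across most of $\mathcal{D}_\Delta(\spt\mu^Q)$. Under this hypothesis Lemma \ref{l:main} yields the desired $F,G$: if $M$ denotes the common slope count, the total tube family $\mathbb{T}$ satisfies $|\mathbb{T}|\lesssim M\Delta^{-1}$ while $|\mathcal{I}(\mathcal{Q},\mathbb{T})|\geq M|\mathcal{Q}|$, so an average tube crosses $\gtrsim\Delta|\mathcal{Q}|$ squares; after pigeonholing this gives $X[\mu^Q,\{\sigma_x^Q\}]*\varphi_\Delta\gtrsim 1$ on a large fraction of $\mathcal{Q}$, and the Erd\H{o}s bipartite-subgraph lemma splits this bulk into well-separated $F,G$ of comparable $\mathcal{H}^t_\infty$-mass. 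An effective lower bound on $\ell(Q)$ transfers the conclusion from $(\mu^Q,\{\sigma_x^Q\})$ back to $(\mu,\{\sigma_x\})$.

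The main obstacle I expect is the interaction between tightness and Frostman regularity after renormalisation: $Q$ is chosen by pigeonholing, so $\mu^Q$ generically fails to be $t$-Frostman, yet Proposition \ref{prop4} requires some Frostman exponent $\tau>2-s$ to supply the Sobolev regularity that drives Proposition \ref{prop3}. The resolution is to restrict the tightness pigeonhole to a long subsequence of \emph{good scales} produced by Proposition \ref{lemma5} (a uniform-$(\delta,t)$-set statement) on which $\mu^Q$ still satisfies a $\tau$-Frostman bound with $\tau$ close to $t$, and then to balance the two pigeonholes so that both can be satisfied at a common $Q$ and $\Delta$.
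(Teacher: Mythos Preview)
Your outline follows the paper's strategy closely, but there is a genuine gap in the reduction step. You assert that if $F,G\subset\spt\mu$ are well separated and each $\sigma_x$ is supported on $E_x(K)$, then $\spt X[\mu|_F,\{\sigma_x\}]\cap G=\emptyset$. This does not follow. By Remark \ref{rem5}, the support of the $X$-ray measure is the \emph{closure} $\overline{\bigcup_{x\in\overline{F}}L_x}$, and moreover $\spt\sigma_x\subset\overline{E_x(K)}$ rather than $E_x(K)$ itself (the latter set is typically not closed). So a point $y\in G$ lying in this support only yields sequences $x_n\to x\in\overline{F}$, $\theta_n\in\overline{E_{x_n}(K)}$, $\theta_n\to\theta$, with $y\in\ell_{x,\theta}$; but nothing forces $\theta\in E_x(K)$, and hence $y\in K\cap\ell_{x,\theta}\setminus\{x\}$ is no contradiction. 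The paper's introduction flags this explicitly (``up to closures, a technicality we carefully track in the actual proof''), and you have reproduced the informal version without the fix.

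The paper's repair (Section \ref{s3}) is to replace $E_x(K)$ by the open thickening $E_x^{r,\epsilon}=\{\theta:[\ell_{x,\theta}]_\epsilon\cap K\subset B(x,r/2)\}$, using compactness of $K$ to show $E_x^r=\bigcup_{\epsilon>0}E_x^{r,\epsilon}$ so that Frostman mass transfers. The configuration is built at a fixed dyadic scale $\delta\ll\epsilon$ with $\sigma_x$ constant on $\delta$-squares (this also handles the measurability you defer to ``measurable selection''). Theorem \ref{t:configurations} then delivers only $\inf\{\dist(y,L_x):|x-y|\geq r\}=0$, not an actual intersection; the contradiction comes because $\dist(y,L_x)\leq\delta$ places a point of $K$ inside $[\ell_{x_p,\theta}]_\epsilon$ for some $\theta\in E_{x_p}^{r,\epsilon}$ at distance $\geq r/2$ from $x_p$, directly violating the definition of $E_{x_p}^{r,\epsilon}$. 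Your Sobolev/tightness/renormalisation pipeline is correct and matches the paper, but the endgame needs this $\epsilon$-room to survive both closures.
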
 

\begin{thm}\label{t:configurations} For every $t \in (1,2]$, $s \in (2 - t,1]$, and $C > 0$, there exist a radius $r = r(C,s,t) > 0$ such that the following holds. Let $(\mu,\{\sigma_{x}\})$ be a configuration, where $\mu$ is a $(t,C)$-Frostman probability measure, and $\sigma_{x}$ is an $(s,C)$-Frostman probability measure for $\mu$ almost all $x \in \R^{2}$. Then, \begin{displaymath} \inf \{\dist(y,L_{x}) : x,y \in \spt \mu, \, |x - y| \geq r\} = 0, \end{displaymath} 
where $L_{x} := \cup \{\ell_{x,\theta} : \theta \in \spt \sigma_{x}\}$, and $\ell_{x,\theta} = \pi_{\theta}^{-1}\{\pi_{\theta}(x)\}$.
 \end{thm}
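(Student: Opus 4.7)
The plan is to run the contradiction strategy described at the end of the introduction. Fix $\sigma \in (2-t, s)$, which is possible since $s > 2-t$; this choice guarantees $\sigma + t > 2$. I will argue by contradiction: assume the conclusion fails, so for every candidate $r$ I may find a configuration $(\mu, \{\sigma_x\})$ satisfying the Frostman hypotheses and some $\eta > 0$ with $\dist(y, L_x) \geq \eta$ whenever $x, y \in \spt \mu$ and $|x - y| \geq r$. The target is to produce two well-separated sets $F, G \subset \spt \mu$ with $\dist(F, G) \geq r$ and $\mathcal{H}^{t}_{\infty}(G) \gtrsim 1$, together with a small scale $\Delta > 0$, so that the $X$-ray measure $\mathbf{X} := X[\mu|_F, \{\sigma_x\}]$ satisfies $\mathbf{X} \ast \varphi_\Delta \gtrsim 1$ pointwise on $G$. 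Once this is secured the contradiction follows quickly: Proposition \ref{prop4} places $\mathbf{X}$ in $\dot H^{\sigma/2}$, and Proposition \ref{prop3} then forces $\mathcal{H}^{t}_{\infty}(\{\mathbf{X} \ast \varphi_\Delta \gtrsim 1\} \setminus \spt \mathbf{X}) \to 0$ as $\Delta \to 0$. By Remark \ref{rem5} we have $\spt \mathbf{X} \subset \overline{\bigcup_{x \in F} L_x}$, so the standing assumption forces $\dist(G, \spt \mathbf{X}) \geq \eta$ and hence $G \subset \{\mathbf{X} \ast \varphi_\Delta \gtrsim 1\} \setminus \spt \mathbf{X}$ for all sufficiently small $\Delta$, contradicting $\mathcal{H}^{t}_{\infty}(G) \gtrsim 1$.

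The second step is the construction of $F, G$ and the pointwise lower bound on $\mathbf{X} \ast \varphi_\Delta$ under a \emph{tightness} hypothesis on $(\mu, \{\sigma_x\})$ at scale $\Delta$. Informally, tightness means that the $\Delta$-discretisations of $\spt \sigma_x$ essentially do not depend on $x \in \spt \mu$. Under tightness I would run the incidence count sketched in the introduction: with $\mathcal{Q} := \mathcal{D}_\Delta(\spt \mu)$ and $\mathbb{T}_Q$ the dyadic $\Delta$-tubes through $Q \in \mathcal{Q}$ with slope in the common direction set of size $M$, one has $|\mathbb{T}| \lesssim M\Delta^{-1}$ and $|\mathcal{I}(\mathcal{Q}, \mathbb{T})| \geq M|\mathcal{Q}|$, so the average number of squares per tube is $\gtrsim \Delta |\mathcal{Q}|$. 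After pigeonholing to a tube subfamily meeting the expected number of squares and unfolding the definition of the $X$-ray measure, this yields $\mathbf{X} \ast \varphi_\Delta \gtrsim 1$ on most of $\spt \mu$. An application of Erd\H{o}s' bipartite subgraph lemma then isolates $F, G \subset \spt \mu$ that are pairwise separated, giving the required pointwise inequality for $\mathbf{X} = X[\mu|_F, \{\sigma_x\}]$. This is packaged as Lemma \ref{l:main}.

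The main obstacle will be that the original configuration is almost never tight at any fixed scale, so I would pass to a \emph{renormalised configuration} $(\mu^Q, \{\sigma_x^Q\})$ on some dyadic square $Q$ whose side length is bounded below in terms of $C, s, t$. Heuristically $x \mapsto \sigma_x$ is approximately continuous (as a Borel map), so zooming into a typical point renders $\sigma_x^Q$ almost constant in $x$, i.e.\ tight at some sub-scale $\Delta$. The delicate part is that $\mu^Q$ may lose the $(t, C)$-Frostman property after renormalisation, in which case Proposition \ref{prop4} breaks. This will be circumvented using Proposition \ref{lemma5}, which, for sufficiently long scale sequences, produces many scales $\Delta_j$ admitting a square $Q \in \mathcal{D}_{\Delta_j}$ whose renormalisation $\mu^Q$ is $(\tau, C')$-Frostman for some $\tau < t$ that can be chosen with $\tau + \sigma > 2$; a further pigeonhole, carried out in Proposition \ref{prop6}, then selects a single scale at which tightness additionally holds. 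Applying Lemma \ref{l:main} to $(\mu^Q, \{\sigma_x^Q\})$ and transporting the conclusion through the homothety $S_Q$ will yield a pair $x, y \in \spt \mu$ with $|x - y| \gtrsim \ell(Q) \geq r(C,s,t)$ and $\dist(y, L_x)$ arbitrarily small, contradicting the standing hypothesis and thereby fixing the value of $r$. The hard part throughout is the simultaneous bookkeeping of three constraints -- lower bounds on $\ell(Q)$, survival of the Frostman dimension above $2 - \sigma$, and tightness at a sub-scale -- none of which can be relaxed in isolation.
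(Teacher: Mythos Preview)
Your proposal is correct and follows essentially the same route as the paper: the Sobolev argument (Propositions \ref{prop4} and \ref{prop3}, packaged in the paper as Theorem \ref{t2}), the construction of the separated sets under tightness (Lemma \ref{l:main}), and the renormalisation step (Theorem \ref{t:renormalisation}, built from Propositions \ref{prop6} and \ref{lemma5}) are exactly the three pillars of the paper's proof in Section \ref{s4}. The only cosmetic differences are that the paper argues directly rather than by contradiction, and that the separation you obtain after undoing the homothety $S_Q$ is $\ell(Q)\cdot\Delta$ rather than $\ell(Q)$ alone---both factors are bounded below by $\Delta_1^{n}$, which is why the paper sets $r := \Delta_1^{2n}$.
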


We now use Theorem \ref{t:configurations} to derive Theorem \ref{t:mainRestated}. 

\begin{proof}[Proof of Theorem \ref{t:mainRestated}] The conclusion is clear for $t = 1$, so assume $t \in (1,2]$. We make a counter assumption: there exist $t \in (1,2]$, and a compact set $K \subset \R^{2}$ such that 
\begin{displaymath} \mathcal{H}_{\infty}^{t}(\{x \in K : \Hd E_{x}(K) > 2 - t\}) > 0. \end{displaymath}
Consequently, by the sub-additivity of $\mathcal{H}^{t}_{\infty}$, there exist $s > 2 - t$ and $\kappa > 0$ such that
\begin{equation}\label{form65} \mathcal{H}^{t}_{\infty}(\{x \in K : \mathcal{H}^{s}_{\infty}(E_{x}(K)) > \kappa\}) > \kappa. \end{equation}
For $r > 0$, let 
\begin{displaymath}	E_x^{r} := E^{r}_{x}(K) \coloneqq \{\theta \in [0,1] : \ell_{x,\theta} \cap K\subset B(x,r/2)\}, \end{displaymath}
and note that $E_{x}(K) \subset \bigcap_{r > 0} E_{x}^{r}$. In particular, \eqref{form65} implies
\begin{equation}\label{form66} \mathcal{H}^{t}_{\infty}(\{x \in K : \mathcal{H}^{s}_{\infty}(E^{r}_{x}) > \kappa\}) > \kappa, \qquad r > 0. \end{equation}
This may seem like a wasteful use of \eqref{form65}, but the idea is that we will obtain a contradiction for a fixed $r > 0$, depending only on $\kappa,s,t$. In fact, we can describe that "$r$" immediately: let $\mathbf{C} \geq 1$ be an absolute constant determined in a moment, and let  
	\begin{displaymath} r := r(\mathbf{C}\kappa^{-1},s,t) > 0 \end{displaymath}
be the constant given by Theorem \ref{t:configurations}. For every $\epsilon>0$, we further define
	\begin{equation}\label{form53}
	E_x^{r,\epsilon}\coloneqq \{\theta \in [0,1] : [\ell_{x,\theta}]_{\epsilon} \cap K\subset B(x,r/2)\},
	\end{equation}
	where $[\ell_{x,\theta}]_{\epsilon}$ is the $\epsilon$-neighbourhood of $\ell_{x,\theta}$. We claim that 
	\begin{equation}\label{eq:lines}
	E_x^{r}  = \bigcup_{\epsilon>0} E_x^{r,\epsilon}.
	\end{equation}
	The inclusion $E_{x}^{r,\epsilon} \subset E_{x}^{r}$ holds for all $\epsilon > 0$, so it suffices to prove the inclusion "$\subset$". Suppose that $\theta \in E_x^{r}$, so $\ell_{x,\theta} \cap K\subset B(x,r/2)$. If $\theta \notin \bigcup_{\epsilon>0} E_x^{r,\epsilon}$, then for every $\epsilon>0$ there exists $y_\epsilon\in [\ell_{x,\theta}]_{\epsilon} \cap K \, \setminus \, B(x,r/2)$. Since $K$ is compact, we may find a sequence $\epsilon_n$ such that $y_{\epsilon_n}\to y\in K\cap \ell_{x,\theta}$. Since $|x-y_{\epsilon_n}|\ge r/2$, we also have $|x-y|\ge r/2$. But this is a contradiction with the definition of $E_x^{r}$. This shows \eqref{eq:lines}.
	
	Note that as $\epsilon \searrow 0$, the sets $E_{x}^{r,\epsilon}$ increase to $E_{x}^{r}$ (by \eqref{eq:lines}). Therefore, whenever $x \in K$ and $\mathcal{H}^{s}_{\infty}(E_{x}^{r}) > \kappa$, Davies' increasing sets lemma \cite[Theorem 4]{MR260959} implies that also $\mathcal{H}^{s}_{\infty}(E_{x}^{r,\epsilon}) > \kappa$ for $\epsilon > 0$ sufficiently small. This observation implies that also the sets $\{x\in K \, :\, \mathcal{H}^{s}_{\infty}(E^{r,\epsilon}_{x}) > \kappa\}$ increase to $\{x \in K : \mathcal{H}^{s}_{\infty}(E_{x}^{r}(K)) > \kappa\}$ as $\epsilon \searrow 0$. By a second application of \cite[Theorem 4]{MR260959}, and recalling \eqref{form66}, we deduce that
	\begin{displaymath} \mathcal{H}^{t}_{\infty}(\{x \in K : \mathcal{H}^{s}_{\infty}(E^{r,\epsilon}_{x}) > \kappa\}) > \kappa \end{displaymath}
	for $\epsilon > 0$ sufficiently small. Fix such an $\epsilon > 0$, and write $K_{\epsilon} := \{x \in K : \mathcal{H}^{s}_{\infty}(E_{x}^{r,\epsilon}) > \kappa\}$.
	
	Fix $\delta \in 2^{-\N}$ with $\delta \leq c\min\{\epsilon,r\}$ for a suitable small constant $c > 0$. Let $\mathcal{P} \subset \mathcal{D}_{\delta}$ be a non-empty $(\delta,t,C\kappa^{-1})$-set with the property $K_{\epsilon} \cap p \neq \emptyset$ for all $p \in \mathcal{P}$ (such $\calP$ exists by Lemma 2.13 in \cite{FaO}). For each $p \in \mathcal{P}$, choose a distinguished point $x_{p} \in K_{\epsilon} \cap p$. Since $\mathcal{H}^{s}_{\infty}(E^{r,\epsilon}_{x_{p}}) > \kappa$, we may use Frostman's lemma to find a $(s,C\kappa^{-1})$-Frostman probability measure $\sigma_{x_{p}}$ supported on $E_{x_{p}}^{r,\epsilon}$. 
		
We now define the configuration $(\mu,\{\sigma_{x}\})$. First, we define $\mu$ to be the uniformly distributed probability measure on $\cup \mathcal{P}$. It is easy to check that $\mu$ is a $(t,\mathbf{C}\kappa^{-1})$-Frostman probability measure with $\mathbf{C} \lesssim C$.

Second, we need to define the measures $\sigma_{x}$ for $x \in \spt \mu$. We do this separately for every square $p \in \mathcal{P}$. For $p \in \mathcal{P}$ fixed, and $x \in \bar{p}$, let $\sigma_{x} := \sigma_{x_{p}}$. Then $\sigma_{x}$ is an $(s,C\kappa^{-1})$-Frostman probablity measure for every $x \in \spt \mu$. Then, for $g \in C(\R^{2} \times [0,1])$ the maps
\begin{displaymath} x \mapsto \int g(x,\theta) \, d\sigma_{x}(\theta), \end{displaymath}
are piecewise continuous and hence measurable. So $(\mu,\{\sigma_{x}\})$ is indeed a configuration.

We finally apply Theorem \ref{t:configurations} to the configuration $(\mu,\{\sigma_{x}\})$. In particular, there exist points $x,y \in \spt \mu$ such that 
\begin{equation}\label{form54} |x - y| \geq r \quad \text{and} \quad \dist(y,L_{x}) \leq \delta, \end{equation}
where $L_{x} = \cup \{\ell_{x,\theta} : \theta \in \spt \sigma_{x}\}$. Let $p,q \in \mathcal{P}$ be the squares such that $x \in \bar{p}$ and $y \in \bar{q}$. Let $x_{p} \in K_{\epsilon} \cap p$ and $y_{q} \in K_{\epsilon} \cap q$ be the associated "distinguished points". Now, $\dist(y,L_{x}) \leq \delta$ implies $\dist(y_{q},L_{x_{p}}) \lesssim \delta$, since $L_{x_{p}}$ is a translate of $L_{x}$ by $(x_{p} - x) \in [-\delta,\delta]^{2}$. In particular, there exists a $\theta \in E_{x_{p}}^{r,\epsilon}$ such that 
\begin{displaymath} y_{q} \in [\ell_{x_{p},\theta}]_{C\delta} \cap K_{\epsilon} \subset [\ell_{x_{p},\theta}]_{\epsilon} \cap K. \end{displaymath}
But since $\theta \in E_{x_{p}}^{r,\epsilon}$, this should mean by definition (recall \eqref{form53}) that $y_{q} \in B(x_{p},r/2)$, and therefore $|x - y| < r$. This contradicts \eqref{form54} and completes the proof of Theorem \ref{t:mainRestated}. \end{proof}


\section{Estimates for $X$-ray measures}

In Section \ref{s3} we reduced the main result (Theorem \ref{t:main}) to a measure theoretic version concerning configurations (Theorem \ref{t:configurations}). The purpose of this section is to prove Theorem \ref{t:configurations} under additional hypotheses on the behaviour of the $X$-ray measure (Theorem \ref{t2} below). Establishing the validity of these hypotheses will, roughly speaking, occupy the remainder of the paper. The proof of Theorem \ref{t:configurations} will be completed in Section \ref{s4}.

\begin{thm}\label{t2} For every $t \in (1,2]$ and $s \in (2 - t,1]$, there exist $\eta = \eta(s,t) > 0$ and $\Delta_{0} = \Delta_{0}(s,t) > 0$ such that the following holds for all $\Delta \in (0,\Delta_{0}]$. 
	
	Let $(\mu,\{\sigma_{x}\})$ be a configuration with the properties that $\mu$ is $(t,\Delta^{-\eta})$-Frostman, and each $\sigma_{x}$ is $(s,\Delta^{-\eta})$-Frostman. Assume additionally that there exist Borel sets $G_{1},G_{2} \subset \spt \mu$ such that:
	\begin{itemize}
		\item[(H1) \phantomsection \label{H1}] $\mu(G_{2}) \geq \Delta^{\eta}$.
		\item[(H2) \phantomsection \label{H2}] $G_{2} \subset \{z \in \R^{2} : X[\mu|_{G_{1}},\{\sigma_{x}\}]_{\Delta}(z) \geq \Delta^{\eta}\}$.
	\end{itemize} 
	Then, $\inf \{\dist(y,L_{x}) : x \in G_{1} \text{ and } y \in G_{2}\} = 0$, where $L_{x} = \cup \{\ell_{x,\theta} : \theta \in \spt \sigma_{x}\}$, and we recall that $\ell_{x,\theta} = \pi_{\theta}^{-1}\{\pi_{\theta}(x)\}$ for $x \in \R^{2}$ and $\theta \in [0,1]$. \end{thm}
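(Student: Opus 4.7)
The plan is to argue by contradiction, using the two Sobolev-flavoured inputs highlighted in the outline. Suppose the conclusion fails, so that there is $c > 0$ with $\dist(y, L_x) \geq c$ for all $x \in G_1$ and $y \in G_2$. Set $\mathbf{X} := X[\mu|_{G_1}, \{\sigma_x\}]$. By Remark \ref{rem5}, $\spt \mathbf{X} \subset \overline{\bigcup_{x \in G_1} L_x}$, and therefore $\dist(G_2, \spt \mathbf{X}) \geq c$. In particular, once $\Delta \le \Delta_0(s,t) \le c/2$, hypothesis (H2) places $G_2$ inside the ``ghost'' set
\begin{displaymath} \{\mathbf{X} \ast \varphi_\Delta \geq \Delta^\eta\} \setminus \spt \mathbf{X}. \end{displaymath}
The proof will then reduce to showing that this set has strictly smaller $\mathcal{H}^t_\infty$-content than the lower bound forced on $G_2$ by (H1) and the Frostman assumption on $\mu$.

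For the upper bound, Proposition \ref{prop4} applied to the configuration $(\mu|_{G_1}, \{\sigma_x\})$, which inherits the Frostman hypotheses of the original configuration, should give $\mathbf{X} \in \dot{H}^{\sigma/2}$ for any fixed $\sigma \in (2-t, s)$, with a norm bound of the form $\|\mathbf{X}\|_{\dot{H}^{\sigma/2}}^2 \lesssim_{s,t} \Delta^{-A\eta}$ for a fixed $A = A(s,t)$. Proposition \ref{prop3} is a Sobolev-to-content conversion: applied to the non-negative function $\mathbf{X}$ at threshold $\lambda = \Delta^\eta$, it should produce an estimate of the form
\begin{displaymath} \mathcal{H}^{t}_{\infty}\bigl(\{\mathbf{X} \ast \varphi_\Delta \geq \Delta^\eta\} \setminus \spt \mathbf{X}\bigr) \lesssim_{s,t} \Delta^{\kappa(s,t) - (A+2)\eta}, \end{displaymath}
where $\kappa(s,t) > 0$ is the quantitative gain coming from the strict inequality $\sigma + t > 2$ (and the factor $\Delta^{-2\eta}$ is the usual Chebyshev loss at level $\lambda = \Delta^\eta$).

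For the matching lower bound, the Frostman-to-content inequality applied to the $(t,\Delta^{-\eta})$-Frostman measure $\mu$ together with (H1) yields $\mathcal{H}^{t}_{\infty}(G_2) \gtrsim \Delta^\eta \mu(G_2) \geq \Delta^{2\eta}$. Combining the display above with the inclusion $G_2 \subset \{\mathbf{X} \ast \varphi_\Delta \geq \Delta^\eta\} \setminus \spt \mathbf{X}$ therefore gives $\Delta^{2\eta} \lesssim \Delta^{\kappa(s,t) - (A+2)\eta}$, which fails once $\eta = \eta(s,t)$ is chosen so small that $(A+4)\eta < \kappa(s,t)$ and $\Delta \le \Delta_0(s,t)$ is sufficiently small. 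The main obstacle is not structural but bookkeeping: I must verify that both the Sobolev norm loss in Proposition \ref{prop4} and the level-set loss in Proposition \ref{prop3} remain polynomial in the Frostman constant $\Delta^{-\eta}$, so that the fixed gain $\Delta^{\kappa(s,t)}$ produced by $\sigma + t > 2$ can be chosen to dominate after a suitable (small) choice of $\eta$.
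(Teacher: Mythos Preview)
Your proposal is essentially correct and follows the same route as the paper: use Proposition \ref{prop4} to place $\mathbf{X}$ in $\dot{H}^{\sigma/2}$ for some $\sigma \in (2-t,s)$ with a $\Delta^{-O(\eta)}$ norm bound, feed this into Proposition \ref{prop3} to bound $\mathcal{H}^t_\infty$ of the ghost set by $\Delta^{O(1)\eta}$ times a gain $\Delta^{\kappa}$ coming from $\sigma+t>2$, and compare with the Frostman lower bound $\mathcal{H}^t_\infty(G_2)\gtrsim\Delta^{2\eta}$.

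Two small corrections. First, the clause ``$\Delta_0(s,t)\le c/2$'' should be deleted: it is a quantifier error (the constant $c$ comes from the counter-assumption and hence from the specific configuration, while $\Delta_0$ must depend only on $s,t$), and it is in any case unnecessary, since $c>0$ alone already gives $G_2\cap\spt\mathbf{X}=\emptyset$ and hence the inclusion into the ghost set. Second, Proposition \ref{prop4} only controls $\|\mathbf{X}\cdot\chi\|_{\dot{H}^{\sigma/2}}$ for a smooth cutoff $\chi$; the full $X$-ray measure is supported on an unbounded union of lines and does not lie in $\dot{H}^{\sigma/2}$ globally, so one must work with $\mathbf{X}\chi$ as the paper does (this is harmless since $G_2\subset\spt\mu\subset[0,1]^2$).
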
 

The remainder of this section is dedicated to the proof of Theorem \ref{t2}.
\subsection{Sobolev norms}
The proof of Theorem \ref{t2} relies on the theory of fractional Sobolev spaces. We deal with some preliminaries in this subsection. In this section, $\mathcal{M}_{\C}(X)$ refers to complex Borel measures on $X$, and $\mathcal{M}(X)$ refers to finite positive Borel measures on $X$ (where always $X \subset \R^{n}$).
\begin{definition}[Sobolev norms on $\R^2$]	
	For $\nu \in \mathcal{M}_{\C}(\R^{2})$, and $s>-1$ we define the homogeneous Sobolev norm
	\begin{equation*}
	\|\nu\|_{\dot{H}^s(\R^2)}^2 \coloneqq \int_{\R^2} |\hat{\nu}(\xi)|^2 |\xi|^{2s}\, d\xi.
	\end{equation*}
\end{definition}

It is well-known (see \cite[Proposition 1.36]{bahouri2011}) that if $\nu \in \mathcal{M}_{\mathbb{C}}(\R^{2})$, $|s|<1$, and $|\int f \, d\nu|$ is uniformly bounded for all $f \in \mathcal{S}(\R^{2})$ with $\|f\|_{\dot{H}^{-s}} \leq 1$, then in fact $\|\nu\|_{\dot{H}^{s}(\R^{2})} < \infty$, and
\begin{equation*}
\|\nu\|_{\dot{H}^s(\R^2)} = \sup_{f\in \calS(\R^2),\, \|f\|_{\dot{H}^{-s}(\R^2)\le 1}} \left| \int f \, d\nu \right|. \end{equation*}

\begin{definition}[Riesz energy]
	Given $0<t<2$ and $\mu \in \mathcal{M}(\R^{2})$, we define the Riesz $t$-energy of $\mu$ by
	\begin{equation*}
	I_t(\mu)=\iint \frac{d\mu(x)d\mu(y)}{|x-y|^t}.
	\end{equation*}
\end{definition}
By Theorem 3.10 in \cite{MR3617376}, the following relation holds between Riesz energies and homogeneous Sobolev norms:
\begin{equation}\label{eq:Riesz}
I_t(\mu)= c_t\|\mu\|_{\dot{H}^{t/2-1}(\R^2)}^2, \qquad \mu \in \mathcal{M}(\R^{2}), \, 0 < t < 2.
\end{equation}

We need a version of Sobolev norms for measures on $[0,1]\times\R$. We follow the conventions from \cite[Section 2]{MR4722034}. We abbreviate $\calA := [0,1] \times \R$ (the motivation being that $[0,1] \times \R$ should be viewed as a parametrisation for the space of lines $\mathcal{A}(2,1)$).

\begin{definition}[{Fourier transform on $\calA$}]
	Given $\nu \in \mathcal{M}_{\C}(\mathcal{A})$ we define for $n\in \Z$ and $\rho\in\R$
	\begin{equation*}
	\calF(\nu)(n,\rho)=\int_0^1\int_\R e^{-2\pi i (n\theta+\rho r)} \, d\nu(r,\theta).
	\end{equation*}
\end{definition}

We consider the following Sobolev norms on $\calA$.
\begin{definition}[Sobolev norms on {$\calA$}]	
	For any $g\in C_c^\infty(\calA)$ and $s>- \tfrac{1}{2}$ we define
	\begin{align*}
	\|g\|_{\dot{H}^s(\calA)}^2 &= \sum_{n\in\Z}\int_\R |\calF(g)(n,\rho)|^2 |(n,\rho)|^{2s}\, d\rho.
	\end{align*}
\end{definition}
Observe that by Plancherel's identity for any $f,g\in C_c^\infty(\calA)$ and any $s > -\tfrac{1}{2}$,
\begin{align}
\int_0^1\int_\R f(\theta,x)\cdot& \overline{g(\theta,x)}\, dx\,d\theta\notag
= \sum_{n\in\Z}\int_\R \calF(f)(n,\rho)\cdot \overline{\calF(g)(n,\rho)}\, d\rho\notag\\
&\le  \sum_{n\in\Z}\left(\int_\R |\calF(f)(n,\rho)|^2|(n,\rho)|^{2s}\, d\rho\cdot \int_\R|{\calF(g)(n,\rho)}|^2|(n,\rho)|^{-2s}\, d\rho	\right)^{1/2}\notag\\
&\le\|f\|_{\dot{H}^s(\calA)}	\|g\|_{\dot{H}^{-s}(\calA)}.\label{eq:dua}\notag
\end{align}

The Sobolev smoothing property of the $X$-ray transform is classical, see e.g. Chapter II, Theorem 5.1 in \cite{Natterer}. The variant below is best suited for our purpose.
\begin{thm}[{\cite[Theorem 2.16]{MR4722034}}]\label{thm:Sob-smooth}
	For every $\chi\in C_c^\infty(\R^2)$ there exists $C_{\chi}>1$ such that if $f\in \calS(\R^2)$ and $s\in [-1/2, 1/2]$ then
	\begin{equation*}
	\|X(f\chi)\|_{\dot{H}^{s+1/2}(\calA)}\le C_{\chi} \|f\|_{{\dot{H}}^{s}(\R^2)}.
	\end{equation*}
\end{thm}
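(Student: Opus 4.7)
\emph{The plan} is to prove the statement by combining the classical Fourier slice theorem with an angular frequency decomposition. \emph{First}, since the 1D Fourier transform of $Xh(\theta,\cdot)$ in the $r$-variable equals $\rho\mapsto\widehat h(\rho e_\theta)$, where $e_\theta:=(\cos 2\pi\theta,\sin 2\pi\theta)$ (standard slice computation, using $\pi_\theta(x)=x\cdot e_\theta$), a further Fourier expansion in $\theta$ yields
\begin{equation*}
\calF(X(f\chi))(n,\rho)=\int_0^1\widehat{f\chi}(\rho e_\theta)\,e^{-2\pi in\theta}\,d\theta,
\end{equation*}
so that $\calF(X(f\chi))(n,\rho)$ is the $n$-th $\theta$-Fourier coefficient of the function $g_\rho(\theta):=\widehat{f\chi}(\rho e_\theta)$.

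\emph{Next}, since $2s+1\ge 0$, I would split the Sobolev weight as
\begin{equation*}
|(n,\rho)|^{2s+1}\lesssim |\rho|^{2s+1}\mathbf{1}_{|n|\le|\rho|}+|n|^{2s+1}\mathbf{1}_{|n|>|\rho|}.
\end{equation*}
The ``low angular frequency'' piece ($|n|\le|\rho|$) is handled using Plancherel in $\theta$, $\sum_n|\hat g_\rho(n)|^2=\int_0^1|g_\rho(\theta)|^2\,d\theta$, followed by polar coordinates $\xi=\rho e_\theta$ on $\R^2$; this gives a bound by $\pi^{-1}\|f\chi\|^2_{\dot{H}^s(\R^2)}$.

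\emph{The main obstacle} will be the ``high angular frequency'' piece ($|n|>|\rho|$), which requires exploiting the smoothness of $g_\rho$ in $\theta$ coming from the compact support of $f\chi$. My approach is $k$-fold integration by parts in $\theta$ using the identity
\begin{equation*}
\partial_\theta\widehat{f\chi}(\rho e_\theta)=-4\pi^2 i\rho\,\widehat{(e_\theta^\perp\cdot x)\,f\chi}(\rho e_\theta),
\end{equation*}
which trades a factor $|n|^{-1}$ for $|\rho|$ but replaces the cutoff $\chi$ by $(e_\theta^\perp\cdot x)\chi\in C_c^\infty(\spt\chi)$. Iterating $k\ge 2$ times gives $|\hat g_\rho(n)|\lesssim_{\chi,k}(|\rho|/|n|)^k\sup_\theta|\widehat{f\chi_{\theta,k}}(\rho e_\theta)|$ for a bounded family $\{\chi_{\theta,k}\}\subset C_c^\infty(\spt\chi)$. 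Summing $(|\rho|/|n|)^{2k}\mathbf{1}_{|n|>|\rho|}$ in $n$ converges, and absorbing the $\sup_\theta$ (e.g.\ by a 1D Sobolev embedding on $S^1$) plus another application of polar coordinates bounds this piece also by $\lesssim_\chi\|f\chi\|^2_{\dot{H}^s(\R^2)}$. The only delicate bookkeeping is the $\theta$-dependence of the auxiliary cutoffs $\chi_{\theta,k}$; this could alternatively be sidestepped via a dyadic frequency decomposition $\widehat{f\chi}=\sum_j(\widehat{f\chi})\varphi_j$, for which on the annulus $|\xi|\sim R$ the angular Fourier coefficients decay rapidly once $|n|\gtrsim R\cdot\diam(\spt\chi)$.

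\emph{Finally}, since multiplication by $\chi\in C_c^\infty(\R^2)$ is bounded on $\dot{H}^s(\R^2)$ for $|s|\le 1/2<1$ (standard for $s\in[0,1/2]$ via the Leibniz rule and a mollification argument, and for $s\in[-1/2,0]$ by duality), we obtain $\|f\chi\|_{\dot{H}^s(\R^2)}\le C_\chi\|f\|_{\dot{H}^s(\R^2)}$, which combined with the bounds on the two pieces above completes the proof.
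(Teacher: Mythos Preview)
The paper does not prove this theorem; it is quoted verbatim from \cite[Theorem 2.16]{MR4722034} and used as a black box (the paper only remarks that the Sobolev smoothing property of the $X$-ray transform is classical, pointing also to Natterer's book). So there is no ``paper's own proof'' to compare against.

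Your outline is a correct and standard route to this estimate. Two remarks. First, the $\theta$-dependence of the auxiliary cutoffs after integration by parts is more benign than your write-up suggests: since $(e_\theta^\perp\cdot x)^j$ is a trigonometric polynomial in $\theta$ with coefficients that are monomials $x^\alpha$ ($|\alpha|\le j$), one has $(e_\theta^\perp\cdot x)^j\chi=\sum_{|\alpha|\le j}c_\alpha(\theta)\,\chi_\alpha$ with $\chi_\alpha:=x^\alpha\chi\in C_c^\infty$ \emph{independent of} $\theta$ and $|c_\alpha|\lesssim 1$. This lets you run Plancherel in $\theta$ and then polar coordinates exactly as in the low-frequency piece, bounding the high-$n$ contribution by $\sum_\alpha\|f\chi_\alpha\|_{\dot H^s}^2$; no $\sup_\theta$ or Sobolev embedding on $S^1$ is needed. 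Second, in the small-$|\rho|$ regime of the high-$n$ piece one should check integrability: after $k\ge 2$ integrations by parts every term in $\partial_\theta^k g_\rho$ carries at least a factor $|\rho|^2$, and since $|\rho|^2\le|\rho|^{2s+1}$ for $|\rho|\le 1$ and $s\le 1/2$, this again reduces to the polar-coordinate bound. Your final step, boundedness of multiplication by $\chi$ on $\dot H^s(\R^2)$ for $|s|<1$, is correct (for $s\in(0,1)$ use the Gagliardo seminorm together with the Sobolev embedding $\dot H^s(\R^2)\hookrightarrow L^{2/(1-s)}$ to control $\|f\|_{L^2(\spt\chi)}$; then dualise for $s\in(-1,0)$).
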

We need the following estimate comparing the Sobolev norms on $\calA$ and $\R^2$.
\begin{lemma}\label{lem:sob-norm-est}
	Let $\nu \in \mathcal{M}_{\C}(\mathcal{A})$. Then, for $-1/2<s\le 0$
	\begin{equation*}
	\|\nu\|_{\dot{H}^s(\calA)}\lesssim_s  \|\nu\| + \|\nu\|_{\dot{H}^s(\R^2)},
	\end{equation*}
	where $\| \nu \| = \nu(\R^{2})$.
	\end{lemma}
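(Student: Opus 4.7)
\medskip

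\noindent\textbf{Proof plan.}
The key observation is that with the parametrisation used for $\mathcal{F}$, we have the identity
\[
\mathcal{F}(\nu)(n,\rho) = \hat{\nu}(n,\rho), \qquad n \in \Z, \ \rho \in \R,
\]
where $\hat{\nu}$ denotes the usual Fourier transform of $\nu$ viewed as a complex measure on $\R^{2}$ supported in $[0,1] \times \R$. Thus $\|\nu\|_{\dot{H}^{s}(\calA)}^{2}$ is essentially a discrete sampling of $\xi_{1} \mapsto \hat{\nu}(\xi_{1},\rho)$ at integer values, weighted by $|(n,\rho)|^{2s}$, whereas $\|\nu\|_{\dot{H}^{s}(\R^{2})}^{2}$ is the full integral in $\xi_{1}$. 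Because $\nu$ is compactly supported in the first variable, the function $\xi_{1} \mapsto \hat{\nu}(\xi_{1},\rho)$ is entire of exponential type, and so sampling at integers should lose no essential information, up to the boundary contribution encoded in $\|\nu\|$.

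To make this precise, the plan is to fix a cut-off $\chi \in C_{c}^{\infty}(\R)$ with $\chi \equiv 1$ on $[0,1]$. Since $\nu = \chi\cdot\nu$ (multiplying in the $\theta$-coordinate), the Fourier transform in that variable gives, for every $\rho \in \R$ and $\xi_{1} \in \R$,
\[
\hat{\nu}(\xi_{1},\rho) = \int_{\R} \hat{\chi}(\xi_{1}-\eta)\,\hat{\nu}(\eta,\rho)\,d\eta.
\]
Applying this at $\xi_{1}=n$ and Cauchy--Schwarz with the weight $(1+|n-\eta|)^{-2}$, together with the fact that $\hat{\chi}$ is Schwartz, yields the pointwise bound
\[
|\mathcal{F}(\nu)(n,\rho)|^{2} \lesssim_{N} \int_{\R} (1+|n-\eta|)^{-N}\,|\hat{\nu}(\eta,\rho)|^{2}\,d\eta
\]
for any large $N$ of our choosing.

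The next step is to split the Sobolev norm into low and high frequencies. For the low-frequency part, only $n=0$ with $|\rho|\le 1$ contributes a potentially singular weight, and the trivial bound $|\mathcal{F}(\nu)(0,\rho)| \le \|\nu\|$ combined with the integrability of $|\rho|^{2s}$ near the origin (which is where $-\tfrac12 < s$ is used) gives a contribution bounded by a constant times $\|\nu\|^{2}$. For the high-frequency part, i.e.\ the sum over $(n,\rho)$ with $|(n,\rho)| \geq 1$, the pointwise bound and Fubini reduce the problem to establishing the weight estimate
\[
W(\eta,\rho) := \sum_{n \in \Z}(1+|n-\eta|)^{-N}\,|(n,\rho)|^{2s}\,\mathbf{1}_{\{|(n,\rho)|\ge 1\}} \lesssim_{s}\;|(\eta,\rho)|^{2s},
\]
which, once verified, gives the high-frequency contribution an upper bound of $\|\nu\|_{\dot{H}^{s}(\R^{2})}^{2}$ after integrating against $|\hat{\nu}(\eta,\rho)|^{2}$.

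The main obstacle is verifying the weight inequality above, which requires a short case analysis in the relative sizes of $|\eta|$, $|\rho|$, and $|n-\eta|$. When $|(\eta,\rho)| \le 10$, every term has $|(n,\rho)|^{2s} \le 1$ (since $s \le 0$ and $|(n,\rho)| \ge 1$) and the sum is $O(1)$, which is harmless because $|(\eta,\rho)|^{2s} \gtrsim 1$ in this regime. When $|(\eta,\rho)| > 10$, the sum splits into a "close" range $|n-\eta| \le |(\eta,\rho)|/2$, on which $|(n,\rho)| \sim |(\eta,\rho)|$ so the bound is immediate, and a "tail" range, on which the rapid decay factor $(1+|n-\eta|)^{-N}$ (for $N$ large enough, depending only on $s$) beats the $|(n,\rho)|^{2s} \le 1$ bound. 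This completes the proof plan; the only routine pieces are Cauchy--Schwarz, Fubini, and the case analysis of $W$.
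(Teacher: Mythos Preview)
Your proof is correct and follows essentially the same approach as the paper: both identify $\mathcal{F}(\nu)(n,\rho)=\hat{\nu}(n,\rho)$, multiply by a cutoff equal to $1$ on $[0,1]$ to write $\hat{\nu}(n,\rho)$ as a convolution in the first frequency variable, and then compare the discrete weight $|(n,\rho)|^{2s}$ with the continuous weight $|(\eta,\rho)|^{2s}$. The only difference is organisational: the paper handles the weight comparison via a dyadic decomposition $|\eta-n|\le 2^{j}$ summed over $j$, whereas you do it via a single Cauchy--Schwarz estimate and the kernel bound $W(\eta,\rho)\lesssim_{s}|(\eta,\rho)|^{2s}$; both are routine and lead to the same low/high frequency split with the low part absorbing the $\|\nu\|^{2}$ term.
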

\begin{proof}
	Observe that for any $n\in\Z$ and $\rho\in\R$ we have $\calF(\nu)(n,\rho)=\hat{\nu}(n,\rho)$, where $\hat{\nu}$ denotes the usual Fourier transform on $\R^2$. Thus,
	\begin{align*}
	\|\nu\|_{\dot{H}^s(\calA)} &= \sum_{n\in\Z}\int_\R |\calF(\nu)(n,\rho)|^2 |(n,\rho)|^{2s}\, d\rho = \sum_{n\in\Z}\int_\R |\hat{\nu}(n,\rho)|^2 |(n,\rho)|^{2s}\, d\rho.
	\end{align*}
	Let $\varphi\in\calS(\R)$ be such that $\|\varphi\|_{L^1}\lesssim 1$, $\varphi(\theta)= 1$ for $\theta\in [0,1]$. Since $\spt \nu \subset [0,1] \times \R$, we have $d\nu(\theta,r) = \varphi(\theta)d\nu(\theta,r)$, which leads to
	\begin{displaymath} \hat{\nu}(n,\rho) = \int \widehat{\varphi}(n - \eta)\hat{\nu}(\eta,\rho) \, d\eta, \qquad (n,\rho) \in \Z \times \R. \end{displaymath}
	Using $\|\widehat{\varphi}\|_{L^\infty}\le \|\varphi\|_{L^1}\lesssim 1$, and the rapid decay of $\widehat{\varphi}$, we further get 
	\begin{displaymath} |\hat{\nu}(n,\rho)|^2 \lesssim \int |\widehat{\varphi}(n - \eta)| |\hat{\nu}(\eta,\rho)|^2 d\eta \lesssim \sum_{j \in \N} 2^{-3j} \int_{|\eta - n| \leq 2^{j}} |\hat{\nu}(\eta,\rho)|^{2} \, d\eta. \end{displaymath}
 It follows that
	\begin{align}
	\|\nu\|_{\dot{H}^s(\calA)}^2 & =\sum_{n\in\Z}\int_\R |\hat{\nu}(n,\rho)|^2 |(n,\rho)|^{2s}\, d\rho \notag\\
	&\label{form84} \lesssim \sum_{j \in \N} 2^{-3j} \sum_{n\in\Z}\int_\R\int_{|\eta - n| \leq 2^{j}}  |\hat{\nu}(\eta,\rho)|^2 |(n,\rho)|^{2s}d\eta\, d\rho.
	\end{align}
	Fix $j \in \N$. Observe that, since $s\le 0$, whenever $|n|\ge 2^{j + 1}$, or $|n| < 2^{j + 1}$ and $|\rho|\ge 2^{j}$,
	\begin{equation*}
	|(n,\rho)|^{2s}\lesssim |(\eta,\rho)|^{2s} \quad\text{for $|\eta - n| \leq 2^{j}$.}
	\end{equation*}
	Thus,
	\begin{multline*}
	\sum_{n\in\Z}\int_\R\int_{|\eta - n| \leq 2^{j}} |\hat{\nu}(\eta,\rho)|^2 |(n,\rho)|^{2s}d\eta\, d\rho\\
	\lesssim \int_{-2^{j}}^{2^{j}} \int_{-2^{j}}^{2^{j}} |\hat{\nu}(\eta,\rho)|^2 |\rho|^{2s}d\eta\, d\rho+
	\sum_{n \in \Z} \int_\R\int_{|\eta - n| \leq 2^{j}} |\hat{\nu}(\eta,\rho)|^2 |(\eta,\rho)|^{2s} d\eta\, d\rho\\
	\lesssim_s 2^{2j} \|\hat{\nu}\|_{L^\infty}^2+2^{j} \|\nu\|_{\dot{H}^s(\R^2)}^2
	\le 2^{2j} \|\nu\|^{2} + 2^{j}\|\nu\|_{\dot{H}^s(\R^2)}^2,
	\end{multline*}
	where in the second inequality we used (a) our assumption $s>-1/2$ to make sure that $\rho \mapsto |\rho|^{2s}$ is locally integrable, and (b) that $\{n \in \N : |\eta - n| \leq 2^{j}\} \lesssim 2^{j}$ for $n \in \Z$. Taking into the factor $2^{-3j}$ in \eqref{form84}, the terms $2^{2j} \|\nu\| + 2^{j}\|\nu\|_{\dot{H}^s(\R^2)}^2$ are summable over $j \in \N$. \end{proof}

\subsection{Sobolev estimates for $X$-ray measures} Our main tool in the proof of Theorem \ref{t2} is the following (local) Sobolev estimate for $X$-ray measures.
\begin{proposition}\label{prop4} Let $t \in (1,2]$, $s \in (2 - t,1]$, and $C > 0$. Assume that $(\mu,\{\sigma_x\})$ is a configuration such that $\mu$ is a $(t,C)$-Frostman measure supported on $B(1)$, and that each $\sigma_x$ is an $(s,C)$-Frostman measure. 
	
	Then for every $0<\ve < s/2$ and $\chi\in C_c^\infty(\R^2)$ we have
	\begin{equation*}
	\|X[\mu,\{\sigma_x\}] \cdot \chi\|_{\dot{H}^{s/2-\ve}(\R^2)}^2 \lesssim_{s,t,\ve,\chi} C^3
	\end{equation*}	
\end{proposition}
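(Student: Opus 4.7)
My plan is first to dualize and reduce to a Sobolev-norm estimate on $\mathcal{A}$. By the $\R^2$-Sobolev duality together with Proposition \ref{prop:adjRad},
\begin{displaymath}
\|X[\mu,\{\sigma_x\}]\chi\|_{\dot{H}^{s/2-\varepsilon}(\R^2)} = \sup_{\|f\|_{\dot H^{-(s/2-\varepsilon)}(\R^2)}\leq 1} \left| \int X(f\chi) \, d\mu(\sigma_x)\right|.
\end{displaymath}
For admissible $f$, Theorem \ref{thm:Sob-smooth} with $s' = -(s/2-\varepsilon) \in [-1/2,0]$ gives $\|X(f\chi)\|_{\dot H^{1/2 - s/2+\varepsilon}(\mathcal{A})} \lesssim_\chi 1$, so by the Cauchy--Schwarz-based duality of Sobolev spaces on $\mathcal{A}$ (displayed immediately after the definition of $\|\cdot\|_{\dot H^s(\mathcal{A})}$), it suffices to prove
\begin{equation*}
\|\mu(\sigma_x)\|_{\dot{H}^\tau(\mathcal{A})}^2 \lesssim_{s,t,\varepsilon} C^3, \qquad \tau := s/2 - 1/2 - \varepsilon \in (-1/2,0).
\end{equation*}

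The second step is to pass from the $\mathcal{A}$-Sobolev norm to a Riesz energy on $\R^2$. Since $\mu(\sigma_x)$ is supported in $[0,1]\times\R$ and $\tau\in(-1/2,0]$, Lemma \ref{lem:sob-norm-est} reduces matters to the total mass $\|\mu(\sigma_x)\|\leq C^2$ plus the $\R^2$-Sobolev norm $\|\mu(\sigma_x)\|_{\dot{H}^\tau(\R^2)}$. Using \eqref{eq:Riesz} with $u:=2\tau+2 = s+1-2\varepsilon\in(1,2)$, the target becomes
\begin{equation*}
I_u(\mu(\sigma_x)) = \iiiint \frac{d\sigma_x(\theta)\,d\mu(x)\,d\sigma_y(\phi)\,d\mu(y)}{(|\theta-\phi|^2 + |\pi_\theta(x)-\pi_\phi(y)|^2)^{u/2}} \lesssim_{s,t,\varepsilon} C^3.
\end{equation*}

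This last estimate is where the hypothesis $s+t>2$ does the real work and is the main obstacle. The plan is to attack it by integrating in the order $\phi,y,\theta,x$: for fixed $(\theta,x)$, the $(s,C)$-Frostman property of $\sigma_y$ gives $\sigma_y(\{|\phi-\theta|\leq r\})\leq Cr^s$ for the $\phi$-integration, while the $(t,C)$-Frostman condition on $\mu$ applied to tubes of width $r$ transverse to $e_\theta$ yields $\mu(\{y:|\pi_\theta(y)-c|\leq r\})\lesssim Cr^{t-1}$ for the $y$-integration (the error $|y\cdot(e_\theta-e_\phi)|\lesssim|\theta-\phi|\leq r$ being absorbed thanks to $|y|\leq 1$). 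A layer-cake decomposition in $r$, combined with an inequality of the form $(|a|^2+|b|^2)^{u/2} \geq |a|^\alpha |b|^{u-\alpha}$ for a carefully chosen $\alpha\in(0,u)$, should convert the remaining $(\theta,x)$-integration into a Riesz-type expression controlled by $I_{2-s}(\mu) \lesssim C^2$, finite precisely because $2-s<t$. Tracking the powers of $C$ through this chain to recover the prefactor $C^3$, and choosing the splitting exponent $\alpha$ so that the resulting integral absorbs the defect $2\varepsilon$ for every $\varepsilon\in(0,s/2)$, is the delicate point I expect to be the hardest.
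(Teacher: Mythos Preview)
Your first two steps---dualising against $X(f\chi)$ via Theorem \ref{thm:Sob-smooth} and then reducing to the Riesz energy $I_{s+1-2\varepsilon}(\mu(\sigma_x))$ through Lemma \ref{lem:sob-norm-est} and \eqref{eq:Riesz}---are correct and coincide with the paper's reduction (Lemma preceding Proposition \ref{prop:energyest}). The gap is in your third step, the direct energy estimate.

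The Frostman-and-layer-cake argument you outline can only extract the ball bound
\[
\mu(\sigma_x)\big(B((\theta_0,r_0),r)\big)\ \le\ \int_{|\pi_{\theta_0}(x)-r_0|\lesssim r}\sigma_x(\{|\theta-\theta_0|\le r\})\,d\mu(x)\ \lesssim\ C^2 r^{\,s+t-1},
\]
i.e.\ $\mu(\sigma_x)$ is $(s+t-1)$-Frostman. This yields $I_{v}(\mu(\sigma_x))<\infty$ only for $v<s+t-1$, and the target $v=s+1-2\varepsilon$ would force $t>2-2\varepsilon$, which is \emph{not} implied by $s+t>2$. Your refined splitting $(|a|^2+|b|^2)^{u/2}\geq |a|^{\alpha}|b|^{u-\alpha}$ hits the same wall: after the $\phi$-integration (cost: $\alpha<s$) you are left with $\int|\pi_\theta(x-y)|^{-(u-\alpha)}$, which is controlled by the $\sigma_x$-Frostman bound only if $u-\alpha<s$, and by the tube bound for $\mu$ only if $u-\alpha<t-1$. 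The first pair needs $s>1-2\varepsilon$, the second needs $t>2-2\varepsilon$; for instance $(s,t,\varepsilon)=(0.5,\,1.6,\,0.1)$ satisfies $s+t>2$ but neither inequality, so no admissible $\alpha$ exists. The appearance of $I_{2-s}(\mu)$ in your sketch does not arise from any order of integration I can reconstruct.

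The paper closes this gap by a genuinely different route (Proposition \ref{prop:energyest}): it transfers $\mu(\sigma_x)$ through point--line duality to a measure $\bar\mu(\bar\sigma_x)$, reduces the energy to the mollified $L^2$ bound $\|\bar\mu(\bar\sigma_x)\ast\psi_\delta\|_2^2\lesssim C^3\delta^{s-1-\varepsilon}$, and then invokes Proposition \ref{lem:des} (a Furstenberg-type incidence estimate quoted from \cite{Orponen2024Jan}). That incidence input is exactly what upgrades the naive exponent $s+t-1$ to $s+1-\varepsilon$; it is not recoverable from Frostman conditions alone.
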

We prove Proposition \ref{prop4} in this subsection. First, we reduce matters to an energy estimate for the measure $\mu(\sigma_x)$.

\begin{lemma}
	Suppose that $(\mu,\{\sigma_x\})$ is a configuration.	For $\sigma\in (0,1)$ and $\chi\in C_c^\infty(\R^2)$,
	\begin{equation}\label{eq:goo}
	\|X[\mu,\{\sigma_x\}]\cdot \chi\|^2_{\dot{H}^{\sigma/2}(\R^2)}\lesssim_{\sigma,\chi} I_{\sigma+1}(\mu(\sigma_x)).
	\end{equation}
\end{lemma}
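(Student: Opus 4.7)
The plan is to apply a duality argument: pass from the $\dot{H}^{\sigma/2}$-norm of $X[\mu,\{\sigma_{x}\}]\cdot\chi$ on $\R^{2}$ to a Sobolev norm of $\nu := \mu(\sigma_{x})$ on $\calA$ via the smoothing estimate for the $X$-ray transform (Theorem \ref{thm:Sob-smooth}), and then to the Riesz energy $I_{\sigma+1}(\nu)$ via the identity \eqref{eq:Riesz}. Proposition \ref{prop:adjRad} identifies $X[\mu,\{\sigma_{x}\}]$ with $X^{*}\nu$, so the left-hand side of \eqref{eq:goo} equals $\|\chi\cdot X^{*}\nu\|_{\dot{H}^{\sigma/2}(\R^{2})}^{2}$.

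First I would invoke the dual characterisation of the Sobolev norm on $\R^{2}$ (valid since $\sigma/2\in(0,1/2)$) together with the defining relation of the adjoint $X$-ray transform to rewrite
\begin{equation*}
\|\chi\cdot X^{*}\nu\|_{\dot{H}^{\sigma/2}(\R^{2})} = \sup_{\|f\|_{\dot{H}^{-\sigma/2}(\R^{2})}\le 1} \Big|\int X(f\chi)\,d\nu\Big|.
\end{equation*}
The dual pairing on $\calA$ recorded above \eqref{eq:dua} bounds the integral by $\|X(f\chi)\|_{\dot{H}^{(1-\sigma)/2}(\calA)}\,\|\nu\|_{\dot{H}^{(\sigma-1)/2}(\calA)}$, and Theorem \ref{thm:Sob-smooth} applied with the admissible exponent $s=-\sigma/2\in[-1/2,1/2]$ controls the first factor by $C_{\chi}\|f\|_{\dot{H}^{-\sigma/2}(\R^{2})}$. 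Taking the supremum yields
\begin{equation*}
\|\chi\cdot X^{*}\nu\|_{\dot{H}^{\sigma/2}(\R^{2})}\lesssim_{\chi} \|\nu\|_{\dot{H}^{(\sigma-1)/2}(\calA)}.
\end{equation*}

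Next, since $(\sigma-1)/2\in(-1/2,0)$, Lemma \ref{lem:sob-norm-est} bounds the right-hand side by $\|\nu\|+\|\nu\|_{\dot{H}^{(\sigma-1)/2}(\R^{2})}$. The identity \eqref{eq:Riesz} with $t=\sigma+1\in(1,2)$ identifies the second summand with a constant multiple of $I_{\sigma+1}(\nu)^{1/2}$. For the mass term I would use compact support: because $\spt\mu\subset[0,1]^{2}$ and $\theta\in[0,1]$, the support of $\nu$ is contained in a fixed bounded subset of $\calA$, so the kernel $|\cdot|^{-(\sigma+1)}$ is bounded below on $\spt\nu\times\spt\nu$ and therefore $\|\nu\|^{2}\lesssim I_{\sigma+1}(\nu)$. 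Squaring and chaining the inequalities then delivers \eqref{eq:goo}.

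No step presents a genuine obstacle; the argument is essentially a careful tracking of Sobolev indices. The only points to verify are that the exponents $-\sigma/2$, $(1-\sigma)/2$, $(\sigma-1)/2$ and $\sigma+1$ lie inside the admissible ranges of Theorem \ref{thm:Sob-smooth}, Lemma \ref{lem:sob-norm-est}, and \eqref{eq:Riesz}, respectively, which is straightforward for every $\sigma\in(0,1)$.
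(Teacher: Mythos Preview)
Your proposal is correct and follows essentially the same route as the paper: identify $X[\mu,\{\sigma_x\}]=X^{*}\mu(\sigma_x)$, use duality together with Theorem \ref{thm:Sob-smooth} to pass to $\|\mu(\sigma_x)\|_{\dot{H}^{(\sigma-1)/2}(\calA)}$, then apply Lemma \ref{lem:sob-norm-est}, \eqref{eq:Riesz}, and the compact support of $\mu(\sigma_x)$ to absorb the mass term into $I_{\sigma+1}$. The Sobolev indices and admissible ranges you check coincide exactly with those in the paper's argument.
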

\begin{proof}
	Recall that $X[\mu,\{\sigma_x\}] = X^*(\mu(\sigma_x))$ as measures, see Proposition \ref{prop:adjRad}. It is easy to see from Definition \ref{def:configuration} that $\spt(\mu(\sigma_x))\subset [0,1]\times [-2,2]$.
		
	By duality and the Sobolev smoothing of $X$-ray transform, Theorem \ref{thm:Sob-smooth}, we have
	\begin{align*}
	\| X^*(\mu(\sigma_x))\cdot\chi\|_{\dot{H}^{\sigma/2}(\R^2)} &= \sup_{\|f\|_{\dot{H}^{-\sigma/2}(\R^2)}\le 1} \left| \int_{\R^2} f\chi\, d X^*(\mu(\sigma_x)) \right| \\
	&= \sup_{\|f\|_{\dot{H}^{-\sigma/2}(\R^2)}\le 1} \left| \int_{\calA(2,1)} X(f\chi)\, d\mu(\sigma_x) \right|\\
	&\le \sup_{\|f\|_{\dot{H}^{-\sigma/2}(\R^2)}\le 1} \|X(f\chi)\|_{\dot{H}^{-\sigma/2+1/2}(\calA)}\|\mu(\sigma_x)\|_{\dot{H}^{\sigma/2-1/2}(\calA)}\\
	&\lesssim_\chi \|\mu(\sigma_x)\|_{\dot{H}^{\sigma/2-1/2}(\calA)}.
	\end{align*}
(The $\sup$ runs over $f\in \calS(\R^2)$.) Together with Lemma \ref{lem:sob-norm-est} this gives
	\begin{equation*}
	\| X^*(\mu(\sigma_x)) \cdot \chi\|^2_{\dot{H}^{\sigma/2}(\R^2)}\lesssim_{\sigma,\chi} \|\mu(\sigma_x)\|^{2} +\|\mu(\sigma_x)\|_{\dot{H}^{-\sigma/2+1/2}(\R^2)}^2\overset{\eqref{eq:Riesz}}{\sim_\sigma} I_{\sigma+1}(\mu(\sigma_x)),
	\end{equation*}
	using also $\mu(\sigma_{x})(\R^2)^2\lesssim_{\sigma} I_{\sigma + 1}(\mu(\sigma_{x}))$ by $\spt(\mu(\sigma_x))\subset [0,1]\times [-2,2]$.	\end{proof}

In light of \eqref{eq:goo}, Proposition \ref{prop4} is a consequence of the following result.
\begin{proposition}\label{prop:energyest}
	Suppose that $t\in [1,2]$ and $s\in [2-t,1]$. If $(\mu,\{\sigma_x\})$ is a configuration such that $\mu$ is $(t,C)$-Frostman, and each $\sigma_x$ is $(s,C)$-Frostman, then for every $\ve>0$
	\begin{equation}\label{eq:desiredest}
	I_{s+1-\ve}(\mu(\sigma_x))\lesssim_{s,t,\ve} C^3.
	\end{equation}
\end{proposition}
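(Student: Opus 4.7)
Expand the Riesz energy via the definition of $\mu(\sigma_x)$ as the quadruple integral
\begin{equation*}
I_{s+1-\ve}(\mu(\sigma_x)) = \iint \iint \frac{d\sigma_x(\theta_1)\, d\sigma_y(\theta_2)\, d\mu(x)\, d\mu(y)}{\bigl|\bigl(\theta_1-\theta_2,\, \pi_{\theta_1}(x) - \pi_{\theta_2}(y)\bigr)\bigr|^{s+1-\ve}},
\end{equation*}
and begin by simplifying the denominator. Writing $\pi_{\theta_1}(x)-\pi_{\theta_2}(y) = \pi_{\theta_1}(x-y) + (e_{\theta_1}-e_{\theta_2})\cdot y$ and using $|y| \lesssim 1$ on $\spt \mu$, a short case analysis produces the geometric lower bound $|(\theta_1-\theta_2,\, \pi_{\theta_1}(x)-\pi_{\theta_2}(y))|^2 \gtrsim |\theta_1-\theta_2|^2 + |\pi_{\theta_1}(x-y)|^2$, which will replace the original denominator throughout the estimate.

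The main step is to split the exponent as $s+1-\ve = \alpha + \beta$ with $\alpha, \beta \geq 0$, use the pointwise inequality $(|\theta_1-\theta_2|^2 + |\pi_{\theta_1}(x-y)|^2)^{(s+1-\ve)/2} \gtrsim |\theta_1-\theta_2|^\alpha\, |\pi_{\theta_1}(x-y)|^\beta$, and integrate in three stages. First, integrate $\theta_2$ against $\sigma_y$ via a layer-cake argument and the $s$-Frostman property, contributing $\lesssim_\ve C$ as long as $\alpha < s$. Second, integrate $y$ against $\mu$ using the tube estimate $\mu(\{y : |\pi_{\theta_1}(x-y)| \leq \rho\}) \lesssim C \rho^{t-1}$, contributing $\lesssim_\ve C$ as long as $\beta < t-1$. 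Finally, the residual $(x, \theta_1)$-integration against $\mu \otimes \sigma_{\cdot}$ contributes a further factor $\lesssim C$, assembling to the claimed $C^3$ bound provided admissible $\alpha, \beta$ can be chosen.

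The main obstacle is that the joint constraint $\alpha < s$, $\beta < t-1$, $\alpha + \beta = s+1-\ve$ forces $\ve > 2-t$, which is more restrictive than ``every $\ve > 0$''. To close the gap, my plan is to exchange the Fubini order in the regime where $|\pi_{\theta_1}(x-y)|$ is the critical singularity: for fixed $y$, rewrite $|\pi_{\theta_1}(x-y)| = |x-y|\cdot |\cos(2\pi(\theta_1 - \alpha_{x,y}))|$, and then integrate the cosine-type singularity against $\sigma_x$ using the $s$-Frostman hypothesis. Distributing the ``deficit'' exponent $\max(0,\, s+1-\ve-(t-1))$ between the $\sigma_x$-integration (which admits singular exponents up to $s$) and the tube/$\sigma_y$-integration (admitting up to $t-1$ and $s$ respectively), the hypothesis $s + t \geq 2$ is precisely what ensures an admissible distribution exists for every $\ve > 0$, with the implicit constant degrading polynomially as $\ve \to 0$. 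I expect this balancing of the three Frostman estimates, and the verification that the admissible range of the split remains non-empty as $\ve \to 0$, to be the main technical step.
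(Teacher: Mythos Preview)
Your geometric lower bound on the denominator and your identification of the obstacle are both correct, but the proposed fix does not work. After integrating out $\theta_2$ with the $s$-Frostman condition you are left (up to constants) with
\[
\int\!\!\int |\pi_{\theta_1}(x-y)|^{-(1-\ve)}\,d\sigma_x(\theta_1)\,d\mu(y),
\]
and the whole question is whether this double integral is $\lesssim C^2$ for every $\ve>0$. Your plan is to write $|\pi_{\theta_1}(x-y)| = |x-y|\,|\cos(2\pi(\theta_1-\phi_{xy}))|$ and distribute exponents between the $\mu$-tube integration (worth $t-1$) and the $\sigma_x$-angular integration (worth $s$). But those two factors carry the \emph{same} exponent: $|\pi_{\theta_1}(x-y)|^{-\beta} = |x-y|^{-\beta}|\cos(\cdot)|^{-\beta}$, so you cannot assign them independently. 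Computing the sublevel set directly (by Fubini in either order) gives only
\[
(\sigma_x\times\mu)\bigl(\{(\theta_1,y):|\pi_{\theta_1}(x-y)|\le\rho\}\bigr)\ \lesssim\ C^2\,\rho^{\max(s,\,t-1)},
\]
and this is sharp from the Frostman hypotheses alone. Hence the elementary approach yields the bound only for $\ve > \min(1-s,\,2-t)$, which is strictly positive whenever $s<1$ and $t<2$. The ``three Frostman budgets summing to $2s+t-1$'' heuristic over-counts: the tube integration over $y$ and the angular integration over $\theta_1$ are competing for the \emph{same} singularity $|\pi_{\theta_1}(x-y)|$, not separate ones.

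What is actually needed to close the gap is an incidence statement: a $t$-Frostman measure cannot put mass $\sim\rho^{t-1}$ simultaneously on an $s$-dimensional family of $\rho$-tubes through a common point when $s+t\ge 2$. This is genuine Furstenberg-set input and is not a consequence of layer-cake plus Fubini. The paper does not attempt a direct energy computation at all; instead it passes to the point--line dual picture, reduces $I_{s+1-\ve}(\mu(\sigma_x))$ to an $L^2$ bound $\|\bar\mu(\bar\sigma_x)\ast\psi_\delta\|_2^2\lesssim C^3\delta^{s-1-\ve}$, and then invokes a discretised incidence estimate (Proposition~\ref{lem:des}, which is \cite[Proposition 6.18]{Orponen2024Jan}) for weighted sums of tubes. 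That black box is exactly where the condition $s+t\ge 2$ enters with its full strength.
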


The proof of Proposition \ref{prop:energyest} is based on \cite[Proposition 6.18]{Orponen2024Jan}, restated below as Proposition \ref{lem:des}. 
\begin{definition}[$\delta$-measures]
	A collection of non-negative weights $\boldsymbol{\mu}=\{\mu(p)\}_{p\in\calD_\delta}$ with $\|\boldsymbol{\mu}\|=\sum_{p\in\calD_\delta}\mu(p)\le 1$ is called a $\delta$-measure. We say that a $\delta$-measure $\boldsymbol{\mu}$ is a $(\delta,s,C)$-measure if $\boldsymbol{\mu}(Q)\le C\ell(Q)^s$ for every $Q\in\calD_\Delta$ with $\delta\le\Delta\le 1$. 
\end{definition}

For every $T\in\calT^\delta$ let $p_T\in\calD_\delta$ denote the unique $\delta$-cube such that $T=\cup\mathbf{D}(p_T)$.
\begin{proposition}\label{lem:des}
	Suppose $t\in [1,2]$ and $s\in [2-t,1]$. Assume that $\boldsymbol{\mu}$ is a $(\delta,t,C_\mu)$-measure, and for every $T\in\calT^\delta$ there is a $(\delta,s,C_\sigma)$-measure $\boldsymbol{\sigma}_T$ supported on $\calD_\delta(T)$. Then, for every $\ve>0$
	\begin{equation}\label{eq:desiredest2}
	\int \bigg(\sum_{T\in\calT^\delta}\mu(p_T)\sum_{q\in \calD_\delta}\big(\sigma_T(q)\cdot \delta^{-2}\1_q\big)\bigg)^2\, dx\lesssim_{\ve} C_\mu C_\sigma^2 \delta^{s-1-\ve}.
	\end{equation}
\end{proposition}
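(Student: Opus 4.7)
Write $g$ for the integrand on the left-hand side. Expanding the square and interchanging summations,
\begin{equation*}
\int g^{2}\, dx \;=\; \delta^{-2}\!\!\sum_{T,T'\in\calT^{\delta}}\mu(p_{T})\mu(p_{T'})\sum_{q\in\calD_{\delta}}\sigma_{T}(q)\sigma_{T'}(q),
\end{equation*}
so it suffices to control this weighted double sum over pairs of tubes. The inner ``overlap'' $\omega(T,T'):=\sum_{q}\sigma_{T}(q)\sigma_{T'}(q)$ vanishes unless $T\cap T'\ne\emptyset$. On this set, writing $\Delta^{\ast}:=\max(\delta,|\sigma(T)-\sigma(T')|)$, the intersection $T\cap T'$ lies in a parallelogram of diameter $\lesssim \delta/\Delta^{\ast}$, so the $(\delta,s,C_{\sigma})$-Frostman hypothesis gives both $\sigma_{T}(q)\le C_{\sigma}\delta^{s}$ pointwise and $\boldsymbol{\sigma}_{T'}(T\cap T')\le C_{\sigma}(\delta/\Delta^{\ast})^{s}$, whence $\omega(T,T')\le C_{\sigma}^{2}\delta^{2s}(\Delta^{\ast})^{-s}$.

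Next I would partition the pair sum into dyadic scales $\Delta^{\ast}\sim 2^{k}\delta$ for $k=0,1,\ldots,\log_{2}(1/\delta)$. The key geometric observation is that, for fixed $T$, the tubes $T'$ with $T\cap T'\ne\emptyset$ and slope separation $\sim 2^{k}\delta$ have their parameters $p_{T'}$ confined (within the compact parameter range forced by the supports of the $\boldsymbol{\sigma}_{T'}$) to a parallelogram in $\calD_{\delta}([-1,1)\times\R)$ whose slope-width and intercept-width are both $\sim 2^{k}\delta$. Covering this parallelogram by $O(1)$ dyadic cubes of side $2^{k}\delta$ and invoking the $(\delta,t,C_{\mu})$-Frostman property of $\boldsymbol{\mu}$ yields $\sum_{T'}\mu(p_{T'})\lesssim C_{\mu}(2^{k}\delta)^{t}$ on each such slice. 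Combined with the overlap bound and summed over $T$ using $\sum_{T}\mu(p_{T})\le 1$, this gives a contribution $\lesssim C_{\mu}C_{\sigma}^{2}\delta^{s+t-2}\cdot 2^{k(t-s)}$ per scale.

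The main obstacle is that telescoping these contributions over $k$ only produces $C_{\mu}C_{\sigma}^{2}\delta^{2s-2}$ up to logarithmic factors, which is a factor of $\delta^{1-s}$ weaker than the target $\delta^{s-1-\ve}$ when $s<1$. Recovering this factor is the heart of the argument and requires a more refined approach. The natural route is to exploit the point--line duality $\mathbf{D}$ from Definition \ref{def:pointLineDuality}: the tubes $T$ containing a fixed $\delta$-cube $q$ correspond, via $\mathbf{D}$, to parameters in a \emph{dual} $\delta$-tube $\tau_{q}$ in $(a,b)$-space. Reorganising the estimate on the parameter side converts $\int g^{2}$ into a weighted $L^{2}$-bound for an adjoint $X$-ray-type operator applied to $\boldsymbol{\mu}$, and a C\'ordoba-style bilinear Cauchy--Schwarz that uses the $(\delta,s)$- and $(\delta,t)$-Frostman properties in both factor slots simultaneously yields the sharp exponent; the $\delta^{-\ve}$ buffer absorbs the surviving logarithmic factors from the dyadic decomposition. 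In practice, since the proposition is stated as a direct restatement of \cite[Proposition 6.18]{Orponen2024Jan}, the cleanest route is simply to invoke that cited result.
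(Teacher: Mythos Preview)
Your exploratory $L^{2}$ expansion and dyadic pigeonholing are correct as far as they go, and you correctly identify that the naive argument only yields $\delta^{2s-2}$. But your final fallback --- ``simply invoke \cite[Proposition 6.18]{Orponen2024Jan}'' --- does not work as stated, and this is precisely the point of the paper's own proof.

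The cited \cite[Proposition 6.18]{Orponen2024Jan} carries the hypothesis $s+t\le 2$ and concludes with exponent $\delta^{2s+t-3-\ve}$, not $\delta^{s-1-\ve}$. Here, however, the standing assumption is $s\in[2-t,1]$, i.e.\ $s+t\ge 2$, so the cited result is not directly applicable. The paper closes this gap with a one-line reduction: set $\tau:=2-s\in[1,t]$, observe that $s+\tau=2$ so the cited hypothesis holds, and note that any $(\delta,t,C_{\mu})$-measure is automatically a $(\delta,\tau,C_{\mu})$-measure since $\tau\le t$. Applying \cite[Proposition 6.18]{Orponen2024Jan} with the pair $(\tau,s)$ then gives the bound $C_{\mu}C_{\sigma}^{2}\delta^{2s+\tau-3-\ve}=C_{\mu}C_{\sigma}^{2}\delta^{s-1-\ve}$, which is exactly \eqref{eq:desiredest2}. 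Your proposal is missing this reduction; without it, the citation is to a statement whose hypotheses fail and whose exponent differs from the target.
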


\begin{proof} This is otherwise \cite[Proposition 6.18]{Orponen2024Jan}, except that \cite[Proposition 6.18]{Orponen2024Jan} assumed $s + t \leq 2$, and the right hand side of \eqref{eq:desiredest2} is replaced by
\begin{equation}\label{form82} \lesssim_{\epsilon} C_\mu C_\sigma^2 \delta^{2s + t - 3 -\ve}. \end{equation}
To derive Proposition \ref{lem:des} as stated, write $\tau := 2 - s \in [0,t]$, and note that $s + \tau \leq 2$. Further, $\boldsymbol{\mu}$ is automatically a $(\delta,\tau,C_{\mu})$-measure, since $\tau \leq t$. So, we may apply \cite[Proposition 6.18]{Orponen2024Jan} with parameters $\tau,s$. Then \eqref{form82} becomes $\lesssim_{\epsilon} C_{\mu}C_{\sigma}^{2}\delta^{s - 1 - \epsilon}$. \end{proof}

We then prove Proposition \ref{prop:energyest}. 
\begin{proof}[Proof of Proposition \ref{prop:energyest}] Recall that $\mu(\sigma_{x})$ is a measure on $[0,1]\times [-2,2]\subset [0,1]\times\R$, which can be identified with the space of lines $\calA(2,1)$ using the 2-to-1 map $(\theta,r)\mapsto\pi_{\theta}^{-1}\{r\}$. Set $\bF(\theta,r)=\pi_{\theta}^{-1}\{r\}$. At this point it will be more convenient for us to work with a different parametrization of $\calA(2,1)$ -- one that preserves the incidence relation. 
	
	Without loss of generality assume that $\spt(\sigma_x)\subset [1/8, 3/8]$ for each $x$, so that $\mu(\sigma_{x})$ is supported in $[1/8, 3/8]\times [-2,2]$. This can be achieved by decomposing the configuration $(\mu,\{\sigma_{x}\})$ into two configurations, each which satisfy this property up to a rotation, and then proving the desired estimate for each of them separately. Set $S=\spt(\mu(\sigma_x))\subset [1/8, 3/8]\times [-2,2]$.
	
	Note that $\bF|_{S}$ is bilipschitz onto its image, with absolute Lipschitz constant. Recalling that the duality map $\mathbf{D}:\R^2\to\calA(2,1)$ from Definition \ref{def:pointLineDuality} is locally bilipschitz, we get that for $\bar{S}\coloneqq \mathbf{D}^{-1}(\bF(S))$ the map $\mathbf{D}|_{\bar{S}}$ is bilipschitz onto $\bF(S)$. Let $\mathbf{D}^* := \mathbf{D}^{-1}|_{\mathbf{F}(S)}$.
	
	Consider the measure 
	\begin{equation}\label{form83}
	\bar{\mu}(\bar{\sigma}_x)\coloneqq \mathbf{D}^{\ast}_{\sharp}\mathbf{F}_{\sharp}(\mu(\sigma_{x})) = (\mathbf{D}^{\ast} \circ \mathbf{F})_{\sharp}(\mu(\sigma_{x})),	\end{equation}
	supported on $\bar{S}$. Since $\mathbf{D}^{\ast} \circ \mathbf{F}: S\to \bar{S}$ is bilipschitz, $I_{s+1-\ve}(\mu(\sigma_x))\lesssim I_{s+1-\ve}(\bar{\mu}(\bar{\sigma}_x))$. Thus, to get \eqref{eq:desiredest} it suffices to prove 
	\begin{equation}\label{eq:aa}
	I_{s+1-\ve}(\bar{\mu}(\bar{\sigma}_x))\lesssim_{s,t,\ve}C^3.
	\end{equation}
	
	By \eqref{eq:Riesz} we see that \eqref{eq:aa} follows from the $L^2$-estimate
	\begin{equation}\label{eq:des2}
	\|\bar{\mu}(\bar{\sigma}_x)\ast\psi_\delta\|^2_{L^2}\lesssim_{s,t,\ve} C^3\, \delta^{(s+1-\ve)-2} = C^3\, \delta^{s-1-\ve},
	\end{equation}
	where $\psi_\delta(x)=\delta^{-2}\psi(x/\delta)$ is a standard approximate identity with $\int\psi=1,$ smooth and radially decreasing, $\1_{B(1/2)}\le\psi\lesssim \1_{B(1)}$. Our goal now is \eqref{eq:des2}.
	
	For every $T\in\calT^\delta$ with $\mu(p_{T}) > 0$, and every $q \in \mathcal{D}_{\delta}(\R^{2})$, we define
	\begin{equation}\label{eq:sigmdef}
	\sigma_T(q) \coloneqq \frac{1}{{\mu}(p_T)}\int_{p_T}\int_{0}^{1}\1_{3q}(\mathbf{D}^*\circ\bF(\theta,\pi_{\theta}(x)))\, d\sigma_x(\theta)\,d\mu(x).
	\end{equation}
	The Frostman properties of $\mu$ and $\sigma_x$ imply that $\boldsymbol{\mu}=\{{\mu}(p)\}_{p\in\calD_\delta}$ is a $(\delta,t,C_{\mu}^{\prime})$-measure, and each $\boldsymbol{\sigma}_T = \{\sigma_T(q)\}_{q\in\calD_\delta}$ is a $(\delta,s,C_{\sigma}^{\prime})$-measure, with $C^{\prime}_{\sigma}\sim C$, $C^{\prime}_{\mu} \sim C_{\mu}$. 
	
	We claim that $\spt (\boldsymbol{\sigma}_{T}) \subset \mathcal{D}_{\delta}(10T)$. To see this, suppose $q \in \mathcal{D}_{\delta}(\R^{2})$ with $\sigma_T(q) \neq 0$, so in particular $\mathbf{D}^*\circ\bF(\theta,\pi_{\theta}(x))\in 3q$ for some  $x\in p_T$ . This is equivalent to 
	\begin{equation*}
	\bF(\theta,\pi_{\theta}(x))=\pi_{\theta}^{-1}\{\pi_\theta(x)\}\in\mathbf{D}(3q).
	\end{equation*}
	Since $x\in \pi_{\theta}^{-1}\{\pi_\theta(x)\}$, we get $x\in \cup\mathbf{D}(3q)$, and so $p_T\cap  \cup\mathbf{D}(3q)\neq\varnothing$. Since $\mathbf{D}$ preserves incidences, this implies $T\cap 3q\neq\varnothing$. Thus, $q \subset 10T$, completing the proof of the claim.
	
	Fix $y\in \spt(\bar{\mu}(\bar{\sigma}_{x}) \ast \psi_{\delta}) \subset 2\bar{S}$, and let $q\in\calD_\delta(2\bar{S})$ be the unique $\delta$-cube containing $y$. Since $\spt(\psi_\delta(\cdot-y))\subset 3q$ we have
	\begin{align*}
	(\bar{\mu}(\bar{\sigma}_x)\ast\psi_\delta)(y) &= \int \psi_\delta(z-y)\, d\bar{\mu}(\bar{\sigma}_x)(z) \lesssim \delta^{-2} \int \1_{3q}(z)\, d\bar{\mu}(\bar{\sigma}_x)(z)\\
	&\stackrel{\eqref{form83}}{=} \delta^{-2}\int_{[0,1]^2}\int_0^1 \1_{3q}(\mathbf{D}^*\circ\bF(\theta,\pi_{\theta}(x)))\, d\sigma_x(\theta)\,d\mu(x) \\
	&\le\delta^{-2}\sum_{T\in\calT^\delta} \int_{p_T}\int_{0}^{1}\1_{3q}(\mathbf{D}^*\circ\bF(\theta,\pi_{\theta}(x)))\, d\sigma_x(\theta)\,d\mu(x)\\
	&\stackrel{\eqref{eq:sigmdef}}{=} \delta^{-2}\sum_{T\in\calT^\delta}{\mu}(p_T)\sigma_{T}({q}).
	\end{align*}
	This estimate can be rewritten as the pointwise inequality
	\begin{equation*}
	\bar{\mu}(\bar{\sigma}_x)\ast\psi_\delta\lesssim \sum_{T\in\calT^\delta}\mu(p_T)\sum_{q\in\calD_\delta}\big(\sigma_T(q)\cdot\delta^{-2}\1_q\big).
	\end{equation*}	
	Since $\boldsymbol{\mu}$ and $\boldsymbol{\sigma}_T$ satisfy the assumptions of Proposition \ref{lem:des}, the estimate above together with \eqref{eq:desiredest2} gives \eqref{eq:des2}. (To be precise, the support of $\boldsymbol{\sigma}_{T}$ is potentially somewhat outside $[0,1]^{2}$, but nonetheless in some ball of absolute constant radius, and Proposition \ref{lem:des} has an easy extension to this situation.) \end{proof}

\subsection{Proof of Theorem \ref{t2}} In this subsection we use Proposition \ref{prop4} to prove Theorem \ref{t2}. We need the following proposition relating Sobolev estimates to Hausdorff content.

\begin{proposition}\label{prop3} Let $d \in \N$, $\epsilon_{1},\epsilon_{2},\epsilon_{3} > 0$, $s,t \in (0,d]$ such that 
\begin{displaymath} s + t > d + 2(\epsilon_{1} + \epsilon_{2} + \epsilon_{3}). \end{displaymath}
Let $\varphi \in C^{\infty}_{c}(\R^{d})$ be non-negative with $\int \varphi = 1$. Then, there exists $\Delta_{0} = \Delta_{0}(d,\epsilon_{1},\epsilon_{2},\epsilon_{3},\varphi,s,t) > 0$ such that the following holds for all $\Delta \in (0,\Delta_{0}]$. Let $X$ be a Radon measure on $\R^{d}$ such that $\|X\|_{\dot{H}^{s/2}} \leq \Delta^{-\epsilon_{3}}$. Then,
\begin{displaymath} \mathcal{H}^{t}_{\infty}(X(\Delta,{\epsilon_{1}}) \, \setminus \, \spt X) \leq \Delta^{\epsilon_{2}}, \end{displaymath}
where $X(\Delta, \epsilon) = \{x \in B(1) : X_{\Delta}(x) \geq \Delta^{\epsilon}\}$, and $X_{\Delta} = X \ast \varphi_{\Delta}$.  \end{proposition}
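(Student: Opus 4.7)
The strategy is a proof by contradiction: pair a Frostman measure on the putative exceptional set against the mollified measure $X_{\Delta}$, and squeeze this pairing between a lower bound (from the pointwise hypothesis defining $X(\Delta,\epsilon_{1})$) and an upper bound (from Sobolev duality applied to $X \in \dot H^{s/2}$).

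First I would suppose, for contradiction, that $\mathcal{H}^{t}_{\infty}(Y) > \Delta^{\epsilon_{2}}$ where $Y := X(\Delta,\epsilon_{1})\setminus \spt X$, and apply Frostman's lemma to extract a compactly supported measure $\nu$ with $\spt \nu \subset Y$, uniform Frostman bound $\nu(B(x,r)) \leq r^{t}$ for all $x,r$, and total mass $\nu(\R^{d}) \gtrsim \Delta^{\epsilon_{2}}$. Since $X_{\Delta} \geq \Delta^{\epsilon_{1}}$ pointwise on $\spt \nu$, the lower bound is immediate:
\begin{equation*}
\int X_{\Delta}\, d\nu \;\geq\; \Delta^{\epsilon_{1}}\, \nu(\R^{d}) \;\gtrsim\; \Delta^{\epsilon_{1}+\epsilon_{2}}.
\end{equation*}

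For the upper bound I would rewrite $\int X_{\Delta}\, d\nu = \langle X,\, \tilde\varphi_{\Delta}\ast \nu\rangle$ by Fubini (here $\tilde\varphi(y) := \varphi(-y)$), and apply the Sobolev duality pairing
\begin{equation*}
\Big|\int X_{\Delta}\, d\nu\Big| \;\leq\; \|X\|_{\dot H^{s/2}}\cdot \|\tilde\varphi_{\Delta}\ast \nu\|_{\dot H^{-s/2}} \;\leq\; \Delta^{-\epsilon_{3}}\cdot \|\tilde\varphi_{\Delta}\ast \nu\|_{\dot H^{-s/2}}.
\end{equation*}
The core technical step is the energy estimate
\begin{equation*}
\|\tilde\varphi_{\Delta}\ast \nu\|_{\dot H^{-s/2}}^{2} \;=\; \int |\hat\nu(\xi)|^{2}|\hat\varphi(\Delta\xi)|^{2}|\xi|^{-s}\, d\xi \;\lesssim\; \Delta^{s+t-d}\,\nu(\R^{d}),
\end{equation*}
which I would establish by inverting to physical space: the kernel $\mathcal{F}^{-1}(|\hat\varphi(\Delta\cdot)|^{2}|\cdot|^{-s})(u)$ is $\sim \Delta^{s-d}$ on $\{|u| \leq \Delta\}$ and decays like $|u|^{s-d}$ for $|u| > \Delta$, while the Frostman property $\nu(B(x,r)) \leq r^{t}$ together with the localisation $\spt \nu \subset [\spt X]^{C\Delta}$ allows one to estimate the double integral $\iint K(x-y)\, d\nu(x)\, d\nu(y)$ by $\Delta^{s+t-d}\nu(\R^{d})$ after a dyadic / clustering analysis at the scale $\Delta$. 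The diagonal contribution $\{|x-y|\leq \Delta\}$ yields the factor $\Delta^{s-d}\cdot \Delta^{t}\cdot \nu(\R^{d}) = \Delta^{s+t-d}\nu(\R^{d})$ directly from Frostman, and the off-diagonal part is handled by exploiting that $\spt \nu$ is squeezed into a $C\Delta$-tubular neighbourhood of $\spt X$.

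Combining the two bounds gives $\Delta^{\epsilon_{1}+\epsilon_{2}} \lesssim \Delta^{-\epsilon_{3}+(s+t-d)/2}$, so $s+t-d \leq 2(\epsilon_{1}+\epsilon_{2}+\epsilon_{3}) + o(1)$ as $\Delta \to 0$; choosing $\Delta_{0}$ small enough to absorb the implicit constants and error contradicts the strict hypothesis $s+t > d+2(\epsilon_{1}+\epsilon_{2}+\epsilon_{3})$. The principal obstacle is the energy estimate above: one must improve the crude Fourier--Riesz bound $\|\tilde\varphi_{\Delta}\ast \nu\|_{\dot H^{-s/2}}^{2} \lesssim I_{d-s}(\nu) \lesssim \nu(\R^{d})$ by the gain $\Delta^{s+t-d}$, which requires carefully using both the mollification scale $\Delta$ and the fact that $\spt \nu$ lies in the thin shell $[\spt X]^{C\Delta} \setminus \spt X$. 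This step is philosophically analogous to the role of the high--low decomposition in the approach of Guth--Solomon--Wang, and is ultimately what allows the hypothesis $s+t > d$ (plus $\epsilon$-headroom) to be converted into a quantitative bound on Hausdorff content.
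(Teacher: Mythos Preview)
Your setup is correct and matches the paper: contradiction, Frostman measure $\nu$ on $Y = X(\Delta,\epsilon_{1})\setminus \spt X$, and the lower bound $\int X_{\Delta}\,d\nu \gtrsim \Delta^{\epsilon_{1}+\epsilon_{2}}$. The gap is in your upper bound. The claimed energy estimate
\[
\|\tilde\varphi_{\Delta}\ast\nu\|_{\dot H^{-s/2}}^{2} = \int |\hat\nu(\xi)|^{2}|\hat\varphi(\Delta\xi)|^{2}|\xi|^{-s}\,d\xi \;\lesssim\; \Delta^{s+t-d}\,\nu(\R^{d})
\]
is simply false. As $\Delta\to 0$ one has $|\hat\varphi(\Delta\xi)|^{2}\to 1$ pointwise, so by monotone (or dominated) convergence the left side tends to $c\,I_{d-s}(\nu) > 0$, not to zero. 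Your own kernel analysis already shows where this fails: the diagonal contribution $\{|x-y|\leq\Delta\}$ indeed gives $\Delta^{s+t-d}\nu(\R^{d})$, but the off-diagonal part $\iint_{|x-y|>\Delta}|x-y|^{s-d}\,d\nu\,d\nu$ is of order $I_{d-s}(\nu)$ with no $\Delta$-gain whatsoever. The remark that ``$\spt\nu$ is squeezed into a $C\Delta$-tubular neighbourhood of $\spt X$'' cannot help here, because this energy integral involves only $\nu$ and knows nothing about $X$.

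The missing idea is to use the disjointness $\spt\nu\cap\spt X=\emptyset$ \emph{before} passing to Fourier side. Since the supports are disjoint compact sets one has $\int\hat X\,\overline{\hat\nu}\,d\xi = 0$, and hence
\[
\int X_{\Delta}\,d\nu \;=\; \int \hat\varphi(\Delta\xi)\,\hat X(\xi)\,\overline{\hat\nu(\xi)}\,d\xi \;=\; -\int \bigl[1-\hat\varphi(\Delta\xi)\bigr]\,\hat X(\xi)\,\overline{\hat\nu(\xi)}\,d\xi.
\]
Now the multiplier $|1-\hat\varphi(\Delta\xi)|\lesssim |\Delta\xi|^{\epsilon_{4}}$ (for any $\epsilon_{4}\leq 1$) provides the $\Delta$-gain. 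Choosing $\epsilon_{4}$ with $\epsilon_{1}+\epsilon_{2}+\epsilon_{3}<\epsilon_{4}$ and $d-s+2\epsilon_{4}<t$, Cauchy--Schwarz gives
\[
\Bigl|\int X_{\Delta}\,d\nu\Bigr| \;\lesssim\; \Delta^{\epsilon_{4}}\,\|X\|_{\dot H^{s/2}}\,\|\nu\|_{\dot H^{-s/2+\epsilon_{4}}} \;\lesssim\; \Delta^{\epsilon_{4}-\epsilon_{3}},
\]
using only the crude bound $\|\nu\|_{\dot H^{-s/2+\epsilon_{4}}}^{2}\sim I_{d-s+2\epsilon_{4}}(\nu)\lesssim \nu(\R^{d})$ from Frostman. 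This contradicts the lower bound $\Delta^{\epsilon_{1}+\epsilon_{2}}$ for $\Delta$ small. In short: the $\Delta$-gain comes from the \emph{high-frequency} factor $1-\hat\varphi(\Delta\xi)$, which is only accessible after exploiting $\langle X,\nu\rangle=0$; it cannot be extracted from $\|\varphi_{\Delta}\ast\nu\|_{\dot H^{-s/2}}$ alone.
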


\begin{proof} We make a counter assumption: $\mathcal{H}^{t}_{\infty}(X(\Delta,{\epsilon_{1}}) \, \setminus \, \spt X) > \Delta^{\epsilon_{2}}$. Then Frostman's lemma produces a measure $\nu$ with $\spt \nu \subset X(\Delta,{\epsilon_{1}}) \, \setminus \, \spt X \subset B(1)$, and satisfying
\begin{displaymath} \nu(B(1))\sim_d \cH^{t}_{\infty}(X(\Delta,\epsilon_{1}) \setminus \spt X) \geq \Delta^{\epsilon_{2}} \quad \text{and} \quad \nu(B(x,r)) \le r^{t} \text{ for all } x \in \R^{d}, \, r > 0. \end{displaymath}
 Since $s + t > d + 2(\epsilon_{1} + \epsilon_{2} + \epsilon_{3})$, there exists $\epsilon_{4} > \epsilon_{1} + \epsilon_{2} + \epsilon_{3}$ such that $t > d -s + 2\epsilon_{4}$. Then, using the higher dimensional version of \eqref{eq:Riesz} (see Theorem 3.10 in \cite{MR3617376})
\begin{equation}\label{form52} \int_{\R^{d}} |\hat{\nu}(\xi)|^{2}|\xi|^{- s + 2\epsilon_{4}} \, d\xi \lesssim I_{d - s + 2\epsilon_{4}}(\nu) \le \frac{d-s+2 \epsilon_{4}}{t-(d-s+2 \epsilon_{4})} \nu(B(1)). \end{equation}
The implicit constants above, depend on $d, s, \epsilon_{4}$ or equivalently, $d,s,t,\epsilon_{1},\epsilon_{2}, \epsilon_{3}$.  We claim that 
$$
0 = \int \widehat{X}(\xi) \bar{\hat{\nu}}(\xi) d \xi.
$$
Indeed, since $\spt \nu , \spt X$ are disjoint compact sets, $\rho \vcentcolon = \dist(\spt \nu, \spt X) >0$. Consequently, whenever $\delta < \frac{\rho}{2 \diam \spt \varphi}$,
\begin{align*}
0 &= \int X_{\delta} \nu_{\delta} = \int \widehat{X_{\delta}}(\xi) \widehat{\nu_{\delta}}(\xi) d \xi = \int \widehat{\varphi}(\delta \xi)^{2} \widehat{X}(\xi) \hat{\nu}(\xi) d \xi \\
& = \int \widehat{X}(\xi) \hat{\nu}(\xi) d \xi + \int(1 - \widehat{\varphi}(\delta \xi)^{2}) \widehat{X}(\xi) \hat{\nu}(\xi) d \xi = \vcentcolon A_{1} + A_{2}.
\end{align*}
In particular, $A_{2} = - A_{1}$ so the claim follows from showing $|A_{2}| = o(\delta)$. To this end, note $|1 - \hat{\varphi}(\delta \xi)| = |\hat{\varphi}(0) - \hat{\varphi} (\delta \xi)| \lesssim \min \{ |\delta \xi|,1\} \le \min\{ |\delta \xi|,1\}^{\epsilon_{4}}$ which in turn implies $(1 - \varphi(\delta \xi)^{2}) \lesssim |\delta \xi|^{\epsilon_{4}}$.
Therefore, 
\begin{align*}
|A_{2}| &\lesssim \delta^{\epsilon_{4}}  \left| \int_{\R^{d}} |\xi|^{\epsilon_{4}} \widehat{X}(\xi) \bar{\hat{\nu}}(\xi) d \xi \right| \lesssim \delta^{\epsilon_{4}}  \Big( \int_{\R^{d}} |\widehat{X}(\xi)|^{2}|\xi|^{s} \, d\xi \Big)^{1/2} \Big( \int_{\R^{d}} |\hat{\nu}(\xi)|^{2}|\xi|^{-s + 2\epsilon_{4}} \, d\xi \Big)^{1/2}  \\
& = \delta^{\epsilon_{4}} \|X\|_{\dot{H}^{s/2}} \|\nu\|_{\dot{H}^{-s/2+\epsilon_{4}}} = o(\delta),
\end{align*}
completing the claim. On the other hand,
\begin{displaymath} 0 = \left| \int \widehat{X}(\xi)\bar{\hat{\nu}}(\xi) \, d\xi \right| \geq \left| \int \widehat{\varphi}(\Delta \xi)\widehat{X}(\xi)\bar{\hat{\nu}}(\xi) \, d\xi \right| - \left| \int [1 - \widehat{\varphi}(\Delta \xi)]\widehat{X}(\xi)\bar{\hat{\nu}}(\xi) \right| =: I_{1} - I_{2}. \end{displaymath}
Here,
\begin{displaymath} I_{1} = \int (X \ast \varphi_{\Delta})(x) \, d\nu(x) \geq \Delta^{\epsilon_{1} + \epsilon_{2}}, \end{displaymath} 
because $(X \ast \varphi_{\Delta})(x) = X_{\Delta}(x) \geq \Delta^{\epsilon_{1}}$ for $x \in \spt \nu \subset X(\Delta,\epsilon_{1})$. $I_{2}$ can be estimated similarly to $|A_{2}|$, to conclude 
$$
I_{2} \lesssim \Delta^{\epsilon_{4}} \|X\|_{\dot{H}^{s}} \|\nu\|_{\dot{H}^{-s+\epsilon_{4}}} \lesssim \Delta^{\epsilon_{4} - \epsilon_{3} }  \cH^{t}_{\infty}(X(\Delta, \epsilon_{1}) \setminus \spt X),
$$
since $\|X\|_{\dot{H}^{s/2}} \lesssim \Delta^{-\epsilon_{3}}$ by hypothesis and $\|\nu\|_{\dot{H}^{- s/2 + \epsilon_{4}}} \lesssim \cH^{t}_{\infty}(X(\Delta, \epsilon_{1}) \setminus \spt X)$.
This yields the desired contradiction $I_{1} - I_{2} \geq \Delta^{\epsilon_{1} + \epsilon_{2}} - O(\Delta^{\epsilon_{4} - \epsilon_{3}}) > 0$ for all $\Delta > 0$ sufficiently small since $\epsilon_{4} > \epsilon_{1} + \epsilon_{2} + \epsilon_{3}$.\end{proof}

\begin{proof}[Proof of Theorem \ref{t2}] Write $\sigma := \sigma(s,t) := \tfrac{1}{2}[(2 - t) + s] \in (2 - t,s)$. We claim that the conclusion of Theorem \ref{t2} is valid for any
\begin{displaymath} 0 < \eta < \frac{\sigma + t - 2}{8}, \end{displaymath} 
provided that $\Delta > 0$ is small enough. Fix $\eta > 0$ as above and let $\chi\in C_c^\infty(\R^2)$ satisfy $\1_{B(1)}\le \chi \le \1_{B(2)}$. Then, under the hypotheses of Theorem \ref{t2}, by Proposition \ref{prop4}, and assuming $\Delta > 0$ small enough,
\begin{displaymath} X := X[\mu|_{G_{1}},\{\mathcal{L}_{x}\}]\chi \in \dot{H}^{\sigma/2} \quad \text{ with } \quad \|X\|_{\dot{H}^{\sigma/2}} \leq \Delta^{-4\eta}. \end{displaymath}
Now, since $\sigma + t > 2 + 8\eta$, we may infer from Proposition \ref{prop3} (with $\epsilon_{1} = \eta$, $\epsilon_{2} = 3\eta$ and $\epsilon_{3} = 4\eta$) that
\begin{equation}\label{form55} \mathcal{H}^{t}_{\infty}(X(\Delta, \eta) \, \setminus \, \spt X) \leq \Delta^{3\eta}. \end{equation}
Note that the hypothesis \nref{H2} can be rephrased as $G_{2} \subset X(\Delta, \eta)$. On the other hand, \nref{H1} combined with the $(t,\Delta^{-\eta})$-Frostman property of $\mu$ implies $\mathcal{H}^{t}_{\infty}(G_{2}) \gtrsim \Delta^{2\eta}$. Combining this information with \eqref{form55}, we deduce that 
\begin{equation}\label{form56} G_{2} \cap (\spt X) \neq \emptyset. \end{equation}
This implies $\inf \{\dist(y,L_{x}) : x \in G_{1} \text{ and } y \in G_{2}\} = 0$, because
\begin{displaymath} \spt X \subset \overline{\bigcup_{x \in \bar{G}_{1}} L_{x}} \end{displaymath}
by Remark \ref{rem5}. This completes the proof of Theorem \ref{t2}. \end{proof}


\section{A $\delta$-discretised Marstrand slicing theorem}

As a technical tool in later sections, we will need a $\delta$-discretised version of \cite[Theorem 2.4]{MR3145914}, stated in Proposition \ref{prop5} below. Fortunately, a $\delta$-discretised version of a stronger result (concerning radial projections) was recently proven in \cite[Theorem 3.1]{MR4722034}. We follow that argument closely, but nonetheless give all the details for the reader's convenience.

\begin{proposition}\label{prop5} For every $t \in (1,2]$, $s \in (2 - t,1]$, and $\eta > 0$, there exist $\delta_{0} = \delta_{0}(\eta,s,t) > 0$ and $\epsilon = \epsilon(\eta) > 0$ such that the following holds for all $\delta \in 2^{-\N} \cap (0,\delta_{0}]$.

Let $\mathcal{P} \subset \mathcal{D}_{\delta}$ be a non-empty $(\delta,t,\delta^{-\epsilon})$-set, and let $\Theta \subset S^{1}$ be a non-empty $\delta$-separated $(\delta,s,\delta^{-\epsilon})$-set. Then, there exists a subset $\bar{\Theta} \subset \Theta$ with $|\bar{\Theta}| \geq (1 - \delta^{\epsilon})|\Theta|$, and for every $\theta \in \bar{\Theta}$ a subset $\mathcal{P}_{\theta} \subset \mathcal{P}$ with $|\mathcal{P}_{\theta}| \geq (1 - \delta^{\epsilon})|\mathcal{P}|$ satisfying the following "rectangular" Frostman condition: if $R \subset \R^{2}$ is a $(\delta \times \Delta)$-rectangle with $\delta \leq \Delta \leq 1$, and the $\Delta$-side parallel to $\theta$, then 
\begin{equation}\label{rectangularFrostman} |\mathcal{P}_{\theta} \cap R| \leq \delta^{1 - \eta}\Delta^{t - 1}|\mathcal{P}|. \end{equation}
\end{proposition}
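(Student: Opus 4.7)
My plan is to follow the proof of \cite[Theorem 3.1]{MR4722034} closely. The argument proceeds by contradiction. Suppose the conclusion fails for some small $\epsilon > 0$. A greedy selection, followed by dyadic pigeonholing, yields a subset $\Theta_{\text{bad}} \subset \Theta$ with $|\Theta_{\text{bad}}| \geq \delta^{O(\epsilon)}|\Theta|$, a common scale $\Delta$, and for each $\theta \in \Theta_{\text{bad}}$ a $(\delta \times \Delta)$-rectangle $R_{\theta}$ parallel to $\theta$ with $|\mathcal{P} \cap R_{\theta}| > \delta^{1-\eta+O(\epsilon)}\Delta^{t-1}|\mathcal{P}|$. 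For $\Delta \leq \delta^{1-\eta+O(\epsilon)}$ the rectangular Frostman bound is already implied by the $(\delta,t,\delta^{-\epsilon})$-Frostman property of $\mathcal{P}$ applied to a single $\Delta$-ball containing $R_{\theta}$, so one may additionally restrict to $\Delta \in [\delta^{1-\eta+O(\epsilon)},1]$.

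The contradiction is then obtained by reducing the ``short tube'' ($\Delta < 1$) situation to the ``full slab'' ($\Delta = 1$) case via a rescaling argument. For each $\theta \in \Theta_{\text{bad}}$, pick a $\Delta$-ball $B_{\theta}$ containing $R_{\theta}$; then $\mu(B_{\theta})$ lies between $\delta^{1-\eta+O(\epsilon)}\Delta^{t-1}$ (heaviness) and $\delta^{-\epsilon}\Delta^{t}$ (Frostman), where $\mu$ is the normalised counting measure on $\mathcal{P}$. Rescale $B_{\theta} \to B(1)$ by the homothety $x \mapsto (x - x_{\theta})/\Delta$; the renormalised image measure $\bar{\mu}_{\theta}$ is a $(\delta',t,\delta^{-O(\eta)})$-Frostman probability measure at scale $\delta' := \delta/\Delta$, and the image of $R_{\theta}$ is a full $(\delta' \times 1)$-slab carrying $\bar{\mu}_{\theta}$-mass at least $(\delta')^{1-\eta''}$ for some explicit $\eta'' = \eta''(\eta,\epsilon) > 0$. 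Applying the $\Delta = 1$ special case of the proposition at scale $\delta'$---which is the classical discretised Marstrand projection theorem, provable directly via Orponen's $L^2$ energy argument from \cite{MR3145914}, crucially using $s + t > 2$---rules out the existence of such heavy slabs for all but a $(\delta')^{\eta''}$-fraction of $\theta \in \Theta_{\text{bad}}$, once $\epsilon, \eta$ are chosen suitably.

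The main obstacle is the quantitative bookkeeping. The Frostman constant of $\bar{\mu}_{\theta}$ can degrade by up to $\delta^{-O(\eta)}$ depending on $\mu(B_{\theta})$; each exceptional set (in both $\bar{\Theta}$ and $\{\mathcal{P}_{\theta}\}$) costs an additional $\delta^{-\epsilon}$ factor; and the final Marstrand Frostman exponent $\eta''$ must be small enough to close the contradiction \emph{uniformly} in $\Delta \in [\delta^{1-\eta+O(\epsilon)},1]$. The argument succeeds because the slack $s + t - 2 > 0$ is strictly positive, but verifying that the tuning of the parameters produces a contradiction without hidden logarithmic losses is the most delicate part, and it is precisely the sort of care that \cite[Theorem 3.1]{MR4722034} handles in the (stronger, radial-projection) setting.
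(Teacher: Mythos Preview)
Your reduction has a structural gap. After pigeonholing to a common scale $\Delta$, you pick for each $\theta \in \Theta_{\text{bad}}$ a $\Delta$-ball $B_{\theta}$ and rescale to obtain a measure $\bar{\mu}_{\theta}$ together with a single heavy $(\delta' \times 1)$-slab in direction $\theta$. You then want to invoke the $\Delta = 1$ case of the proposition (i.e.\ the discretised Marstrand exceptional set estimate from \cite{MR3145914}) to say that such heavy slabs can exist only for a small set of $\theta$. But that estimate bounds the set of bad directions for a \emph{single} measure; here the measure $\bar{\mu}_{\theta}$ itself depends on $\theta$, so there is nothing preventing every $\theta \in \Theta_{\text{bad}}$ from being exceptional for its own $\bar{\mu}_{\theta}$. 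The per-direction rescaling destroys exactly the uniformity needed to run an exceptional-set argument.

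The paper fixes this by rescaling once, not once per direction. By a further pigeonholing step one finds a single $\Delta$-square $Q$ such that $|\mathcal{B} \cap Q| \gtrapprox |\mathcal{P} \cap 10Q|$, where $\mathcal{B}$ is the set of $p \in \mathcal{P}$ that are bad for a $(\delta,s,\delta^{-O(\epsilon)})$-set of directions $\Theta_{p}$. After rescaling $10Q$ to the unit square, one has a $(\delta/\Delta,t)$-Frostman set $\mathcal{Q}$ and, for each $q \in \overline{\mathcal{B}} := S_{Q}(\mathcal{B} \cap Q)$, a $(\delta/\Delta,s)$-set of heavy $(\delta/\Delta \times 1)$-tubes through $q$. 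At this point one does \emph{not} reduce to the $\Delta = 1$ case of the proposition; instead one applies the Furstenberg-set estimate \cite[Theorem 4.9]{MR4722034} to extract a Katz-Tao $(\delta/\Delta,3-t)$-set of tubes $\mathcal{T}$ with $|\mathcal{T}| \gtrapprox (\delta/\Delta)^{t - 3}\mu(\cup\mathcal{Q})$, and then compares the lower bound $|\mathcal{I}(\mathcal{Q},\mathcal{T})| \geq \delta^{1-\eta}\Delta^{t-1}|\mathcal{P}||\mathcal{T}|$ (from heaviness) with the Fu--Ren incidence upper bound \cite[Theorem 1.5]{fu2022incidence}. The single-rescaling step and the use of the Furstenberg/incidence machinery in place of the linear Marstrand estimate are the ideas missing from your sketch.
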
 

\begin{remark} One may take $\epsilon = c\eta$ for a sufficiently small absolute constant $c > 0$. \end{remark}

To prove Proposition \ref{prop5}, we need a quantitative Furstenberg set estimate \cite[Theorem 4.9]{MR4722034}, stated as Theorem \ref{t:orponen}. In the statement, an \emph{ordinary $\delta$-tube} is any rectangle of dimensions $\delta \times 1$. A family $\mathcal{T}$ of ordinary $\delta$-tubes is called a \emph{Katz-Tao $(\delta,s,C)$-set} if for all $\delta \leq r \leq 1$, an arbitrary rectangle of dimensions $(r \times 2)$ contains at most $C(r/\delta)^{s}$ elements of $\mathcal{T}$. If the constant $C$ is absolute, a Katz-Tao $(\delta,s,C)$-set is called a Katz-Tao $(\delta,s)$-set. This definition is, in particular, used in \cite{fu2022incidence}, whose results we plan to apply in a moment. 

Similarly, a family $\mathcal{T}$ of ordinary $\delta$-tubes is called a \emph{$(\delta,s,C)$-set} if every rectangle of dimensions $(r \times 2)$, $\delta \leq r \leq 1$, contains at most $Cr^{s}|\mathcal{T}|$ elements of $\mathcal{T}$. These definitions are the analogues of Definition \ref{def:deltaSTubes} for ordinary tubes.

\begin{thm}\label{t:orponen} Let $t \in (1,2]$, $s \in (2 - t,1]$, $C \geq 1$, and $\sigma < s$. Fix $\delta \in 2^{-\N}$. Assume that $\mu$ is an $(s,C)$-Frostman measure on $B(1) \subset \R^{2}$. For every $p \in \mathcal{D}_{\delta}(\spt \mu)$, let $\mathcal{T}_{p}$ be a $(\delta,s,C)$-set of ordinary $\delta$-tubes such that $T \cap p \neq \emptyset$ for all $T \in \mathcal{T}_{p}$. Then, $\mathcal{T}$ contains a Katz-Tao $(\delta,\sigma + 1)$-set $\mathcal{T}'$ of cardinality $|\mathcal{T}'| \gtrsim_{\sigma,s,t} \mu(\R^{2})\delta^{-(\sigma + 1)}/C^{3}$. \end{thm}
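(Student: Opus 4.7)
The plan is to derive Theorem \ref{t:orponen} from a sharp $\delta$-discretised Furstenberg-type incidence estimate of the kind established in \cite{fu2022incidence, 2023arXiv230110199O}, combined with a Wolff-style greedy refinement that upgrades a Frostman tube-count to a Katz-Tao subfamily of essentially the same order.

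First, by a standard dyadic pigeonholing on the $\mu$-mass of $\delta$-cubes, I would extract a subfamily $\mathcal{P}_{0} \subset \mathcal{D}_{\delta}(\spt \mu)$ forming a $(\delta, s, O(C))$-set with $|\mathcal{P}_{0}| \gtrsim \mu(\R^{2})\delta^{-s}/C$, on which $\mu(p)$ is dyadically constant, and restrict the tube families $\mathcal{T}_{p}$ accordingly. Applying the point-line duality $\mathbf{D}$ of Definition \ref{def:pointLineDuality}, the cubes of $\mathcal{P}_{0}$ become a $(\delta, s, O(C))$-set of dual strips in parameter space, while each $\mathbf{D}(\mathcal{T}_{p})$ becomes a $(\delta, s, C)$-set of dual points lying in the corresponding strip. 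This is exactly a Furstenberg/radial-projection-type configuration, and the sharp $\delta$-discretised estimates of \cite{fu2022incidence, 2023arXiv230110199O} yield, for every $\sigma < s$ and all sufficiently small $\delta$, a $\delta$-covering lower bound
\begin{equation*}
|\mathcal{T}|_{\delta} \;\gtrsim_{\sigma, s, t}\; \mu(\R^{2})\, \delta^{-(1+\sigma)}/C^{3}
\end{equation*}
in the affine Grassmannian metric; the factor $C^{3}$ packages one Frostman constant from $\mu$ together with two constants from the point-through-tube families.

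To upgrade this Frostman-type lower bound to a bona-fide Katz-Tao $(\delta, \sigma+1)$-subfamily of the same order, I would then perform a greedy removal across dyadic scales $r \in \{2^{-j} : \delta \leq 2^{-j} \leq 1\}$: at each $r$, discard every tube lying inside some $(r \times 2)$-rectangle containing more than an absolute multiple of $(r/\delta)^{\sigma+1}$ tubes of the current collection. A double-counting argument shows that across all $\sim \log(1/\delta)$ scales the total number of removed tubes is bounded by $\delta^{-\varepsilon}$ times the Furstenberg lower bound, for arbitrarily small $\varepsilon>0$; hence the surviving subfamily $\mathcal{T}'$ is Katz-Tao $(\delta, \sigma+1)$ by construction and of the claimed cardinality, once the logarithmic loss is absorbed into the slack $s - \sigma > 0$.

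The principal obstacle is Step 2, the sharp Furstenberg incidence estimate with the explicit polynomial $C^{3}$ dependence on the Frostman constants. More elementary inputs (for instance, Marstrand slicing or the Wolff $X$-ray estimate) only deliver exponents strictly weaker than $\sigma + 1$ and cannot close the gap to the endpoint $\sigma = s$. The greedy pruning in Step 3 is comparatively routine, but it must be executed with enough care to keep all logarithmic and constant losses strictly inside the margin $\sigma < s$.
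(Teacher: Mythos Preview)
The paper does not prove this statement at all: Theorem \ref{t:orponen} is simply quoted as \cite[Theorem 4.9]{MR4722034}, with only the remark that the original formulation uses dyadic rather than ordinary $\delta$-tubes, and that the two versions are easily equivalent. So there is no ``paper's own proof'' to compare against beyond that citation.

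That said, your sketch has a genuine gap in Step 3. A $\delta$-covering lower bound $|\mathcal{T}|_{\delta} \gtrsim \delta^{-(\sigma+1)}$ does \emph{not} by itself yield a Katz--Tao $(\delta,\sigma+1)$-subset of comparable size via greedy removal. For a concrete obstruction, take $\delta^{-1/2}$ disjoint $(\delta^{1/2}\times 2)$-rectangles, each containing $\delta^{-1}$ parallel $\delta$-separated tubes: then $|\mathcal{T}|_{\delta}\sim\delta^{-3/2}$, but the Katz--Tao $(\delta,3/2)$ condition at scale $r=\delta^{1/2}$ forces each rectangle to retain at most $\delta^{-3/4}$ tubes, so any Katz--Tao $(\delta,3/2)$-subset has size at most $\delta^{-5/4}$. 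Your ``double-counting argument shows the removed tubes are $\delta^{-\varepsilon}$ times the lower bound'' is therefore false without further non-concentration input. The extraction of a large Katz--Tao subfamily genuinely requires the Furstenberg \emph{structure} (the $(\delta,s)$-set of base cubes together with the $(\delta,s)$-sets of tubes through each), not merely the cardinality conclusion of the Furstenberg estimate; this is where the actual work in \cite[Theorem 4.9]{MR4722034} lies, and it is not captured by splitting into ``Furstenberg bound, then prune''.

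A smaller point: you pigeonhole $\mu$ to extract a $(\delta,s)$-set of cubes, but in the theorem as stated $\mu$ is $(s,C)$-Frostman while the parameter $t\in(1,2]$ sits unused; looking at how the theorem is applied in the proof of Proposition \ref{prop5}, the measure there is in fact $(t,C)$-Frostman, so you should be prepared for that reading.
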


In the original formulation \cite[Theorem 4.9]{MR4722034} the elements of $\mathcal{T}_{p}$ are assumed to be dyadic $\delta$-tubes, but the two variants of the theorem are easily seen to be equivalent. 

We also need Fu and Ren's incidence bound \cite[Theorem 1.5]{fu2022incidence}:

\begin{thm}\label{t:fuRen} Let $\epsilon > 0$, and $s,t \in (0,2)$ with $s + t \leq 3$. Then, there exists $\delta_{0} = \delta_{0}(\epsilon,s,t) > 0$ such that the following holds for all $\delta \in (0,\delta_{0}]$. Assume that $\mathcal{P} \subset \mathcal{D}_{\delta}([0,10)^{2})$ is a Katz-Tao $(\delta,t,C_{\mathcal{P}})$-set, and $\mathcal{T}$ is a Katz-Tao $(\delta,s,C_{\mathcal{T}})$-set of ordinary $\delta$-tubes. Then,
\begin{displaymath} |\mathcal{I}(\mathcal{P},\mathcal{T})| \stackrel{\mathrm{def.}}{=} |\{(p,T) \in \mathcal{P} \times \mathcal{T} : p \cap T \neq \emptyset\}| \leq \delta^{-1/2 - \epsilon}(C_{\mathcal{P}}C_{\mathcal{T}})^{1/2}|P|^{1/2}|\mathcal{T}|^{1/2}. \end{displaymath} \end{thm}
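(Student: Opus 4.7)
The plan is to argue by contradiction, following closely the template of \cite[Theorem 3.1]{MR4722034}, which combines a Furstenberg-type extraction (our Theorem \ref{t:orponen}) with the Fu-Ren incidence bound (our Theorem \ref{t:fuRen}). Negate the desired conclusion: after dyadic pigeonholing over the scale $\Delta \in [\delta, 1]$, costing only a factor $\delta^{-O(\epsilon)}$, we obtain a fixed dyadic $\Delta$, a set $\Theta^{*} \subset \Theta$ of ``bad'' directions with $|\Theta^{*}| \gtrsim \delta^{O(\epsilon)}|\Theta|$, and for every $\theta \in \Theta^{*}$ a set $\mathcal{P}_{\theta}^{*} \subset \mathcal{P}$ of ``bad'' points with $|\mathcal{P}_{\theta}^{*}| \gtrsim \delta^{O(\epsilon)}|\mathcal{P}|$, so that each $p \in \mathcal{P}_{\theta}^{*}$ lies in some ``heavy'' $(\delta \times \Delta)$-rectangle $R_{p,\theta}$ in direction $\theta$ with $|\mathcal{P} \cap R_{p,\theta}| > \delta^{1-\eta} \Delta^{t-1} |\mathcal{P}|$.

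Given this setup, the first step is to collect, for each $\theta \in \Theta^{*}$, the family $\mathcal{T}_{\theta}$ of ordinary $\delta$-tubes in direction $\theta$ containing a heavy sub-rectangle. Parallel $\delta$-tubes in a fixed direction being disjoint, summing the heaviness lower bound over $T \in \mathcal{T}_{\theta}$ yields $|\mathcal{T}_{\theta}| \lesssim \delta^{\eta - 1} \Delta^{1 - t}$, and $\mathcal{P}_{\theta}^{*} \subset \bigcup \mathcal{T}_{\theta}$. Next, double-counting the pairs $(p,\theta)$ produces a subset $\mathcal{P}^{**} \subset \mathcal{P}$ of density $\gtrsim \delta^{O(\epsilon)}$ such that, for each $p \in \mathcal{P}^{**}$, the direction set $\Theta_{p} := \{\theta \in \Theta^{*} : p \in \bigcup \mathcal{T}_{\theta}\}$ has cardinality $\gtrsim \delta^{O(\epsilon)} |\Theta^{*}|$. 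By Remark \ref{rem4}, $\Theta_{p}$ remains a $(\delta, s, \delta^{-O(\epsilon)})$-set, and by Lemma \ref{lemma4} the tube family $\mathcal{T}_{p} = \{T_{p,\theta} : \theta \in \Theta_{p}\}$ (one heavy tube per direction) is a $(\delta, s, \delta^{-O(\epsilon)})$-set of $\delta$-tubes through $p$.

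With this data, apply Theorem \ref{t:orponen} to the normalised measure $\mu$ on $\bigcup \mathcal{P}^{**}$ (which is $(s, \delta^{-O(\epsilon)})$-Frostman because $s \leq 1 \leq t$) and the tube families $\{\mathcal{T}_{p}\}$ to extract a Katz-Tao $(\delta, \sigma + 1)$-subfamily $\mathcal{T}' \subset \bigcup_{p} \mathcal{T}_{p}$ of cardinality $|\mathcal{T}'| \gtrsim \delta^{-(\sigma+1) - O(\epsilon)}$ for any fixed $\sigma \in (2-t, s)$, every tube of which is heavy. Viewing $\mathcal{P}$ as a Katz-Tao $(\delta, t_1, \delta^{-O(\epsilon)})$-set with $t_{1} := 2 - \sigma - \mu'$ for some small $\mu' > 0$ (using the a priori bound $|\mathcal{P}| \leq \delta^{-t}$), the dimensional constraint $t_{1} + (\sigma + 1) = 3 - \mu' \leq 3$ of Fu-Ren is satisfied, and Theorem \ref{t:fuRen} gives $|\mathcal{I}(\mathcal{P}, \mathcal{T}')| \lesssim \delta^{-1/2 - O(\epsilon)} |\mathcal{P}|^{1/2} |\mathcal{T}'|^{1/2}$. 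The heaviness of tubes in $\mathcal{T}'$ supplies the matching lower bound $|\mathcal{I}(\mathcal{P}, \mathcal{T}')| \geq |\mathcal{T}'| \cdot \delta^{1-\eta} \Delta^{t-1} |\mathcal{P}|$. Inserting $|\mathcal{T}'| \gtrsim \delta^{-(\sigma+1) - O(\epsilon)}$ and $|\mathcal{P}| \leq \delta^{-t}$, the two bounds combine into an inequality of the shape $\delta^{\sigma + t - 2 - 2\eta - O(\epsilon)} \lesssim \Delta^{2(t-1)} \leq 1$, which, since $\sigma + t > 2$, fails for $\eta \ll \sigma + t - 2$, $\epsilon \ll \eta$, and small enough $\delta$ -- the desired contradiction.

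The main obstacle is arranging the final step so that the contradiction is uniform in $\Delta \in [\delta, 1]$: when $\Delta$ is close to $\delta$ the heaviness condition becomes weakest and the naive $\delta$-scale incidence count is too crude to beat Fu-Ren. This regime is absorbed by an additional rescaling, viewing each heavy $(\delta \times \Delta)$-rectangle inside a $\Delta$-cube $Q$ as an ordinary $(\delta/\Delta)$-tube in the renormalised square $Q \to [0,1]^{2}$ and re-running the Furstenberg/incidence pair at the rescaled scale $\delta' = \delta/\Delta$; a short calculation then shows the resulting inequality is uniform in $\Delta$. Bookkeeping of the Frostman and Katz-Tao constants through the double-counting, the extraction of $\mathcal{T}'$ from Theorem \ref{t:orponen}, and (when needed) the rescaling is the principal technical nuisance, but the dimensional inequality $\sigma + t > 2$ is what drives the contradiction in every regime.
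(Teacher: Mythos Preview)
Your proposal is not a proof of Theorem \ref{t:fuRen} at all. The statement to be proved is the Fu--Ren incidence bound: given a Katz--Tao $(\delta,t)$-set of squares $\mathcal{P}$ and a Katz--Tao $(\delta,s)$-set of tubes $\mathcal{T}$, bound $|\mathcal{I}(\mathcal{P},\mathcal{T})|$. Your argument, however, introduces a direction set $\Theta$, a parameter $\eta$, ``heavy'' $(\delta\times\Delta)$-rectangles with threshold $\delta^{1-\eta}\Delta^{t-1}|\mathcal{P}|$, and ultimately derives a contradiction from the inequality $\sigma+t>2$. None of these objects or hypotheses appear in the statement of Theorem \ref{t:fuRen}. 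What you have written is an outline of the proof of Proposition \ref{prop5} (the $\delta$-discretised Marstrand slicing estimate), not of the Fu--Ren bound.

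Worse, your argument is circular: you explicitly invoke Theorem \ref{t:fuRen} as a tool (``Theorem \ref{t:fuRen} gives $|\mathcal{I}(\mathcal{P},\mathcal{T}')| \lesssim \delta^{-1/2-O(\epsilon)}|\mathcal{P}|^{1/2}|\mathcal{T}'|^{1/2}$''), so even if the target were correctly identified the proof would assume its own conclusion. In the paper, Theorem \ref{t:fuRen} is not proved at all---it is quoted from \cite[Theorem 1.5]{fu2022incidence} as a black box, and its proof (via the high-low method and Fourier analysis) is entirely different in character from the pigeonholing-and-rescaling argument you describe.
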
 

We are then equipped to prove Proposition \ref{prop5}.

\begin{proof}[Proof of Proposition \ref{prop5}] We claim that the statement holds if $\epsilon = c\eta$ for a sufficiently small absolute constant $c \in (0,\tfrac{1}{2}]$, determined on the last line of the proof. We may assume that $t < 2$, since in the case $t = 2$ we may use $|\mathcal{P}| \geq \delta^{\epsilon - 2}$ to deduce the trivial upper bound 
\begin{displaymath} |\mathcal{P} \cap R| \lesssim \delta^{-1} \Delta \leq \delta^{1 - \epsilon}\Delta^{2 - 1}|\mathcal{P}| \leq \delta^{1 - \eta}\Delta^{t - 1}|\mathcal{P}|   \end{displaymath}
for all $(\delta \times \Delta)$-rectangles $R \subset \R^{2}$ with $\delta \leq \Delta \leq 1$, verifying \eqref{rectangularFrostman} when $t=2$. 

Assuming $t \in (1,2)$, we make a counter assumption: there exists a subset $\bar{\Theta} \subset \Theta$ of cardinality $|\bar{\Theta}| \geq \delta^{\epsilon}|\Theta|$, and for each $\theta \in \overline{\Theta}$, a "bad" subset $\mathcal{B}_{\theta} \subset \mathcal{P}$ with $|\mathcal{B}_{\theta}| \geq \delta^{\epsilon}|\mathcal{P}|$, all squares of which fail the rectangular Frostman condition \eqref{rectangularFrostman} for some $(\delta \times \Delta)$-rectangle $R \subset \R^{2}$ parallel to $\theta$ -- which may of course depend on the square.

For the duration of this proof, the notation $A \lessapprox B$ will mean that $A \leq C_{s,t}\delta^{-C\epsilon}B$, where $C_{s,t} > 0$ may depend on $s,t$, but $C > 0$ is absolute.

By pigeonholing, reducing both $\bar{\Theta}$ and $\mathcal{B}_{\theta}$ somewhat, the (longer) side-length $\Delta$ of these rectangles can roughly be assumed to be independent of both $\theta$ and $p \in \mathcal{B}_{\theta}$. More precisely, the following objects can be found (we recycle the notation $\bar{\Theta}$ and $\mathcal{B}_{\theta}$):
\begin{enumerate}
\item A fixed number $\Delta \in 2^{-\N} \cap [\delta,1]$.
\item A subset $\bar{\Theta} \subset \Theta$ with $|\bar{\Theta}| \gtrapprox |\Theta|$.
\item For each $\theta \in \bar{\Theta}$ a family of disjoint $(\delta \times \Delta)$-rectangles $\mathcal{R}_{\theta}$ parallel to $\theta$ which are heavy in the sense $|\mathcal{P} \cap R| \geq \delta^{1 - \eta}\Delta^{t - 1}|\mathcal{P}|$, $R \in \mathcal{R}_{\theta}$, and cover a large part of $\mathcal{P}$:
\begin{displaymath} |\{p \in \mathcal{P} : p \cap R \neq \emptyset \text{ for some } R \in \mathcal{R}_{\theta}\}| \gtrapprox |\mathcal{P}|. \end{displaymath}
\end{enumerate}
Since $|\mathcal{P}| \geq \delta^{\epsilon - t}$ by the $(\delta,t,\delta^{-\epsilon})$-set hypothesis, we note that 
\begin{displaymath} \Delta/\delta \gtrsim |\mathcal{P} \cap R| \geq \delta^{1 - t - \eta + \epsilon}\Delta^{t - 1} \geq \delta^{1 - t - \eta/2}\Delta^{t - 1}, \end{displaymath}
which can be rearranged to $\delta/\Delta \lesssim \delta^{\eta/(4 - 2t)}$ (since $t < 2$). In particular, we may assume that $\delta/\Delta$ is arbitrarily small by choosing $\delta > 0$ small enough in terms of $\eta,t$. 

Inspired by (3), we define $\mathcal{B}_{\theta} := \{p \in \mathcal{P} : p \cap R \neq \emptyset \text{ for some } R \in \mathcal{R}_{\theta}\}$. Observe that
\begin{displaymath} \sum_{p \in \mathcal{P}} |\{\theta \in \bar{\Theta} : p \in \mathcal{B}_{\theta}\}| = \sum_{\theta \in \bar{\Theta}} |\mathcal{B}_{\theta}| \gtrapprox |\mathcal{P}||\Theta|. \end{displaymath}
 Consequently, there exists a fixed set $\mathcal{B} \subset \mathcal{P}$ with $|\mathcal{B}| \gtrapprox |\mathcal{P}|$ with the property
\begin{equation}\label{form37} |\{\theta \in \bar{\Theta} : p \in \mathcal{B}_{\theta}\}| \gtrapprox |\Theta|, \qquad p \in \mathcal{B}. \end{equation}
Write $\Theta_{p} := \{\theta \in \bar{\Theta} : p \in \mathcal{B}_{\theta}\}$ for $p \in \mathcal{B}$.

We next claim that there exists a distinguished square $Q \in \mathcal{D}_{\Delta}(\mathcal{P})$ with the property
\begin{equation}\label{form38} |\mathcal{B} \cap Q| \gtrapprox |\mathcal{P} \cap 10Q| > 0. \end{equation}
Indeed, if this failed for all $Q \in \mathcal{D}_{\Delta}$, then the bounded overlap of the squares $10Q$ would contradict $|\mathcal{B}| \gtrapprox |\mathcal{P}|$. We fix a square $Q \in \mathcal{D}_{\Delta}(\mathcal{P})$ satisfying \eqref{form38} for the remaining proof.

Fix $p \in \mathcal{B} \cap Q$, and recall that $|\Theta_{p}| \gtrapprox |\Theta|$ by \eqref{form37}. For every $\theta \in \Theta_{p}$, we have $p \in \mathcal{B}_{\theta}$ by definition. This means that there exists a $(\delta \times \Delta)$-rectangle $R = R(p,\theta)$ parallel to $\theta$ which intersects $p$, and satisfies $|\mathcal{P} \cap R| \geq \delta^{1 - \eta}\Delta^{t - 1}|\mathcal{P}|$. \emph{A fortiori},
\begin{equation}\label{form39} |(\mathcal{P} \cap 10Q) \cap R| \geq \delta^{1 - \eta}\Delta^{t - 1}|\mathcal{P}|, \end{equation}
since $R \subset B(p,2\Delta) \subset 10Q$. Let 
\begin{displaymath} \mathcal{R}_{p} := \{R(p,\theta) : \theta \in \Theta_{p}\}, \qquad p \in \mathcal{B} \cap Q. \end{displaymath}
(The notation $\mathcal{R}_{\theta}$ will no longer appear to cause confusion.) Since $|\Theta_{p}| \gtrapprox |\Theta|$, and $\Theta$ is a non-empty $(\delta,s,\delta^{-\epsilon})$-set, one could informally say that $\mathcal{R}_{p}$ is a non-empty $(\delta,s,\delta^{-\epsilon})$-set of $(\delta \times \Delta)$-rectangles incident to $p$. To make this more formal, we rescale by $\sim \Delta^{-1}$. Let $S_{10Q}$ be the $\Delta^{-1}$-rescaling which sends $10Q$ to $[0,10)^{2}$. Let 
\begin{equation}\label{form41} \mathcal{Q} := S_{10Q}(\mathcal{P} \cap 10Q) \quad \text{and} \quad \overline{\mathcal{B}} := S_{10Q}(\mathcal{B} \cap Q). \end{equation}
Thus, $\mathcal{Q}$ and $\overline{\mathcal{B}}$ consist of $\delta/\Delta$-squares contained in $[0,10)^{2}$.

Now, for $q = S_{10Q}(p) \in \overline{\mathcal{B}}$, consider the rescaled family of rectangles $S_{10Q}(\mathcal{R}_{p})$. This family consists of $((\delta/\Delta) \times 1)$-tubes intersecting $q$ and contained in $[0,1)^{2}$. The homothety $S_{10Q}$ preserves directions: therefore the $(\delta/\Delta)$-sides of the elements in $S_{10Q}(\mathcal{R}_{p})$ are still parallel to the elements in the $(\delta,s,\delta^{-\epsilon})$-set $\Theta_{p}$.

The $\delta$-separation of $\Theta_{p} \subset \Theta$ is no longer natural at the relevant scale $\delta/\Delta$, so we pass to a subset: applying \cite[Proposition A.1]{FaO}, we extract a non-empty $(\delta/\Delta,s,\delta^{-\epsilon})$-set $\bar{\Theta}_{p} \subset \Theta_{p}$ consisting of $(\delta/\Delta)$-separated elements. Then, for each $\theta \in \bar{\Theta}_{p}$, we select a tube in $S_{10Q}(\mathcal{R}_{p})$ with slope $\theta$, and we denote the ensuing collection $\mathcal{T}_{q}$ (recall: $q = S_{10Q}(p))$. Summary: for each $q \in \overline{\mathcal{B}}$, the family $\mathcal{T}_{q}$ is a non-empty $(\delta/\Delta,s,\delta^{-\epsilon})$-set of ordinary $(\delta/\Delta)$-tubes intersecting $q$.

We aim to apply Theorem \ref{t:orponen} to the set $\overline{\mathcal{B}}$ and the families $\mathcal{T}_{q}$, at scale $\delta/\Delta$. To do this, we need to introduce an appropriate measure $\mu$ associated to $\overline{\mathcal{B}}$. Recall that $\mathcal{P} \subset \mathcal{D}_{\delta}$ is a $(\delta,t,\delta^{-\epsilon})$-set, that is,
\begin{equation}\label{form40} |\mathcal{P} \cap B(x,r)| \lesssim \delta^{-\epsilon}r^{t}|\mathcal{P}|, \qquad x \in \R^{2}, \, r \geq \delta. \end{equation}
Let $P \subset \mathcal{P}$ be a set containing one point from each square $p \in \mathcal{P}$, and let $\mu_{0} := |P|^{-1}\mathcal{H}^{0}|_{P}$. Then \eqref{form40} shows that $\mu_{0}(B(x,r)) \lesssim \delta^{-\epsilon}r^{t}$ for all $x \in \R^{2}$ and $r \geq \delta$. Consider
\begin{displaymath} \mu := \Delta^{-t}S_{10Q}(\mu_{0}|_{10Q}). \end{displaymath}
We remark that $\mu$ may not be a probability measure: the best one can say is that $\mu(\R^{2}) = \Delta^{-t} \cdot |\mathcal{Q}|/|\mathcal{P}| \lesssim \delta^{-\epsilon}$. In fact, more generally for $x \in \R^{2}$, $r \geq \delta/\Delta$, and $y := S_{10Q}^{-1}(x)$, 
\begin{displaymath} \mu(B(x,r)) = \Delta^{-t} \mu_{0}(S_{10Q}^{-1}(B(x,r))) = \Delta^{-t}(\mu_{0}(B(y,\Delta r)) \lesssim \delta^{-\epsilon}\Delta^{-t}(\Delta r)^{t} = \delta^{-\epsilon}r^{t}. \end{displaymath}
In other words, $\mu$ satisfies the Frostman condition in Theorem \ref{t:orponen} with $C \sim \delta^{-\epsilon}$. 

Another key hypothesis of Theorem \ref{t:orponen} is that the $(\delta/\Delta,s,\delta^{-\epsilon})$-sets $\mathcal{T}_{q}$ exist for all $q \in \mathcal{D}_{\delta/\Delta}(\spt \mu)$, but this is only the case for $q \in \overline{\mathcal{B}}$. This is no problem, however, because $\cup \overline{\mathcal{B}}$ has nearly full $\mu$ measure: recalling the definition \eqref{form41}, and then the lower bound \eqref{form38}, we infer
\begin{equation}\label{form42} \mu(\cup \overline{\mathcal{B}}) \gtrapprox \mu(\R^{2}) = \Delta^{-t} \cdot |\mathcal{Q}|/|\mathcal{P}|. \end{equation}
Now, the measure $\bar{\mu} := \mu|_{\cup \overline{\mathcal{B}}}$ satisfies the hypotheses of Theorem \ref{t:orponen} at scale $\delta/\Delta$, with constant $C \sim \delta^{-\epsilon}$, and  
\begin{equation}\label{form46} \sigma := 2 - t < s. \end{equation} The conclusion is that $\bigcup_{q \in \overline{\mathcal{B}}} \mathcal{T}_{q}$ contains a Katz-Tao $(\delta/\Delta,\sigma + 1)$-set $\mathcal{T}$ of cardinality
\begin{equation}\label{form45} |\mathcal{T}| \gtrapprox \bar{\mu}(\R^{2})\left(\tfrac{\delta}{\Delta}\right)^{-(\sigma + 1)} \stackrel{\eqref{form42}}{\gtrapprox} \Delta^{-t} \cdot \tfrac{|\mathcal{Q}|}{|\mathcal{P}|} \cdot \left(\tfrac{\delta}{\Delta} \right)^{-(\sigma + 1)}. \end{equation}
In the remainder of the proof, we will check that, thanks to \eqref{form39}, the cardinality of incidences $\mathcal{I}(\mathcal{Q},\mathcal{T})$ is high. We will then compare this with the upper bound from Fu and Ren's result, Theorem \ref{t:fuRen}, at scale $\delta/\Delta$ to derive a contradiction. 

Recall that $\mathcal{Q} = S_{10Q}(\mathcal{P} \cap 10Q)$. By  \eqref{form39}, and the fact that the family $\mathcal{T}$ consists of $S_{10Q}$-rescaled versions of (some of) the rectangles $R = R(p,\theta)$ considered at \eqref{form39}, 
\begin{displaymath} |\{q \in \mathcal{Q} : q \cap T \neq \emptyset\}| \geq \delta^{1 - \eta}\Delta^{t - 1}|\mathcal{P}|, \qquad T \in \mathcal{T}. \end{displaymath}
Summing this over $T \in \mathcal{T}$,
\begin{equation}\label{form44} |\mathcal{I}(\mathcal{Q},\mathcal{T})| \geq \delta^{1 - \eta}\Delta^{t - 1}|\mathcal{P}||\mathcal{T}| \end{equation}
On the other hand, we may apply Theorem \ref{t:fuRen}, at scale $\delta/\Delta$ to find a strong upper bound for $|\mathcal{I}(\mathcal{Q},\mathcal{T})|$. For this purpose, we need to note or recall that
\begin{itemize}
\item $\mathcal{Q} = S_{Q}(\mathcal{P} \cap 10Q)$ is a Katz-Tao $(\delta/\Delta,t,C_{\mathcal{Q}})$-set of cardinality $|\mathcal{Q}| = |\mathcal{P} \cap 10Q|$, and constant $C_{\mathcal{Q}} \lessapprox \delta^{t}|\mathcal{P}|$. The latter claim follows immediately from the $(\delta,t,\delta^{-\epsilon})$-set property of $\mathcal{P}$, and the calculation
\begin{displaymath} |\mathcal{Q} \cap B(x,r)| = |\mathcal{P} \cap B(y,\Delta r)| \leq \delta^{-\epsilon}(\Delta r)^{t}|\mathcal{P}| = (\delta^{t - \epsilon}|\mathcal{P}|) \cdot \left(\tfrac{r}{\delta/\Delta} \right)^{t}, \quad r \geq \delta/\Delta. \end{displaymath}

\item $\mathcal{T}$ is a Katz-Tao $(\delta,\sigma + 1)$-set.
\end{itemize} 
With this information in hand, Theorem \ref{t:fuRen} applied with exponents $t$ and $\sigma + 1 = 3 - t$  implies
\begin{displaymath} |\mathcal{I}(\mathcal{Q},\mathcal{T})| \lessapprox \left(\tfrac{\delta}{\Delta} \right)^{-1/2}(\delta^{t}|\mathcal{P}|)^{1/2}|\mathcal{Q}|^{1/2}|\mathcal{T}|^{1/2}. \end{displaymath} 
Combining this upper bound with the lower bound in \eqref{form44} leads to
\begin{displaymath} |\mathcal{T}| \lessapprox \delta^{t - 3 + 2\eta} \cdot \Delta^{3 - 2t} \cdot \tfrac{|\mathcal{Q}|}{|\mathcal{P}|}. \end{displaymath}
Comparing this upper bound further with the lower bound \eqref{form45} yields
\begin{displaymath} \Delta^{-t} \cdot \tfrac{|\mathcal{Q}|}{|\mathcal{P}|} \cdot \left(\tfrac{\delta}{\Delta} \right)^{-(\sigma + 1)} \lessapprox \delta^{t - 3 + 2\eta} \cdot \Delta^{3 - 2t} \cdot \tfrac{|\mathcal{Q}|}{|\mathcal{P}|}, \end{displaymath}
which can be rearranged to $\delta^{2 - t - \sigma - 2\eta} \lessapprox \Delta^{2 - t - \sigma}$. Recall now that the notation $A \lessapprox B$ is an abbreviation for $A \leq C_{s,t}\delta^{-C\epsilon}B$. Since $\sigma + t = 2$ by \eqref{form46}, and $\Delta \geq \delta$, the aligned inequality above produces a contradiction if $\epsilon = c\eta$ for a sufficiently small absolute constant $c > 0$, and $\delta > 0$ small enough depending on $\eta,s,t$. \end{proof}


\section{Main lemma}\label{s6}

Morally, Theorem \ref{t2} imply would Theorem \ref{t:configurations} (and therefore Theorem \ref{t:main}) if we could ensure the validity of the extra hypotheses \nref{H1}-\nref{H2}. In this section we give a sufficient condition (Lemma \ref{l:main})  for achieving the hypotheses \nref{H1}-\nref{H2} of Theorem \ref{t2}. Unfortunately, the "original" configuration $(\mu,\{\sigma_{x}\})$ naturally associated to Theorem \ref{t:configurations} need not satisfy these conditions, but we will show in the next section that a suitable "renormalised" configuration does. Eventually, in Section \ref{s4}, Theorem \ref{t:configurations} will be proven by applying Lemma \ref{l:main} and Theorem \ref{t2} to that renormalised configuration.

\begin{definition}[$\mu \times \sigma_{x}$] Given a configuration $(\mu,\{\sigma_{x}\})$, we define the measure $\mu \times \sigma_{x}$ on $\R^{2} \times [0,1]$ as the Radon measure produced by the Riesz representation theorem applied to the positive linear functional determined by
\begin{equation}\label{form60} \int g \, d(\mu \times \sigma_{x}):= \int \int g(x,\theta) \, d\sigma_{x}(\theta) \, d\mu(x), \qquad g \in C_{c}(\R^{2} \times [0,1]). \end{equation}
\end{definition}

\begin{remark} The right hand side of \eqref{form60} makes sense by the measurability hypothesis in Definition \ref{def:configuration}. Let us also emphasise that the measure $\mu \times \sigma_{x}$ on $\R^{2} \times [0,1]$ should not be confused with the measure $\mu(\sigma_{x})$ on $[0,1] \times \R$ from Definition \ref{def:productMeasure}. In fact, whereas $\mu(\sigma_{x})$ is best interpreted as a measure on the space $\mathcal{A}(2,1)$ of all affine lines, $\mu \times \sigma_{x}$ is best interpreted as a measure on $\R^{2} \times \mathcal{A}(2,1)$. The next \textbf{Notation} makes this more precise.  \end{remark}

\begin{notation} For a Borel set $B \subset \R^{2} \times \mathcal{A}(2,1)$, we denote 
\begin{equation}\label{form81} (\mu \times \sigma_{x})(B) := (\mu \times \sigma_{x})(\{(x,\theta) \in \R^{2} \times [0,1] : (x,\ell_{x,\theta}) \in B\}). \end{equation} 
In other words, we will use the identification above to consider $\mu \times \sigma_{x}$ either as a measure on $\R^{2} \times [0,1]$, or as a measure on $\R^{2} \times \mathcal{A}(2,1)$ depending on which is more convenient. \end{notation}

\begin{definition}[Tight configuration]\label{def:tightness} Let $C \geq 1$. A configuration $(\mu,\{\sigma_{x}\})$ is \emph{$C$-tight at scale $\Delta \in 2^{-\N}$ with data $(\mathcal{Q},\mathbb{T}$)} if there exist families
\begin{displaymath} \mathcal{Q} \subset \mathcal{D}_{\Delta} \quad \text{and} \quad \mathbb{T} = \bigcup_{Q \in \mathcal{Q}} \mathbb{T}_{Q} \subset \mathcal{T}^{\Delta} \end{displaymath}
with the following properties.
\begin{itemize}
\item[(T1) \phantomsection \label{T1}] The families $\mathbb{T}_{Q}$ have constant cardinality $M \geq 1$.
\item[(T2) \phantomsection \label{T2}] $Q \mapsto \mu(Q)$ is constant up to a factor of $2$ on $\mathcal{Q}$, and $\mu(\cup \mathcal{Q}) \geq C^{-1}$.
\item[(T3) \phantomsection \label{T3}]  $(\mu \times \sigma_{x})(Q \times \mathbf{T}) \geq C^{-1}\mu(Q)/M > 0$ for all $\mathbf{T} \in \mathbb{T}_{Q}$.
\item[(T4) \phantomsection \label{T4}] The slope set $\sigma(\mathbb{T})$ has cardinality $|\sigma(\mathbb{T})| \leq CM$.
\end{itemize}
\end{definition}

\begin{remark} \nref{T3} ensures that all the tubes $\mathbf{T} \in \mathbb{T}_{Q}$ intersect $Q$. In particular, \nref{T1} implies
\begin{equation}\label{form48} |\sigma(\mathbb{T})| \geq |\sigma(\mathbb{T}_{Q})| \gtrsim M, \qquad Q \in \mathcal{Q}. \end{equation}
  \end{remark} 

\begin{lemma}\label{l:main} For all $t \in (1,2]$, $s \in (2 - t,1]$, and $\eta > 0$ there exist $\epsilon = \epsilon(\eta,s,t) > 0$ and $\Delta_{0} = \Delta_{0}(\eta,s,t) > 0$ such that the following holds for all $\Delta \in 2^{-\N} \cap (0,\Delta_{0}]$. 

Let $(\mu,\{\sigma_{x}\})$ be a $\Delta^{-\epsilon}$-tight configuration at scale $\Delta$ with data $(\mathcal{Q},\mathbb{T})$, where $\Theta = \sigma(\mathbb{T})$ is a non-empty $(\Delta,s,\Delta^{-\epsilon})$-set, and $\mathcal{Q}$ is a $(\Delta,t,\Delta^{-\epsilon})$-set. Then, there exist $\mathcal{G}_{1},\mathcal{G}_{2} \subset \mathcal{D}_{\Delta}$ with $\dist(\mathcal{G}_{1},\mathcal{G}_{2}) \geq \Delta$, and the following properties:
\begin{itemize}
\item[(M1) \phantomsection \label{M1}] $\min\{\mu(G_{1}),\mu(G_{2})\} \geq \Delta^{\eta}$, where $G_{j} := (\cup \mathcal{G}_{j}) \cap \spt \mu$.
\item[(M2) \phantomsection \label{M2}] $X[\mu|_{G_{1}},\{\sigma_{x}\}]_{\Delta}(y) \geq \Delta^{\eta}$ for all $y \in G_{2}$.
\end{itemize}
Here $X[\ldots]_{\Delta} = X[\ldots] \ast \varphi_{\Delta}$, where $\varphi \in C^{\infty}_{c}(\R^{2})$ is any function satisfying 
\begin{displaymath} \mathbf{1}_{B(100)} \leq \varphi \leq \mathbf{1}_{B(200)}, \end{displaymath}
and $\varphi_{r}(x) = r^{-2}\varphi(x/r)$ for $r > 0$.
\end{lemma}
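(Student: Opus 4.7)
My plan proceeds in four interlocking pieces: incidence counting via tightness, a sharp Marstrand-type upper bound from Proposition \ref{prop5}, Erd\H os's bipartite lemma combined with a 4-coloring, and a Markov-type pigeonhole. To start, tightness forces $(\mathcal{Q},\mathbb{T})$ to behave like a ``generic'' configuration. By \nref{T4} plus the standard bound of $\Delta^{-1}$ distinct dyadic tubes per slope, $|\mathbb{T}| \leq CM\Delta^{-1}$; by \nref{T1} and \nref{T3},
\[
|\mathcal{I}(\mathcal{Q},\mathbb{T})| = \sum_{Q \in \mathcal{Q}} |\{\mathbf{T} \in \mathbb{T} : \mathbf{T}\cap Q \neq \emptyset\}| \geq M|\mathcal{Q}|,
\]
so the average of $b(\mathbf{T}) := |\{Q \in \mathcal{Q} : \mathbf{T}\cap Q \neq \emptyset\}|$ over $\mathbf{T} \in \mathbb{T}$ is $\gtrsim \Delta|\mathcal{Q}|/C$. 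Proposition \ref{prop5}, applied with its $\eta$-parameter set to $\eta/100$ (feasible since $\epsilon \leq c\eta/100$), bounds $b(\mathbf{T})$ from above by $\Delta^{1-O(\eta)}|\mathcal{Q}|$ for most tubes.

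A multi-level dyadic pigeonhole on $a(\mathbf{T}) := |\{Q \in \mathcal{Q} : \mathbf{T} \in \mathbb{T}_Q\}|$ and on $b(\mathbf{T})$, combined with a refinement of the slope set to $\bar\Theta$ from Proposition \ref{prop5}, produces sub-families $\mathcal{Q}' \subset \mathcal{Q}$ and $\mathbb{T}' \subset \mathbb{T}$ in which $a$ and $b$ are essentially constant, with $|\mathcal{Q}'|$, $|\mathbb{T}'|$ and $|\mathbb{T}_Q \cap \mathbb{T}'|$ each reduced by at most a $\Delta^{O(\eta)}$ factor. Next, 4-color $\mathcal{D}_\Delta$ by $(\lfloor x_1/\Delta\rfloor \bmod 2, \lfloor x_2/\Delta\rfloor \bmod 2)$: two same-color squares are at Euclidean distance $\geq \Delta$. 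Selecting the heaviest color class $\mathcal{Q}_c \subset \mathcal{Q}'$ by $\mu$-mass guarantees $\mu(\cup \mathcal{Q}_c) \gtrsim \mu(\cup\mathcal{Q}') \geq \Delta^{\epsilon+O(\eta)}$, and forces any two disjoint $\mathcal{G}_1, \mathcal{G}_2 \subset \mathcal{Q}_c$ to satisfy $\dist(\mathcal{G}_1,\mathcal{G}_2) \geq \Delta$ automatically, taking care of the separation requirement once and for all.

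On the graph on $\mathcal{Q}_c$ with symmetric weights $w(Q,Q') := |\{\mathbf{T} \in \mathbb{T}' : \mathbf{T} \in \mathbb{T}_Q \cap \mathbb{T}_{Q'}\}|$, Cauchy--Schwarz gives
\[
\sum_{\{Q,Q'\}} w(Q,Q') \gtrsim \sum_{\mathbf{T} \in \mathbb{T}'} a(\mathbf{T})^2 \geq \frac{\left(\sum_{\mathbf{T} \in \mathbb{T}'} a(\mathbf{T})\right)^2}{|\mathbb{T}'|} \gtrsim M|\mathcal{Q}_c|^2\Delta^{1+O(\eta)}.
\]
Erd\H os's bipartite lemma \cite[Lemma 1]{MR190027}, enforced to produce a balanced partition via a random coin-flip partition together with Chebyshev concentration, yields $\mathcal{Q}_c = A \sqcup B$ with $\mu(\cup A), \mu(\cup B) \gtrsim \mu(\cup\mathcal{Q}_c)$ and crossing weight $\sum_{Q \in A, Q' \in B} w(Q,Q') \gtrsim M|\mathcal{Q}_c|^2\Delta^{1+O(\eta)}$. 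Setting $\mathcal{G}_1 := A$, the $X$-ray definition together with \nref{T2}--\nref{T3} gives, for $y \in Q' \in B$,
\[
X[\mu|_{G_1},\{\sigma_x\}]\ast \varphi_\Delta(y) \gtrsim \Delta^{-1+\epsilon}\,\frac{\mu(\cup \mathcal{Q})}{M|\mathcal{Q}|}\, w_{\mathrm{in}}(Q'),
\]
where $w_{\mathrm{in}}(Q') := \sum_{\mathbf{T} \in \mathbb{T}',\, \mathbf{T}\cap Q' \neq \emptyset} a_A(\mathbf{T})$. The crucial upper bound $\max_{Q'} w_{\mathrm{in}}(Q') \leq CM\cdot \max_{\mathbf{T} \in \mathbb{T}'} a(\mathbf{T}) \lesssim M|\mathcal{Q}|\Delta^{1-O(\eta)}$ follows from Proposition \ref{prop5} combined with the pigeonhole that made $a(\mathbf{T}) \sim \bar a$ on $\mathbb{T}'$; Markov's inequality applied to $w_{\mathrm{in}}$ over $B$ then produces $\mathcal{G}_2 \subset B$ with $\mu(\cup\mathcal{G}_2) \geq \Delta^\eta$ on which $w_{\mathrm{in}}(Q') \geq \bar w_{\mathrm{in}}/2$, giving $X[\mu|_{G_1},\{\sigma_x\}]\ast \varphi_\Delta(y) \geq \Delta^\eta$.

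The hardest part, I expect, is the bookkeeping of the many $\Delta^{O(\eta)}$ losses through the successive pigeonholings, to ensure that the final estimates $\mu(\cup \mathcal{G}_1), \mu(\cup \mathcal{G}_2) \geq \Delta^\eta$ and $X \ast \varphi_\Delta(y) \geq \Delta^\eta$ all hold simultaneously. Taking $\epsilon = c\eta$ for a sufficiently small absolute constant $c$ and $\eta_1 = \eta/100$ (this determines the Proposition \ref{prop5} application), and then letting $\Delta_0 = \Delta_0(\eta,s,t)$ be small enough to absorb the resulting multiplicative constants, should be enough to guarantee that the cumulative $O(\eta)$ exponents stay strictly below $\eta$.
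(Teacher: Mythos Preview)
Your approach is correct and follows the same skeleton as the paper's: tightness for the incidence lower bound, Proposition \ref{prop5} for the upper bound, Erd\H{o}s's bipartite lemma, and a Markov-type selection of $\mathcal{G}_{2}$. Two differences are worth noting. First, by working with a \emph{weighted} graph and Cauchy--Schwarz, you only need Proposition \ref{prop5} at the full scale $\bar{\Delta}=1$ to bound $\max a(\mathbf{T})$; the paper counts \emph{unweighted} edges instead, which forces it to invoke Proposition \ref{prop5} at an intermediate scale $\bar{\Delta}=\Delta^{O(\eta)}$ to control the multiplicity of nearby squares lying on several tubes through a fixed $Q$ (see \eqref{form49}), so your route is a mild simplification there. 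Second, your ``random coin-flip $+$ Chebyshev'' step to balance $\mu(\cup A)$ is unnecessary and a bit delicate as stated: once you have the uniform weighted-degree upper bound $\sum_{Q'} w(Q,Q') \leq M\cdot \max a(\mathbf{T})$, the same Markov argument you run on the $B$-side applies symmetrically on the $A$-side to give $|\{Q\in A: d(Q)\text{ large}\}|\gtrsim \Delta^{O(\eta)}|\mathcal{Q}|$, hence $\mu(\cup\mathcal{G}_{1})\geq\Delta^{\eta}$ --- this is precisely what the paper does at \eqref{form2a}. One point that deserves explicit care in your writeup: Proposition \ref{prop5} only bounds $|\mathcal{P}_{\theta}\cap R|$ for the $\theta$-\emph{dependent} refined set $\mathcal{P}_{\theta}$, not $|\mathcal{Q}\cap R|$, so your pigeonholing on $a(\mathbf{T})$ must be carried out on the restricted count $|\{Q\in\mathcal{P}_{\sigma(\mathbf{T})}:\mathbf{T}\in\mathbb{T}_{Q}\}|$, as the paper does via its restricted incidences $\overline{\mathcal{I}}$.
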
 

\begin{proof} Fix $\eta > 0$, and let $\bar{\eta} > 0$ be so small that $4\bar{\eta}/(t - 1) \leq \eta$. Let $\epsilon_{\ref{prop5}} = \epsilon_{\ref{prop5}}(s,t,\bar{\eta}) > 0$ be the constant given by Proposition \ref{prop5} with parameters $s,t,\bar{\eta}$. Finally, assume that 
\begin{equation}\label{form51} 0 < \epsilon \leq \tfrac{1}{8}\min\{\bar{\eta},\epsilon_{\ref{prop5}}\}. \end{equation}
We claim that the conclusions of Lemma \ref{l:main} are valid for any such choice of "$\epsilon$", provided that $\Delta > 0$ is sufficiently small in terms of $\eta,s,t$ (we do not explicitly track the required upper bound for $\Delta$, and we also omit constantly writing "if $\Delta$ is small enough").

The squares in $\mathcal{Q}$ are disjoint, but we desire them to be $\Delta$-separated to eventually guarantee the condition $\dist(\mathcal{G}_{1},\mathcal{G}_{2}) \geq \Delta$. This can be achieved by replacing $\mathcal{Q}$ by a subset of cardinality $\gtrsim |\mathcal{Q}|$. This replacement has no noticeable effect on the validity of the hypotheses of Lemma \ref{l:main}, so we assume that $\mathcal{Q}$ is $\Delta$-separated to begin with.

We record the following consequence of \nref{T2} for our $\Delta^{-\epsilon}$-tight configuration
\begin{equation}\label{form11} \mu(Q)|\mathcal{Q}| \sim \mu(\cup \mathcal{Q}) \geq \Delta^{\epsilon}, \qquad Q \in \mathcal{Q}. \end{equation}

For all $\mathcal{Q}' \subset \mathcal{Q}$ and $\mathbb{T}' \subset \mathbb{T}$, write $\mathcal{I}(\mathcal{Q}',\mathbb{T}') := \{(Q,\mathbf{T}) \in \mathcal{Q}' \times \mathbb{T}' : \mathbf{T} \in \mathbb{T}_{Q}\}$. Observe that
\begin{displaymath} |\mathcal{I}(\mathcal{Q},\mathbb{T})| = \sum_{Q \in \mathcal{Q}} |\mathbb{T}_{Q}| = M|\mathcal{Q}|. \end{displaymath}
A useful alternative way to count the incidences $\mathcal{I}(\mathcal{Q},\mathbb{T})$ is the following. For $\theta \in \Theta = \sigma(\mathbb{T})$, let $\mathbb{T}_{\theta} := \{T \in \mathbb{T} : \sigma(T) = \theta\}$ and $\mathcal{Q}_{\theta} := \{Q \in \mathcal{Q} : \mathbb{T}_{Q} \cap \mathbb{T}_{\theta} \neq \emptyset \}$. Since for each $Q \in \mathcal{Q}_{\theta}, |\mathbb{T}_{Q} \cap \mathbb{T}_{\theta}| \lesssim 1$,
\begin{equation} \label{e:altcount}M|\mathcal{Q}| = |\mathcal{I}(\mathcal{Q},\mathbb{T})| = \sum_{\theta \in \Theta} |\mathcal{I}(\mathcal{Q},\mathbb{T}_{\theta})| \sim \sum_{\theta \in \Theta} |\mathcal{Q}_{\theta}|. \end{equation}
 On the other hand, $|\mathcal{Q}_{\theta}| \leq |\mathcal{Q}|$  and \nref{T4} ensures $|\Theta| \leq \Delta^{-\epsilon}M$. So, combined with \eqref{e:altcount} there exists a subset $\Theta' \subset \Theta$ of cardinality $|\Theta'| \gtrsim \Delta^{\epsilon}|\Theta|$ such that $|\mathcal{Q}_{\theta}| \gtrsim \Delta^{2 \epsilon}|\mathcal{Q}|$ for all $\theta \in \Theta'$. In particular, $\Theta'$ is a $(\Delta,s,\Delta^{-\epsilon_{\ref{prop5}}})$-set. This places us in a position to apply Proposition \ref{prop5} with parameters $s,t$, at scale $\Delta$, to the $(\Delta,t,\Delta^{-\epsilon_{\ref{prop5}}})$-set $\mathcal{Q}$, and to the set of slopes $\Theta'$ found just above.

The conclusion is that there exists a further subset $\bar{\Theta} \subset \Theta'$ with $|\bar{\Theta}| \geq \tfrac{1}{2}|\Theta'| \gtrsim \Delta^{2 \epsilon} |\Theta|$, and for each $\theta \in \bar{\Theta}$ a subset $\mathcal{Q}_{\theta}' \subset \mathcal{Q}$ of cardinality $|\mathcal{Q}_{\theta}'| \geq (1 - \Delta^{\epsilon_{\ref{prop5}}})|\mathcal{Q}|$ such that
\begin{equation}\label{form13} |\mathcal{Q}_{\theta}' \cap R| \lesssim \Delta^{1 - \bar{\eta}}\bar{\Delta}^{t - 1}|\mathcal{Q}|, \end{equation} 
whenever $R \subset \R^{2}$ is a rectangle of dimensions $(\Delta \times \bar{\Delta})$ parallel to $\theta$ (here $\mathcal{Q}_{\theta}' \cap R = \{Q' \in \mathcal{Q}_{\theta}' : Q' \cap R \neq \emptyset\}$). Since $|\mathcal{Q} \, \setminus \, \mathcal{Q}_{\theta}'| \leq \Delta^{\epsilon_{\ref{prop5}}}|\mathcal{Q}| \leq \tfrac{1}{2}|\mathcal{Q}_{\theta}|$, the sets $\mathcal{Q}_{\theta}$ and $\mathcal{Q}_{\theta}'$ have large intersection. In fact, for $\Delta$ small enough, the cardinality of $Q_{\theta}^{\prime}$ is so large that, writing 
\begin{displaymath} \mathcal{Q}_{\theta}'' := \mathcal{Q}_{\theta} \cap \mathcal{Q}_{\theta}', \end{displaymath}
the definition of $\Theta^{\prime} \supset \bar \Theta$ ensures we have
\begin{displaymath} |\mathcal{Q}_{\theta}''| \geq \tfrac{1}{2}|\mathcal{Q}_{\theta}| \gtrsim \Delta^{2 \epsilon}|\mathcal{Q}|, \qquad \theta \in \bar{\Theta}. \end{displaymath}
Of course the estimate \eqref{form13} persists for $\mathcal{Q}_{\theta}'' \subset \mathcal{Q}_{\theta}'$.

To keep track of this information, we define  the \emph{restricted incidences}
\begin{displaymath} \overline{\mathcal{I}}(\mathcal{Q}',\mathbb{T}') := \{(Q,\mathbf{T}) \in \mathcal{Q}' \times \mathbb{T}' : \mathbf{T} \in \mathbb{T}_{Q} \text{ and } Q \in \mathcal{Q}_{\sigma(\mathbf{T})}''\},  \end{displaymath} 
for arbitrary $\mathcal{Q}' \subset \mathcal{Q}$ and $\mathbb{T}' \subset \mathbb{T}$. From the cardinality lower bounds for the sets $\bar{\Theta}$ and $\mathcal{Q}_{\theta}''$, we will soon deduce that the number of restricted incidences remains nearly maximal. We first record that if $\theta \in \bar{\Theta}$ and $Q \in \mathcal{Q}_{\theta}''$, then in particular $Q \in \mathcal{Q}_{\theta}$, and thus $\mathbf{T} \in \mathbb{T}_{Q}$ for some $\mathbf{T} \in \mathbb{T}_{\theta}$. Then $(Q,\mathbf{T}) \in \overline{\mathcal{I}}(\mathcal{Q},\mathbb{T}_{\theta})$ by definition. This shows that $|\mathcal{Q}_{\theta}''| \leq |\overline{\mathcal{I}}(\mathcal{Q},\mathbb{T}_{\theta})|$, and consequently
\begin{displaymath} |\overline{\mathcal{I}}(\mathcal{Q},\mathbb{T})| \geq \sum_{\theta \in \bar{\Theta}} |\overline{\mathcal{I}}(\mathcal{Q},\mathbb{T}_{\theta})| \geq \sum_{\theta \in \bar{\Theta}} |\mathcal{Q}_{\theta}''| \gtrsim \Delta^{2\epsilon}|\bar{\Theta}||\mathcal{Q}| \stackrel{\eqref{form48}}{\gtrsim} \Delta^{3\epsilon}M|\mathcal{Q}|. \end{displaymath}

Next, we pass to a subset of $\mathbb{T}$ with a roughly constant number of restricted incidences.  By the pigeonhole principle, choose a subset $\overline{\mathbb{T}} \subset \mathbb{T}$ and a number $N \geq 1$ such that
\begin{itemize}
\item[(a) \phantomsection \label{a}] $|\{Q \in \mathcal{Q}_{\sigma(\mathbf{T})}'' : \mathbf{T} \in \mathbb{T}_{Q}\}| \in [N,2N]$ for all $\mathbf{T} \in \overline{\mathbb{T}}$, and
\item[(b) \phantomsection \label{b}] $N|\overline{\mathbb{T}}| \sim |\overline{\mathcal{I}}(\mathcal{Q},\overline{\mathbb{T}})| \gtrapprox_{\Delta} \Delta^{3\epsilon}M|\mathcal{Q}|$.
\end{itemize}
Notice that $|\overline{\mathbb{T}}| \leq |\mathbb{T}| \lesssim \Delta^{-1 - \epsilon}M$ by the tightness hypothesis \nref{T4}, and since all the tubes in $\mathbb{T}$ intersect $\cup \mathcal{Q} \subset [0,1]^{2}$. Consequently,
\begin{equation}\label{form6a} N \gtrapprox_{\Delta} \frac{\Delta^{3\epsilon}M|\mathcal{Q}|}{|\overline{\mathbb{T}}|} \gtrsim \Delta^{1 + 4\epsilon}|\mathcal{Q}|. \end{equation}

We next replace $\mathcal{Q}$ by a subset $\overline{\mathcal{Q}}$ by another pigeonholing argument. Note that
\begin{displaymath} \Delta^{3\epsilon}M|\mathcal{Q}| \stackrel{\textup{\nref{b}}}{\lessapprox_{\Delta}} |\overline{\mathcal{I}}(\mathcal{Q},\overline{\mathbb{T}})| = \sum_{Q \in \mathcal{Q}} |\{\mathbf{T} \in \overline{\mathbb{T}} \cap \mathbb{T}_{Q} : Q \in \mathcal{Q}_{\sigma(\mathbf{T})}''\}| =: \sum_{Q \in \mathcal{Q}} |\overline{\mathbb{T}}_{Q}|. \end{displaymath}
Since $|\overline{\mathbb{T}}_{Q}| \leq |\mathbb{T}_{Q}| = M$, there exists a subset $\overline{\mathcal{Q}} \subset \mathcal{Q}$ with $|\overline{\mathcal{Q}}| \gtrapprox_{\Delta} \Delta^{4\epsilon}|\mathcal{Q}|$ such that 
\begin{displaymath} |\overline{\mathbb{T}}_{Q}| \gtrapprox_{\Delta} \Delta^{4\epsilon}M, \qquad Q \in \overline{\mathcal{Q}}. \end{displaymath}

We then proceed to define a graph with vertex set $\mathcal{Q}$, and edge set $\mathcal{E}$, as follows. For $Q,Q'$ distinct, we set $(Q,Q') \in \mathcal{E}$ if there exists a tube $\mathbf{T} \in \overline{\mathbb{T}}_{Q} \cap \overline{\mathbb{T}}_{Q'}$. (Note that $(Q,Q') \in \mathcal{E}$ if and only if $(Q',Q) \in \mathcal{E}$, so $(\mathcal{Q},\mathcal{E})$ is an undirected graph.) We claim that
\begin{equation}\label{form1a} |\mathcal{E}| \geq \Delta^{\eta}MN|\mathcal{Q}| \end{equation}
for small enough $\Delta > 0$. To see this, fix $Q \in \overline{\mathcal{Q}}$, so $|\overline{\mathbb{T}}_{Q}| \gtrapprox_{\Delta} \Delta^{4\epsilon}M$. Moreover, for every $\mathbf{T} \in \overline{\mathbb{T}}_{Q} \subset \overline{\mathbb{T}}$,
\begin{equation}\label{form50} |\{Q' \in \mathcal{Q}_{\sigma(\mathbf{T})}'' : \mathbf{T} \in \overline{\mathbb{T}}_{Q'}\}| = |\{Q' \in \mathcal{Q}_{\sigma(\mathbf{T})}'' : \mathbf{T} \in \mathbb{T}_{Q'}\}| \stackrel{\textup{\nref{a}}}{\geq} N \stackrel{\eqref{form6a}}{\gtrapprox_{\Delta}} \Delta^{1 + 3\epsilon}|\mathcal{Q}|. \end{equation}
All the $N$ squares $Q' \in \mathcal{Q}$ counted here contribute an edge $(Q',Q) \in \mathcal{E}$. Since $\mathbf{T}$ can be selected in $\gtrapprox_{\Delta} \Delta^{4\epsilon}M$ different ways for each $Q \in \overline{\mathcal{Q}}$, this might seem to give something even stronger than \eqref{form1a} (since $\epsilon < \eta/5$). But there is a catch: some of the $N$ squares $Q'$ corresponding to different tubes in $\overline{\mathbb{T}}_{Q}$ may coincide if $\dist(Q',Q) \ll 1$. The non-concentration condition \eqref{form13} is needed to fix this.

For $Q \in \overline{\mathcal{Q}}$ and $\mathbf{T} \in \overline{\mathbb{T}}_{Q}$ still fixed, and $\bar{\Delta} \in [\Delta,1]$, we claim that
\begin{equation}\label{form49} I_{\bar{\Delta}}(Q, \mathbf{T}) \vcentcolon = |\{Q' \in \mathcal{Q}_{\sigma(\mathbf{T})}'' : \mathbf{T} \in \mathbb{T}_{Q'} \text{ and } \dist(Q,Q') \leq \bar{\Delta}\}| \stackrel{\eqref{form13}}{\lesssim} \Delta^{1 - \bar{\eta}}\bar{\Delta}^{t - 1}|\mathcal{Q}|. \end{equation}
This is because $\mathcal{Q}_{\sigma(\mathbf{T})}'' \subset \mathcal{Q}_{\sigma(\mathbf{T})}'$, and because the conditions $\dist(Q',Q) \leq \bar{\Delta}$ and $\mathbf{T} \in \mathbb{T}_{Q'}$ imply that $Q'$ intersects a rectangle $R$ of dimensions $\sim (\Delta \times \bar{\Delta})$ parallel to $\mathbf{T}$.

In particular, choosing
\begin{displaymath} \bar{\Delta} := \Delta^{(4\epsilon + \bar{\eta})/(t - 1)} \geq \Delta^{2\bar{\eta}/(t - 1)} \geq \Delta^{\eta/2}, \end{displaymath}
implies $I_{\bar{\Delta}}(Q,\mathbf{T}) \lesssim  \Delta^{1 + 4\epsilon}|\mathcal{Q}| \lessapprox_{\Delta} \Delta^{\epsilon} N$.
For this choice of $\bar{\Delta}$ we deduce from \eqref{form50} that for $\Delta$ small enough,
\begin{displaymath} |\{Q' \in \mathcal{Q}_{\sigma(\mathbf{T})}'' : \mathbf{T} \in \overline{\mathbb{T}}_{Q'} \text{ and } \dist(Q',Q) \geq \Delta^{\eta/2}\}| \geq N - I(Q, \mathbf{T}) \ge \tfrac{1}{2}N, \qquad Q \in \overline{\mathcal{Q}}, \, \mathbf{T} \in \overline{\mathbb{T}}_{Q}. \end{displaymath}
For $Q \in \overline{\mathcal{Q}}$ fixed, the families 
\begin{displaymath} \{Q' \in \mathcal{Q}_{\sigma(\mathbf{T})}'' : \mathbf{T} \in \overline{\mathbb{T}}_{Q'} \text{ and } \dist(Q',Q) \geq \Delta^{\eta/2}\} \end{displaymath}
have overlap bounded by $\lesssim \Delta^{-\eta/2}$ as $\mathbf{T} \in \overline{\mathbb{T}}_{Q}$ varies. Since $4 \epsilon \le \eta /2$ and $|\overline{\mathbb{T}}_{Q}| \gtrapprox_{\Delta} \Delta^{4\epsilon}M$, we get \eqref{form1a}.

Erd{\H o}s \cite[Lemma 1]{MR190027} has shown that every (undirected) graph $(\mathcal{Q},\mathcal{E})$ contains a bi-partite sub-graph $(\mathcal{G}_{1} \dot{\cup} \mathcal{G}_{2},\overline{\mathcal{E}})$ (here $\mathcal{G}_{1},\mathcal{G}_{2} \subset \mathcal{Q}$ are disjoint, and $\overline{\mathcal{E}} \subset \mathcal{E}$ consists of edges between $\mathcal{G}_{1},\mathcal{G}_{2}$) with 
\begin{equation}\label{form3a} |\overline{\mathcal{E}}| \geq \tfrac{1}{2}|\mathcal{E}| \geq \tfrac{1}{2}\Delta^{\eta}MN|\mathcal{Q}|. \end{equation}
For $Q \in \mathcal{G}_{j}$, $j \in \{1,2\}$, let $d(Q) := |\{Q' \in \mathcal{G}_{3 - j} : (Q',Q) \in \overline{\mathcal{E}}\}|$ be the degree of a vertex in $\mathcal{G}_{j}$. Let
\begin{displaymath} \overline{\mathcal{G}}_{j} := \{Q \in \mathcal{G}_{j} : d(Q) \geq \tfrac{1}{10}\Delta^{\eta}MN \}, \qquad j \in \{1,2\}, \end{displaymath}
be the "high degree" squares in $\mathcal{G}_{j}$. We claim that
\begin{equation}\label{form2a} \min\{|\mathcal{G}_{1}|,|\mathcal{G}_{2}|\} \geq \min\{|\overline{\mathcal{G}}_{1}|,|\overline{\mathcal{G}}_{2}|\} \geq \tfrac{1}{10} \Delta^{\eta}|\mathcal{Q}|. \end{equation}
This follows from the uniform upper bound
\begin{displaymath} |\{Q' \in \mathcal{Q} : (Q',Q) \in \mathcal{E}\}| \leq \sum_{\mathbf{T} \in \overline{\mathbb{T}}_{Q}} |\{Q' \in \mathcal{Q}_{\sigma(\mathbf{T})}'' : \mathbf{T} \in \overline{\mathbb{T}}_{Q'}\}| \stackrel{\textup{\nref{a}}}{\leq} 2MN, \qquad Q \in \mathcal{Q}, \end{displaymath}
so in particular $d(Q) \leq 2MN$ for $Q \in \mathcal{G}_{1} \cup \mathcal{G}_{2}$. Thus, if \eqref{form2a} failed, we could estimate the number of edges in $\overline{\mathcal{E}}$ (recall that $\mathcal{E}$ is an undirected graph) as
\begin{displaymath} |\overline{\mathcal{E}}| = \sum_{Q \in \mathcal{G}_{j} \, \setminus \, \overline{\mathcal{G}}_{j}} d(Q) + \sum_{Q \in \overline{\mathcal{G}}_{j}} d(Q) \leq \tfrac{3}{10}\Delta^{\eta} MN|\mathcal{Q}|, \qquad j \in \{1,2\}, \end{displaymath}
violating \eqref{form3a}.

We will now verify that the claims Lemma \ref{l:main}\nref{M1}-\nref{M2} are satisfied by the families $\mathcal{G}_{1}$ and $\overline{\mathcal{G}}_{2}$. The measure lower bounds in \nref{M1} are clear from \eqref{form2a} and the constancy of $Q \mapsto \mu(Q)$ on $\mathcal{Q}$, recall \eqref{form11}. It remains to prove the $X$-ray measure lower bound \nref{M2}.

Fix $y \in G_{2} = (\cup \overline{\mathcal{G}}_{2}) \cap \spt \mu$, and $Q \in \overline{\mathcal{G}}_{2}$ such that $y \in Q$. Recalling that $X[\ldots]_{\Delta} = X[\ldots] \ast \varphi_{\Delta}$, and that $\mathbf{1}_{B(100)} \leq \varphi \leq \mathbf{1}_{B(200)}$,
\begin{align} X[\mu|_{G_{1}},\{\sigma_{x}\}]_{\Delta}(y) & \gtrsim \frac{X[\mu|_{G_{1}},\{\sigma_{x}\}](10Q)}{\Delta^{2}} \notag\\
&\label{form10a} = \Delta^{-2} \int_{G_{1}} \int X(\mathbf{1}_{10Q})(\theta,\pi_{\theta}(x)) \, d\sigma_{x}(\theta) \, d\mu(x). \end{align}
Note that if $\ell_{x,\theta} \cap 5Q \neq \emptyset$, then $X(\mathbf{1}_{10Q})(\theta,\pi_{\theta}(x)) \sim \Delta$. This motivates studying
\begin{displaymath} A := (\mu \times \sigma_{x})(\{(x,\theta) \in G_{1} \times [0,1] : \ell_{x,\theta} \cap 5Q \neq \emptyset\}). \end{displaymath}
Here is a crucial observation: if $x \in Q' \in \mathcal{G}_{1}$, and $(Q,Q') \in \overline{\mathcal{E}}$, then there exists a tube $\mathbf{T} \in \mathbb{T}_{Q} \cap \mathbb{T}_{Q'}$ (thus $Q \cap \mathbf{T} \neq \emptyset \neq Q' \cap \mathbf{T}$). Now, $\ell_{x,\theta} \cap 5Q \neq \emptyset$ for all lines $\ell_{x,\theta} \subset \mathbf{T}$. Therefore, recalling the notation \eqref{form81},
\begin{align*} (Q,Q') \in \overline{\mathcal{E}} \quad \Longrightarrow \quad   (\mu \times \sigma_{x}) (\{(x,\theta) \in Q' \times [0,1] : \ell_{x,\theta} \cap 5Q \neq \emptyset\}) & \geq (\mu \times \sigma_{x})(Q' \times \mathbf{T})\\
&\geq \Delta^{\epsilon}\mu(Q)/M, \end{align*}
using the $\Delta^{-\epsilon}$-tightness hypothesis \nref{T3} in the final inequality. Since $|\{Q' \in \mathcal{G}_{1} : (Q,Q') \in \overline{\mathcal{E}}\}| = d(Q) \gtrsim \Delta^{\eta}MN$ by the definition of $Q \in \overline{\mathcal{G}}_{2}$, we find

\begin{align*} A & \geq \mathop{\sum_{Q' \in \mathcal{G}_{1}}}_{(Q,Q') \in \overline{\mathcal{E}}} (\mu \times \sigma_{x})(\{(x,\theta) \in Q' \times [0,1] : \ell_{x,\theta} \cap 5Q \neq \emptyset\}) \gtrsim \Delta^{\eta}MN \cdot \Delta^{\epsilon}\mu(Q)/M \\
& \stackrel{\eqref{form6a}}{\gtrapprox_{\Delta}} \Delta^{1 + 4\epsilon + \eta}\mu(Q)|\mathcal{Q}| \stackrel{\eqref{form11}}{\geq} \Delta^{1 + 5\epsilon + \eta} \stackrel{\eqref{form51}}{\geq} \Delta^{1 + 2\eta}. \end{align*} 
Taking into account the factor $\Delta^{-2}$ in \eqref{form10a}, and the lower bound $X(\mathbf{1}_{10Q})(\theta,\pi_{\theta}(x)) \gtrsim \Delta$ whenever $\ell_{x,\theta} \cap 5Q \neq \emptyset$, this concludes the proof of the proposition. \end{proof}


\section{Finding a tight sub-configuration}\label{s7}

In this section we prove that a (finitary) "blow-up" of $(\mu,\{\sigma_{x}\})$ satisfies the tightness hypotheses required to apply Lemma \ref{l:main}. Here is what we mean by "blow-up":
\begin{definition}[Renormalised configuration]\label{def:renormalisation} Let $(\mu,\{\sigma_{x}\})$ be a configuration, and let $Q \subset [0,1)^{2}$ be a dyadic square with $\mu(Q) > 0$. The \emph{$Q$-renormalised configuration} is $(\mu^{Q},\{\sigma_{y}^{Q}\})$, where
\begin{displaymath} \mu^{Q} := \tfrac{1}{\mu(Q)}S_{Q}(\mu|_{Q}) \quad \text{and} \quad \sigma_{y}^{Q} := \sigma_{S_{Q}^{-1}(y)}. \end{displaymath}
Here $S_{Q}$ is the homothety $Q \to [0,1)^{2}$, and $S_{Q}(\mu|_{Q})$ is the push-forward of $\mu|_{Q}$ by $S_{Q}$. \end{definition}

Here is the main result of the section:
\begin{thm}\label{t:renormalisation} Let $s,t \in (0,2]$, $\tau \in (0,t)$, $C > 0$, and $\epsilon > 0$. Then, there exist $\Delta_{0} = \Delta_{0}(C,\epsilon,t,\tau) > 0$ and $n = n(\epsilon,t,\tau) \in \N$ such that the following holds for all $\Delta_{1} \in 2^{-\N} \cap (0,\Delta_{0}]$. Let $(\mu,\{\sigma_{x}\})$ be a configuration, where $\mu$ is a $(t,C)$-Frostman probability measure, and $\sigma_{x}$ is an $(s,C)$-Frostman probability measure for $\mu$ almost all $x \in \R^{2}$.

Then, there exist 
\begin{itemize}
\item dyadic scales $\underline{\Delta},\Delta \in [\Delta_{1}^{n},\Delta_{1}]$ with $\underline{\Delta} \leq \Delta$
\item a measure $\bar{\mu} = \mu|_{B}$, where $B \subset [0,1)^{2}$ is Borel, and 
\item a square $Q \in \mathcal{D}_{\underline{\Delta}}$
\end{itemize}
such that the $Q$-renormalised configuration $(\bar{\mu}^{Q},\{\sigma^{Q}_{y}\})$ is $\Delta^{-\epsilon}$-tight at scale $\Delta$ with data $(\mathcal{Q},\mathbb{T}) \subset \mathcal{D}_{\Delta} \times \mathcal{T}^{\Delta}$. The square $Q$ can be selected so that $\bar{\mu}^{Q}$ is $(\tau,\Delta^{-\epsilon})$-Frostman, and $\mathbb{T}$ can be selected so that $\sigma(\mathbb{T})$ is a non-empty $(\Delta,s,\Delta^{-\epsilon})$-set.
\end{thm}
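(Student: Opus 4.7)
The plan is to execute two independent pigeonholing layers and combine them. The first, depending only on $\mu$, identifies a positive-density set of dyadic scales $\underline{\Delta} \in \{\Delta_1^{j}\}_{j=1}^{n}$ admitting a cube $Q \in \calD_{\underline{\Delta}}$ such that the renormalised measure $\mu^{Q}$ is $(\tau, \Delta^{-\epsilon})$-Frostman, even though pigeonholing alone could destroy Frostman regularity. This is exactly the role of Proposition \ref{lemma5} from the introduction, which requires $n = n(\epsilon, t, \tau)$ sufficiently large. The second layer, which uses the Borel variation of $x \mapsto \sigma_x$, produces a second positive-density set of scales at which the discretised direction measures $\sigma_y^{Q}$ become essentially common across the $\bar\mu^{Q}$-bulk of $[0,1)^{2}$. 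Intersecting the two density-$\gtrsim 1$ subsets of $\{\Delta_1^{j}\}_{j=1}^{n}$ yields a pair $(\underline{\Delta}, \Delta)$ at which both properties hold simultaneously.

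For the second layer, Lusin's theorem applied to the weak$*$-Borel map $x \mapsto \sigma_x$ furnishes a Borel set $B \subset [0,1)^{2}$ of $\mu$-mass $\geq 1 - \epsilon/2$ on which this map is uniformly weak$*$-continuous; set $\bar\mu := \mu|_{B}$. Weak$*$-uniform continuity, combined with a standard multiscale decomposition, implies that for most dyadic scales $\Delta = \Delta_1^{j}$, the level-set profile $(\sigma_y^{Q}(I))_{I \in \calD_\Delta([0,1])}$ is, up to dyadic quantisation of the heights, essentially independent of $y$ across a subset of large $\bar\mu^{Q}$-mass. This is where Proposition \ref{prop6} (as indicated in the introduction) performs the quantitative bookkeeping.

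Given $(\underline{\Delta}, Q, \Delta)$ from the combined selection, define $\Theta \subset [0,1] \cap \Delta \cdot \Z$ as the common set of heavy discretised slopes, namely those $I \in \calD_\Delta([0,1])$ with $\sigma_y^{Q}(I) \gtrsim \Delta^{s + C\epsilon}$ uniformly in $y$ over the selected sub-mass. The $(s, \Delta^{-\epsilon})$-Frostman condition on $\sigma_y^{Q}$ forces $M := |\Theta| \gtrsim \Delta^{-s + C\epsilon}$ and, after absorbing constants into $\epsilon$, supplies the $(\Delta, s, \Delta^{-\epsilon})$-set property for $\Theta$. For each $Q' \in \calD_\Delta$ with $\bar\mu^{Q}(Q') > 0$, set
\begin{displaymath} \mathbb{T}_{Q'} := \{\mathbf{T} \in \calT^{\Delta} : \mathbf{T} \cap Q' \neq \emptyset \text{ and } \sigma(\mathbf{T}) \in \Theta\}, \end{displaymath}
and extract $\calQ \subset \calD_\Delta$ by a final pigeonhole on $\bar\mu^{Q}(Q')$, giving cubes of comparable mass and total mass $\geq \Delta^{\epsilon}$. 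Conditions (T1) and (T4) hold by construction since $\sigma(\mathbb{T}) = \Theta$ and $|\mathbb{T}_{Q'}| = M$ on every $Q' \in \calQ$; (T2) is the pigeonhole; (T3) is the lower bound $\sigma_y^{Q}(I) \gtrsim \Delta^{s + C\epsilon}$ integrated against $\bar\mu^{Q}(Q')$, which is comparable to $\Delta^{t + C\epsilon}$ by the Frostman property and hence gives the required $\bar\mu^{Q}(Q')/M$ mass down to constants.

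The chief obstacle is the quantitative coordination of the two pigeonhole layers: ensuring that the common slope set $\Theta$ survives as genuinely common across a $\bar\mu^{Q}$-large sub-family of cubes and remains a $(\Delta, s, \Delta^{-\epsilon})$-set with the same $\epsilon$-loss appearing in the Frostman bound on $\bar\mu^{Q}$. Each layer individually loses factors of $\Delta^{-O(\epsilon)}$; the point is that these losses can all be absorbed into a single $\Delta^{-\epsilon}$ factor by running the scale sequence for $n$ sufficiently large and choosing the internal $\epsilon$-parameters sufficiently small. This is a more elaborate version of the argument sketched for \cite[Theorem 5.7]{2023arXiv230110199O}, which the introduction explicitly flags as the template.
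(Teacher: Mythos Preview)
Your proposal has the right architecture at the coarsest level, but the second layer contains a genuine gap. Lusin's theorem gives only a \emph{qualitative} modulus of continuity for $x \mapsto \sigma_x$ on $B$: for $x,y$ sufficiently close (depending on $B$ and the map itself), $\sigma_x$ and $\sigma_y$ are weak*-close. There is no reason this modulus should be compatible with the scale $\Delta \in [\Delta_1^n, \Delta_1]$; you need the slope profiles at resolution $\Delta$ to agree across a square of side $\underline{\Delta}$, and Lusin gives no relation between $\underline{\Delta}$ and $\Delta$ in terms of the Frostman data. The sentence ``weak*-uniform continuity \dots\ implies that for most dyadic scales $\Delta = \Delta_1^j$, the level-set profile \dots\ is essentially independent of $y$'' is exactly the unjustified step. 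Your subsequent definition of $\Theta$ as the ``common'' heavy-slope set presupposes what has to be proved.

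The paper's mechanism does not use Lusin at all. Proposition~\ref{prop6} runs an \emph{iterated} pigeonhole along a geometric sequence $\delta_j = \Delta^{-j}\delta$: at each level it restricts both $\mu$ and the $\sigma_x$ to sub-configurations on which the tube-family cardinality $|\mathcal{T}_p^j| \approx M_j$ is roughly constant. Since $1 \leq M_j \lessapprox \Delta^{-1}$ for every $j$, across $\approx 2/\epsilon$ levels some consecutive pair must satisfy $M_{j+1} \lessapprox \Delta^{-\epsilon} M_j$; at that pair the $\delta_j$-level families $\mathcal{T}_p$ are forced to fill nearly all of the common $\delta_{j+1}$-level family $\mathbb{T}_Q$, which \emph{is} (T4). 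This is an entropy-stabilisation argument, not a continuity argument. The coordination with the Frostman layer is also more delicate than intersecting two positive-density scale sets: Lemma~\ref{lemma5} only delivers its good scales with constant $O(\Delta_1^{-3d})$ rather than $O(\Delta_1^{-\epsilon})$, so the paper applies Szemer\'edi's theorem inside the good-scale set to find a long arithmetic progression of gap $m \geq 6/\epsilon$, sets $\Delta := \Delta_1^{m}$, and runs Proposition~\ref{prop6} along that progression so that $\Delta_1^{-6} \leq \Delta^{-\epsilon}$.
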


\subsection{Preliminaries} Most of the work in the proof of Theorem \ref{t:renormalisation} has nothing to do with renormalisations and Frostman conditions. The main technical tool is Proposition \ref{prop6} below, which works for all (probability) measures. Proposition \ref{prop6} is modelled on ideas which appeared in the proof of \cite[Theorem 5.7]{2023arXiv230110199O}. Fortunately, we do not need all the components of the proof of \cite[Theorem 5.7]{2023arXiv230110199O}, so Proposition \ref{prop6} is somewhat simpler.

Below, the expression "a (positive) function $f$ is roughly constant" means that there exists $C > 0$ such that $C \leq f \leq 2C$. We also recall from \eqref{form81} that if $(\mu,\{\sigma_{x}\})$ is a configuration, we denote (for $\delta,\Delta \in 2^{-\N}$)
\begin{displaymath} (\mu \times \sigma_{x})(p \times T) = \int_{Q} \sigma_{x}(\{\theta \in [0,1] : \ell_{x,\theta} \subset T\}) \, d\mu(x), \quad p \in \mathcal{D}_{\delta}, \, T \in \mathcal{T}^{\Delta}. \end{displaymath}

\begin{lemma}\label{lemma1} Let $A \geq 1$, $\delta,\Delta \in 2^{-\N}$ with $\delta \leq \Delta$, and let $(\mu,\{\sigma_{x}\})$ be a configuration satisfying $\mu(\R^{2}) \in [\delta^{A},\delta^{-A}]$ and 
\begin{displaymath} \delta^{A}\mu(p) \leq (\mu \times \sigma_{x})(p \times [0,1]) \leq \mu(p)\delta^{-A}, \qquad p \in \mathcal{D}_{\delta}(\spt \mu). \end{displaymath}
Then, there exist $\mathcal{P} \subset \mathcal{D}_{\delta}$, and for each $p \in \mathcal{P}$ a family $\mathcal{T}_{p} \subset \mathcal{T}^{\Delta}$ with the following properties:
\begin{itemize}
\item[(B1) \phantomsection \label{B1}] $p \mapsto \mu(p)$ is roughly constant on $\mathcal{P}$, and $\mu(\cup \mathcal{P}) \gtrapprox_{\delta} \mu(\R^{2})$.
\item[(B2) \phantomsection \label{B2}] $(p,\mathbf{T}) \mapsto (\mu \times \sigma_{x})(p \times \mathbf{T})$ is roughly constant on $\{(p,\mathbf{T}) \in \mathcal{P} \times \mathcal{T}^{\Delta} : T \in \mathcal{T}_{p}\}$, and $(\mu \times \sigma_{x})(p \times (\cup \mathcal{T}_{p})) \approx_{\delta} (\mu \times \sigma_{x})(p \times [0,1])$ for all $p \in \mathcal{P}$.
 \end{itemize}
The implicit constants in "$\approx_{\delta}$" are here allowed to depend on $A$. \end{lemma}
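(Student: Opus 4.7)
The lemma is a two-step dyadic pigeonhole with the synchronisation twist that \nref{B2} asks for joint constancy over $(p,\mathbf{T}) \in \mathcal{P} \times \mathcal{T}_p$, not just constancy in $\mathbf{T}$ for each fixed $p$. The hypotheses $\mu(\R^{2}) \in [\delta^{A},\delta^{-A}]$ and $\delta^{A}\mu(p) \leq (\mu\times\sigma_x)(p\times[0,1]) \leq \delta^{-A}\mu(p)$ force every quantity of interest into a window of width $\delta^{\pm O(A)}$, so each pigeonhole only has to sort $O_{A}(\log(1/\delta))$ dyadic levels, and losses of that size are exactly what $\approx_\delta$ absorbs.

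For \nref{B1}, I would first discard cubes with $\mu(p)\le\delta^{2A+10}$, which contribute a total of at most $\delta^{A+10}\ll\mu(\R^{2})$ and are therefore negligible. On the surviving cubes $\mu(p)\in[\delta^{2A+10},\delta^{-A}]$ falls into $O(\log(1/\delta))$ dyadic bands $[2^{-k-1},2^{-k}]$; selecting the band carrying the largest total $\mu$-mass produces a family $\mathcal{P}^{0}$ on which $p\mapsto\mu(p)$ is constant up to a factor of $2$ and $\mu(\cup\mathcal{P}^{0})\gtrapprox_{\delta}\mu(\R^{2})$.

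Next, for each $p\in\mathcal{P}^{0}$ I would pigeonhole tubes. Viewing each $\mathbf{T}\in\mathcal{T}^{\Delta}$ as the subset $\mathbf{D}(p_{\mathbf{T}})\subset\mathcal{A}(2,1)$, the families $\{\mathbf{D}(q)\}_{q\in\mathcal{D}_{\Delta}}$ partition $\mathcal{A}(2,1)$ up to a measure-zero boundary set, giving
\[
\sum_{\mathbf{T}\in\mathcal{T}^{\Delta}}(\mu\times\sigma_{x})(p\times\mathbf{T}) \;=\; (\mu\times\sigma_{x})(p\times[0,1]) \;\in\; [\delta^{A}\mu(p),\,\delta^{-A}\mu(p)].
\]
Deleting the tail $\{\mathbf{T}:(\mu\times\sigma_{x})(p\times\mathbf{T})\le\delta^{M}\mu(p)\}$ for a sufficiently large $M=M(A)$ discards at most $\lesssim\delta^{M}\Delta^{-2}\mu(p)\ll\delta^{A}\mu(p)$, hence is negligible against the lower bound above. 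The remaining values occupy only $O(\log(1/\delta))$ dyadic bands, so a pigeonhole yields an index $k_{p}$ and a subfamily $\mathcal{T}_{p,k_{p}}\subset\mathcal{T}^{\Delta}$ on which $(\mu\times\sigma_{x})(p\times\mathbf{T})\in[2^{-k_{p}-1},2^{-k_{p}}]$ and which retains $\gtrapprox_{\delta}(\mu\times\sigma_{x})(p\times[0,1])$ of the mass.

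The only real subtlety is the synchronisation: $\mathcal{T}_{p,k_{p}}$ gives \nref{B2} for each fixed $p$ separately, but the indices $k_{p}$ may vary with $p$. Since $k_{p}$ ranges over $O(\log(1/\delta))$ values on $\mathcal{P}^{0}$, a $\mu$-weighted pigeonhole extracts a common level $k$ and a subfamily $\mathcal{P}\subset\mathcal{P}^{0}$ with $k_{p}\equiv k$ on $\mathcal{P}$ and $\mu(\cup\mathcal{P})\gtrapprox_{\delta}\mu(\cup\mathcal{P}^{0})\gtrapprox_{\delta}\mu(\R^{2})$. Setting $\mathcal{T}_{p}:=\mathcal{T}_{p,k}$, the pair value $(\mu\times\sigma_{x})(p\times\mathbf{T})$ is now jointly constant up to a factor of $2$, while constancy of $\mu(p)$ is inherited by the subset $\mathcal{P}\subset\mathcal{P}^{0}$; hence \nref{B1} and \nref{B2} are both satisfied. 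No step presents a genuine obstacle; the only bookkeeping to track is that all the logarithmic losses are $A$-dependent but $\delta$-tame, exactly matching the statement's convention that implicit constants in $\approx_{\delta}$ may depend on $A$.
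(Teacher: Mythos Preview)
Your proposal is correct and follows essentially the same three-step pigeonhole as the paper's proof: first a dyadic pigeonhole on $\mu(p)$ to get (B1), then for each surviving $p$ a dyadic pigeonhole on the tube values $(\mu\times\sigma_{x})(p\times\mathbf{T})$, and finally a third pigeonhole over the finitely many level indices $k_{p}$ to synchronise them across $p$. The paper's argument is nearly identical, differing only cosmetically (e.g.\ it uses the sharper tube count $\lesssim \Delta^{-1}$ through a fixed $\delta$-cube rather than your $\Delta^{-2}$, but either suffices).
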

 
 \begin{remark} The reader may think that $A = 1$, since we do not need any other cases. \end{remark}
 
 \begin{proof}[Proof of Lemma \ref{lemma1}] We start with property \nref{B1}, and then refine further to obtain also \nref{B2}. Let 
 \begin{displaymath} \mathcal{P}_{j} := \{p \in \mathcal{D}_{\delta}(\spt \mu) : 2^{-j - 1} < \mu(p) \leq 2^{-j}\}, \qquad j \in \Z. \end{displaymath}
 By the pigeonhole principle, choose an index $j \in \Z$ such that $\mu(\cup \mathcal{P}_{j}) \gtrapprox_{\delta} \mu(\R^{2})$. This is possible by the hypothesis $\mu(\R^{2}) \in [\delta^{A},\delta^{-A}]$. Now $p \mapsto \mu(p)$ is roughly constant on $\mathcal{P}_{j}$, with $\mu(p) \sim 2^{-j} =: m$.  Note that $m \in [c\delta^{A + 3},\delta^{-A}]$ for an absolute constant $c > 0$, as follows by combining $|\mathcal{P}_{j}| \leq \delta^{-2}$ and $\mu(\cup \mathcal{P}_{j}) \gtrapprox_{\delta} \delta^{A}$.
 
 Next, for $i \in \Z$ and $p \in \mathcal{P}_{j}$, let
 \begin{displaymath} \mathcal{T}_{p}^{i} := \{\mathbf{T} \in \mathcal{T}^{\Delta} : 2^{-i - 1} < (\mu \times \sigma_{x})(p \times \mathbf{T}) \leq 2^{-i}\}. \end{displaymath}
 By another application of the pigeonhole principle, pick $i = i(p) \in \Z$ such that 
 \begin{equation}\label{form61} (\mu \times \sigma_{x})(p \times (\cup \mathcal{T}_{p}^{i(p)})) \approx_{\delta} (\mu \times \sigma_{x})(p \times [0,1]), \qquad p \in \mathcal{P}_{j}. \end{equation}
 This is possible, because 
 \begin{equation}\label{form62} \delta^{-2A} \geq (\mu \times \sigma_{x})(p \times [0,1]) \geq \mu(p)\delta^{A}  \gtrsim \delta^{2A + 3}, \qquad p \in \mathcal{P}_{j}, \end{equation}
Finally, we want to remove the dependence of $i(p)$ on the choice of $p \in \mathcal{P}_{j}$. Note that $2^{-i(p)} \in [c\delta^{2A + 5},\delta^{-2A}]$ for each $p \in \mathcal{P}_{j}$, which follows by combining \eqref{form61}, \eqref{form62}, and $|\mathcal{T}_{p}^{i(p)}| \lesssim \Delta^{-1} \leq \delta^{-1}$. In other words, there are only $\approx_{\delta}$ possible choices for $i(p)$, and consequently at least one family 
\begin{displaymath} \mathcal{P}_{j}^{i} := \{p \in \mathcal{P}_{j} : i(p) = i\} \end{displaymath}
has to satisfy $\mu(\cup \mathcal{P}_{j}^{i}) \gtrapprox_{\delta} \mu(\cup \mathcal{P}_{j}) \gtrapprox_{\delta} \mu(\R^{2})$. Since the rough constancy of $p \mapsto \mu(p)$ remains true on $\mathcal{P} := \mathcal{P}_{j}^{i}$, the lemma is now valid setting $\mathcal{T}_{p} := \mathcal{T}_{p}^{i(p)}$ for $p \in \mathcal{P}$. \end{proof} 
 
The rough constancy in \nref{B2} will be used for obtaining a $(\Delta,s)$-set property for the family $\mathcal{T}_{p}$, provided that $(\mu \times \sigma_{x})(p \times \mathcal{A}(2,1)) \approx_{\delta} \mu(p)$. This is based on the following:
\begin{lemma}\label{lemma2a} Let $(\mu,\{\sigma_{x}\})$ be a configuration. Let $\Delta \in 2^{-\N}$, and let $p \subset \R^{2}$ be a dyadic cube. Assume that every $\sigma_{x}$ is an $(s,C)$-Frostman measure. Assume that $\mathcal{T} \subset \mathcal{T}^{\Delta}$ is a family such that $T \mapsto (\mu \times \sigma_{x})(p \times T)$ is roughly constant on $\mathcal{T}$. Then, $\mathcal{T}$ is a $(\Delta,s,\mathbf{C})$-set with constant
\begin{displaymath} \mathbf{C} \lesssim \frac{C\mu(p)}{(\mu \times \sigma_{x})(p \times (\cup \mathcal{T}))}. \end{displaymath}
\end{lemma}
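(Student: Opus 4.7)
The plan is to translate the cardinality of a subfamily of $\mathcal{T}$ into the $(\mu \times \sigma_x)$-mass it carries via the rough constancy hypothesis, and then to bound that mass using the Frostman property of $\sigma_x$. To start, I would fix $\kappa > 0$ so that $\kappa \leq (\mu \times \sigma_x)(p \times T) \leq 2\kappa$ for every $T \in \mathcal{T}$. Because distinct dyadic $\Delta$-tubes come from disjoint parameter squares in $\mathcal{D}_\Delta([-1,1) \times \R)$, the sets $\mathbf{D}(p_T) \subset \mathcal{A}(2,1)$ are pairwise disjoint, and hence for any $\mathcal{T}' \subset \mathcal{T}$,
\[ \sum_{T \in \mathcal{T}'} (\mu \times \sigma_x)(p \times T) \;=\; (\mu \times \sigma_x)\Big(p \times \bigcup \mathcal{T}'\Big). \]
Taking $\mathcal{T}' = \mathcal{T}$ gives $|\mathcal{T}|\kappa \sim (\mu \times \sigma_x)(p \times \bigcup \mathcal{T})$.

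To verify the $(\Delta,s,\mathbf{C})$-set condition for the parameter family $\{p_T : T \in \mathcal{T}\}$, I would fix $T_0 \in \mathcal{T}$ and $r \geq \Delta$, and estimate $\mathcal{T}_r \coloneqq \{T \in \mathcal{T} : p_T \subset B_{\R^2}(p_{T_0}, r)\}$. For $x \in p$ and $\theta \in [0,1]$ such that $\ell_{x,\theta} \in \mathbf{D}(p_T)$ for some $T \in \mathcal{T}_r$, the slope of $\ell_{x,\theta}$ is constrained to lie within $r + O(\Delta)$ of the slope $\sigma(T_0)$. After a routine initial partition of the configuration by angular sector ensuring $\spt \sigma_x \subset [1/8, 3/8]$ (as in the proof of Proposition \ref{prop:energyest}), the correspondence $\theta \mapsto -\cot(2\pi\theta)$ is bi-Lipschitz, and consequently $\theta$ lies in a ball $B(\theta_0, C_1 r) \subset [0,1]$ with $\theta_0$ independent of $x$. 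The $(s,C)$-Frostman property of $\sigma_x$ then yields
\[ \sigma_x\big(\{\theta : \ell_{x,\theta} \in \mathbf{D}(p_T) \text{ for some } T \in \mathcal{T}_r\}\big) \;\lesssim\; C r^s. \]

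Integrating over $x \in p$ and combining with the disjointness identity,
\[ |\mathcal{T}_r|\kappa \;\lesssim\; (\mu \times \sigma_x)\Big(p \times \bigcup \mathcal{T}_r\Big) \;\lesssim\; C r^s \mu(p), \]
and dividing by $|\mathcal{T}|\kappa \sim (\mu \times \sigma_x)(p \times \bigcup \mathcal{T})$ produces $|\mathcal{T}_r| \leq \mathbf{C} r^s |\mathcal{T}|$ with
\[ \mathbf{C} \;\lesssim\; \frac{C \mu(p)}{(\mu \times \sigma_x)(p \times \bigcup \mathcal{T})}, \]
as required. The only real technicality I anticipate is the passage from the $d_{\mathcal{A}(2,1)}$-metric (equivalently, the parameter-space metric on $[-1,1) \times \R$) to the $\theta$-parametrisation, which is handled by the preliminary angular reduction; aside from this, the argument is a direct pigeonhole-style accounting using the rough constancy and the Frostman hypothesis.
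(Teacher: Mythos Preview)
Your proof is correct and follows essentially the same route as the paper: fix the common value $m \sim (\mu \times \sigma_x)(p \times T)$, use disjointness of the parameter squares to get $m|\mathcal{T}| \sim (\mu \times \sigma_x)(p \times \cup \mathcal{T})$, and then bound $m|\mathcal{T} \cap \mathbf{T}|$ for a larger-scale tube $\mathbf{T}$ via the Frostman estimate $\sigma_x(\{\theta : \ell_{x,\theta} \subset \mathbf{T}\}) \lesssim C\bar{\Delta}^s$. The only cosmetic difference is that the paper tests against dyadic $\bar{\Delta}$-tubes rather than metric balls in parameter space. One small simplification: your ``routine initial partition by angular sector'' is unnecessary here. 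Since dyadic tubes have slopes in $[-1,1)$, the condition $\ell_{x,\theta} \subset \mathbf{T}$ already forces $\theta$ into the region $[1/8,3/8] \cup [5/8,7/8]$ where $\theta \mapsto -\cot(2\pi\theta)$ is bi-Lipschitz, so the set $\{\theta : \ell_{x,\theta} \subset \mathbf{T}\}$ is automatically contained in $O(1)$ intervals of length $\lesssim \bar{\Delta}$.
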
 

\begin{proof} Let $m > 0$ be such that $(\mu \times \sigma_{x})(p \times T) \sim m$ for all $T \in \mathcal{T}$. Fix $\Delta \leq \bar{\Delta} \leq 1$, and $\mathbf{T} \in \mathcal{T}^{\bar{\Delta}}$. The sets $\{(x,\theta) : \ell_{x,\theta} \subset T\}$ are disjoint for various $T \in \mathcal{T} \cap \mathbf{T}$. Therefore,
\begin{displaymath} m|\mathcal{T} \cap \mathbf{T}| \lesssim \mathop{\sum_{T \in \mathcal{T}}}_{T \subset \mathbf{T}} (\mu \times \sigma_{x})(p \times T) \leq \int_{p} \sigma_{x}(\{\theta : \ell_{x,\theta} \subset \mathbf{T}\}) \, d\mu(x) \lesssim C\mu(p)\bar{\Delta}^{s}. \end{displaymath}
When this inequality is combined with $m|\mathcal{T}| \sim (\mu \times \sigma_{x})(p \times (\cup \mathcal{T}))$, we find
\begin{displaymath} |\mathcal{T} \cap \mathbf{T}| \lesssim \frac{C\mu(p)}{(\mu \times \sigma_{x})(p \times (\cup \mathcal{T}))} \cdot \bar{\Delta}^{s}|\mathcal{T}|. \end{displaymath}
This completes the proof. \end{proof} 

We then arrive at the main proposition. The words "tightness" or "renormalisation" are not literally present in the statement (they will only appear in the eventual proof of Proposition \ref{t:renormalisation}). The reader may already view property (4) as a precedent for the tightness condition \nref{T4}. The scale $\underline{\Delta}$ in Proposition \ref{t:renormalisation} will eventually coincide with the scale $\bar{\delta} = \delta_{j + 1}$ of Proposition \ref{prop6}.

\begin{proposition}\label{prop6} For every $A \geq 1$ and $\epsilon \in (0,1]$, there exists $\Delta_{0} = \Delta_{0}(A,\epsilon) > 0$ such that the following holds for all $\Delta \in (0,\Delta_{0}]$ and $\delta \in 2^{-\N} \cap (0,\Delta]$. Let $(\mu,\{\sigma_{x}\})$ be a configuration, where $\mu(\R^{2}) \in [\Delta^{A},\Delta^{-A}]$, and $\sigma_{x}([0,1]) = 1$ for $\mu$ almost all $x \in \R^{2}$. Let $n := \ceil{2/\epsilon}$, and let $\{\delta_{j}\}_{j = 0}^{n} \subset 2^{-\N} \cap (0,\Delta]$ be the increasing scale sequence
\begin{displaymath} \delta_{j} := \Delta^{-j}\delta, \qquad j \in \{0,\ldots,n - 1\}. \end{displaymath}

Then, there exist consecutive scales $\underline{\delta} = \delta_{j}$ and $\bar{\delta} = \delta_{j + 1}$, a measure $\bar{\mu} := \mu|_{B}$, where $B \subset [0,1)^{2}$ is Borel (a union of elements in $\mathcal{D}_{\delta}$), and the following objects:
\begin{enumerate}
\item A family $\mathcal{Q} \subset \mathcal{D}_{\bar{\delta}}(\spt \mu)$ with $\bar{\mu}(\cup \mathcal{Q}) \gtrapprox_{\delta} \mu(\R^{2})$.
\item For each $Q \in \mathcal{Q}$ a family $\mathcal{P}_{Q} \subset \mathcal{D}_{\underline{\delta}}(Q)$ such that $\bar{\mu}(\cup \mathcal{P}_{Q}) \approx_{\delta} \bar{\mu}(Q)$.
\item For each $Q \in \mathcal{Q}$ a family $\mathbb{T}_{Q} \subset \mathcal{T}^{\Delta}$ of dyadic $\Delta$-tubes intersecting $Q$ such that 
\begin{displaymath} (\bar{\mu} \times \sigma_{x})(Q \times (\cup \mathbb{T}_{Q})) \approx_{\delta} \bar{\mu}(Q). \end{displaymath}
\item For each $p \in \mathcal{P}_{Q}$ a family $\mathcal{T}_{p} \subset \mathbb{T}_{Q}$ such that $|\mathbb{T}_{Q}| \lessapprox_{\delta} \Delta^{-\epsilon}|\mathcal{T}_{p}|$ for all $p \in \mathcal{P}_{Q}$.
\item For each $p \in \mathcal{P}_{Q}$ and $\mathbf{T} \in \mathcal{T}_{p}$, it holds $(\bar{\mu} \times \sigma_{x})(p \times \mathbf{T}) \gtrapprox_{\delta} \bar{\mu}(p)/|\mathcal{T}_{p}|$.
\end{enumerate}
The notation $f \lessapprox_{\delta} g$ means that $f \leq C(\log(\delta^{-1}))^{C}g$, where $C \geq 1$ may depend on $A,\epsilon$. 

Additionally, if each $\sigma_{x}$ is $(s,C)$-Frostman for some $C,s > 0$, then $\mathbb{T}_{Q}$ can be selected to be a $(\Delta,s,\mathbf{C})$-set with $\mathbf{C} \lessapprox_{\delta} C$.
\end{proposition}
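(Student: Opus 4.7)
The strategy is a pigeonhole across the $n \sim 1/\epsilon$ scales $\delta_{0} < \delta_{1} < \cdots < \delta_{n - 1}$, in the spirit of the proof of \cite[Theorem 5.7]{2023arXiv230110199O}. The first step is to iteratively apply Lemma \ref{lemma1} from the coarsest scale $\delta_{n - 1}$ down to the finest scale $\delta_{0} = \delta$, restricting the current measure at each stage to the union of cubes produced by the lemma. This yields a nested sequence of families $\mathcal{P}_{j}^{\ast} \subset \mathcal{D}_{\delta_{j}}$ such that each $p \in \mathcal{P}_{j}^{\ast}$ is contained in some $Q \in \mathcal{P}_{k}^{\ast}$ for every $k > j$, while only polylog factors in $\delta$ are lost at each of the $n$ steps. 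Attached to each $p \in \mathcal{P}_{j}^{\ast}$ is a tube family $\mathcal{T}_{p}^{(j)} \subset \mathcal{T}^{\Delta}$ coming from Lemma \ref{lemma1}\nref{B2}.

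By further pigeonholing the outputs of Lemma \ref{lemma1}, I may arrange that $\mu(p) \sim m_{j}$, $(\mu \times \sigma_{x})(p \times \mathbf{T}) \sim \tau_{j}$ on $\mathcal{T}_{p}^{(j)}$, and $|\mathcal{T}_{p}^{(j)}| \sim N_{j}$ are essentially constant on $\mathcal{P}_{j}^{\ast}$. Since each $\mathcal{T}_{p}^{(j)}$ consists of $\Delta$-tubes meeting a single $\delta_{j}$-cube with $\delta_{j} \leq \Delta$, one has $N_{j} \lesssim \Delta^{-1}$, so the normalized logarithms $a_{j} := \log_{1/\Delta}(N_{j})$ lie in $[0, 1]$. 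Were it the case that $a_{j + 1} - a_{j} > \epsilon$ for every $j = 0, \ldots, n - 2$, the telescoping sum would yield $a_{n - 1} - a_{0} > (n - 1)\epsilon \geq 2 - \epsilon \geq 1$, contradicting $a_{n - 1} - a_{0} \leq 1$. Therefore some index $j_{\ast}$ satisfies $N_{j_{\ast} + 1} \leq \Delta^{-\epsilon} N_{j_{\ast}}$, which will be the source of property $(4)$.

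Having located $j_{\ast}$, I set $\underline{\delta} := \delta_{j_{\ast}}$, $\bar{\delta} := \delta_{j_{\ast} + 1}$, $\bar{\mu} := \mu|_{\cup \mathcal{P}_{j_{\ast}}^{\ast}}$, $\mathcal{Q} := \mathcal{P}_{j_{\ast} + 1}^{\ast}$, $\mathcal{P}_{Q} := \{p \in \mathcal{P}_{j_{\ast}}^{\ast} : p \subset Q\}$, and then declare $\mathbb{T}_{Q} := \mathcal{T}_{Q}^{(j_{\ast} + 1)}$ and $\mathcal{T}_{p} := \mathcal{T}_{p}^{(j_{\ast})} \cap \mathbb{T}_{Q}$. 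Properties $(1)$ and $(2)$ follow immediately from the nested construction (indeed $\bar{\mu}(\cup \mathcal{P}_{Q}) = \bar{\mu}(Q)$ exactly), while $(5)$ is a direct consequence of the pigeonholed per-tube constancy at scale $\underline{\delta}$. The closing $(\Delta, s, \mathbf{C})$-set assertion under the added Frostman hypothesis on $\sigma_{x}$ follows from Lemma \ref{lemma2a} applied to the rough constancy of $(\bar{\mu} \times \sigma_{x})(Q \times \mathbf{T})$ on $\mathbb{T}_{Q}$, using the lower bound on $(\bar{\mu} \times \sigma_{x})(Q \times \cup \mathbb{T}_{Q})$ supplied by the next step.

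The main obstacle is the simultaneous verification of $(3)$ and $(4)$. Neither is automatic: the family $\mathbb{T}_{Q}$ was manufactured relative to the intermediate measure $\mu^{(j_{\ast} + 1)} \geq \bar{\mu}$ from the iteration rather than relative to $\bar{\mu}$ itself, so $(\bar{\mu} \times \sigma_{x})(Q \times \cup \mathbb{T}_{Q})$ could a priori be much smaller than $(\mu^{(j_{\ast} + 1)} \times \sigma_{x})(Q \times \cup \mathbb{T}_{Q}) \approx_{\delta} \mu^{(j_{\ast} + 1)}(Q)$; similarly the intersection $\mathcal{T}_{p}^{(j_{\ast})} \cap \mathbb{T}_{Q}$ could shrink well below $|\mathcal{T}_{p}^{(j_{\ast})}|$. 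The resolution is a $\bar{\mu}$-weighted double averaging across $\mathcal{Q}$: since $\mathbb{T}_{Q}$ captures essentially all $(\mu^{(j_{\ast} + 1)} \times \sigma_{x})(Q \times \cdot)$-mass and $\mathcal{T}_{p}^{(j_{\ast})}$ captures essentially all $(\bar{\mu} \times \sigma_{x})(p \times \cdot)$-mass, the ratio $\bar{\mu}(Q)/\mu^{(j_{\ast} + 1)}(Q)$ is on $\bar{\mu}$-average $\gtrapprox_{\delta} 1$, and a pigeonhole isolates a $\bar{\mu}$-positive subfamily of $\mathcal{Q}$ on which both $(3)$ and the lower bound $|\mathcal{T}_{p}| \gtrapprox_{\delta} N_{j_{\ast}}$ hold. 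Combined with $|\mathbb{T}_{Q}| \sim N_{j_{\ast} + 1} \leq \Delta^{-\epsilon} N_{j_{\ast}}$ from the previous step, this delivers $(4)$ and completes the plan.
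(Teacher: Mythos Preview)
Your overall plan—iterate Lemma \ref{lemma1} across the scales $\delta_{j}$, pigeonhole to find consecutive scales where the tube-family cardinalities $N_{j}$ satisfy $N_{j_{\ast}+1} \leq \Delta^{-\epsilon}N_{j_{\ast}}$, then declare $\underline{\delta},\bar{\delta},\bar{\mu},\mathcal{Q},\mathcal{P}_{Q},\mathbb{T}_{Q},\mathcal{T}_{p}$ accordingly—is exactly the paper's strategy, and your pigeonhole on $a_{j} = \log_{1/\Delta}(N_{j}) \in [0,1]$ is fine. The direction of iteration (you go coarse-to-fine, the paper goes fine-to-coarse) is a secondary difference. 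The real gap is that you restrict only $\mu$, never $\sigma_{x}$, and this is precisely what breaks your ``double averaging'' resolution of (3) and (4).

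The issue is the meaning of ``$\approx_{\delta}$''. Lemma \ref{lemma1}\nref{B2} gives
\[
(\mu^{(j_{\ast}+1)} \times \sigma_{x})(Q \times \cup \mathbb{T}_{Q}) \gtrapprox_{\delta} \mu^{(j_{\ast}+1)}(Q),
\]
which is a \emph{polylog-fraction lower bound}, not a statement that the complement is small: $(\mu^{(j_{\ast}+1)} \times \sigma_{x})(Q \times (\cup \mathbb{T}_{Q})^{c})$ may well be of the same order as $\mu^{(j_{\ast}+1)}(Q)$. The same holds for $\mathcal{T}_{p}^{(j_{\ast})}$ relative to $\bar{\mu}(p)$. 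So you have two families $\mathbb{T}_{Q}$ and $\mathcal{T}_{p}^{(j_{\ast})}$, each carrying a polylog fraction of mass, with no mechanism forcing their intersection $\mathcal{T}_{p} = \mathcal{T}_{p}^{(j_{\ast})} \cap \mathbb{T}_{Q}$ to carry any mass at all. Your averaging only recovers $\bar{\mu}(Q) \approx_{\delta} \mu^{(j_{\ast}+1)}(Q)$ on a large subfamily, but this again is polylog-$\approx$, not $1 - o(1)$ close; the part of $\mu^{(j_{\ast}+1)}|_{Q}$ discarded in forming $\bar{\mu}|_{Q}$ could be exactly where all the $\mathbb{T}_{Q}$-mass sits.

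The paper fixes this by also restricting $\sigma_{x}$ at each step: one defines $\sigma_{x}^{j}$ as the restriction of $\sigma_{x}^{j-1}$ to $\{\theta : \ell_{x,\theta} \subset T \text{ for some } T \in \mathcal{T}_{p}^{j}\}$. The point is then that $(\bar{\mu} \times \sigma_{x}^{j})(p \times \mathbf{T}) = 0$ whenever $\mathbf{T} \notin \mathcal{T}_{p}^{j}$, so once one has (for most $p$) the averaged inequality $(\bar{\mu} \times \sigma_{x}^{j})(p \times \cup \mathbb{T}_{Q}) \gtrapprox_{\delta} \bar{\mu}(p)$, this mass is automatically carried by $\mathcal{T}_{p} = \mathcal{T}_{p}^{j} \cap \mathbb{T}_{Q}$, and the rough constancy $(\bar{\mu} \times \sigma_{x}^{j})(p \times \mathbf{T}) \sim l_{j}$ on $\mathcal{T}_{p}^{j}$ forces $|\mathcal{T}_{p}| \gtrapprox_{\delta} M_{j}$. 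This is the step your outline cannot reproduce. (In the paper's fine-to-coarse order, $\mathbb{T}_{Q}$ is chosen via Lemma \ref{lemma1} applied to $(\mu_{j},\{\sigma_{x}^{j}\})$, i.e.\ \emph{after} the restriction to $\mathcal{T}_{p}^{j}$; in your coarse-to-fine order with $\sigma_{x}$-restriction one would instead get $\mathcal{T}_{p}^{(j_{\ast})} \subset \mathbb{T}_{Q}$ automatically, which is equally good.)
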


\begin{remark} Note that (4) is only useful if $\delta$ and $\Delta$ are somewhat comparable, say $\delta \geq \Delta^{C}$ for a constant $C \geq 1$. In the application of the proposition, this will be the case.

 We use the following terminology in the proof.  If $(\mu,\{\sigma_{x}\})$ is a configuration, a \emph{sub-configuration} is any configuration $(\bar{\mu},\overline{\sigma}_{x})$, where $\mu$ is a restriction of $\mu$ to a Borel set, and each $\overline{\sigma}_{x}$ is a restriction of $\sigma_{x}$ to a Borel set. \end{remark}

\begin{proof}[Proof of Proposition \ref{prop6}] Recall that $n = \ceil{2/\epsilon}$. In the first part of the proof, we define a sequence of configurations $(\mu_{j},\{\sigma_{x}^{j}\})$, $0 \leq j \leq n - 1$, where $(\mu_{j + 1},\{\sigma_{x}^{j + 1}\})$ is always a sub-configuration of $(\mu_{j},\{\sigma_{x}^{j}\})$ for $0 \leq j \leq n - 2$.

To get started with this, we apply Lemma \ref{lemma1} once to the configuration $(\mu,\{\sigma_{x}\})$ at scales $\delta_{0} =\delta$ and $\Delta$. This produces the families $\mathcal{P}_{0} \subset \mathcal{D}_{\delta}$ and $\mathcal{T}^{0}_{p} \subset \mathcal{T}^{\Delta}$ for each $p \in \mathcal{P}$, satisfying the rough constancy conditions \nref{B1}-\nref{B2}. We define $\mu_{0}$ as the restriction of $\mu$ to $\cup \mathcal{P}_{0}$, and $\sigma_{x}^{0}$ as the restriction of $\sigma_{x}$ to $\mathcal{T}^{0}_{p}$, or more precisely to the set 
\begin{displaymath} \{\theta \in [0,1] : \ell_{x,\theta} \subset T \text{ for some } T \in \mathcal{T}_{p}^{0}\}, \end{displaymath}
whenever $x \in p \in \mathcal{P}$. This gives an initial sub-configuration $(\mu_{0},\{\sigma_{x}^{0}\})$.

We then assume that $0 \leq j \leq n - 2$, and the (sub-)configuration $(\mu_{j},\{\sigma_{x}^{j}\})$ satisfies the following properties relative to certain families $\mathcal{P}_{j} \subset \mathcal{D}_{\delta_{j}}$ and $\mathcal{T}_{p}^{j} \subset \mathcal{T}^{\Delta}$ for $p \in \mathcal{P}_{j}$:
\begin{enumerate}
\item[(I1) \phantomsection \label{I1}] $\mathcal{P}_{j} = \{p \in \mathcal{D}_{\delta_{j}} : \mu_{j}(p) > 0\}$, $p \mapsto \mu(p)$ is roughly constant on $\mathcal{P}_{j}$, and 
\begin{displaymath} \mu_{j}(\cup \mathcal{P}_{j}) \approx_{\delta} \mu(\R^{2}). \end{displaymath}
\item[(I2) \phantomsection \label{I2}] $\mathcal{T}_{p}^{j} = \{\mathbf{T} \in \mathcal{T}^{\Delta} : (\mu_{j} \times \sigma_{x}^{j})(p \times \mathbf{T}) > 0\}$ for all $p \in \mathcal{P}_{j}$, and 
\begin{equation}\label{form30} (\mu_{j} \times \sigma_{x}^{j})(p \times [0,1]) \approx_{\delta} \mu_{j}(p), \qquad p \in \mathcal{P}_{j}. \end{equation} 
\item[(I3) \phantomsection \label{I3}] $(p,\mathbf{T}) \mapsto (\mu_{j} \times \sigma_{x}^{j})(p \times \mathbf{T})$ is roughly constant on the family
\begin{displaymath} \mathcal{S}_{j} := \{(p,\mathbf{T}) \in \mathcal{P}_{j} \times \mathcal{T}^{\Delta} : \mathbf{T} \in \mathcal{T}_{p}^{j}\} =  \{(p,\mathbf{T}) \in \mathcal{D}_{\delta_{j}} \times \mathcal{T}^{\Delta} : (\mu_{j} \times \sigma_{x}^{j})(p \times \mathbf{T}) > 0\}. \end{displaymath} \end{enumerate}
For $j = 0$, the properties \nref{I1}-\nref{I3} follow from \nref{B1}-\nref{B2}: in \nref{I2}, the equation \eqref{form30} is based on \nref{B2}, and additionally the hypothesis that each $\sigma_{x}$ is a probability measure. 

To spell out the meaning of \nref{I3}, there exists $l_{j} > 0$ such that $(\mu_{j} \times \sigma_{x}^{j})(p \times \mathbf{T}) \in [l_{j},2l_{j}]$ whenever $(\mu_{j} \times \sigma_{x}^{j})(p \times \mathbf{T}) > 0$. As a consequence:
\begin{displaymath} l_{j}|\mathcal{T}_{p}^{j}| \sim (\mu_{j} \times \sigma_{x}^{j})(p \times [0,1]) \approx_{\delta} \mu_{j}(p), \qquad p \in \mathcal{P}_{j}. \end{displaymath}
Since also $p \mapsto \mu_{j}(p)$ is roughly constant on $\mathcal{P}_{j}$, say $\mu_{j}(p) \in [m_{j},2m_{j}]$, we infer that the cardinality
\begin{equation}\label{form24} |\mathcal{T}_{p}^{j}| \approx_{\delta} m_{j}/l_{j} =: M_{j} \end{equation}
is independent of $p$, at least up to a multiplicative error $\approx_{\delta} 1$. For ease of reference, we spell out the following rearrangement of the previous equation:
\begin{equation}\label{form27} (\mu_{j} \times \sigma_{x}^{j})(p \times \mathbf{T}) \sim l_{j} \sim \mu_{j}(p)/M_{j}, \qquad p \in \mathcal{P}_{j}, \, \mathbf{T} \in \mathcal{T}^{j}_{p}. \end{equation}

Recalling that $0 \leq j \leq n - 2$, we will next define families $\mathcal{P}_{j + 1} \subset \mathcal{D}_{\delta_{j + 1}}$ and $\mathcal{T}^{j + 1}_{Q} \subset \mathcal{T}^{\Delta}$ for each $Q \in \mathcal{P}_{j + 1}$ such that \nref{I1}-\nref{I3} hold at index $j + 1$. This is achieved by applying Lemma \ref{lemma1} to the configuration $(\mu_{j},\{\sigma_{x}^{j}\})$, at scales $\delta_{j + 1}$ and $\Delta$. The outcomes are:
\begin{enumerate}
\item A family $\mathcal{P}_{j + 1}$ such that $Q \mapsto \mu_{j}(Q)$ is roughly constant on $\mathcal{P}_{j + 1}$, and $\mu_{j}(\cup \mathcal{P}_{j + 1}) \gtrapprox_{\delta} \mu_{j}(\R^{2})$. We define $\mu_{j + 1}$ as the restriction of $\mu_{j}$ to $\cup \mathcal{P}_{j + 1}$. The measure $\mu_{j + 1}$ and the family $\mathcal{P}_{j + 1}$ then satisfy \nref{I1} at index $j + 1$.
\item For each $Q \in \mathcal{P}_{j + 1}$ a family $\mathcal{T}_{Q}^{j + 1} \subset \mathcal{T}^{\Delta}$ such that $(Q,\mathbf{T}) \mapsto (\mu_{j} \times \sigma_{x}^{j})(Q \times \mathbf{T})$ is roughly constant on $\{(Q,\mathbf{T}) \in \mathcal{P}_{j + 1} \times \mathcal{T}^{\Delta} : \mathbf{T} \in \mathcal{T}_{Q}^{j + 1}\}$, and moreover
\begin{equation}\label{form25} (\mu_{j} \times \sigma_{x}^{j})(Q \times (\cup \mathcal{T}^{j + 1}_{Q})) \gtrapprox_{\delta} (\mu_{j} \times \sigma_{x}^{j})(Q \times [0,1]). \end{equation} \end{enumerate}
For $x \in Q \in \mathcal{P}_{j + 1}$, we define $\sigma_{x}^{j + 1}$ to be the restriction of $\sigma_{x}^{j}$ to set $\{\theta \in [0,1] : \ell_{x,\theta} \subset \mathbf{T} \text{ for some } \mathbf{T} \in \mathcal{T}_{Q}^{j + 1}\}$. Then indeed 
\begin{displaymath} \mathcal{T}^{j + 1}_{Q} = \{\mathbf{T} \in \mathcal{T}^{\Delta} : (\mu_{j + 1} \times \sigma^{j + 1}_{x})(Q \times \mathbf{T}) > 0\}, \qquad Q \in \mathcal{P}_{j + 1}, \end{displaymath}
as required by \nref{I2}. With these definitions, $(Q \times \mathbf{T}) \mapsto (\mu_{j + 1} \times \sigma_{x}^{j + 1})(Q \times \mathbf{T})$ is roughly constant on the set
\begin{align*} \mathcal{S}_{j + 1} & := \{(Q,\mathbf{T}) \in \mathcal{P}_{j + 1} \times \mathcal{T}^{\Delta} : \mathbf{T} \in \mathcal{T}_{Q}^{j + 1}\}\\
& =  \{(Q,\mathbf{T}) \in \mathcal{D}_{\delta_{j + 1}} \times \mathcal{T}^{\Delta} : (\mu_{j + 1} \times \sigma_{x}^{j + 1})(Q \times \mathbf{T}) > 0\}, \end{align*}
as required by \nref{I3} at index $j + 1$.
It remains to verify that $(\mu_{j + 1} \times \sigma_{x}^{j + 1})(Q \times [0,1]) \approx_{\delta} \mu_{j + 1}(Q)$ for all $Q \in \mathcal{P}_{j + 1}$. Starting from \eqref{form25}, and recalling how $\mu_{j + 1}$ and $\sigma_{x}^{j + 1}$ were defined,
\begin{align*} (\mu_{j + 1} \times \sigma_{x}^{j + 1})(Q \times [0,1]) & \stackrel{\eqref{form25}}{\approx_{\delta}} (\mu_{j} \times \sigma_{x}^{j})(Q \times [0,1])\\
& \stackrel{\textup{\nref{I1}}}{=} \sum_{p \in \mathcal{P}_{j} \cap Q} (\mu_{j} \times \sigma_{x}^{j})(p \times [0,1])\\
& \stackrel{\textup{\nref{I2}}}{\approx_{\delta}} \sum_{p \in \mathcal{P}_{j} \cap Q} \mu_{j}(p) \stackrel{\textup{\nref{I1}}}{=} \mu_{j}(Q) = \mu_{j + 1}(Q). \end{align*} 
We have now verified that the configuration $(\mu_{j + 1},\{\sigma_{x}^{j + 1}\})$ satisfies \nref{I1}-\nref{I3}. In particular, the numbers $l_{j + 1} \sim (\mu_{j + 1} \times \sigma_{x}^{j + 1})(Q \times \mathbf{T})$ and $m_{j + 1} \sim \mu_{j + 1}(Q)$ and
\begin{displaymath} M_{j + 1} := m_{j + 1}/l_{j + 1} \approx_{\delta} |\mathcal{T}_{Q}^{j + 1}|, \qquad Q \in \mathcal{P}_{j + 1} \end{displaymath}
are well-defined. The "$\approx_{\delta}$" equation above is proven as in \eqref{form24}, since it was only based on the properties \nref{I1}-\nref{I3}.

This completes the inductive construction of the configurations $(\mu_{j},\{\sigma_{x}^{j}\})$, and the associated objects $\mathcal{P}_{j},\mathcal{T}_{p}^{j},l_{j},m_{j},M_{j}$ for all $0 \leq j \leq n - 1$.

Note that $1 \leq M_{j} \lessapprox_{\delta} \Delta^{-1}$ for all $0 \leq j \leq n - 1$ (since $M_{j} \approx_{\delta} |\mathcal{T}_{p}^{j}|$ by \eqref{form24}, and $\mathcal{T}_{p}^{j}$ is a family of dyadic $\Delta$-tubes intersecting the common $\delta_{j}$-square $p$, where $\delta_{j} \leq \Delta$). Based on this, and $n = \ceil{2/\epsilon}$, we claim that the sequence $M_{0},\ldots,M_{n - 1}$ contains a pair of consecutive elements $M_{j}, M_{j + 1}$ with $0 \leq j \leq n - 2$ such that
\begin{equation}\label{form21} M_{j + 1} \lessapprox_{\delta} \Delta^{-\epsilon}M_{j}. \end{equation}
Indeed, if this failed for all $0 \leq j \leq n - 2$, then 
\begin{displaymath} \Delta^{-1} \gtrapprox_{\delta} M_{n - 1} \gg_{\delta} \Delta^{-\epsilon (n - 1)}M_{0} \geq \Delta^{-1}. \end{displaymath}
Here $M_{n - 1} \gg_{\delta} \Delta^{-\epsilon n}M_{0}$ precisely means that $M_{n - 1} \geq C_{1}\log(1/\delta)^{C_{1}} \Delta^{-\epsilon (n - 1)}M_{0}$ for a suitable constant $C_{1} > 0$ depending on $A,\epsilon$, determined right below. To see that this leads to a contradiction, recall that the notation $A \lessapprox_{\delta} B$ means $A \leq C(\log(1/\delta))^{C} B$, where $C$ may depend on $A,\epsilon$. In particular, the inequalities $M_{n - 1} \lessapprox_{\delta} \Delta^{-1}$ and $M_{n - 1} \gg_{\delta} \Delta^{-1}$ are incompatible if the constant $C_{1} > 0$ is chosen large enough. Thus, there exists $0 \leq j \leq n - 2$ satisfying \eqref{form21}.

\begin{remark} The following information is not needed in the argument, so the reader may skip this remark. We remark that the converse of \eqref{form21}, namely $M_{j} \lesssim M_{j + 1}$, holds for all indices $0 \leq j \leq n - 2$. Indeed, note that $M_{j + 1}l_{j + 1} = m_{j + 1}$ by the definition of $M_{j + 1}$. Further, for $Q \in \mathcal{P}_{j + 1}$,
\begin{equation}\label{form31} M_{j + 1}l_{j + 1} = m_{j + 1} \sim \mu_{j + 1}(Q) = \mu_{j}(Q) = \sum_{p \in \mathcal{P}_{j} \cap Q} \mu_{j}(p) \sim \sum_{p \in \mathcal{P}_{j} \cap Q} M_{j}l_{j} \end{equation}
by the rough constancy of $p \mapsto \mu_{j}(p) \sim m_{j}$, and the definition of $M_{j}$. Finally, for $\mathbf{T} \in \mathcal{T}_{Q}^{j + 1}$ arbitrary,
\begin{displaymath} l_{j} \gtrsim (\mu_{j} \times \sigma_{x}^{j})(p \times \mathbf{T}) \geq (\mu_{j + 1} \times \sigma_{x}^{j + 1})(p \times \mathbf{T}), \qquad p \in \mathcal{P}_{j} \cap Q, \end{displaymath}
by the rough constancy of $(p,\mathbf{T}) \mapsto (\mu_{j} \times \sigma_{x}^{j})(p \times \mathbf{T})$. When the lower bound for $l_{j}$ is plugged back into \eqref{form31}, and using $\mu_{j + 1}|_{Q} = \mu_{j}|_{Q}$ for $Q \in \mathcal{P}_{j + 1}$, we find
\begin{displaymath} M_{j + 1}l_{j + 1} \gtrsim M_{j}\sum_{p \in \mathcal{P}_{j} \cap Q} (\mu_{j + 1} \times \sigma_{x}^{j + 1})(p \times \mathbf{T}) = M_{j}(\mu_{j + 1} \times \sigma_{x}^{j + 1})(Q \times \mathbf{T}). \end{displaymath}
Since $\mathbf{T} \in \mathcal{T}^{j + 1}_{Q}$, we again have $(\mu_{j + 1} \times \sigma_{x}^{j + 1})(Q \times \mathbf{T}) \sim l_{j + 1}$, hence $M_{j + 1} \gtrsim M_{j}$. \end{remark}

For any index $j \in \{0,\ldots,n - 2\}$ satisfying \eqref{form21}, write 
\begin{displaymath} \underline{\delta} := \delta_{j}, \quad \bar{\delta} := \delta_{j + 1}, \quad \text{and} \quad \bar{\mu} := \mu_{j}. \end{displaymath}
Define also $\mathcal{Q} := \mathcal{P}_{j + 1}$. Then Proposition \ref{prop6}(1) is satisfied.

For $Q \in \mathcal{Q}$, we set $\mathbb{T}_{Q} := \mathcal{T}_{Q}^{j + 1}$. Then Proposition \ref{prop6}(3) is satisfied by \eqref{form30} (with index $j + 1$), and since $\bar{\mu}(Q) = \mu_{j + 1}(Q)$ for $Q \in \mathcal{Q}$. Before proceeding with the other "main" claims (2), (4)-(5), we verify the $(\Delta,s,\mathbf{C})$-set property of the families $\mathbb{T}_{Q}$ under the assumption that each $\sigma_{x}$ satisfies an $s$-dimensional Frostman condition. Indeed, then $(\mu_{j + 1},\{\sigma_{x}^{j + 1}\})$ is a configuration with the properties that each $\sigma_{x}^{j + 1}$ is $(s,C)$-Frostman, and $\mathbf{T} \mapsto (\mu_{j + 1} \times \sigma_{x}^{j + 1})(Q \times \mathbf{T})$ is roughly constant on $\mathbb{T}_{Q}$. We may now infer from Lemma \ref{lemma2a} that $\mathbb{T}_{Q}$ is a $(\Delta,s,\mathbf{C})$-set with
\begin{displaymath} \mathbf{C} \lesssim \frac{C\mu_{j + 1}(Q)}{(\mu_{j + 1} \times \sigma_{x}^{j + 1})(Q \times (\cup \mathbb{T}_{Q}))} \stackrel{\textup{\nref{I2}}}{=} \frac{C\mu_{j + 1}(Q)}{(\mu_{j + 1} \times \sigma_{x}^{j + 1})(Q \times [0,1])}  \stackrel{\textup{\nref{I2}}}{\lessapprox_{\delta}} C. \end{displaymath}
This is what we claimed.

Regarding properties (2), (4)-(5), the families $\mathcal{P}_{Q} \subset \mathcal{D}_{\underline{\delta}}(Q)$ and $\mathcal{T}_{p} \subset \mathcal{T}^{\Delta}$ could almost be taken to be $\mathcal{P}_{Q} := \mathcal{P}_{j} \cap Q = \{p \in \mathcal{P}_{j} : p \subset Q\}$ and $\mathcal{T}_{p} := \mathcal{T}_{p}^{j}$, but the inclusion $\mathcal{T}_{p} \subset \mathbb{T}_{Q}$ ($= \mathcal{T}_{Q}^{j + 1}$) is not guaranteed. We need a slight refinement to fix this.

Note that
\begin{displaymath} (\bar{\mu} \times \sigma_{x}^{j})(Q \times (\cup \mathbb{T}_{Q})) \geq (\mu_{j + 1} \times \sigma_{x}^{j + 1})(Q \times (\cup \mathbb{T}_{Q})) \stackrel{\textup{\nref{I2}}}{\gtrapprox_{\delta}} \bar{\mu}(Q), \end{displaymath}
since $\mu_{j + 1}|_{Q} = \bar{\mu}|_{Q}$. It follows that there exists a family $\mathcal{P}_{Q} \subset \mathcal{P}_{j} \cap Q$ such that $\bar{\mu}(\cup \mathcal{P}_{Q}) \approx_{\delta} \bar{\mu}(Q)$, and 
\begin{equation}\label{form28} (\bar{\mu} \times \sigma_{x})(p \times (\cup \mathbb{T}_{Q})) \geq (\bar{\mu} \times \sigma_{x}^{j})(p \times (\cup \mathbb{T}_{Q})) \gtrapprox_{\delta} \bar{\mu}(p), \qquad p \in \mathcal{P}_{Q}. \end{equation}
Now Proposition \ref{prop6}(2) has been verified. 

For $p \in \mathcal{P}_{Q}$, set $\mathcal{T}_{p} := \mathcal{T}^{j}_{p} \cap \mathbb{T}_{Q} \subset \mathbb{T}_{Q}$. To verify Proposition \ref{prop6}(4), we need to check that $|\mathbb{T}_{Q}| \lessapprox_{\delta} \Delta^{-\epsilon}|\mathcal{T}_{p}|$ for all $p \in \mathcal{P}_{Q}$. Since $(\bar{\mu} \times \sigma^{j}_{x})(p \times \mathbf{T}) \lesssim \bar{\mu}(p)/M_{j}$ for all $\mathbf{T} \in \mathcal{T}_{p} \subset \mathcal{T}_{p}^{j}$ by the rough constancy of $\mathbf{T} \mapsto (\mu_{j} \times \sigma_{x}^{j})(p \times \mathbf{T}) = (\bar{\mu} \times \sigma^{j}_{x})(p \times \mathbf{T})$ (see \eqref{form27}),
\begin{displaymath} \bar{\mu}(p) \stackrel{\eqref{form28}}{\lessapprox_{\delta}} (\bar{\mu} \times \sigma^{j}_{x})(p \times (\cup \mathbb{T}_{Q})) \lesssim |\mathcal{T}_{p}| \cdot \bar{\mu}(p)/M_{j}, \qquad p \in \mathcal{P}_{Q}. \end{displaymath}
Therefore, 
\begin{equation}\label{form29} |\mathcal{T}_{p}| \gtrapprox_{\delta} M_{j} \stackrel{\eqref{form21}}{\gtrapprox_{\delta}} \Delta^{\epsilon}M_{j + 1} \approx_{\delta} \Delta^{\epsilon}|\mathbb{T}_{Q}|, \qquad p \in \mathcal{P}_{Q}. \end{equation}
This verifies Proposition \ref{prop6}(4). Finally, the estimate in Proposition \ref{prop6}(5), namely
\begin{displaymath} (\bar{\mu} \times \sigma_{x})(p \times \mathbf{T}) \gtrapprox_{\delta} \bar{\mu}(p)/|\mathcal{T}_{p}|, \qquad p \in \mathcal{P}_{Q}, \, \mathbf{T} \in \mathcal{T}_{p}, \end{displaymath}
follows from \eqref{form27} and $|\mathcal{T}_{p}| \gtrapprox_{\delta} M_{j}$, as observed in \eqref{form29}. This completes the proof.  \end{proof}

   
\subsection{Proof of Theorem \ref{t:renormalisation}} In this section we deduce Theorem \ref{t:renormalisation} from Proposition \ref{prop6}. We also need a few lesser auxiliary definitions and results, discussed below. 

Recall that $S_{Q} \colon \R^{2} \to \R^{2}$ is the homothety which maps $Q \to [0,1)^{2}$. 

\begin{lemma}\label{lemma3} Let $\Delta \in 2^{-\N}$, and $\underline{\delta},\bar{\delta} \in 2^{-\N}$ with $\underline{\delta} = \Delta \bar{\delta}$. Let $Q \in \mathcal{D}_{\bar{\delta}}$ and $p \in \mathcal{D}_{\underline{\delta}}(Q)$. Let $\mathbf{T} \in \mathcal{T}^{\Delta}$, and let $\mathcal{L}(p,\mathbf{T}) := \{\ell \in \mathcal{A}(2,1) : \ell \cap p \neq \emptyset \text{ and } \ell \subset \mathbf{T}\}$. Then, $S_{Q}(\mathcal{L}(p,\mathbf{T}))$ can be covered by $C \lesssim 1$ tubes $\mathbf{T}_{1},\ldots,\mathbf{T}_{C} \in \mathcal{T}^{\Delta}$ such that $|\sigma(\mathbf{T}_{j}) - \sigma(\mathbf{T})| \lesssim \Delta$ for all $1 \leq j \leq C$.
\end{lemma}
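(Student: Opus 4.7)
The plan is to work in the point-line duality picture. Let $p$ have centre $(x_0, y_0) \in Q$, and write $Q = [x_Q, x_Q + \bar{\delta}) \times [y_Q, y_Q + \bar{\delta})$, so that $|x_0 - x_Q|, |y_0 - y_Q| \leq \bar{\delta}$. The tube $\mathbf{T} = \cup \mathbf{D}(p_{\mathbf{T}})$ corresponds to a dyadic $\Delta$-square $p_{\mathbf{T}} = [a_0, a_0 + \Delta) \times [b_0, b_0 + \Delta)$ in parameter space, with $\sigma(\mathbf{T}) = a_0$. Meanwhile, the condition $\mathbf{D}(a,b) \cap p \neq \emptyset$ forces $b = y_0 - a x_0 + O(\underline{\delta}(1 + |a|))$, and since $|a| \lesssim 1$ on $\sigma(\mathbf{T})$-relevant ranges, the set of parameters $(a, b)$ with $\mathbf{D}(a, b) \in \mathcal{L}(p, \mathbf{T})$ is contained in a parallelogram $R \subset [a_0, a_0 + \Delta) \times [b_0, b_0 + \Delta)$ of $a$-extent $\Delta$ and transverse $b$-width $\lesssim \underline{\delta}$.

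Next, I compute the action of $S_Q$ on parameters: the image of $y = ax + b$ under $(x,y) \mapsto ((x - x_Q)/\bar{\delta}, (y - y_Q)/\bar{\delta})$ is $y' = ax' + b'$ with $b' = (b + a x_Q - y_Q)/\bar{\delta}$, i.e., the slope is preserved and $b$ transforms affinely. For $(a, b) \in R$, write $b = y_0 - a x_0 + r$ with $|r| \lesssim \underline{\delta}$, so
\[
b' \;=\; \frac{(y_0 - y_Q) + a(x_Q - x_0) + r}{\bar{\delta}}.
\]
Using $|y_0 - y_Q|, |x_Q - x_0| \leq \bar{\delta}$ and $\underline{\delta} = \Delta \bar{\delta}$, this gives $|b'| \lesssim 1$, and as $a$ varies over $[a_0, a_0 + \Delta)$ and $r$ over an interval of length $\lesssim \underline{\delta}$, the $b'$-variation is
\[
|\Delta b'| \;\lesssim\; \frac{\Delta \cdot \bar{\delta} + \underline{\delta}}{\bar{\delta}} \;=\; O(\Delta).
\]
Hence $S_Q(R)$ lies in a rectangle of $a'$-extent $\Delta$ and $b'$-extent $O(\Delta)$ in parameter space.

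Finally, any such rectangle is covered by $C \lesssim 1$ dyadic $\Delta$-squares in $\mathcal{D}_{\Delta}([-1,1) \times \R)$, which, via $\mathbf{D}$, correspond to $C \lesssim 1$ tubes $\mathbf{T}_1, \ldots, \mathbf{T}_C \in \mathcal{T}^{\Delta}$ covering $S_Q(\mathcal{L}(p, \mathbf{T}))$. Each covering square has its $a'$-extent inside $[a_0 - O(\Delta), a_0 + O(\Delta)]$, so the associated slope $\sigma(\mathbf{T}_j) \in \Delta \cdot \Z$ satisfies $|\sigma(\mathbf{T}_j) - a_0| = |\sigma(\mathbf{T}_j) - \sigma(\mathbf{T})| \lesssim \Delta$. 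The only genuine content is the cancellation $\Delta \cdot (|x_Q - x_0|/\bar{\delta}) \lesssim \Delta$ in the shear term, which uses crucially that $p \subset Q$; I expect this step to be the one worth highlighting, with the rest a straightforward bookkeeping in the duality picture.
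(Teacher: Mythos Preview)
Your proof is correct and follows essentially the same route as the paper's: both track the parameters $(a,b)$ under $S_Q$, note that $a$ is preserved, and show the image parameter set has diameter $O(\Delta)$. The paper phrases the computation slightly more geometrically---it observes that $S_Q(p)\in\mathcal{D}_{\underline{\delta}/\bar{\delta}}=\mathcal{D}_\Delta$, so $S_Q(\mathcal{L}(p,\mathbf{T}))$ consists of lines with slopes in a fixed $\Delta$-interval all meeting a fixed $\Delta$-square, from which $|b-b'|\lesssim\Delta$ follows directly---but your explicit affine bookkeeping reaches the same conclusion with the same key cancellation.
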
 

\begin{proof} Since all lines $\ell = \{(x,y) : y = ax + b\} \in \mathcal{L}(p,\mathbf{T})$ are contained in $\mathbf{T}$, their slopes "$a$" are contained on some interval $I = I_{\mathbf{T}} \in \mathcal{D}_{\Delta}([-1,1))$. The map $S_{Q}$ does not affect slopes, so the same remains true for $S_{Q}(\mathcal{L}(p,\mathbf{T}))$. Moreover,  all the lines in $S_{Q}(\mathcal{L}(p,\mathbf{T}))$ intersect $S_{Q}(p) \in \mathcal{D}_{\underline{\delta}/\bar{\delta}} = \mathcal{D}_{\Delta}$. Thus, $S_{Q}(\mathcal{L}(p,\mathbf{T}))$ consists of lines with (i) slopes in $I \subset [-1,1)$, and (ii) all intersecting a fixed element of $\mathcal{D}_{\Delta}$. We claim that any such line family can be covered by $\lesssim 1$ dyadic $\Delta$-tubes.

Indeed, let $\ell = \{(x,y) : y = ax + b\}$ and $\ell' = \{(x,y) : y = a'x + b'\}$ be elements of $S_{Q}(\mathcal{L}(p,\mathbf{T}))$ (or any line family with properties (i)-(ii)). Then $|a' - a| \leq \Delta$ by hypothesis (i). Let $q \in \mathcal{D}_{\Delta}$ be as in the hypothesis (ii), and let $(x,y) \in \ell \cap q$ and $(x',y') \in \ell' \cap q$. Then, $|x - x'| \leq \Delta$ and $|y - y'| \leq \Delta$, so $|b - b'| = |(y - ax) - (y' - a'x')| \lesssim \Delta$. Thus, $|(a,b) - (a',b')| \lesssim \Delta$, and the claim follows. \end{proof} 

We will need the notion of \emph{$\{\Delta_j\}_{j=0}^{n - 1}$-uniform sets}. For a more extensive introduction to $\{\Delta_j\}_{j=0}^{n - 1}$-uniform sets, see \cite[Section 2.3]{OS23}. 

\begin{definition}\label{def:uniformity}
Let $n \geq 1$, and let $\{\Delta_{j}\}_{j = 0}^{n} \subset 2^{-\N}$ with
\begin{displaymath} \delta := \Delta_{n} < \Delta_{n - 1} < \ldots < \Delta_{1} \leq \Delta_{0} := 1. \end{displaymath}
A set $\mathcal{P} \subset \mathcal{D}_{\delta}$ is called \emph{$\{\Delta_j\}_{j=0}^{n - 1}$-uniform} if there is a sequence $\{N_j\}_{j=0}^{n - 1}$ such that $N_{j} \in 2^{\N}$ and $|\mathcal{P} \cap Q|_{\Delta_{j + 1}} = N_j$ for all $j\in \{0,\ldots,n - 1\}$ and all $Q\in\mathcal{D}_{\Delta_{j}}(\mathcal{P})$. \end{definition}

A key feature of uniform sets is, roughly, that every set $\mathcal{P} \subset \mathcal{D}_{\delta}$ contains uniform subsets of large cardinality. The following variant of the principle is \cite[Lemma 3.6]{Sh}:

\begin{lemma}\label{l:uniformSubsets} Let $\Delta = 2^{-T}$, $T \in \N$, and let $\delta = \Delta^{n} = 2^{-nT}$ for some $n \in \N$. Let $\mathcal{P} \subset \mathcal{D}_{\delta}$. Then, there exists a $\{\Delta^{j}\}_{j = 0}^{n - 1}$-uniform set $\overline{\mathcal{P}} \subset \mathcal{P}$ with 
\begin{displaymath} |\overline{\mathcal{P}}| \geq (2T)^{-n}|\mathcal{P}|. \end{displaymath}  \end{lemma}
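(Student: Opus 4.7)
The plan is to construct $\overline{\mathcal{P}}$ iteratively via dyadic pigeonholing, proceeding from the finest scale $\Delta^{n-1}$ back up to the coarsest scale $\Delta^{0} = 1$. I will define a decreasing chain $\mathcal{P} = \mathcal{P}_{n} \supset \mathcal{P}_{n-1} \supset \cdots \supset \mathcal{P}_{0} =: \overline{\mathcal{P}}$ such that at each stage $\mathcal{P}_{j}$ is already uniform at all scale-pairs $\Delta^{k} \to \Delta^{k+1}$ with $k \geq j$, while losing only a factor of order $T^{-1}$ per stage.

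The inductive step runs as follows. Given $\mathcal{P}_{j+1}$, record for each $Q \in \mathcal{D}_{\Delta^{j}}(\mathcal{P}_{j+1})$ the child-count
\begin{equation*}
c(Q) := |\mathcal{P}_{j+1} \cap Q|_{\Delta^{j+1}} \in \{1, 2, \ldots, 2^{2T}\},
\end{equation*}
where the upper bound uses that a $\Delta^{j}$-cube contains exactly $2^{2T}$ dyadic $\Delta^{j+1}$-subcubes. Classify $Q$ by $a(Q) := \lfloor \log_{2} c(Q) \rfloor$, which takes at most $2T+1$ values. A mass-weighted pigeonhole over these levels produces a single $a_{j}$ and a subfamily $\mathcal{Q}_{j} := \{Q : a(Q) = a_{j}\}$ capturing a $\sim T^{-1}$ fraction of $|\mathcal{P}_{j+1}|$. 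For each $Q \in \mathcal{Q}_{j}$, arbitrarily retain exactly $N_{j} := 2^{a_{j}}$ of its $c(Q) \geq N_{j}$ active children in $\mathcal{D}_{\Delta^{j+1}}$, and keep every $\delta$-cube of $\mathcal{P}_{j+1}$ lying inside those retained children. Since $N_{j}/c(Q) > 1/2$, an additional constant-factor loss is incurred, leaving $\mathcal{P}_{j} \subset \mathcal{P}_{j+1}$ with $|\mathcal{P}_{j}| \gtrsim T^{-1} |\mathcal{P}_{j+1}|$ and, by construction, $|\mathcal{P}_{j} \cap Q|_{\Delta^{j+1}} = N_{j}$ for every $Q \in \mathcal{D}_{\Delta^{j}}(\mathcal{P}_{j})$.

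The key conceptual point, which I would emphasise carefully, is that pruning at stage $j$ only removes whole groups of $\delta$-cubes corresponding to unselected $\Delta^{j+1}$-children; each retained $\Delta^{j+1}$-cube keeps its internal dyadic structure (at all scales between $\Delta^{j+1}$ and $\delta$) verbatim. Consequently the uniformity already attained at stages $k > j$ — a property purely about subdivisions inside surviving $\Delta^{j+1}$-cubes — is not disturbed. This is precisely why the bottom-up order is essential: a top-down variant would have to revisit fine-scale counts after coarse-scale pruning, breaking already-established uniformity.

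The only real obstacle is bookkeeping — keeping careful track that the product of the per-stage losses yields the claimed $(2T)^{-n}$ bound, which requires either sharpening the per-stage factor (by, e.g., the convention $c(Q) \in (2^{a(Q)-1}, 2^{a(Q)}]$ so that the surviving fraction $N_{j}/c(Q)$ equals $1$ rather than $>1/2$, at the cost of at most $2T$ possible values of $a(Q)$) or absorbing the extra $2^{-n}$ factor. Either way, setting $\overline{\mathcal{P}} := \mathcal{P}_{0}$ yields the desired $\{\Delta^{j}\}_{j=0}^{n-1}$-uniform subset with $|\overline{\mathcal{P}}| \geq (2T)^{-n}|\mathcal{P}|$.
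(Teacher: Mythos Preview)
Your bottom-up pigeonholing is the standard argument; the paper itself does not supply a proof but cites \cite[Lemma 3.6]{Sh}, whose method is exactly the one you describe. The key structural point --- that pruning at stage $j$ removes whole $\Delta^{j+1}$-children and therefore leaves the already-established finer-scale uniformity intact --- is correctly identified and is the heart of the argument.

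The one concrete flaw is in your first proposed fix for the constant. With the convention $c(Q) \in (2^{a(Q)-1}, 2^{a(Q)}]$, the values $c(Q)$ still vary within the selected bucket, so there is no single $N_j$ with $N_j/c(Q) = 1$ for every $Q$; you must still trim to a common $N_j \leq \min_{Q} c(Q)$, incurring the factor $\geq 1/2$. Moreover $a(Q)$ takes $2T+1$ values in $\R^2$ (since $c(Q) \in \{1,\ldots,2^{2T}\}$), not $2T$. The honest per-stage loss from your argument is therefore $(2(2T+1))^{-1}$, yielding $|\overline{\mathcal{P}}| \geq (4T+2)^{-n}|\mathcal{P}|$ rather than $(2T)^{-n}|\mathcal{P}|$. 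Neither of your fixes recovers the stated constant. This discrepancy is immaterial for the paper's application (only $|\overline{\mathcal{P}}| \gtrapprox_{\delta} |\mathcal{P}|$ is used), but you should not assert the exact bound $(2T)^{-n}$ without a sharper count.
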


Given a uniform $(\delta,t)$-set $\mathcal{P} \subset \mathcal{D}_{\delta}$, and $\tau \in [0,t)$, the next lemma allows us to find many intermediate scales $\Delta \in [\delta,1]$ such that the renormalised set $\mathcal{P}^{Q} \subset \mathcal{D}_{\delta/\Delta}$ is a $(\delta,\tau)$-set. The result with "one scale" instead of many is (roughly) \cite[Corollary 2.12]{OS23}.

\begin{lemma}\label{lemma5} Let $t \in (0,d]$, $\tau \in (0,t)$, $\gamma \in (0,1]$, $\epsilon \in (0,\tfrac{1}{8d}\gamma(t - \tau)^{2}]$, and $\Delta \in 2^{-\N}$. Then, there exist $n_{0} = n_{0}(d,\epsilon) \in \N$ and $\mathbf{A} \lesssim_{d} \Delta^{-3d}$, such that the following holds for all $n \geq n_{0}$.

Let $\delta := \Delta^{n}$, and let $\mathcal{P} \subset \mathcal{D}_{\delta}([0,1)^d)$ be a $\{\Delta^{j}\}_{j = 0}^{n - 1}$-uniform $(\delta,t,\delta^{-\epsilon})$-set. Then, there exists
\begin{equation}\label{form64} \mathcal{G} \subset \{0,\ldots,\gamma n\} \quad \text{ with } \quad |\mathcal{G}| \geq n \cdot \gamma(t - \tau)^{2}/(10 d^{2}) \end{equation}
such that $\mathcal{P}^{Q} = S_{Q}(\mathcal{P} \cap Q) \subset \mathcal{D}_{\delta/\Delta^{j}}$ is a $(\delta/\Delta^{j},\tau,\mathbf{A})$-set for all $j \in \mathcal{G}$, and $Q \in \mathcal{D}_{\Delta^{j}}(\mathcal{P})$. \end{lemma}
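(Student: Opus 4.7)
The plan is to reduce the assertion to a one-dimensional combinatorial statement about partial sums of ``branching dimensions'' and then produce many good scales by balancing the Frostman lower bound against the Lipschitz structure of the associated potential function.

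\textbf{Step 1 (Encoding via branching dimensions).} The uniform hypothesis provides integers $N_j \in [1, \Delta^{-d}]$ with $|\mathcal{P} \cap Q|_{\Delta^{j+1}} = N_j$ for every $Q \in \mathcal{D}_{\Delta^j}(\mathcal{P})$. Writing $N_j = \Delta^{-s_j}$ with $s_j \in [0, d]$, a standard bounded-overlap argument (each ball of radius $r \in (\Delta^{k+1}, \Delta^k]$ meets only $O_d(1)$ cubes of $\mathcal{D}_{\Delta^{k+1}}$) reduces the $(\delta, t, \delta^{-\epsilon})$-set hypothesis to
\[
\sum_{i=0}^{k-1} s_i \ge kt - n\epsilon - O_d(1), \qquad 0 \le k \le n,
\]
and the desired $(\delta/\Delta^j, \tau, \mathbf{A})$-set conclusion at $Q \in \mathcal{D}_{\Delta^j}(\mathcal{P})$ to
\[
\sum_{i=j}^{j+m-1} s_i \ge m\tau - \log_{1/\Delta} \mathbf{A} - O_d(1), \qquad 1 \le m \le n - j.
\]
Choosing $\mathbf{A} \lesssim_d \Delta^{-3d}$ amounts to tolerating a loss of $c := 3d$ (measured in the units of the $s_i$) once the $O_d(1)$ losses are absorbed into the implicit constant.

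\textbf{Step 2 (One-dimensional potential function).} Set $f(k) := \sum_{i=0}^{k-1}(s_i - \tau)$. Then $f(0) = 0$, the step-increments $f(k+1) - f(k) = s_k - \tau \in [-\tau, d-\tau]$ make $f$ $d$-Lipschitz, and the hypothesis reads
\[
f(k) \ge k(t - \tau) - n\epsilon, \qquad 0 \le k \le n. \qquad (\star)
\]
Define $m(j) := \min_{l \ge j} f(l)$ and call $j$ \emph{good} if $L(j) := f(j) - m(j) \le c$. The conclusion I need then reduces to: under $(\star)$ and $\epsilon \le \gamma(t-\tau)^2/(8d)$, there are at least $n\gamma(t-\tau)^2/(10 d^2)$ good indices in $\{0,\ldots, \lfloor \gamma n\rfloor\}$ once $n \ge n_0(d, \epsilon)$.

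\textbf{Step 3 (Locating good indices via right-record minima).} The right-record minima $r_0 < r_1 < \cdots < r_M = n$ defined by $f(r_k) = m(r_k)$ are automatically good. Combining $f(0) = 0$ with $(\star)$ forces $r_0 \le n\epsilon/(t - \tau) \le \gamma n/8$ within the allowed range of $\epsilon$. On each excursion $(r_{k-1}, r_k)$ the function $f$ remains strictly above $f(r_k)$; since $f$ decreases by at most $\tau \le d$ per step, the final descent into $f(r_k)$ traverses the ``good band'' $(f(r_k), f(r_k) + c]$ in at least $c/d$ consecutive steps, or else the entire excursion lies in the good band and contributes its full length. Good indices therefore accumulate both at record minima and along descents.

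\textbf{Step 4 (Main obstacle --- a quantitative pigeonhole).} The hardest step will be the quantitative count: excursions of $f$ can in principle be long and concentrate their descents past $\gamma n$, so good indices from each excursion need not lie in $[0, \gamma n]$. My plan is to partition $[0, \gamma n]$ into blocks $I_r = [r\Lambda, (r+1)\Lambda)$ of fixed length $\Lambda := \lceil 10 d^2/(t-\tau)^2 \rceil$ and to show, by a double-counting argument against $(\star)$, that at most a $(1 - (t-\tau)^2/(10 d^2))$-fraction of blocks can fail to contain a good index. The crucial input is that a failing block forces $f$ to drop by more than $c$ at some later stage and then climb back under $(\star)$, and the total ``drop-and-recover'' budget of $f$ on $[0, n]$ is controlled by $f(n) \le n(d - \tau)$ combined with the lower envelope from $(\star)$, giving a downward-variation budget of at most $n(d - t) + 2n\epsilon + O_d(1)$. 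Optimising constants with $\Lambda \sim d^2/(t-\tau)^2$ and $c = 3d$, and using $\epsilon \le \gamma(t-\tau)^2/(8d)$ to keep the lower-order terms negligible for $n \ge n_0(d, \epsilon)$, this yields the required lower bound $|\mathcal{G}| \ge n\gamma(t-\tau)^2/(10 d^2)$, completing the proof with the claimed $\mathbf{A} \lesssim_d \Delta^{-3d}$.
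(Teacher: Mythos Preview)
Your Steps 1--3 are correct and coincide with the paper's reduction: the branching numbers $s_{j}$, the potential $f(k)=\sum_{i<k}(s_{i}-\tau)$, the hypothesis $(\star)$, and the identification of ``good'' indices with $L(j)=f(j)-m(j)\le c$ are exactly what the paper encodes via the branching function and Lemma~\ref{OSLemma}. The entire content of the lemma is therefore the Lipschitz statement you isolate at the end of Step~2.

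Step~4, however, is not a proof, and the sketch you give does not work. There are two concrete problems.

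\emph{First}, the ``downward-variation budget'' does not bound the number of failing blocks. A single descent of $f$ of depth $H$ renders bad every earlier index $j$ with $f(j)>m(j)+c$; if $f$ climbed slowly (say with slope $t-\tau$) before the descent, this can be $\sim H/(t-\tau)$ indices spread over $\sim H/((t-\tau)\Lambda)$ blocks, all sharing the \emph{same} drop. Your accounting ``(failing blocks)$\times c\le D$'' is therefore unjustified: distinct failing blocks need not consume distinct portions of $D$. Even granting that inequality, the resulting bound $(\text{failing blocks})\le D/c\approx n(d-t)/(3d)$ is useless, since for small $\gamma$ or small $t-\tau$ one has $n(d-t)/(3d)\gg \gamma n/\Lambda$ (the total number of blocks). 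Take for instance $d=2$, $t=1.1$, $\tau=1$, $\gamma=10^{-2}$.

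\emph{Second}, your final count is off by a factor of $\Lambda$. With $\Lambda=\lceil 10d^{2}/(t-\tau)^{2}\rceil$ there are only $\gamma n/\Lambda\approx n\gamma(t-\tau)^{2}/(10d^{2})$ blocks in $[0,\gamma n]$, and each ``good'' block contributes (by your definition) only one good index. So even if \emph{every} block were good you would get exactly the target $n\gamma(t-\tau)^{2}/(10d^{2})$; showing merely that a positive fraction of blocks are good falls short.

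The paper handles this step (Lemma~\ref{lemma2}) by a different, short argument that avoids variation bounds entirely. In your notation: set $\mathfrak{c}=\gamma(t-\tau)/(2d)$ and $h(x):=\sup\{y\in[0,n]:f(y)=f(x)\}$. Using $(\star)$ one checks that $h(x)\le\gamma n$ for $x\in[0,\mathfrak{c}n]$, and a last-hitting-time argument shows $h(x)$ is a good index (indeed $L(h(x))=0$). Thus $h$ maps $[0,\mathfrak{c}n]$ into $G\cap[0,\gamma n]$, with $f(h(x))=f(x)$. Since the image $\{f(x):x\in[0,\mathfrak{c}n]\}$ has length $\ge f(\mathfrak{c}n)\ge \mathfrak{c}n(t-\tau)-n\epsilon\ge \tfrac{1}{2}\mathfrak{c}n(t-\tau)$ and $f$ is $d$-Lipschitz, one gets $|G\cap[0,\gamma n]|\ge \tfrac{1}{2d}\cdot\tfrac{1}{2}\mathfrak{c}n(t-\tau)\ge n\gamma(t-\tau)^{2}/(10d^{2})$. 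This replaces your Step~4.
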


The proof is postponed to Appendix \ref{appA}. We are then prepared to prove Theorem \ref{t:renormalisation}. We repeat the statement:

\begin{thm}\label{t:renormalisation2} Let $s,t \in (0,2]$, $\tau \in (0,t)$, $\mathbf{C} > 0$, and $\epsilon > 0$. Then, there exist $\Delta_{0} = \Delta_{0}(\mathbf{C},\epsilon,t,\tau) > 0$ and $n = n(\epsilon,t,\tau) \in \N$ such that the following holds for all $\Delta_{1} \in 2^{-\N} \cap (0,\Delta_{0}]$. Let $(\mu,\{\sigma_{x}\})$ be a configuration, where $\mu$ is a $(t,\mathbf{C})$-Frostman probability measure, and $\sigma_{x}$ is an $(s,\mathbf{C})$-Frostman probability measure for $\mu$ almost all $x \in \R^{2}$.

Then, there exist 
\begin{itemize}
\item dyadic scales $\underline{\Delta},\Delta \in [\Delta_{1}^{n},\Delta_{1}]$ with $\underline{\Delta} \leq \Delta$
\item a measure $\bar{\mu} = \mu|_{B}$, where $B \subset [0,1)^{2}$ is Borel, and 
\item a square $Q \in \mathcal{D}_{\underline{\Delta}}$
\end{itemize}
such that the $Q$-renormalised configuration $(\bar{\mu}^{Q},\{\sigma^{Q}_{y}\})$ is $\Delta^{-\epsilon}$-tight at scale $\Delta$ with data $(\mathcal{Q},\mathbb{T}) \subset \mathcal{D}_{\Delta} \times \mathcal{T}^{\Delta}$. The square $Q$ can be selected so that $\bar{\mu}^{Q}$ is $(\tau,\Delta^{-\epsilon})$-Frostman, and $\mathbb{T}$ can be selected so that $\sigma(\mathbb{T})$ is a non-empty $(\Delta,s,\Delta^{-\epsilon})$-set. \end{thm}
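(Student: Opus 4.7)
The plan combines three ingredients: a Frostman discretisation and uniformisation of $\mu$; the multi-scale Frostman preservation of Lemma \ref{lemma5}; and the tightness extraction of Proposition \ref{prop6}. I fix auxiliary dyadic unit scales $\Delta_{L}, \Delta_{L5} \in 2^{-\N}$ with $\Delta_{L} = \Delta_{L5}^{\beta}$ for an integer $\beta \sim 1/\epsilon$, chosen so that Lemma \ref{lemma5}'s constant $\mathbf{A} \lesssim \Delta_{L5}^{-3d}$ is bounded by $\Delta_{L}^{-\epsilon/2}$. Set $\delta := \Delta_{L5}^{n_{L5}}$ with $n_{L5} = \beta n_{L}$, for $n_L$ sufficiently large in terms of $\epsilon, t, \tau$. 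Frostman's lemma provides a $(\delta, t, O(\mathbf{C}))$-set $\mathcal{P}_0 \subset \mathcal{D}_\delta$ supporting mass $\gtrsim 1$ of $\mu$, and Lemma \ref{l:uniformSubsets} extracts a $\{\Delta_{L5}^j\}_{j = 0}^{n_{L5} - 1}$-uniform subset $\mathcal{P} \subset \mathcal{P}_0$; the $(2\log_2(1/\Delta_{L5}))^{n_{L5}}$-loss is absorbed by taking $\Delta_1$ small enough. Put $\bar{\mu} := \mu|_{\cup \mathcal{P}}$ after a preliminary pigeonhole so that $p \mapsto \bar{\mu}(p)$ is roughly constant on $\mathcal{P}$.

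Apply Lemma \ref{lemma5} to $\mathcal{P}$ to produce a Frostman-good index set $\mathcal{G} \subset \{0, \ldots, \gamma n_{L5}\}$ of cardinality $|\mathcal{G}| \gtrsim n_{L5}(t - \tau)^2$ such that $\mathcal{P}^Q$ is a $(\delta/\Delta_{L5}^k, \tau, \mathbf{A})$-set for every $k \in \mathcal{G}$ and $Q \in \mathcal{D}_{\Delta_{L5}^k}(\mathcal{P})$. In parallel, apply Proposition \ref{prop6} to $(\bar{\mu}, \{\sigma_x\})$ with $(\Delta, \delta) = (\Delta_L, \delta)$; its scale sequence $\delta_j = \Delta_L^{n_L - j}$ is a $\beta$-sparse sub-sequence of the uniformisation scales. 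Inspection of the pigeonhole in the proof of Proposition \ref{prop6} shows, using the monotonicity $M_j \lesssim M_{j + 1}$ and $M_{n_L - 1} \lessapprox_\delta \Delta_L^{-1}$, that at most $\lessapprox_\delta 1/\epsilon$ of the $n_L - 1$ possible indices can fail the ratio $M_{j + 1}/M_j \lessapprox_\delta \Delta_L^{-\epsilon}$. A pigeonhole on the $\beta$-sparse inclusion then yields a Proposition \ref{prop6}-good index $j^{*}$ with $k_0 := \beta(n_L - j^{*} - 1) \in \mathcal{G}$, provided $(n_L, \gamma, \beta)$ are tuned so $|\mathcal{G}|$ dominates the bad-index count along the compatible $k$'s.

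Set $\underline{\Delta} := \Delta_L^{n_L - j^{*} - 1}$ and $\Delta := \Delta_L$, and pick $Q \in \mathcal{Q} \cap \mathcal{D}_{\underline{\Delta}}(\mathcal{P})$ where $\mathcal{Q}$ is the output of Proposition \ref{prop6}. Combining the Frostman-goodness of $k_0$ (Lemma \ref{lemma5}) with the rough constancy of $\bar{\mu}$ on $\mathcal{P}$, and using the original $(t, \mathbf{C})$-Frostman condition on $\mu$ to handle sub-discretisation scales $R < \delta/\underline{\Delta}$, gives $\bar{\mu}^Q$ the required $(\tau, \Delta^{-\epsilon})$-Frostman property. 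For the tight data, define $\mathcal{Q}_{\mathrm{thm}} := \{S_Q(p) : p \in \mathcal{P}_Q\} \subset \mathcal{D}_\Delta$ and, for each $p \in \mathcal{P}_Q$, let $\mathbb{T}_{S_Q(p)}$ be the union over $\mathbf{T} \in \mathcal{T}_p$ of the boundedly many $\Delta$-tubes covering $S_Q(\mathcal{L}(p, \mathbf{T}))$ provided by Lemma \ref{lemma3}. Properties (T1)-(T4) then follow from Proposition \ref{prop6}(1)-(5), after a final pigeonhole enforcing constant $|\mathbb{T}_{S_Q(p)}|$: (T2) from (1)-(2) pulled back by $S_Q$, (T3) from (5), (T4) from (4). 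The $(\Delta, s, \Delta^{-\epsilon})$-set condition on $\sigma(\mathbb{T}_{\mathrm{thm}})$ follows from the addendum of Proposition \ref{prop6} (giving $\mathbb{T}_Q$ a $(\Delta, s)$-set structure), Lemma \ref{lemma4} (transferring to the slope set of a family meeting a common square), and the $\Delta$-slope preservation in Lemma \ref{lemma3}.

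The hard part is the scale coordination. Proposition \ref{prop6} naively outputs only one scale, chosen by its internal pigeonhole, yet we need this scale to simultaneously (i) coincide with a Frostman-good scale of Lemma \ref{lemma5}, and (ii) make Lemma \ref{lemma5}'s Frostman constant satisfy the prescribed $\Delta^{-\epsilon}$-bound, which it does not if one uses the same unit scale in both lemmas. Both difficulties are overcome in one move: by separating the two unit scales as $\Delta_L = \Delta_{L5}^\beta$ with $\beta \sim 1/\epsilon$, the Frostman constant $\mathbf{A} \lesssim \Delta_{L5}^{-3d}$ becomes admissible, and by extracting from the proof of Proposition \ref{prop6} the stronger quantitative statement that at most $O(1/\epsilon)$ scales are bad, we have enough flexibility to place $k_0 \in \mathcal{G}$ at an index where both tightness and Frostman hold.
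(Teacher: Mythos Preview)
Your outline mirrors the paper's architecture---uniformise $\mu$, apply Lemma \ref{lemma5} for Frostman preservation, run Proposition \ref{prop6} for tightness, and rescale via Lemma \ref{lemma3}---but the scale-coordination step has a genuine gap. You fix the arithmetic progression $\{0,\beta,2\beta,\ldots\}$ in advance (via $\Delta_L = \Delta_{L5}^{\beta}$) and then assert that ``a pigeonhole on the $\beta$-sparse inclusion'' lands a Proposition \ref{prop6}-good index inside the Lemma \ref{lemma5}-good set $\mathcal{G}$. But Lemma \ref{lemma5} only gives $|\mathcal{G}| \gtrsim \gamma(t-\tau)^{2} n_{L5}$ as a subset of $\{0,\ldots,\gamma n_{L5}\}$; it says nothing about how $\mathcal{G}$ is distributed modulo $\beta$, and $\mathcal{G} \cap \beta\Z$ may well be empty. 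Your strengthening of Proposition \ref{prop6}---using the monotonicity $M_{j} \lesssim M_{j+1}$ to deduce that only $\lessapprox 1/\epsilon$ of the $n_{L}$ indices are tightness-bad---is correct and extractable from the paper's remark, but it is useless if none of the $\beta$-multiples lie in $\mathcal{G}$ to begin with.

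The paper resolves this by applying Szemer\'edi's theorem: rather than fixing the progression, it finds a long arithmetic progression $\mathcal{A}$ \emph{inside} (a sparsified subset of) $\mathcal{G}$, and then sets $\Delta := \Delta_{1}^{m}$ where $m$ is the common difference of $\mathcal{A}$. Every scale in the Proposition \ref{prop6} sequence is then automatically Frostman-good, so whichever index the proposition selects works. Your two-scale trick $\Delta_{L} = \Delta_{L5}^{\beta}$ plays the same role as the paper's $m \geq 6/\epsilon$ (making the constant $\mathbf{A} \lesssim \Delta_{L5}^{-3d} \leq \Delta^{-\epsilon}$), but the paper lets Szemer\'edi choose both $m$ and the residue class, whereas you have nailed them down too early.

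Your route is salvageable, and in fact more cheaply than via Szemer\'edi: average over shifts $a \in \{0,\ldots,\beta-1\}$ to find one with $|\mathcal{G} \cap (a + \beta\Z)| \gtrsim \gamma(t-\tau)^{2} n_{L}$, and then run the (opened-up) Proposition \ref{prop6} along the scale sequence starting at $\Delta_{L5}^{n_{L5}-a}$ instead of $\Delta_{L5}^{n_{L5}}$. Combined with your $O(1/\epsilon)$ bad-index bound this does produce a common good index once $n_{L}$ is large enough, and with effective (rather than Szemer\'edi-type) dependence of $n$ on $(\epsilon,t,\tau)$. As written, however, the coordination step is not justified.
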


\begin{remark}  The tightness part of the statement does not use the $t$-Frostman property of $\mu$ or the $s$-Frostman property of $\sigma_{x}$. These hypotheses are only needed to ensure the corresponding properties for $\bar{\mu}^{Q}$ and $\sigma(\mathbb{T})$. \end{remark} 
 
 \begin{proof}[Proof of Theorem \ref{t:renormalisation2}] The proof is divided into two steps, called \emph{Initial uniformisation of $\mu$} and the the \emph{Main argument}.

\subsubsection*{Initial uniformisation of $\mu$} Fix $\epsilon,t,\tau$ as in the statement. We may assume that 
\begin{equation}\label{form85} 0 < \epsilon \leq \tfrac{1}{160}(t - \tau)^{3}, \end{equation}
since Theorem \ref{t:renormalisation2} with a smaller $\epsilon$ implies Theorem \ref{t:renormalisation2} with a larger $\epsilon$. Throughout this proof, the notation $A \lessapprox_{\Delta} B$ means that $A \leq C(\log(\Delta^{-1}))^{C} B$, where $C$ is allowed to depend on $\epsilon,t,\tau$. Let
\begin{equation}\label{def:rho} \rho := (t - \tau)^{3}/10^{4}. \end{equation} 
Then, let $n = n(\epsilon,t,\tau) \in \N$ be so large that every set $\mathcal{G} \subset \{0,\ldots,n\}$ with $|\mathcal{G}| \geq \tfrac{1}{6}\epsilon \rho n$ contains an arithmetic progression $\mathcal{A}$ of length $|\mathcal{A}| \geq \ceil{2/\epsilon} + 1$. The number $n$ exists by Szemer\'edi's theorem \cite{MR369312}.

Let $\Delta_{0} := \Delta_{0}(C,\epsilon,t,\tau) \in 2^{-\N}$ be a scale to be determined during the proof, and let $\Delta_{1} \in (0,\Delta_{0}]$. We require explicitly that $\Delta_{0}$, hence $\Delta_{1}$, is smaller than the threshold in Proposition \ref{prop6} is applied with parameters $A := 1$ and $\epsilon$, and that $C \leq \Delta_{0}^{-\epsilon/2}$. Additionally, we will need that $\mathbf{C} \leq \Delta_{0}^{-\epsilon}$, and
\begin{displaymath} C \lessapprox_{\Delta_{0}} 1 \quad \Longrightarrow \quad C \leq \Delta_{0}^{-\epsilon}, \end{displaymath} 
where we recall that the implicit constants in the "$\approx$" notation may depend on $\epsilon,t,\tau$.

Let $\bar{\delta} := \Delta_{1}^{n}$, and $\bar{\delta}_{j} := \Delta_{1}^{-j}\bar{\delta}$ for $0 \leq j \leq n$. The scale $\Delta$ will eventually have the form $\Delta = \Delta_{1}^{m}$ for some $2/\epsilon \leq m \leq n$, so all the notations $\approx_{\Delta}$, $\approx_{\bar{\delta}}$, and $\approx_{\Delta_{1}}$ are equivalent. This will be crucial when applying Proposition \ref{prop6}(4) in the \emph{Main argument} part of the proof. We will use the notation $\approx_{\Delta}$ from now on, even though $\Delta$ will only be fixed at \eqref{form79}.

By the pigeonhole principle (or Lemma \ref{lemma1}), find a family $\mathcal{P} \subset \mathcal{D}_{\bar{\delta}}$ such that $p \mapsto \mu(p)$ is roughly constant on $\mathcal{P}$, say $\mu(p) \sim \mathfrak{m}$ for all $p \in \mathcal{P}$, and $\mu(\cup \mathcal{P}) \gtrapprox_{\Delta} \mu(\R^{2}) = 1$.

Next, using Lemma \ref{l:uniformSubsets}, locate a $\{\bar{\delta}_{j}\}_{j = 1}^{n}$-uniform subset $\overline{\mathcal{P}} \subset \mathcal{P}$ with 
\begin{displaymath} |\overline{\mathcal{P}}| \geq (2\log(\tfrac{1}{\Delta_{1}}))^{-n}|\mathcal{P}| \gtrapprox_{\Delta} |\mathcal{P}|. \end{displaymath}
(It is useful to note that the scale sequence $\{\bar{\delta}_{j}\}$ is increasing in $j$, with $\bar{\delta}_{n} = 1$, whereas the scale sequence in Definition \ref{def:uniformity} is decreasing, with $\Delta_{0} = 1$. So, to be accurate, we should write that $\overline{\mathcal{P}}$ is $\{\bar{\delta}_{j}\}_{j = n}^{1}$-uniform. This remark will explain the difference between \eqref{form63} and \eqref{form64}.) We record that $\overline{\mathcal{P}}$ is a $(\bar{\delta},t,\mathbf{C}')$-set with $\mathbf{C}' \lessapprox_{\Delta} \mathbf{C}$. This follows from the inequality
\begin{equation}\label{form71} |\overline{\mathcal{P}} \cap Q| \cdot \mathfrak{m} \lesssim \mu(Q) \lesssim \mathbf{C}r^{t}, \qquad Q \in \mathcal{D}_{r}, \, \bar{\delta} \leq r \leq 1, \end{equation}
valid by the rough constancy of $p \mapsto \mu(p)$ on $\overline{\mathcal{P}} \subset \mathcal{P}$, and since
\begin{displaymath} |\overline{\mathcal{P}}| \cdot \mathfrak{m} \sim \mu(\cup \overline{\mathcal{P}}) \gtrapprox_{\Delta} \mu(\cup \mathcal{P}) \gtrapprox_{\Delta} 1, \end{displaymath}
using again (multiple times) the rough constancy of $p \mapsto \mu(p)$.

We have now established that $\overline{\mathcal{P}}$ is a $\{\bar{\delta}_{j}\}_{j = 1}^{n}$-uniform $(\bar{\delta},t,\mathbf{C}')$-set with $\mathbf{C}' \lessapprox_{\Delta} \mathbf{C}$. In particular, $\overline{\mathcal{P}}$ is a $(\bar{\delta},t,\bar{\delta}^{-\epsilon})$-set for $\bar{\delta} > 0$ sufficiently small (i.e. $\Delta_{0} > 0$ sufficiently small depending on $\mathbf{C},\epsilon$). According to Lemma \ref{lemma5} applied with parameter $\gamma := \tfrac{1}{10}(t - \tau)$ (this is legitimate by the constraint \eqref{form85} on $\epsilon$), there exists a set of indices 
\begin{equation}\label{form63} \mathcal{G} \subset \{n - \tfrac{1}{10}(t - \tau)n,\ldots,n\} \quad \text{with} \quad |\mathcal{G}| \geq \rho n \end{equation}
such that $\overline{\mathcal{P}}^{Q} \subset \mathcal{D}_{\bar{\delta}/\bar{\delta}_{j}}$ is a $(\bar{\delta}/\bar{\delta}_{j},\tau,O(\Delta_{1}^{-6}))$-set for all $j \in \mathcal{G}$, and all $Q \in \mathcal{D}_{\bar{\delta}_{j}}(\overline{\mathcal{P}})$. Recall from \eqref{def:rho} that $\rho = (t - \tau)^{3}/10^{4}$. We remark in the passing that
\begin{equation}\label{form80} \bar{\delta}_{j} = \Delta_{1}^{n - j} \geq \Delta_{1}^{(t - \tau)n/10} = \bar{\delta}^{(t - \tau)/10}, \qquad j \in \mathcal{G}. \end{equation}

\begin{remark} The appearance of the constant $O(\Delta_{1}^{-6})$ is a technical problem which led us to applying Szemer\'edi's theorem. If this constant could be taken to be of the order $O(\Delta_{1}^{-\epsilon})$, we could simply set $\Delta := \Delta_{1}$. Using Szemer\'edi's theorem to fix this issue seems, at the same time, convenient, and rather too complicated. \end{remark} 

Pick arbitrarily a $(6/\epsilon)$-separated subset of $\mathcal{G}' \subset \mathcal{G}$ of cardinality $|\mathcal{G}'| \geq \tfrac{1}{6}\epsilon \rho n$. By the choice of $n = n(\epsilon,t,\tau)$ and Szemer\'edi's theorem, there exists an arithmetic progression $\mathcal{A} \subset \mathcal{G}'$ of cardinality $|\mathcal{A}| \geq \ceil{2/\epsilon} + 1$. This progression has the form 
\begin{displaymath} \mathcal{A} = \{a_{0},a_{0}+ m,a_{0} + 2m,\ldots,a_{0} + |\mathcal{A}|m\} \end{displaymath}
for some $a_{0} \in \{0,\ldots,n\}$ and $6/\epsilon \leq m \leq n$. We define 
\begin{equation}\label{form79} \Delta := \Delta_{1}^{m} \leq \Delta_{1}^{6/\epsilon}. \end{equation} 
In particular, for $j \in \mathcal{G}$ and $Q \in \mathcal{D}_{\bar{\delta}_{j}}(\overline{\mathcal{P}})$, the renormalisation $\overline{\mathcal{P}}^{Q}$ is a $(\bar{\delta}/\bar{\delta}_{j},\tau,\Delta^{-\epsilon})$-set.

Let $\mu_{\mathrm{uni}}$ be the restriction of $\mu$ to $\cup \overline{\mathcal{P}}$. Then $\mu_{\mathrm{uni}}(\R^{2}) \approx_{\Delta} 1$. We claim that $\mu_{\mathrm{uni}}^{Q}$ is $(\tau,\Delta^{-\epsilon})$-Frostman for all $j \in \mathcal{G}$, and $Q \in \mathcal{D}_{\bar{\delta}_{j}}(\spt \mu_{\mathrm{uni}}) = \mathcal{D}_{\bar{\delta}_{j}}(\overline{\mathcal{P}})$.

For radii $r \geq \bar{\delta}/\bar{\delta}_{j}$, this is based on the $(\bar{\delta}/\bar{\delta}_{j},\tau,\Delta^{-\epsilon})$-set property of $\overline{\mathcal{P}}^{Q}$: for $x \in \R^{2}$ and $y = S_{Q}(x)$,
\begin{align*} \mu_{\mathrm{uni}}^{Q}(B(x,r)) & \stackrel{\mathrm{def.}}{=} \mu_{\mathrm{uni}}(Q)^{-1}\mu_{\mathrm{uni}}(B(y,\bar{\delta}_{j}r) \cap Q)\\
& \lesssim \mu_{\mathrm{uni}}(Q)^{-1}\mathfrak{m} \cdot |\overline{\mathcal{P}} \cap B(y,2\bar{\delta}_{j}r) \cap Q|\\
& = \mu_{\mathrm{uni}}(Q)^{-1}\mathfrak{m} \cdot |\overline{\mathcal{P}}^{Q} \cap B(x,2r)|\\
& \lesssim \Delta^{-\epsilon}\mu_{\mathrm{uni}}(Q)^{-1}\mathfrak{m} \cdot r^{\tau}|\overline{\mathcal{P}}^{Q}|\\
& = \Delta^{-\epsilon} \mu_{\mathrm{uni}}(Q)^{-1}\mathfrak{m} \cdot r^{\tau}|\overline{\mathcal{P}} \cap Q| \sim \Delta^{-\epsilon}r^{\tau}. \end{align*}
This estimate no longer works in the range $r \leq \bar{\delta}/\bar{\delta}_{j}$, since the $(\bar{\delta}/\bar{\delta}_{j},\tau)$-set property of $\overline{\mathcal{P}}^{Q}$ says nothing about such radii. In that range, we instead use that $\bar{\delta}_{j} \geq \bar{\delta}^{(\tau - t)/10}$ for $j \in \mathcal{G}$, as recorded in \eqref{form80}, so in particular 
\begin{displaymath} r^{t - \tau} \leq (\bar{\delta}/\bar{\delta}_{j})^{t - \tau} \leq \bar{\delta}_{j}^{2}. \end{displaymath}
Making crude estimates such as $\mu_{\mathrm{uni}}(Q) \gtrapprox_{\Delta} \bar{\delta}_{j}^{2}$ (since $Q \mapsto \mu_{\mathrm{uni}}(Q)$ is roughly constant on $\mathcal{D}_{\bar{\delta}_{j}}(\overline{\mathcal{P}})$, and $\mu_{\mathrm{uni}}(\cup \overline{\mathcal{P}}) \gtrapprox_{\Delta} 1$), and using the $(t,\mathbf{C})$-Frostman property of $\mu$ directly, we obtain (again for $x \in \R^{2}$ and $y = S_{Q}(x)$)
\begin{displaymath} \mu_{\mathrm{uni}}^{Q}(B(x,r)) = \mu_{\mathrm{uni}}(Q)^{-1}\mu_{\mathrm{uni}}(B(y,\delta_{j}r)) \lessapprox_{\Delta} \mathbf{C}\bar{\delta}_{j}^{-2}r^{t} \leq \mathbf{C}r^{\tau}. \end{displaymath}
This completes the proof of the $(\tau,\Delta^{-\epsilon})$-Frostman property of $\mu_{\mathrm{uni}}^{Q}$.

\subsubsection*{Main argument} Here is where the arithmetic progression $\mathcal{A} \subset \mathcal{G}$ is used: for Proposition \ref{prop6}, we need a long increasing scale sequence with constant ratios between consecutive scales, and $\mathcal{A}$ determines such a sequence. We set $\delta := \Delta_{1}^{n - a_{0}}$ and
\begin{displaymath} \delta_{j} := \Delta^{-j}\delta, \qquad 0 \leq j \leq |\mathcal{A}|. \end{displaymath}
Since $\mathcal{A} \subset \mathcal{G}$, all the scales $\delta_{j}$ are contained in $\{\bar{\delta}_{i} : i \in \mathcal{G}\}$. Recall that $|\mathcal{A}| \geq \ceil{2/\epsilon} + 1$. In Proposition \ref{prop6}, it was also assumed that $\{\delta_{j}\} \subset (0,\Delta]$: since $\{\delta_{j}\}_{j = 0}^{|\mathcal{A}|} \subset (0,1]$, it holds $\{\delta_{j}\}_{j = 0}^{m} \subset (0,\Delta]$ with $m := |\mathcal{A}| - 1 \geq \ceil{2/\epsilon}$.

 Let 
 \begin{displaymath} \delta_{j} < \delta_{j + 1} =: \underline{\Delta} \end{displaymath}
 be the special scales provided by Proposition \ref{prop6} applied with parameters $A = 1$ and $\epsilon$ to the configuration $(\mu_{\mathrm{uni}},\{\sigma_{x}\})$ and the sequence $\{\delta_{j}\}_{j = 0}^{m}$. (Note that $\mu_{\mathrm{uni}}(\R^{2}) \gtrapprox_{\Delta} 1$, so certainly $\mu_{\mathrm{uni}}(\R^{2}) \geq \Delta^{A}$ for $\Delta > 0$ sufficiently small.) Let also $\bar{\mu} = (\mu_{\mathrm{uni}})|_{B}$ be the measure given by Proposition \ref{prop6}, and let $\mathcal{Q}_{\mathrm{uni}} \subset \mathcal{D}_{\underline{\Delta}}(\spt \mu_{\mathrm{uni}})$ be the collection from Proposition \ref{prop6}(1). Pick a square $Q \in \mathcal{Q}_{\mathrm{uni}}$ with 
\begin{equation}\label{form57} \bar{\mu}(Q) \gtrapprox_{\Delta} \mu_{\mathrm{uni}}(Q). \end{equation}
This is possible by Proposition \ref{prop6}(1). The square $Q$ selected here will remain fixed for the remainder of the proof. We checked in the first part of the proof that $\mu_{\mathrm{uni}}^{Q}$ is $(\tau,\Delta^{-\epsilon})$-Frostman (recall: $\mathcal{A} \subset \mathcal{G}$), and it now follows from \eqref{form57} that also $\bar{\mu}^{Q}$ is $(\tau,\Delta^{-2\epsilon})$-Frostman: indeed $\bar{\mu}^{Q}(B) \lessapprox_{\Delta} \mu_{\mathrm{uni}}^{Q}(B)$ for all Borel sets $B \subset \R^{2}$. 

 We next claim that the $Q$-renormalised configuration $(\bar{\mu}^{Q},\{\sigma_{x}^{Q}\})$ is $\Delta^{-2\epsilon}$-tight, provided $\Delta > 0$ is small enough. Recall that $\bar{\mu}(\cup \mathcal{P}_{Q}) \gtrapprox_{\Delta} \bar{\mu}(Q)$ by Proposition \ref{prop6}(2). Let
\begin{displaymath} \mathcal{Q} := \{S_{Q}(p) : p \in \mathcal{P}_{Q}\} \subset \mathcal{D}_{\Delta}. \end{displaymath}
Then, $\bar{\mu}^{Q}(\cup \mathcal{Q}) = \tfrac{1}{\bar{\mu}(Q)}\bar{\mu}(\cup \mathcal{P}_{Q}) \gtrapprox_{\Delta} 1$. This verifies the tightness condition \nref{T2}. We then proceed to define the families $\mathbb{T}_{q} \subset \mathcal{T}^{\Delta}$, $q \in \mathcal{Q}$, from the definition of tightness. Fix $p \in \mathcal{P}_{Q}$ and $T \in \mathcal{T}_{p} \subset \mathcal{T}^{\Delta}$ (as in Proposition \ref{prop6}(4)). Consider
\begin{displaymath} \mathcal{L}(p,T) := \{\ell \in \mathcal{A}(2,1) : \ell \cap p \neq \emptyset \text{ and } \ell \subset T\}. \end{displaymath}
Then, by Proposition \ref{prop6}(5),
\begin{align*} (\bar{\mu} \times \sigma_{x})(p \times \mathcal{L}(p,T)) & \stackrel{\mathrm{def.}}{=} (\bar{\mu} \times \sigma_{x})(\{(x,\theta) \in p \times [0,1] : \ell_{x,\theta} \in \mathcal{L}(p,T)\})\\
& = (\bar{\mu} \times \sigma_{x})(\{(x,\theta) \in p \times [0,1] : \ell_{x,\theta} \subset T\})\\
& = (\bar{\mu} \times \sigma_{x})(p \times T) \gtrapprox_{\Delta} \bar{\mu}(p)/|\mathcal{T}_{p}|. \end{align*} 
Moreover, by Lemma \ref{lemma3}, the image $S_{Q}(\mathcal{L}(p,T))$ can be covered by a family $\mathbb{T}(p,T) \subset \mathcal{T}^{\Delta}$ with $|\mathbb{T}(p,T)| \lesssim 1$. We may therefore select one distinguished element $\mathbf{T} := \mathbf{T}(p,T) \in \mathbb{T}(p,T)$ such that also
\begin{equation}\label{form34} (\bar{\mu} \times \sigma_{x})(p \times S_{Q}^{-1}(\mathbf{T})) \gtrapprox_{\Delta} \bar{\mu}(p)/|\mathcal{T}_{p}|. \end{equation}
For $q = S_{Q}(p) \in \mathcal{Q}$, we now define $\mathbb{T}_{q} := \{\mathbf{T}(p,T) : T \in \mathcal{T}_{p}\} \subset \mathcal{T}^{\Delta}$. We note that 
\begin{equation}\label{form35} |\mathbb{T}_{S_{Q}(p)}| \sim |\mathcal{T}_{p}|, \qquad p \in \mathcal{P}_{Q}. \end{equation}
This is because the slopes of the elements in $\mathcal{T}_{p}$ are $\Delta$-separated: $\mathcal{T}_{p}$ is a family of dyadic $\Delta$-tubes intersecting a fixed $\delta_{j}$-square with $\delta_{j} \leq \Delta$. Moreover, $T \in \mathcal{T}_{p}$ (almost) uniquely determines the slope of $\mathbf{T}(p,T)$ according to the last part of Lemma \ref{lemma3}.

With this definition, if $q = S_{Q}(p) \in \mathcal{Q}$, and $\mathbf{T} \in \mathbb{T}_{q}$, then,
\begin{align*} (\bar{\mu}^{Q} \times \sigma_{y}^{Q})(q \times \mathbf{T}) & \stackrel{\mathrm{def.}}{=} \int_{q} \sigma_{y}^{Q}(\{\theta : \ell_{y,\theta} \subset \mathbf{T}\}) \, d\bar{\mu}^{Q}(y)\\
& \stackrel{\mathrm{def.}}{=} \tfrac{1}{\bar{\mu}(Q)} \int_{p} \sigma^{Q}_{S_{Q}(x)}(\{\theta : \ell_{S_{Q}(x),\theta} \subset \mathbf{T}\}) \, d\bar{\mu}(x) \\
& \stackrel{\mathrm{def.}}{=} \tfrac{1}{\bar{\mu}(Q)} \int_{p} \sigma_{x}(\{\theta : \ell_{S_{Q}(x),\theta} \subset \mathbf{T}\}) \, d\bar{\mu}(x) \\
& = \tfrac{1}{\bar{\mu}(Q)} \int_{p} \sigma_{x}(\{\theta : \ell_{x,\theta} \subset S_{Q}^{-1}(\mathbf{T})\}) \, d\bar{\mu}(x) \\
& \stackrel{\mathrm{def.}}{=} \tfrac{1}{\bar{\mu}(Q)} (\bar{\mu} \times \sigma_{x})(p \times S_{Q}^{-1}(\mathbf{T}))\\
& \stackrel{\eqref{form34}}{\gtrapprox_{\Delta}} \frac{\bar{\mu}(p)/|\mathcal{T}_{p}|}{\bar{\mu}(Q)} = \bar{\mu}^{Q}(q)/|\mathcal{T}_{p}| \stackrel{\eqref{form35}}{\sim} \bar{\mu}^{Q}(q)/|\mathbb{T}_{q}|. \end{align*} 
This is almost the tightness condition \nref{T3}, except that we have not defined the number "$M$" (i.e. the "common cardinality") yet. We do this now. Recall from Proposition \ref{prop6}(4) that all the families $\mathcal{T}_{p}$ are contained in a common family $\mathbb{T}_{Q} \subset \mathcal{T}^{\Delta}$ of cardinality 
\begin{equation}\label{form36} |\mathbb{T}_{Q}| \lessapprox_{\Delta} \Delta^{-\epsilon}|\mathcal{T}_{p}| \lesssim \Delta^{-\epsilon}|\mathbb{T}_{S_{Q}(p)}|. \end{equation}
We define $M := c\Delta^{\epsilon}|\mathbb{T}_{Q}|$ for a suitable small constant $c \approx_{\Delta} 1$ to be chosen in a moment. Using \eqref{form35}, we infer that $|\mathbb{T}_{q}| \lesssim |\mathbb{T}_{Q}| = c^{-1}\Delta^{-\epsilon}M$ for all $q \in \mathcal{Q}$, so the long computation above shows that
\begin{displaymath} (\bar{\mu}^{Q} \times \sigma_{y}^{Q})(q \times \mathbf{T}) \gtrapprox_{\Delta} \Delta^{\epsilon}\bar{\mu}^{Q}(q)/M, \qquad q \in \mathcal{Q}, \, \mathbf{T} \in \mathbb{T}_{q}. \end{displaymath}
This yields the $\Delta^{-2\epsilon}$-tightness condition \nref{T3}, provided that $\Delta > 0$ is sufficiently small in terms of $\epsilon$. Moreover, since $M \leq |\mathbb{T}_{q}|$ for all $q \in \mathcal{Q}$ according to \eqref{form36} (and taking $c \approx_{\Delta} 1$ small enough), we may simply reduce the families $\mathbb{T}_{q}$ if necessary so that they all have common cardinality exactly $M$. This gives the tightness condition \nref{T1}.

We next check the tightness condition \nref{T4}. Since $\mathcal{T}_{p} \subset \mathbb{T}_{Q}$, in particular
\begin{displaymath} \sigma(\mathcal{T}_{p}) \subset \sigma(\mathbb{T}_{Q}) \subset (\Delta \cdot \Z), \qquad p \in \mathcal{P}_{Q}. \end{displaymath}
Now, for each $p \in \mathcal{P}_{Q}$ and $T \in \mathcal{T}_{p}$, the slope of the tube $\mathbf{T}(p,T) \in \mathbb{T}_{S_{Q}(p)}$ is within $\lesssim \Delta$ of the slope of $T$, see the last statement of Lemma \ref{lemma3}. Therefore, the total slope set of the family $\mathbb{T} := \cup \{\mathbb{T}_{q} : q \in \mathcal{Q}\}$ satisfies $|\sigma(\mathbb{T})| \lesssim |\sigma(\mathbb{T}_{Q})| \leq |\mathbb{T}_{Q}| \sim \Delta^{-\epsilon}M$. This is what is required by the tightness condition \nref{T4}. 

Finally, it remains to check the $(\Delta,s,\Delta^{-\epsilon})$-set property of $\sigma(\mathbb{T})$. According to the final part of Proposition \ref{prop6}, the family $\mathbb{T}_{Q}$ can be chosen to be a $(\Delta,s,\mathbf{C}')$-set with $\mathbf{C}' \lessapprox_{\Delta} \mathbf{C}$; since all the elements of $\mathbb{T}_{Q}$ intersect the fixed square $Q \in \mathcal{D}_{\underline{\Delta}}$ with $\underline{\Delta} \leq \Delta$, this is equivalent (Lemma \ref{lemma4}) to $\sigma(\mathbb{T}_{Q})$ being a $(\Delta,s,\mathbf{C}')$-set, and $|\sigma(\mathbb{T}_{Q})| \sim |\mathbb{T}_{Q}|$.

Moreover, we just argued above that $\sigma(\mathbb{T})$ lies in the $\lesssim \Delta$-neighbourhood of $\sigma(\mathbb{T}_{Q})$, and $|\sigma(\mathbb{T})| \gtrsim |\sigma(\mathcal{T}_{p})| \geq \Delta^{\epsilon}|\sigma(\mathbb{T}_{Q})|$ for any $p \in \mathcal{P}_{Q}$ according to \eqref{form36}. It follows that $\sigma(\mathbb{T})$ is a $(\Delta,s,\Delta^{-\epsilon}\mathbf{C}')$-set, and therefore a $(\Delta,s,\Delta^{-2\epsilon})$-set for $\Delta > 0$ sufficiently small. \end{proof}


\section{Proof of Theorem \ref{t:configurations}}\label{s4}

We finally have all the ingredients to prove the measure-theoretic version of Theorem \ref{t:main} stated in Theorem \ref{t:configurations}. Recall from Section \ref{s3} that Theorem \ref{t:configurations} implies Theorem \ref{t:main}. We start by restating Theorem \ref{t:configurations}:

\begin{thm}\label{t:configurationsRestated} For all $t \in (1,2]$, $s \in (2 - t,1]$, and $C > 0$, there exist a radius $r = r(C,s,t) > 0$ such that the following holds. Let $(\mu,\{\sigma_{x}\})$ be a configuration, where $\mu$ is a $(t,C)$-Frostman probability measure, and $\sigma_{x}$ is an $(s,C)$-Frostman probability measure for $\mu$ almost all $x \in \R^{2}$. Then,
\begin{displaymath} \inf \{\dist(y,L_{x}) : x,y \in \spt \mu, \, |x - y| \geq r\} = 0, \end{displaymath} 
where $L_{x} := \cup \{\ell_{x,\theta} : \theta \in \spt \sigma_{x}\}$, and $\ell_{x,\theta} = \pi_{\theta}^{-1}\{\pi_{\theta}(x)\}$. \end{thm}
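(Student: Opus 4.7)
The plan is to assemble the three main engines of the paper---the renormalisation theorem (Theorem \ref{t:renormalisation2}), the main lemma (Lemma \ref{l:main}), and the Sobolev-based closure theorem (Theorem \ref{t2})---into a single argument. First I fix parameters: since $s + t > 2$, I pick $\tau \in (2 - s, t)$. Theorem \ref{t2} applied to $(\tau, s)$ provides an exponent $\eta = \eta(s, \tau) > 0$ and a threshold scale; Lemma \ref{l:main} applied to $(\tau, s, \eta)$ then provides an exponent $\epsilon = \epsilon(\eta, s, \tau) > 0$ and its own threshold; finally Theorem \ref{t:renormalisation2} applied with $(C, \tfrac{1}{2}\epsilon, t, \tau)$ produces $\Delta_0 > 0$ and $n \in \N$. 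I set $r = r(C, s, t) := \Delta_0^{2n}$, which depends only on $C, s, t$.

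Applying Theorem \ref{t:renormalisation2} to $(\mu, \{\sigma_x\})$ with $\Delta_1 := \Delta_0$ yields dyadic scales $\underline{\Delta} \leq \Delta$ in $[\Delta_0^n, \Delta_0]$, a Borel restriction $\bar{\mu} := \mu|_B$, and a square $Q \in \mathcal{D}_{\underline{\Delta}}$ such that $(\bar{\mu}^Q, \{\sigma_y^Q\})$ is $\Delta^{-\epsilon/2}$-tight at scale $\Delta$ with data $(\mathcal{Q}, \mathbb{T})$, the slope set $\sigma(\mathbb{T})$ is a non-empty $(\Delta, s, \Delta^{-\epsilon/2})$-set, and $\bar{\mu}^Q$ is $(\tau, \Delta^{-\epsilon/2})$-Frostman. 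Combining the Frostman property of $\bar{\mu}^Q$ with the near-constancy of $Q' \mapsto \bar{\mu}^Q(Q')$ on $\mathcal{Q}$ supplied by condition (T2) directly shows that $\mathcal{Q}$ itself is a $(\Delta, \tau, \Delta^{-\epsilon})$-set. Moreover each $\sigma_y^Q = \sigma_{S_Q^{-1}(y)}$ retains the $(s, C)$-Frostman property, hence is $(s, \Delta^{-\epsilon/2})$-Frostman once $\Delta_0$ is chosen small enough that $C \leq \Delta_0^{-\epsilon/2}$.

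The configuration $(\bar{\mu}^Q, \{\sigma_y^Q\})$ therefore satisfies all the hypotheses of Lemma \ref{l:main} with exponent $\tau$ in place of $t$. The lemma produces families $\mathcal{G}_1, \mathcal{G}_2 \subset \mathcal{D}_\Delta$ with $\dist(\mathcal{G}_1, \mathcal{G}_2) \geq \Delta$ whose associated sets $G_j = (\cup \mathcal{G}_j) \cap \spt \bar{\mu}^Q$ satisfy (M1) and (M2). But (M1)--(M2) are precisely the hypotheses (H1)--(H2) of Theorem \ref{t2} applied to $(\bar{\mu}^Q, \{\sigma_y^Q\})$---again with parameter $\tau$ in place of $t$, which is admissible since $\tau > 2 - s$. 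Theorem \ref{t2} therefore delivers
\begin{displaymath}
\inf\{\dist(y, L_x^Q) : x \in G_1, \, y \in G_2\} = 0,
\end{displaymath}
where $L_x^Q := \cup\{\ell_{x,\theta} : \theta \in \spt \sigma_x^Q\}$.

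The final step is to transport this conclusion back along the homothety $S_Q \colon Q \to [0, 1)^2$, which has ratio $\underline{\Delta}^{-1}$. Homotheties preserve incidence and direction, so $S_Q(L_{x'}) = L^Q_{S_Q(x')}$ for every $x' \in Q$, and consequently $\dist(y, L_x^Q) = \underline{\Delta}^{-1} \dist(y', L_{x'})$ and $|x - y| = \underline{\Delta}^{-1}|x' - y'|$ for the pre-images $x' := S_Q^{-1}(x) \in \spt \mu$ and $y' := S_Q^{-1}(y) \in \spt \mu$. The separation $|x - y| \geq \Delta$ coming from $\dist(\mathcal{G}_1, \mathcal{G}_2) \geq \Delta$ therefore becomes $|x' - y'| \geq \underline{\Delta} \cdot \Delta \geq \Delta_0^{2n} = r$, while the displayed infimum translates into $\inf\{\dist(y', L_{x'}) : x', y' \in \spt \mu, \, |x' - y'| \geq r\} = 0$, the desired conclusion. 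I expect the only real obstacle to be parameter book-keeping: the renormalised measure $\bar{\mu}^Q$ is only guaranteed to satisfy a Frostman condition at the weaker exponent $\tau < t$---this is precisely why Theorem \ref{t:renormalisation2} is formulated with the auxiliary parameter $\tau$---which forces both Lemma \ref{l:main} and Theorem \ref{t2} to be invoked at $\tau$, and the constraint $\tau > 2 - s$ is what consumes the hypothesis $s + t > 2$ and is the binding constraint throughout.
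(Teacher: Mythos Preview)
Your proof is correct and follows essentially the same route as the paper: choose $\tau \in (2-s,t)$, cascade the parameter choices through Theorem \ref{t2}, Lemma \ref{l:main}, and Theorem \ref{t:renormalisation2}, then apply them in reverse order to the renormalised configuration $(\bar{\mu}^Q,\{\sigma_y^Q\})$ and pull the conclusion back through the homothety $S_Q$. The only cosmetic difference is that the paper takes $\Delta_1$ to be the explicit minimum of the thresholds from Theorem \ref{t2} and Theorem \ref{t:renormalisation2} (with Lemma \ref{l:main}'s threshold implicitly absorbed), whereas you conflate these into a single $\Delta_0$; this is harmless since all thresholds depend only on $C,s,t$ and you note that $\Delta_0$ may be shrunk as needed.
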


\begin{proof} Fix the configuration $(\mu,\{\sigma_{x}\})$. Write
\begin{displaymath} \tau := \tau(s,t) := \tfrac{1}{2}[(2 - s) + t] \in (2 - s,t). \end{displaymath}
Note that still $s + \tau > 2$.

Let $\eta = \eta(s,\tau) > 0$ and $\Delta_{0} = \Delta_{0}(s,\tau) > 0$ be sufficiently small that the conclusions of Theorem \ref{t2} hold. Next, given this $\eta = \eta(s,\tau) > 0$, apply the (main) Lemma \ref{l:main} to find a positive constant $\epsilon = \epsilon(\eta,s,\tau) \in (0,\eta]$. Next, apply Theorem \ref{t:renormalisation} with constants $C$ (provided in the hypothesis of Theorem \ref{t:configurationsRestated}) and $\epsilon/2,t,\tau$. This yields another threshold $\Delta_{0}' = \Delta_{0}'(C,\epsilon/2,t,\tau) > 0$, and an integer $n = n(\epsilon/2,t,\tau)$, which eventually just depend on $C,s,t$. Let $\Delta_{1} := \min\{\Delta_{0},\Delta_{0}'\}$, and set
\begin{displaymath} r := \Delta_{1}^{2n}. \end{displaymath}
We claim that the conclusion of Theorem \ref{t:configurationsRestated} holds with this choice of "$r$".

To see this, apply Theorem \ref{t:renormalisation} to find scales $\underline{\Delta},\Delta \in [\Delta_{1}^{n},\Delta_{1}]$, a sub-configuration $(\bar{\mu},\{\sigma_{x}\})$, and a square $Q \in \mathcal{D}_{\underline{\Delta}}$ such that the $Q$-renormalised configuration $(\bar{\mu}^{Q},\{\sigma_{x}^{Q}\})$ is $\Delta^{-\epsilon/2}$-tight at scale $\Delta$, with data
\begin{displaymath} (\mathcal{Q},\mathbb{T}) \subset \mathcal{D}_{\Delta} \times \mathcal{T}^{\Delta}. \end{displaymath}
Moreover, by Theorem \ref{t:renormalisation}, $\bar{\mu}^{Q}$ is $(\tau,\Delta^{-\epsilon/2})$-Frostman, and $\sigma(\mathbb{T})$ is a non-empty $(\Delta,s,\Delta^{-\epsilon})$-set. The $(\tau,\Delta^{-\epsilon/2})$-Frostman property of $\bar{\mu}^{Q}$, the rough constancy of $q \mapsto \bar{\mu}^{Q}(q)$ on $\mathcal{Q}$, and $\bar{\mu}^{Q}(\cup \mathcal{Q}) \geq \Delta^{\epsilon/2}$ (by $\Delta^{-\epsilon/2}$-tightness) together imply that $\mathcal{Q}$ is a $(\Delta,\tau,\Delta^{-\epsilon})$-set (repeating the argument at \eqref{form71}, for example).

This information brings us to a position to apply the (main) Lemma \ref{l:main} to the configuration $(\bar{\mu}^{Q},\{\sigma_{x}^{Q}\})$, with parameters $\eta,s,\tau$ (as specified above). The conclusion is that there exist $\Delta$-separated subsets $\mathcal{G}_{1},\mathcal{G}_{2} \subset \mathcal{Q}$ such that the following holds for $G_{j} := \cup \mathcal{G}_{j}$:
\begin{itemize}
\item $\min\{\bar{\mu}^{Q}(G_{1}),\bar{\mu}^{Q}(G_{2})\} \geq \Delta^{\eta}$, where $G_{j} := (\cup \mathcal{G}_{j}) \cap \spt \bar{\mu}^{Q}$.
\item $X[\bar{\mu}^{Q}|_{G_{1}},\{\sigma^{Q}_{x}\}]_{\Delta}(y) \geq \Delta^{\eta}$ for all $y \in G_{2}$.
\end{itemize}
Since $\Delta \leq \Delta_{1} \leq \Delta_{0}$, and recalling that $\eta = \eta(s,\tau) > 0$ was the threshold required by Theorem \ref{t2}, that result now implies
\begin{displaymath} \inf \{\dist(y,L_{x}^{Q}) : x \in G_{1} \text{ and } y \in G_{2}\} = 0. \end{displaymath}
Here $L_{x}^{Q} = \cup \{\ell_{x,\theta} : \theta \in \spt \sigma_{x}^{Q}\}$. Since $\dist(G_{1},G_{2}) \geq \Delta$, and $\spt \bar{\mu} \subset \spt \mu$, in particular
\begin{displaymath} \inf \{\dist(y,L_{x}^{Q}) : x,y \in \spt \mu^{Q} \text{ and } |x - y| \geq \Delta\} = 0. \end{displaymath}
Recalling that $\Delta \geq \Delta_{1}^{n}$, and the definitions of the rescaled measures $\mu^{Q}$ and $\sigma^{Q}_{x}$ from Definition \ref{def:renormalisation}, and finally that $\ell(Q) = \underline{\Delta} \geq \Delta_{1}^{n}$, this implies 
\begin{displaymath} \inf \{\dist(y,L_{x}) : x,y \in \spt \mu, \, |x - y| \geq \Delta_{1}^{2n}\} = 0. \end{displaymath}
This completes the proof of Theorem \ref{t:configurationsRestated}. \end{proof}


\appendix

\section{A lemma on Lipschitz functions}\label{appA}

This section contains the proof of Lemma \ref{lemma5}, restated here:

\begin{lemma}\label{lemma5Restated} Let $t \in (0,d]$, $\tau \in (0,t)$, $\gamma \in (0,1]$, $\epsilon \in (0,\tfrac{1}{8d}\gamma(t - \tau)^{2}]$, and $\Delta \in 2^{-\N}$. Then, there exist $n_{0} = n_{0}(d,\epsilon) \in \N$ and $\mathbf{A} \lesssim_{d} \Delta^{-3d}$, such that the following holds for all $n \geq n_{0}$.

Let $\delta := \Delta^{n}$, and let $\mathcal{P} \subset \mathcal{D}_{\delta}([0,1)^d)$ be a $\{\Delta^{j}\}_{j = 0}^{n - 1}$-uniform $(\delta,t,\delta^{-\epsilon})$-set. Then, there exists
\begin{displaymath} \mathcal{G} \subset \{0,\ldots,\gamma n\} \quad \text{ with } \quad |\mathcal{G}| \geq n \cdot \gamma(t - \tau)^{2}/(10 d^{2}) \end{displaymath}
such that $\mathcal{P}^{Q} = S_{Q}(\mathcal{P} \cap Q) \subset \mathcal{D}_{\delta/\Delta^{j}}$ is a $(\delta/\Delta^{j},\tau,\mathbf{A})$-set for all $j \in \mathcal{G}$, and $Q \in \mathcal{D}_{\Delta^{j}}(\mathcal{P})$. \end{lemma}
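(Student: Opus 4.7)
The plan is to reinterpret the problem combinatorially in terms of the branching numbers of $\mathcal{P}$, and then to solve the resulting combinatorial problem with a running-minimum argument.

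By the $\{\Delta^j\}_{j = 0}^{n-1}$-uniformity, there exist integers $N_0, \ldots, N_{n-1} \in 2^{\N}$ with $N_j \le \Delta^{-d}$ such that $|\mathcal{P} \cap Q|_{\Delta^{j+1}} = N_j$ for every $Q \in \mathcal{D}_{\Delta^j}(\mathcal{P})$. Writing $s_j := \log N_j / \log(1/\Delta) \in [0, d]$ and $T_k := \sum_{i = 0}^{k-1} s_i$, the $(\delta, t, \delta^{-\epsilon})$-set hypothesis applied at each dyadic radius $r = \Delta^k$ translates, after unwinding the uniformity, to
\begin{displaymath} T_k \ge tk - \epsilon n - O_d(1), \qquad k \in \{0, \ldots, n\}, \end{displaymath}
where $O_d(1)$ absorbs the dimensional overlap constant for balls versus dyadic cubes. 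By essentially the same calculation, together with a routine discretisation of non-dyadic radii and a suitable choice of $\mathbf{A}$ of the prescribed order, the desired conclusion that $\mathcal{P}^Q$ is a $(\delta/\Delta^j, \tau, \mathbf{A})$-set for every $Q \in \mathcal{D}_{\Delta^j}(\mathcal{P})$ will follow from the branching inequality
\begin{displaymath} T_k - T_j \ge \tau(k - j) - c_d, \qquad j \le k \le n, \end{displaymath}
for some dimensional constant $c_d$. Setting $h(k) := T_k - \tau k$, this condition reads $h(k) \ge h(j) - c_d$, and the task becomes to find at least $n\gamma(t - \tau)^2/(10 d^2)$ indices $j \in \{0, 1, \ldots, \lfloor \gamma n \rfloor\}$ for which it holds uniformly in $k \ge j$.

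The combinatorial core is to analyse the running minimum $m(j) := \min_{k \in \{j, \ldots, n\}} h(k)$. Since $|h(k+1) - h(k)| = |s_k - \tau| \le d$, the sequence $m$ is non-decreasing with jumps of size at most $d$; moreover, whenever $m(j+1) > m(j)$ the minimum on $\{j, \ldots, n\}$ must be attained at $k = j$ itself, forcing $m(j) = h(j)$ and so making $j$ trivially good. Telescoping the jumps, the number of such jump-points inside $\{0, \ldots, \lfloor \gamma n \rfloor\}$ is at least $(m(\lceil \gamma n \rceil) - m(0))/d$. The first step gives $h(k) \ge (t - \tau) k - \epsilon n - O_d(1)$ for every $k$, and because this lower bound is increasing in $k$ (as $t > \tau$), we obtain $m(\lceil \gamma n \rceil) \ge (t - \tau) \gamma n - \epsilon n - O_d(1)$, while $m(0) \le h(0) = 0$. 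Hence the count is at least $[(t - \tau)\gamma n - \epsilon n - O_d(1)]/d$; choosing $n \ge n_0(d, \epsilon)$ large enough to absorb the additive constant into $\epsilon n$, and invoking the hypothesis $\epsilon \le \gamma (t - \tau)^2/(8 d)$ together with $(t - \tau)/d \le 1$, this is bounded below by $n \gamma (t - \tau)^2/(10 d^2)$, as required.

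The main obstacle is the bookkeeping in the first two paragraphs: one must carefully track the dimensional overlap constants (from covering balls by cubes) and the loss from passing between dyadic and non-dyadic radii, so that the slack in the branching inequality reduces to a pure dimensional constant and hence $\mathbf{A} \lesssim_d \Delta^{-3d}$ is admissible. Once this translation is in place, the running-minimum argument is quite robust and produces the quantitative count almost immediately.
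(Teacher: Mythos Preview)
Your proof is correct and the translation between the $(\delta,t)$-set property and the branching inequalities is exactly what the paper does (its Lemma~A.3). The combinatorial core, however, is genuinely different. The paper rescales the branching function to $\bar\beta\colon[0,1]\to[0,d]$ and proves a continuous Lipschitz lemma: for each $x$ in a short initial interval it defines a ``last hitting time'' $h(x)=\sup\{y:\bar\beta(y)-\tau y=\bar\beta(x)-\tau x\}$, shows $h(x)\in G\cap[0,\gamma]$, and then bounds $\mathcal{H}^1(G)$ from below via the Lipschitz constant of $y\mapsto\bar\beta(y)-\tau y$. Your running-minimum argument is more direct: at every jump of $m(j)=\min_{k\ge j}h(k)$ the minimum is attained at $j$, so $j$ is automatically good with zero slack, and telescoping the jumps of size at most $d$ immediately gives the count $(m(\lceil\gamma n\rceil)-m(0))/d$. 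This is shorter and in fact yields the stronger count $\gtrsim n\gamma(t-\tau)/d$ (versus the claimed $n\gamma(t-\tau)^2/(10d^2)$), so the hypothesis on $\epsilon$ could even be relaxed. The paper's projection argument and your running-minimum argument are morally dual ways of reading off the level-set structure of $k\mapsto T_k-\tau k$, but yours avoids the passage to the continuum and the floor-taking at the end, which is why you get good indices with $c_d=0$ at the combinatorial step.
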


Lemma \ref{lemma5Restated} will be proven by studying the behaviour of the \emph{branching function} associated to every uniform set:
\begin{definition}[Branching function]\label{def:branchingFunction} Let $\Delta \in 2^{-\N}$, and let $\mathcal{P} \subset \mathcal{D}_{\delta}$ be a $\{\Delta^{j}\}_{j = 0}^{n - 1}$-uniform set, $\delta = \Delta^{n}$. Let 
\begin{displaymath} \{N_{j}\}_{j = 0}^{n - 1} \subset \{1,\ldots,\Delta^{-d}\}^{n - 1} \end{displaymath}
be the associated sequence, as in Definition \ref{def:uniformity}. The \emph{branching function} $\beta \colon [0,n] \to [0,dn]$ is defined by setting $\beta(0) = 0$, and
\begin{displaymath} \beta(j) := \frac{\log |\mathcal{P}|_{\Delta^{j}}}{-\log_{2}(\Delta)} = \frac{1}{-\log_{2}(\Delta)} \sum_{i = 0}^{j - 1} \log N_{i}, \qquad j \in \{1,\ldots,n\}, \end{displaymath}
and then interpolating linearly. \end{definition}

Note that $\beta$ defines a (piecewise linear) non-decreasing $d$-Lipschitz function on $[0,n]$. The following simple lemma, combining \cite[Lemmas 2.22 and 2.24]{2023arXiv230110199O}, shows that the $(\delta,t)$-set properties of uniform sets, and their renormalisations, can be characterised by the "superlinear" behaviour of their branching functions on intervals of the form $[a,n]$.

\begin{lemma}\label{OSLemma} 
  Let $\Delta\in 2^{-\N}$, and let $\mathcal{P} \subset \mathcal{D}_{\delta}$ be $\{\Delta^{j}\}_{j = 0}^{n - 1}$-uniform, $\delta = \Delta^{n}$. Let $\beta \colon [0,n] \to [0,dn]$ be the associated branching function. Let $t \in [0,d]$, $C \geq 1$, and $\epsilon > 0$.
  \begin{enumerate}
  \item If $\mathcal{P}$ is a $(\delta,t,\delta^{-\epsilon})$-set, then 
  \begin{displaymath} \beta(x) \geq tx - \epsilon n - O_{d}(1), \qquad x \in [0,n]. \end{displaymath}
\item Fix $a\in \{0,\ldots,n - 1\}$ and $Q \in \mathcal{D}_{\Delta^{a}}(\mathcal{P})$. If
\begin{displaymath} \beta(x) - \beta(a) \geq t(x - a) - C, \qquad x \in [a,n], \end{displaymath}
then $\mathcal{P}^{Q}$ is a $(\delta/\Delta^{a},t,O_{d}(\Delta^{-(C + d)}))$-set. 
\end{enumerate} \end{lemma}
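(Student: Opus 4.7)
The plan is to feed the branching function $\beta$ of $\mathcal{P}$ into Lemma \ref{OSLemma} and convert the problem to a statement about the Lipschitz function $g(x) := \beta(x) - \tau x$ on $[0,n]$. Since the slopes of $\beta$ lie in $[0,d]$, those of $g$ lie in $[-\tau, d-\tau]$, so $g$ is $d$-Lipschitz with $g(0) = 0$. Applying Lemma \ref{OSLemma}(1) to the $(\delta, t, \delta^{-\epsilon})$-set $\mathcal{P}$ yields the pointwise lower bound
\begin{equation}\label{eq:plan-fr}
g(x) \;\geq\; (t - \tau) x \;-\; \epsilon n \;-\; O_{d}(1), \qquad x \in [0,n].
\end{equation}
Declare $j \in \{0, 1, \ldots, \lfloor \gamma n \rfloor\}$ \emph{good} if $g(x) \geq g(j) - 2d$ for every $x \in [j, n]$; equivalently $\beta(x) - \beta(j) \geq \tau(x - j) - 2d$ on $[j,n]$. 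Lemma \ref{OSLemma}(2) applied with $C = 2d$ then guarantees, for every good $j$ and every $Q \in \mathcal{D}_{\Delta^{j}}(\mathcal{P})$, that $\mathcal{P}^{Q}$ is a $(\delta/\Delta^{j}, \tau, O_{d}(\Delta^{-3d}))$-set. With $\mathbf{A} \lesssim_{d} \Delta^{-3d}$ as required, the lemma reduces to producing at least $n\gamma(t - \tau)^{2}/(10 d^{2})$ good indices in the window.

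Write $\mu := t - \tau > 0$ and $\phi(a) := g(a) - \mu a$. A clean \emph{sufficient} condition for $j$ to be good is $\phi(j) \leq -\epsilon n + 2d$: under this inequality, for every $y \in [j,n]$ one has $g(y) \geq \mu y - \epsilon n - O_d(1) \geq \mu j - \epsilon n - O_d(1) \geq g(j) - 2d - O_d(1)$ from \eqref{eq:plan-fr} and the monotonicity of $\mu y$. Combined with \eqref{eq:plan-fr} itself, the ``provably good'' indices are precisely those where $\phi(j)$ lies in a slab $I := [-\epsilon n - O_{d}(1), \, -\epsilon n + 2d]$ of width $\lesssim_d 1$, located near height $-\epsilon n$ within the a priori range of $\phi$. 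The function $\phi$ is Lipschitz with slopes in $[-t, d-t]$, satisfies $\phi(0) = 0$, $\phi(a) \geq -\epsilon n - O_{d}(1)$ for all $a$ by \eqref{eq:plan-fr}, and $\phi(n) \geq -\epsilon n - O_{d}(1)$. The claim reduces to showing that $\phi$ lies in $I$ on at least $n\gamma\mu^{2}/(10 d^{2})$ of the window $[0, \gamma n]$.

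The main obstacle is the possibility that $\phi$ rises above $I$ at slope $d - t$ and stays above $I$ throughout the entire window, leaving too few indices inside $I$. I would rule this out by combining three ingredients. First, the Lipschitz constraint forces $\phi$ to descend from $\phi(0) = 0$ to the top of $I$ in at most $\epsilon n / t \leq \gamma n / 8$ time units, using the hypothesis $\epsilon \leq \gamma \mu^{2}/(8 d) \leq \gamma \mu /8$; so $\phi$ reaches $I$ well inside the window $[0, \gamma n]$. Second, the total positive variation of $\phi$ on $[0,n]$ satisfies $\int_{0}^{n} \phi'_{+} \leq \tfrac{1}{2}(dn + \phi(n)) \leq dn$, which caps the aggregate height of $\phi$'s excursions above $I$. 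Third, a pigeonhole over dyadic sub-windows of $[0, \gamma n]$, together with the narrow $O_d(1)$ width of $I$, extracts the required density $n\gamma\mu^{2}/(10 d^{2})$: one factor of $\mu/d$ arises from the fraction of the window not consumed by upward excursions (each excursion of height $h$ above $I$ has length $\geq h/(d-t)$, and total excursion height is bounded by the variation budget $dn$), and a second factor of $\mu/d$ arises from the slab width relative to the range of $\phi$'s fluctuations between $-\epsilon n$ and $(d-\mu)\gamma n$. Choosing $n_0 = n_0(d,\epsilon)$ large enough to absorb the $O_d(1)$ losses throughout then completes the proof.
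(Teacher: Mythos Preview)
Your proposal does not address the stated lemma. Lemma~\ref{OSLemma} is the foundational dictionary between the $(\delta,t)$-set property of a uniform family (and of its renormalisations $\mathcal{P}^{Q}$) and linear lower bounds on the branching function $\beta$. Your writeup instead attempts to prove Lemma~\ref{lemma5Restated}: it introduces the parameters $\gamma$, $\tau$, the gap $\mu = t-\tau$, the target count $n\gamma(t-\tau)^{2}/(10d^{2})$, and the notion of ``good indices'' in a window $\{0,\ldots,\lfloor \gamma n\rfloor\}$, none of which appear in the statement you were asked to prove. You even invoke both parts of Lemma~\ref{OSLemma} as black boxes in your first two paragraphs.

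For the actual content of Lemma~\ref{OSLemma}, the paper gives no argument and simply cites \cite[Lemmas~2.22 and 2.24]{2023arXiv230110199O} and \cite[Lemma~8.3(1)]{OS23}. The proof is short: for part~(1), apply the $(\delta,t,\delta^{-\epsilon})$-set inequality at radius $r=\Delta^{j}$ to any $Q\in\mathcal{D}_{\Delta^{j}}(\mathcal{P})$, use uniformity to write $|\mathcal{P}\cap Q| = |\mathcal{P}|/|\mathcal{P}|_{\Delta^{j}}$, and take $\log_{2}$ base $\Delta^{-1}$. For part~(2), the hypothesis $\beta(x)-\beta(a)\geq t(x-a)-C$ unwinds via uniformity to $|\mathcal{P}^{Q}\cap Q'| \leq \Delta^{-C}(\Delta^{x-a})^{t}|\mathcal{P}^{Q}|$ for every $Q'\in\mathcal{D}_{\Delta^{x-a}}$, which is the $(\delta/\Delta^{a},t)$-set condition at the scales $\Delta^{x-a}$; interpolating to intermediate radii costs an extra factor $\Delta^{-d}$.
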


\begin{remark} The constant $O_{d}(\Delta^{-(C + d)})$ is more precise than stated in \cite[Lemma 2.22]{2023arXiv230110199O}, where the constant is $O_{\Delta,d}(1)$. This part of lemma is actually proven in \cite[Lemma 8.3(1)]{OS23}, and one can easily track from that argument that the constant is $O_{d}(\Delta^{-(C + d)})$. In case the reader does this, let us still mention that the estimate in the proof of \cite[Lemma 8.3(1)]{OS23} contains a typo, and there "$\delta^{-\epsilon m}$" should be "$\Delta^{-\epsilon m}$".  \end{remark} 

Lemma \ref{OSLemma} shows that in order to prove Lemma \ref{lemma5Restated}, it suffices to study the behaviour of non-decreasing $d$-Lipschitz functions $f \colon [0,n] \to [0,dn]$ satisfying $f(0) = 0$, or equivalently $g \colon [0,1] \to [0,d]$ (via the rescaling $g(x) := \tfrac{1}{n}f(nx)$).

\begin{figure}[h!]
\begin{center}
\begin{overpic}[scale = 0.9]{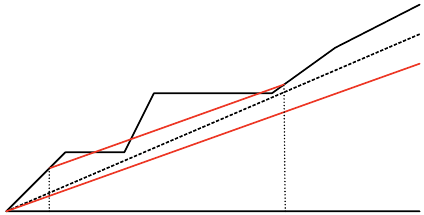}
\put(9,-5){$x$}
\put(63,-5){$h(x)$}
\put(48,32){$f$}
\end{overpic}
\caption{The functions $f$ and $h$ in Lemma \ref{lemma2}. The dotted line is the graph of $x \mapsto tx$, whereas the (longer) red line is the graph of $x \mapsto \tau x$.}\label{fig1}
\end{center}
\end{figure}

\begin{lemma}\label{lemma2} Let $t \in (0,d]$, $\tau \in (0,t)$, $\gamma \in (0,1)$, and $\epsilon \in (0,\tfrac{1}{4d}\gamma(t - \tau)^{2}]$. Let $f \colon [0,1] \to [0,\infty)$ be a $d$-Lipschitz function satisfying $f(0) = 0$ and $f(x) \geq t x - \epsilon$ for all $x \in [0,1]$. Then, there exists an analytic set $G \subset [0,\gamma]$ of measure $\mathcal{H}^{1}(G) \geq \gamma(t - \tau)^{2}/(10d^{2})$ such that
\begin{displaymath} f(y) - f(x) \geq \tau(y - x), \qquad x \in G, \, y \in [x,1]. \end{displaymath}
\end{lemma}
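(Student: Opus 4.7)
The reformulation in terms of $h(x) := f(x) - \tau x$ makes the problem transparent: $h$ is continuous with $h(0) = 0$, Lipschitz with constant $d + \tau \leq 2d$, and satisfies $h(x) \geq (t-\tau)x - \epsilon$. The desired condition $f(y) - f(x) \geq \tau(y-x)$ for $y \geq x$ reads $h(y) \geq h(x)$, so the target set becomes
\begin{displaymath}
G = \{ x \in [0,\gamma] : h(y) \geq h(x) \text{ for all } y \in [x,1]\},
\end{displaymath}
i.e., the set of $x \in [0,\gamma]$ where $h$ attains its minimum on $[x,1]$. By continuity of $h$, the complement of $G$ in $[0,\gamma]$ is open, so $G$ is closed and in particular analytic.

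The plan is to study the forward-minimum $m(x) := \min_{y \in [x,1]} h(y)$, a non-decreasing continuous function with $m(x) \leq h(x)$, and equality precisely on $\widetilde G := \{x \in [0,1] : h(x) = m(x)\} \supseteq G$. A quick sandwich argument shows $m$ inherits the Lipschitz constant of $h$ (for $x<y$, either the minimizer $z^*$ on $[x,1]$ lies in $[y,1]$, forcing $m(y)=m(x)$, or $z^*<y$ and $m(y)\leq h(y)\leq h(z^*)+(d+\tau)(y-z^*)\leq m(x)+(d+\tau)(y-x)$), so $0 \leq m'(x) \leq 2d$ almost everywhere. The two boundary evaluations we need are $m(0) \leq h(0) = 0$ and
\begin{displaymath}
m(\gamma) \geq \min_{y \in [\gamma,1]}\bigl((t-\tau)y - \epsilon\bigr) = (t-\tau)\gamma - \epsilon,
\end{displaymath}
giving $m(\gamma) - m(0) \geq (t-\tau)\gamma - \epsilon$.

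The crux, and the main obstacle, is a one-sided rising-sun style claim: $m$ is \emph{constant} on every connected component of the open set $U := \{h > m\} = [0,1] \setminus \widetilde G$. Fix a component $(a,b) \subset U$ and $x \in (a,b)$, and let $y^*(x) \in [x,1]$ realise $m(x)$. Then $y^*(x) \neq x$ (otherwise $x \in \widetilde G$), and $y^*(x) \notin (a,b)$ (otherwise $y^*(x) \in U$ would yield some $y' > y^*(x)$ with $h(y') < h(y^*(x)) = m(x)$, contradicting $m(x) \leq h(y')$). Hence $y^*(x) \in [b,1]$, so $m(x) = h(y^*(x)) \geq m(b)$; combined with monotonicity $m(x) \leq m(b)$, we conclude $m \equiv m(b)$ on $(a,b)$, and therefore $m'(x) = 0$ almost everywhere on $U$.

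Putting the pieces together, since $m$ is absolutely continuous and $m'=0$ a.e.\ on $U$,
\begin{displaymath}
(t-\tau)\gamma - \epsilon \leq m(\gamma) - m(0) = \int_0^\gamma m'(x)\, dx = \int_{[0,\gamma] \cap \widetilde G} m'(x)\, dx \leq 2d\, |G|.
\end{displaymath}
The hypothesis $\epsilon \leq \gamma(t-\tau)^2/(4d)$ together with $t - \tau \leq d$ gives $(t-\tau)\gamma - \epsilon \geq \tfrac{3}{4}\gamma(t-\tau)$, so $|G| \geq 3\gamma(t-\tau)/(8d)$; since $t - \tau \leq d$, this lower bound exceeds $\gamma(t-\tau)^2/(10 d^2)$, completing the plan. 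The only genuinely nontrivial step is the constancy of $m$ on components of $U$; everything else is routine bookkeeping.
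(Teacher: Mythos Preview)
Your proof is correct and takes a genuinely different route from the paper's. Both arguments work with the auxiliary function $h(x)=f(x)-\tau x$ (the paper writes this as the projection $\pi(a,b)=-\tau a+b$ restricted to the graph of $f$), and both identify $G$ as the set of ``rising-sun'' points of $h$ in $[0,\gamma]$. But the measurement of $|G|$ is different: the paper builds an explicit map $x\mapsto h(x)$ from a small interval $[0,\mathfrak{c}]$, $\mathfrak{c}=\gamma(t-\tau)/(2d)$, into $G$ (sending $x$ to the \emph{last} point where $f$ has the same $\pi$-level as $x$), checks that this map lands in $G\cap[0,\gamma]$, and then bounds $|G|$ from below by comparing the $\pi$-projections of $\Gamma_f|_{[0,\mathfrak{c}]}$ and $\Gamma_f|_{G}$ via a Lipschitz estimate. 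Your argument instead introduces the forward minimum $m(x)=\min_{[x,1]}h$, shows it is constant on the components of $\{h>m\}$, and integrates $m'$ over $[0,\gamma]$ to obtain $(t-\tau)\gamma-\epsilon\leq 2d\,|G|$. Your route is the classical rising-sun computation and gives the sharper bound $|G|\geq 3\gamma(t-\tau)/(8d)$, which is linear rather than quadratic in $(t-\tau)/d$; the paper's geometric picture (Figure~\ref{fig1}) is perhaps more intuitive but loses a factor of $(t-\tau)/d$ in the final estimate.
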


\begin{proof} Let $\mathfrak{c} := \mathfrak{c}(d,\gamma,t,\tau) := \gamma (t - \tau)/(2d) > 0$,  
\begin{displaymath} G := \{x \in [0,1] : f(y) - f(x) \geq \tau(y - x) \text{ for all } y \in [x,1]\}, \end{displaymath}
and $\pi(a,b) := -\tau a + b$. For $x \in [0,\mathfrak{c}]$, define
\begin{displaymath} h(x) := \sup \{y \in [0,1] : \pi(y,f(y)) = \pi(x,f(x))\}, \end{displaymath} 
see Figure \ref{fig1} for an illustration. We make three remarks. First, note that we are not taking a "$\sup$" over an empty set, since $y = x$ itself satisfies $\pi(y,f(y)) = \pi(x,f(x))$. Second, the "$\sup$" is really a "$\max$", so in particular $\pi(h(x),f(h(x))) = \pi(x,f(x))$. This follows readily from the continuity of $f$ and $\pi$. Third, note that $\pi(y,f(y)) = \pi(x,f(x))$ is equivalent to $f(y) - f(x) = \tau(y - x)$. 

We first claim that $h(x) \in [0,\gamma]$ for $x \in [0,\mathfrak{c}]$. In fact, we prove something a little stronger: if $x \in [0,\mathfrak{c}]$, and $y \in [x,1]$ is any point such that $f(y) - f(x) \leq \tau(y - x)$, then $y \leq \gamma$. In fact, the opposite inequality $y > \gamma$ would lead to
\begin{align*} f(y) \le f(x) + \tau(y - x) & \leq f(x) + ty - (t - \tau)y\\
& < \mathfrak{c}d + t y - \gamma (t - \tau)\\
& = ty - \tfrac{1}{2}\gamma(t - \tau) \le t y - \epsilon,  \end{align*}
contradicting our main hypothesis.

We next claim that $h(x) \in G$ for all $x \in [0,\mathfrak{c}]$. To see this, assume to the contrary that $x_{1} := h(x) \notin G$. This means that there exists $y \in (x_{1},1]$ such that
\begin{displaymath} f(y) - f(x_{1}) < \tau(y - x_{1}). \end{displaymath}
Consequently also $f(y) - f(x) < \tau(y - x)$. We have shown above that this implies $y \leq \gamma$, so the opposite inequality $f(y') - f(x) \geq \tau(y' - x)$ has to hold for $y' > \gamma$. Therefore, there exists a point $y'' \in [y,y']$ satisfying 
\begin{displaymath} f(y'') - f(x) = \tau(y'' - x), \end{displaymath}
or equivalently $\pi(x,f(x)) = \pi(y'',f(y''))$. This means that $h(x) \geq y''$, which is a contradiction, since $y'' > x_{1} = h(x)$.

We have now shown that $h \colon [0,\mathfrak{c}] \to G \cap [0,\gamma]$. Next, we note that 
\begin{align*} \mathcal{H}^{1}(\pi(\{(x,f(x)) : x \in [0,\mathfrak{c}]\})) & \geq |\pi(\mathfrak{c},f(\mathfrak{c}))| = |f(\mathfrak{c}) - \tau\mathfrak{c}|\\
& \geq \mathfrak{c}t - \epsilon - \tau\mathfrak{c} \geq \tfrac{\mathfrak{c}(t - \tau)}{2} = \tfrac{\gamma (t - \tau)^{2}}{4d}.  \end{align*}
In the penultimate inequality we used the hypothesis $\epsilon \leq \tfrac{1}{4d}\gamma(t - \tau)^{2} = \tfrac{1}{2}\mathfrak{c}(t - \tau)$. Finally, note that since $\pi(h(x),f(h(x))) = \pi(x,f(x))$ for all $x \in [0,\mathfrak{c}]$, we also have
\begin{displaymath} \pi(\{(h(x),f(h(x))) : x \in [0,\mathfrak{c}]\}) = \pi(\{(x,f(x)) : x \in [0,\mathfrak{c}]\}), \end{displaymath}
and therefore
\begin{displaymath} \mathcal{H}^{1}(\pi(\{(y,f(y)) : y \in G \cap [0,\gamma]\})) \geq \mathcal{H}^{1}(\pi(\{(h(x),f(h(x))) : x \in [0,\mathfrak{c}]\})) \geq \tfrac{\gamma (t - \tau)^{2}}{4d}. \end{displaymath} 
Finally, the composition $y \mapsto \pi(y,f(y))$ is $2d$-Lipschitz, so $\mathcal{H}^{1}(G \cap [0,\gamma]) \geq \tfrac{\gamma(t - \tau)^{2}}{10d^{2}}$. \end{proof}

The proof of Lemma \ref{lemma5Restated} is a straightforward combination of Lemmas \ref{OSLemma} and \ref{lemma2}:

\begin{proof}[Proof of Lemma \ref{lemma5Restated}] Since $\mathcal{P}$ is a $\{\Delta^{j}\}_{j = 0}^{n - 1}$-uniform $(\delta,t,\delta^{-\epsilon})$-set, the branching function $\beta \colon [0,n] \to [0,dn]$ satisfies 
\begin{displaymath} \beta(x) \geq tx - \epsilon n - O_{d}(1), \qquad x \in [0,n], \end{displaymath}
according to Lemma \ref{OSLemma}. In particular, $\beta(x) \geq tx - 2\epsilon n$ for $n \geq n_{0}(d,\epsilon)$, and the rescaled function $\bar{\beta}(\cdot) := \tfrac{1}{n}\beta(n \, \cdot) \colon [0,1] \to [0,d]$ satisfies $\bar{\beta}(x) \geq tx - 2\epsilon$. Since $2\epsilon \leq \tfrac{1}{4d}\gamma(t - \tau)^{2}$ by assumption, Lemma \ref{lemma2} may be applied to $\bar{\beta}$. The conclusion is that there exists a set $\bar{G} \subset [0,\gamma]$ with $\mathcal{H}^{1}(\bar{G}) \geq \gamma(t - \tau)^{2}/(10d^{2})$ such that 
\begin{displaymath} \bar{\beta}(y) - \bar{\beta}(x) \geq \tau(y - x), \qquad x \in \bar{G}, \, y \in [x,1]. \end{displaymath}
Writing $G := \{nx : x \in G\} \subset [0,\gamma n]$, the above yields $\mathcal{H}^{1}(G) \geq n \cdot \gamma(t - \tau)^{2}/(10d^{2})$, and
\begin{equation}\label{form58} \beta(y) - \beta(x) \geq \tau(y - x), \qquad x \in G, \, y \in [x,n]. \end{equation}
Let $\mathcal{G} := \{\lfloor x\rfloor : x \in G\} \subset \{0,\ldots,\gamma n\}$ be the integer parts of elements in $G$. Then $|\mathcal{G}| \geq \gamma(t - \tau)^{2}/(10d^{2})$. Using \eqref{form58}, and the $d$-Lipschitz property of $\beta$,
\begin{displaymath} \beta(y) - \beta(j) \geq \tau(y - j) - 2d, \qquad j \in \mathcal{G}, \, y \in [x,n]. \end{displaymath}
Now the second part of Lemma \ref{OSLemma} implies that for all $j \in \mathcal{G}$, the renormalisation $\mathcal{P}^{Q} \subset \mathcal{D}_{\delta/\Delta^{j}}$ is a $(\delta/\Delta^{j},\tau,O_{d}(\Delta^{-3d}))$-set for all $Q \in \mathcal{D}_{\Delta^{j}}(\mathcal{P})$. This completes the proof. \end{proof}


\section{Counter examples}\label{appB}

The purpose of this section is to provide counter examples for \cite[Theorem 6.9]{MR3617376}, as discussed in Remark \ref{rem2}. Recall that $\ell_{x,e} = x + \mathrm{span}(e)$ for $x \in \R^{2}$ and $e \in S^{1}$. For $t \in (1,2]$, let $\gamma(t) \in [0,1]$ be the infimum over $\gamma \in [0,1]$ such that the following holds:
\begin{itemize}
\item Let $K \subset \R^{2}$ be compact with $\mathcal{H}^{t}(K) < \infty$. Then, there exists a set $E \subset S^{1}$ with 
\begin{displaymath} \Hd E \leq \gamma \end{displaymath}
such that for $\mathcal{H}^{t}$ almost all $x \in K$, it holds $|K \cap \ell_{x,e}| \geq 2$ for all $e \in S^{1} \, \setminus \, E$.
\end{itemize}
Proposition \ref{appProp1} shows that $\gamma(t)$ satisfies no non-trivial bounds for $t \in (1,2)$:

\begin{proposition}\label{appProp1} $\gamma(t) = 1$ for $t \in (1,2)$. \end{proposition}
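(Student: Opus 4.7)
The inequality $\gamma(t) \leq 1$ is automatic since $\Hd E \leq 1$ for any $E \subset S^{1}$. For the lower bound, fix $\gamma \in (0, 1)$ and $\gamma' \in (\gamma, 1)$; I plan to construct a compact set $K \subset \R^{2}$ with $0 < \mathcal{H}^{t}(K) < \infty$ witnessing $\gamma(t) > \gamma$, in the sense that no $E \subset S^{1}$ with $\Hd E \leq \gamma$ will contain $E_{x}(K)$ for $\mathcal{H}^{t}$-a.e. $x \in K$. Letting $\gamma \uparrow 1$ then gives $\gamma(t) = 1$. A key observation guiding the construction is that, by Theorem \ref{t:main}, each individual $E_{x}(K)$ has $\Hd E_{x}(K) \leq 2 - t < 1$ for $\mathcal{H}^{t}$-a.e. $x$, so the counter-example cannot rely on any single $E_{x}(K)$ being large: it must instead arrange that, as $x$ varies over an $\mathcal{H}^{t}$-full-measure subset of $K$, the union $\bigcup_{x} E_{x}(K)$ has Hausdorff dimension strictly greater than $\gamma$.

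The construction is a "radial graph carpet" of the type foreshadowed in Remark \ref{rem2}. Fix a compact $\Omega \subset S^{1}$ with $\Hd \Omega = \gamma'$ and $0 < \mathcal{H}^{\gamma'}(\Omega) < \infty$, and a compact Cantor-type set $F \subset [1, 2]$ with $\Hd F = t - \gamma' \in (0, 1)$ and $0 < \mathcal{H}^{t - \gamma'}(F) < \infty$. Set
\begin{equation*}
K := \{ r\omega : r \in F,\ \omega \in \Omega \} \subset \R^{2}.
\end{equation*}
Since $r \in [1, 2]$ is bounded away from $0$, the map $(r, \omega) \mapsto r\omega$ is bi-Lipschitz on $F \times \Omega$, so $K$ is compact with $\Hd K = \Hd F + \Hd \Omega = t$ and $0 < \mathcal{H}^{t}(K) < \infty$. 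Geometrically, $K$ is a "Cantor annulus" foliated by the rays $\R_{+}\omega$ for $\omega \in \Omega$, each ray-slice being the scaled Cantor set $F\omega$.

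For $x = r_{0}\omega_{0} \in K$, I plan to exhibit a canonical accessibility direction, namely the direction $\omega_{0}^{\perp}$ perpendicular to the radial vector of $x$. A direct calculation shows that the line $\ell_{x, \omega_{0}^{\perp}}$ meets the ray $\R_{+}\omega$ (for $\omega \neq \omega_{0}$) at a point of radial coordinate $r_{0}/\cos(\angle(\omega, \omega_{0}))$. Hence $\omega_{0}^{\perp} \in E_{x}(K)$ if and only if $r_{0}/\cos(\angle(\omega, \omega_{0})) \notin F$ for all $\omega \in \Omega \setminus \{\omega_{0}\}$. Provided this accessibility condition holds for $\mathcal{H}^{t}$-a.e. $x$, the assignment $x \mapsto \omega_{0}^{\perp}$ is a continuous map from $K$ onto (a rotation of) $\Omega$, and a Fubini argument guarantees that the image of any $\mathcal{H}^{t}$-full-measure subset $K' \subset K$ still covers a $\mathcal{H}^{\gamma'}$-positive subset of this rotated $\Omega$. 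Consequently $\bigcup_{x \in K'} E_{x}(K)$ contains a set of Hausdorff dimension at least $\gamma'$, which rules out any $E$ with $\Hd E \leq \gamma < \gamma'$ and witnesses $\gamma(t) \geq \gamma'$.

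The main obstacle is verifying that $\omega_{0}^{\perp}$ is genuinely an accessibility direction for $\mathcal{H}^{t}$-a.e. $x$, i.e., that the "failure" set
\begin{equation*}
B := \{(r_{0}, \omega_{0}) \in F \times \Omega : \exists \omega \in \Omega \setminus \{\omega_{0}\},\ r_{0}/\cos(\angle(\omega, \omega_{0})) \in F\}
\end{equation*}
has $(\mathcal{H}^{t - \gamma'} \times \mathcal{H}^{\gamma'})$-measure zero. I expect this to reduce to a Fubini/transversality statement for the smooth family of maps $(\omega, r') \mapsto r' \cos(\angle(\omega, \omega_{0}))$ from $\Omega \times F$ into $\R$, applied slice-by-slice in $\omega_{0}$, combined with the fact that $F$ has Lebesgue measure zero and dimension strictly less than $1$. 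The non-degeneracy of the Jacobian of this family away from the diagonal $\omega = \omega_{0}$ is an elementary geometric computation, but propagating the resulting nullity to the whole fractal $F \times \Omega$ — uniformly enough to conclude for almost every $x \in K$ rather than merely for an arbitrary $x$ — will require a Marstrand-type slicing argument in the spirit of Proposition \ref{prop5}. This uniformisation across the fractal is where I expect the technical effort to concentrate.
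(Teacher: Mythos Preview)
Your construction is different from the paper's, and the difficulty you flag at the end is more than ``technical effort''. For your product set $K = F \cdot \Omega$, the failure set consists (for fixed $\omega_{0}$) of those $r_{0} \in F$ lying in $F \cdot C_{\omega_{0}}$, where $C_{\omega_{0}} = \{\cos(\angle(\omega,\omega_{0})) : \omega \in \Omega \setminus \{\omega_{0}\}\}$. You need $\mathcal{H}^{t-\gamma'}(F \cap F \cdot C_{\omega_{0}}) = 0$ for $\mathcal{H}^{\gamma'}$-a.e.\ $\omega_{0}$. But $\dim(F \times C_{\omega_{0}})$ can be as large as $t > 1$, so $F \cdot C_{\omega_{0}}$ can have positive Lebesgue measure and there is no general reason its intersection with $F$ should be $\mathcal{H}^{t-\gamma'}$-null. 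Crucially, there is no transversality here: $F$ and $\Omega$ are \emph{fixed} sets, not members of a one-parameter family, so Marstrand-type arguments (which yield conclusions for \emph{almost every} parameter) do not apply. In logarithmic coordinates the question becomes whether $(\log F - \log F) \cap \log C_{\omega_{0}}$ is suitably small---a sumset problem with no automatic answer. To push your approach through you would have to build $F$ and $\Omega$ with enough arithmetic independence to force this, which may be possible but is far from a routine slicing lemma.

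The paper sidesteps the issue entirely by using a \emph{radial graph} rather than a radial product. Take $K$ to be a compact subset (with $0 < \mathcal{H}^{t}(K) < \infty$) of $\Gamma_{g}^{\mathrm{rad}} := \{g(e)e : e \in S\}$, where $g$ arises from a Besicovitch--Ursell $(2-\bar t)$-H\"older function with $\bar t \in (t,2)$ chosen so that $1 + t - \bar t > \gamma$. Because a radial graph meets each ray $\R_{+}e$ in exactly one point, the radial direction $e_{x} = x/|x|$ lies in $E_{x}(K)$ for \emph{every} $x \in K$, with no exceptional set to verify. If some $E$ with $\Hd E \leq \gamma$ worked, the radial projection of the $\mathcal{H}^{t}$-full-measure set $B \subset K$ would lie in $E$; but Kahane's inequality $\Hd \Gamma_{g}^{\mathrm{rad}}(A) \leq \Hd A + \bar t - 1$ forces that projection to have dimension at least $1 + t - \bar t > \gamma$, a contradiction.
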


To prove the proposition, we need two classical facts about graphs of H\"older functions. Recall that a function $f \colon [a,b] \to \R$ is \emph{$\alpha$-H\"older continuous} (for $\alpha \in (0,1]$) if there exists a constant $C > 0$ such that $|f(x) - f(y)| \leq C|x - y|^{\alpha}$ for all $x,y \in [a,b]$. For a function $f \colon [0,1] \to \R$, and $B \subset [a,b]$, we write
\begin{displaymath} \Gamma_{f}(B) := \{(x,f(x)) : x \in B\} \subset \R^{2}. \end{displaymath} 

\begin{lemma} Let $\alpha \in (0,1]$, and let $f \colon [a,b] \to \R$ be $\alpha$-H\"older continuous. Then,
\begin{equation}\label{kahane} \Hd \Gamma_{f}(A) \leq \Hd A + 1 - \alpha, \qquad A \subset [a,b]. \end{equation} \end{lemma}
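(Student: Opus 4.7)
The plan is a direct Hausdorff measure computation, essentially the standard Kahane-type upper bound for graphs of H\"older functions.

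First I would fix any $s > \Hd A$ and let $C$ denote the $\alpha$-H\"older constant of $f$. By the definition of Hausdorff dimension, for every $\varepsilon, \delta > 0$ I can cover $A$ by countably many intervals $I_{j} \subset [a,b]$ of diameter $r_{j} \leq \delta$ (so in particular $r_{j} \leq 1$ after shrinking $\delta$) with $\sum_{j} r_{j}^{s} \leq \varepsilon$. The key geometric observation is that $\Gamma_{f}(A \cap I_{j})$ is contained in a rectangle $R_{j} \subset \R^{2}$ of dimensions $r_{j} \times (2Cr_{j}^{\alpha})$, because for any $x \in I_{j}$, $|f(x) - f(x_{j})| \leq Cr_{j}^{\alpha}$ for some fixed $x_{j} \in I_{j}$. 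Since $\alpha \leq 1$ and $r_{j} \leq 1$, the longer side is $\sim r_{j}^{\alpha}$.

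Next I would cover each rectangle $R_{j}$ by axis-parallel squares of side length $r_{j}$. The number of such squares needed is
\begin{displaymath}
N_{j} \leq \lceil 2Cr_{j}^{\alpha}/r_{j} \rceil + 1 \lesssim_{C} r_{j}^{\alpha - 1}.
\end{displaymath}
Each of these squares has diameter $\lesssim r_{j} \leq \delta$. Summing the $(s + 1 - \alpha)$-th powers of the diameters,
\begin{displaymath}
\mathcal{H}^{s + 1 - \alpha}_{C\delta}(\Gamma_{f}(A)) \lesssim_{C} \sum_{j} N_{j} \cdot r_{j}^{s + 1 - \alpha} \lesssim_{C} \sum_{j} r_{j}^{\alpha - 1} \cdot r_{j}^{s + 1 - \alpha} = \sum_{j} r_{j}^{s} \leq \varepsilon.
\end{displaymath}

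Finally I would let $\varepsilon, \delta \to 0$ to conclude $\mathcal{H}^{s + 1 - \alpha}(\Gamma_{f}(A)) = 0$, so $\Hd \Gamma_{f}(A) \leq s + 1 - \alpha$, and then let $s \searrow \Hd A$ to obtain \eqref{kahane}. There is no real obstacle: the only points needing care are the use of $r_{j} \leq 1$ to ensure $r_{j}^{\alpha} \geq r_{j}$ (so that the covering of $R_{j}$ by $r_{j}$-squares is what saves exponents), and keeping the $\delta$-bookkeeping consistent so that the computation legitimately estimates $\mathcal{H}^{s + 1 - \alpha}_{\delta'}$ for a controllable $\delta'$.
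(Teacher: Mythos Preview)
Your proof is correct and is precisely the standard argument; the paper does not give its own proof but simply cites Kahane \cite[Section 7, Theorem 6]{MR254888}, whose argument is essentially the one you wrote out.
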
 

\begin{proof} The proof can be found in \cite[Section 7, Theorem 6]{MR254888}. \end{proof}

Conversely, Besicovitch and Ursell \cite{BeUr} have constructed for every $t \in [1,2)$ a $(2 - t)$-H\"older function $f \colon [0,1] \to [\tfrac{1}{2},1]$ such that $\Hd \Gamma_{f}([0,1]) = t$.

For the proof of Proposition \ref{appProp1}, we need "radial" versions of \eqref{kahane} and the Besicovitch-Ursell construction. Fix $t \in [1,2)$, and let $f = f_{t} \colon [0,\tfrac{\pi}{2}] \to [\tfrac{1}{2},1]$ be the $(2 - t)$-H\"older function constructed by Besicovitch and Ursell, scaled to the interval $[0,\tfrac{\pi}{2}]$. 

For $\theta \in [0,\tfrac{\pi}{2}]$, let $e_{\theta} := (\cos \theta,\sin \theta)$, and let $S := \{e_{\theta} : \theta \in [0,\tfrac{\pi}{2}]\} \subset S^{1}$. Consider the function $g \colon S \to \R$ defined by $g(e_{\theta}) := f(\theta)$, and the "radial" graph 
\begin{displaymath} \Gamma_{g}^{\mathrm{rad}} := \Gamma^{\mathrm{rad}}_{g}(S) := \{g(e)e : e \in S\}. \end{displaymath}
Then $\Gamma_{g}^{\mathrm{rad}}$ is the image of $\Gamma_{f}([0,\tfrac{\pi}{2}])$ under the map $(r,\theta) \mapsto re_{\theta}$ (which is bi-Lipschitz on $[0,\tfrac{\pi}{2}] \times [\tfrac{1}{2},1]$), so 
\begin{displaymath} \Hd \Gamma_{g}^{\mathrm{rad}} = \Hd \Gamma_{f}([0,\tfrac{\pi}{2}]) = t. \end{displaymath}
Similarly, it follows from \eqref{kahane} with $\alpha = 2 - t$ that
\begin{equation}\label{kahane2} \Hd \Gamma_{g}^{\mathrm{rad}}(A) \leq \Hd A + t - 1, \qquad A \subset S. \end{equation}
Lastly, we observe that if $x = g(e)e \in \Gamma_{g}^{\mathrm{rad}}$, then
\begin{equation}\label{form77} |\Gamma_{g}^{\mathrm{rad}} \cap \ell_{x,e}| = 1. \end{equation}

We are then prepared to prove Proposition \ref{appProp1}.

\begin{proof} Fix $t \in (1,2)$ and $\gamma < 1$. It suffices to prove that $\gamma(t) \geq \gamma$. Let $\Gamma_{g}^{\mathrm{rad}} \subset \R^{2} \, \setminus \, \{0\}$ be the radial version of the Besicovitch-Ursell graph with parameter $\bar{t} \in (t,2)$ such that
\begin{displaymath} 1 + (t - \bar{t}) > \gamma. \end{displaymath}
Let $K \subset \Gamma_{g}^{\mathrm{rad}}$ be an arbitrary compact subset with $0 < \mathcal{H}^{t}(K) < \infty$.

We make a counter assumption: $\gamma(t) < \gamma$. In particular, with $K$ as above, there exists a set $E \subset S^{1}$ with $\Hd E \leq \gamma$, and a $\mathcal{H}^{t}$ full measure subset $B \subset K \subset \Gamma_{g}^{\mathrm{rad}}$ such that 
\begin{equation}\label{form78} |K \cap \ell_{x,e}| \geq 2, \qquad x \in B, \, e \in S^{1} \, \setminus \, E. \end{equation} 
Let $A := \{e \in S : \mathrm{span}(e) \cap B \neq \emptyset\}$, so $B = \Gamma_{g}^{\mathrm{rad}}(A)$. Then \eqref{kahane2} implies 
\begin{displaymath} t = \Hd B = \Hd \Gamma_{g}^{\mathrm{rad}}(A) \leq \Hd A + \bar{t} - 1, \end{displaymath}
thus $\Hd A \geq 1 + (t - \bar{t}) > \gamma$. In particular, there exists $e \in A \, \setminus \, E$. Let $x = g(e)e \in B$. Since $K \subset \Gamma^{\mathrm{rad}}_{g}$,
\begin{displaymath} 1 \stackrel{\eqref{form77}}{=} |\Gamma_{g}^{\mathrm{rad}} \cap \ell_{x,e}| \geq |K \cap \ell_{x,e}| \stackrel{\eqref{form78}}{\geq} 2, \end{displaymath}
which gives the desired contradiction.  \end{proof}

\bibliographystyle{plain}
\bibliography{references}

\end{document}